\numberwithin{figure}{section}
\numberwithin{equation}{section}
\newtheorem{theorem}{Theorem}[section]
\newtheorem{lemma}[theorem]{Lemma}
\newtheorem{proposition}[theorem]{Proposition}
\theoremstyle{definition}
\newcommand{\Rm}[1]{
  \textup{\uppercase\expandafter{\romannumeral#1}}
}
\newcommand{\A}{\mathbf{A}}
\newcommand{\Ac}{\mathcal{A}}
\newcommand{\B}{\mathcal{B}}
\newcommand{\F}{\mathcal{F}}
\newcommand{\C}{\mathcal{C}}
\newcommand{\ub}{\mathbf{u}}
\newcommand{\Lc}{\mathcal{L}}
\newcommand{\cutoffxi}{\mathfrak{d}}
\newcommand{\med}{\mathrm{med}}
\newcommand{\cutoff}{\mathfrak{b}}
\newcommand{\lb}{\mathbf{l}}
\newcommand{\n}{\mathbf{n}}
\newcommand{\Nc}{\mathcal{N}}
\renewcommand{\P}{\mathcal{P}}
\newcommand{\R}{\mathbb{R}}
\newcommand{\Tb}{\mathbf{T}}
\newcommand{\Tf}{\mathfrak{T}}
\renewcommand{\u}{\mathbf{u}}
\newcommand{\x}{\mathbf{x}}
\newcommand{\Z}{\mathbb{Z}}
\newcommand{\xib}{\boldsymbol{\xi}}
\def\vp{\varphi}
\def\ve{\varepsilon}
\def\px{\partial_x}
\def\ds{\displaystyle}
\newcommand{\diff}{\,\mathrm{d}}
\newcommand{\Lbr}{\mathrlap{[}{\hspace{0.15em}[\hspace{0.15em}}}
\newcommand{\Rbr}{\hspace{0.15em}\mathrlap{]}{\hspace{0.15em}]}}
\let\oldtocsection=\tocsection
\let\oldtocsubsection=\tocsubsection
\let\oldtocsubsubsection=\tocsubsubsection
\renewcommand{\tocsection}[2]{\hspace{0em}\oldtocsection{#1}{#2}}
\renewcommand{\tocsubsection}[2]{\hspace{1em}\oldtocsubsection{#1}{#2}}
\renewcommand{\tocsubsubsection}[2]{\hspace{2em}\oldtocsubsubsection{#1}{#2}}
\author{Fangchi Yan}
\address{Department of Mathematics, West Virginia University}
\email{fangchi.yan@mail.wvu.edu}
\author{Qingtian Zhang}
\address{Department of Mathematics, West Virginia University}
\email{qingtian.zhang@mail.wvu.edu}
\title[Global Solutions of QGSW Front]{Global solutions of quasi-geostrophic shallow-water fronts}
\date{\today.}
\begin{document}

\begin{abstract}
In this paper, we consider a family of piecewise constant solutions of the quasi-geostrophic shallow-water (QGSW) equation. We derive the contour dynamics equation of the QGSW front and prove the global existence of the solutions when the initial data is sufficiently close to a flat front.
\end{abstract}

\maketitle

\tableofcontents

\section{Introduction}

The quasi-geostrophic shallow-water (QGSW) equation \cite{CaCr13} is a transport equation in two space dimensions for an active scalar $q(t, \x)$,  $\x = (x, y)$,
\begin{equation}
\left\{
\begin{aligned}
\label{QGSW}
\begin{split}
& q_t + \u \cdot \nabla q = 0,\\
&\u=\nabla^\perp \Psi,\\
&-\Delta\Psi+F\Psi= q,
\end{split}
\end{aligned}\right.
\end{equation}
where $q$ is the potential vorticity, $\u$ is the velocity, $\Psi$ is the stream function, and $F$ is a positive constant. 
This equation describes the dynamics of a single fluid layer of ocean or atmosphere in the mid-latitude region. 
On the earth, the horizontal scale of the ocean or the atmosphere is about the order of thousands kilometers, which is much larger than the vertical scale, which is around $10$km. The time scale for fluid motion is same or longer than the scale of the earth rotation. In this situation, shallow water approximation can provide a reasonable reduction. This equation is also known as the Charney equation \cite{Charney48} in some geophysical context \cite{Ped87}, and its mathematical justification can be found in Majda’s book \cite{Maj03}. Another interesting point is that the same equation is also derived from the plasma physics \cite{HM78} and it is called Hasegawa-Mima equation in the plasma context, for example \cite{CNQ15}. 

The mathematical structure of QGSW equation is close to incompressible Euler equation, which is the case when $F=0$. Its local and global wellposedness of weak and strong solutions in $\R^2$ has been proved in \cite{Pau04, GH04} in the suitable Sobolev spaces. In these results, the vanishing far-field limit of potential velocity $q$ is a requirement. This contrasts the result we will present below in this paper, where the far-field limit of $q$ is nonzero. 

The main content of the current paper is to study a family of piecewise constant solutions for QGSW equation. This kind of solutions has been studied for several models related to 2D incompressible fluid. These models share the similar form of governing equations:
\begin{equation}\label{vorEq}
\left\{~\begin{aligned}
&\theta_t+\mathbf{u}\cdot\nabla\theta=0,\\
& \mathbf{u}=\nabla^\perp \Psi,\\
& \mathbf{A}\Psi=\theta,
 \end{aligned}\right.
\end{equation} 
where $\theta$ is the (potential) vorticity, $\ub$ is the velocity, and $\Psi$ is the stream function. In the third equation, $\A$ is an elliptic operator connecting the (potential) vorticity and the stream function. 
\begin{itemize}
\item When $\A=-\Delta$, it is the incompressible Euler equation. \item When $\A=(-\Delta)^{1/2}$, it is the surface quasi-geostrophic (SQG) equation. 
\item When $\A=(-\Delta)^{\alpha/2}$ with $\alpha\in(0,2]$, it is the generalized surface quasi-geostrophic (GSQG) equation. 
\item When $\A=F-\Delta$, where $F$ is a positive constant, it is the quasi-geostrophic shallow water (QGSW) equation. 
\end{itemize}
In all these models, the (potential) vorticity satisfies a transport equation. Therefore when the initial data is a piecewise constant function, it will remain so for all time. This is the main reason that we can study the piecewise constant solutions for these equations. When we study these solutions, we usually treat them as a free boundary problem for the curves of the vorticity discontinuities transporting along the fluid flows. 
According to the different geometries of these curves, we classify these problems as the vortex patch problem and the vortex front problem separately. We use the definitions in \cite{HSZ20b} to define the patch solution and the front solution.

Consider the piecewise constant solutions of \eqref{vorEq}
\begin{align}
\label{piecewisetheta}
\theta(t, \x) = \sum_{k = 1}^N \theta_k \mathds{1}_{\Omega_k(t)}(\x),
\end{align}
where $N \geq 2$ is a positive integer, $\theta_1, \dotsc, \theta_N\in \R$ are constants, and $\Omega_1(t),\dotsc, \Omega_N(t) \subset \R^2$ are disjoint domains such that
\[
\bigcup_{k = 1}^N \overline{\Omega_k(t)} = \R^2,
\]
and their boundaries $\partial \Omega_1(t), \dots, \partial \Omega_N(t)$ are smooth curves, whose components either coincide or are a positive distance apart. In \eqref{piecewisetheta}, $\mathds{1}_{\Omega_k(t)}$ denotes the indicator function of $\Omega_k(t)$.\\
\eqref{piecewisetheta} is a {\it patch solution} if it satisfies the following assumptions:
\begin{enumerate}
\item $N \geq 2$;
\item $\theta_N = 0$, but $\theta_k \in \R \setminus \{0\}$ for each $1 \leq k \leq N - 1$;
\item for each $1 \leq k \leq N - 1$, the region $\Omega_k(t)$ is bounded, and its boundary $\partial \Omega_k(t)$
is a smooth, simple, closed curve that is diffeomorphic to the circle $\mathbb{T}$;
\item the region $\Omega_N(t)$ is unbounded.
\end{enumerate}
\eqref{piecewisetheta} is an {\it (N-1)-front solution} if it satisfies the following assumptions:
\begin{enumerate}
\item $N \geq 2$;
\item $\theta_1,\cdots, \theta_N \in \R$ are constants such that $\theta_i\neq \theta_{i+1}$ for $i=1,\cdots, N-1$;
\item Each $\Omega_i(t)$, $i=1,\cdots, N,$ is unbounded and each of their boundaries is a simple, smooth curve diffeomorphic to $\R$.
\end{enumerate}

{\noindent\bf Vortex patch problems.}
For incompressible Euler equation, the contour dynamics equation is derived by Zabusky et.~al.~\cite{Zabusky}.  The boundary of a vortex patch remains smooth globally in time \cite{BC93, Che93, Che98}. Some special types of nontrivial global-in-time smooth vortex patch solutions are constructed in \cite{Bur82, CCGS16b, dlHHMV16, HM16, HM17, HMV13}.
For SQG and generalized SQG equation, local well-posedness of the contour dynamics equations is proved in \cite{CCG18, Gan08, GP21, CCCGW12}. The question of whether finite-time singularities can form in smooth boundaries of SQG or GSQG patches remains open. 
Numerical simulation suggests that the boundary of the SQG patch may develop self-similar singularities and provides evidence that two separated SQG patches can touch in finite time \cite{SD14, SD19}.
It is proved in \cite{GS14} that splash singularities cannot form. Some  nontrivial global solutions for SQG and GSQG patches have been constructed in \cite{CCGS16a, dlHHH16, GS19, HH15, HM17, GPSY19p}. Local existence of the vortex patch problem for singular SQG equation is studied in \cite{KR20a, KR20b}.
When a rigid boundary is present, the local existence of smooth GSQG patches is shown in \cite{GP21, KRYZ16, KYZ17, JZ21p}, and the formation of finite-time singularities is proved for a range of $\alpha$ close to $2$. By contrast, vortex patches in this setting (with $\alpha=2$) have global regularity \cite{KRYZ16}.
For QGSW equation, the vortex patch problem has been studied in \cite{HR21, JD20, PD12, PD13, DHC19}.
 
\vspace{0.2cm}

{\noindent\bf Vortex front problems.} 
For incompressible Euler equation, the vortex front problem is studied in \cite{BH10, Ray1895}, and the approximate models of the Euler front is justified in \cite{HSZ22}.
For GSQG equation with $\alpha\in[1,2]$, the vortex front equations are derived by a regularization procedure in \cite{HS18}. The SQG front equation is derived by a perturbation method in \cite{HSZ20a}.
For SQG equation, local existence and uniqueness for spatially periodic fronts are proved for $C^\infty$ initial data in \cite{Rod05} and analytic initial data in \cite{FR11}. Local well-posedness in Sobolev spaces for spatially periodic solutions of a cubically nonlinear approximation of the SQG front equation is proved in \cite{HSZ18}. Almost sharp SQG fronts are studied in \cite{CFR04, FLR12, FR12, FR15}, and smooth $C^\infty$ solutions for spatially periodic GSQG fronts with $1 < \alpha < 2$ also exist locally in time \cite{CFMR05}.

In the non-periodic setting, smooth solutions to the GSQG front equations with $0 < \alpha < 1$ on $\R$ are shown to exist globally in time for small initial data in \cite{CGI19}.
The analogous result is proved for the SQG front equation with $\alpha = 1$ in \cite{HSZ21}, and for GSQG fronts with $1<\alpha<2$ in \cite{HSZ20p}. The two-front problem of GSQG equation for different values of $\alpha\in(0,2]$ is studied in \cite{HSZ20b} and local well-posedness is proved.




\vspace{0.2cm}

{\noindent\bf QGSW front problem.}
In the current paper, we will derive the equation of the QGSW front. We consider a family of piecewise constant solutions of the form
\begin{equation}
q(t, \x) = \begin{cases} q_+ & \text{if $y >\varphi(t,x)$},\\q_- &\text{if $y<\varphi(t,x)$},\end{cases}
\label{front}
\end{equation}
where $q_+$ and $q_-$ are two constants. The set of discontinuities of $q$ is $\{(x,y)\mid y=\vp(t,x)\}$, which is called the QGSW front. Without loss of generality, we set $F=1$. We will show that the QGSW front satisfies the equation
\begin{equation}\label{QGSWfront}
 \vp_t(t,x)=\frac1\pi\int_{\R}(\vp_x(t,x)-\vp_{x}(t, x+\zeta))K_0\left(\sqrt{\zeta^2+(\vp(t, x)-\vp(t, x+\zeta))^2}\right)\diff \zeta,
\end{equation}
where $K_0(\cdot)$ is the modified Bessel function of the second kind. After simplifying this equation, we will see that this is a nonlinear, nonlocal dispersive equation. We will mainly study the local and global existence of solutions.

Since the equation \eqref{QGSWfront} is a quasilinear dispersive equation, the local existence and uniqueness of solutions in Sobolev space follow from Kato's theory \cite{Kat75a, Kat75b}. The idea is to use the linearized equation to construct a sequence of approximate solutions. Then using the uniform bound of the sequence in the higher regularity space and the contraction in the lower regularity space to obtain a convergent limit. This procedure is similar as the SQG front in \cite{HSZ21}. 

When proving global existence, we need the dispersive estimates. 
This can be achieved by the method of space-time resonances introduced by Germain, Masmoudi and Shatah \cite{Germain,GMS09, GMS12}, together with estimates for weighted $L^\infty_\xi$-norms --- the so-called $Z$-norms --- developed by Ionescu and his collaborators \cite{CGI19, DIP17, DIPP17, IP12, IP13, IP14, IP15, IPu16}.
One may have two additional difficulties compared with the previous results of the GSQG front problem \cite{HSZ21, CGI19, HSZ20p}. 
First, the dispersive relation $p(\xi)=-\xi(1+\xi^2)^{-1/2}$ satisfies $p''(0)=0$ and $p'''(0)\neq 0$, which implies that the frequency $\xi=0$ is a stationary phase and it will leads to slower dispersive decay rate, approximately $O(t^{-1/3})$, when doing oscillatory integral estimate \cite{Ste93}. To obtain $O(t^{-1/2})$ decay, we require some extra derivative in the lower frequency.  Fortunately, the structure of the nonlinear term can provide the extra derivative in the low frequency. This helps to gain $O(t^{-1/2})$ decay.

Another difficulty is that the dispersive relation $p(\xi)=-\xi(1+\xi^2)^{-1/2}$ does not satisfy scaling property. So there is no scaling vector field to use. To solve this problem, we need to introduce the profile function $\hat h(\xi)=e^{-ip(\xi)t}\hat \vp(\xi)$, and estimate $\partial_\xi\hat h$ by using the profile equation. In this procedure, there may be loss of derivatives when estimating the nonlinear terms. It can be solved by symmetrizing the nonlinearities and using the cancellation property of the symbol.

The paper is organized as follows. In Section \ref{sec:prelim}, we list some facts and notations to be used in the later sections. In Section \ref{sec:Der}, we derive the front equation. In Section \ref{sec:Loc}, we prove the local well-posedness of the front equation. After that, we consider the global solutions in Sections \ref{sec:Glo} to \ref{sec:Znorm}. The global existence theorem is stated in Theorem \ref{Thm:glo}. Sections \ref{sec:Dis}, \ref{sec:WE}, \ref{sec:Znorm} are mainly for dispersive estimates, weighted energy estimates and the space-time resonance analysis, separately.

\section{Preliminaries}
\label{sec:prelim}
We denote the Fourier transform of $f \colon \R\to \C$ by $\hat f \colon \R\to \C$, where $\hat f= \F f$ is given by
\[
f(x)=\int_{\R} \hat f(\xi) e^{i\xi x} \diff\xi,  \qquad \hat f(\xi)=\frac1{2\pi} \int_{\R}f(x) e^{-i\xi x}\diff{x}.
{\color{red}}
\]
For $s\in \R$, we denote by $H^s(\R)$ the space of Schwartz distributions $f$ with $\|f\|_{H^s} < \infty$, where
\[
\|f\|_{H^s} = \left[\int_\R \left(1+|\xi|^2\right)^s |\hat{f}(\xi)|^2\, \diff{\xi}\right]^{1/2}.
\]
Throughout this paper, we use $A\lesssim B$ to mean there is a constant $C$ such that $A\leq C B$, and $A\gtrsim B$ to mean there is a constant $C$ such that $A\geq C B$. We use $A\approx B$ to mean that $A\lesssim B$ and $B\lesssim A$.

Let $\psi \colon\R\to [0,1]$ be a smooth function supported in $[-8/5, 8/5]$ and equal to $1$ in $[-5/4, 5/4]$.
For any $k\in \mathbb Z$, we define
\begin{align}
\label{defpsik}
\begin{split}
\psi_k(\xi)&=\psi(\xi/2^k)-\psi(\xi/2^{k-1}), \qquad \psi_{\leq k}(\xi)=\psi(\xi/2^k),\qquad \psi_{\geq k}(\xi)=1-\psi(\xi/2^{k-1}),\\
\tilde\psi_k(\xi)&=\psi_{k-1}(\xi)+\psi_k(\xi)+\psi_{k+1}(\xi).
\end{split}
\end{align}
So we have the homogeneous dyadic decomposition
\[
1=\sum\limits_{k\in\mathbb Z}\psi_k(\xi),
\]
and the non-homogeneous dyadic decomposition
\[
1=\sum\limits_{k\in\mathbb N}\psi_k(\xi)+\psi_{\leq 0}(\xi).
\]
We denote the sub-index of the non-homogeneous dyadic decomposition as $\{\leq 0\}\cup \mathbb N$.

We denote by $P_k$, $P_{\leq k}$, $P_{\geq k}$, and $\tilde{P}_k$  the Fourier multiplier operators with symbols $\psi_k, \psi_{\leq k}, \psi_{\geq k}$, and $\tilde{\psi}_k$, respectively. Notice that $\psi_k(\xi)=\psi_0(\xi/2^k)$, $\tilde\psi_k(\xi)=\tilde\psi_0(\xi/2^k)$.

It is easy to check that
\begin{equation}
\label{psi-L2}
 \|\psi_k\|_{L^2}\approx  2^{k/2}, \qquad  \|\psi_k'\|_{L^2}\approx 2^{-k/2}.
\end{equation}

We will need the following interpolation lemma, whose proof can be found in \cite{IPu16}.
\begin{lemma}\label{interpolation}
For any $k\in\mathbb Z$ and $f\in L^2(\R)$, we have
\[
\|\widehat{P_kf}\|_{L^\infty}^2\lesssim \|P_k f\|_{L^1}^2\lesssim 2^{-k}\|\hat f\|_{L^2_\xi}\left[2^k\|\partial_\xi\hat f\|_{L^2_\xi}+\|\hat f\|_{L^2_\xi}\right].
\]
\end{lemma}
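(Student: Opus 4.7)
The first inequality $\|\widehat{P_k f}\|_{L^\infty}^2\lesssim \|P_k f\|_{L^1}^2$ is immediate from the definition of the Fourier transform in the paper, since $|\widehat{P_k f}(\xi)|\leq \frac{1}{2\pi}\|P_k f\|_{L^1}$ pointwise. The substance of the lemma is therefore the second inequality, and the plan is to obtain it from the scale-free ``uncertainty-principle'' estimate
\[
\|g\|_{L^1}^2\lesssim \|g\|_{L^2}\,\|xg\|_{L^2}
\]
applied to $g=P_k f$, and then to translate both factors back to the frequency side using Plancherel and the identity $\widehat{xg}=i\partial_\xi \hat g$.

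To prove the auxiliary inequality I would introduce a free scale $\lambda>0$ and apply Cauchy--Schwarz with the weight $\langle \lambda x\rangle:=(1+\lambda^2 x^2)^{1/2}$:
\[
\|g\|_{L^1}\leq \|\langle \lambda x\rangle^{-1}\|_{L^2}\,\|\langle \lambda x\rangle g\|_{L^2}=\sqrt{\pi/\lambda}\,\bigl(\|g\|_{L^2}^2+\lambda^2\|xg\|_{L^2}^2\bigr)^{1/2}.
\]
Squaring yields $\|g\|_{L^1}^2\lesssim \lambda^{-1}\|g\|_{L^2}^2+\lambda\|xg\|_{L^2}^2$, and choosing $\lambda=\|g\|_{L^2}/\|xg\|_{L^2}$ balances the two terms and gives the claim.

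Setting $g=P_k f$, Plancherel gives $\|P_k f\|_{L^2}\approx \|\psi_k\hat f\|_{L^2}\leq \|\hat f\|_{L^2}$, while the product rule combined with the scaling bound $\|\psi_k'\|_{L^\infty}\lesssim 2^{-k}$ (which follows from $\psi_k(\xi)=\psi_0(\xi/2^k)$) yields
\[
\|xP_k f\|_{L^2}\approx \|\partial_\xi(\psi_k\hat f)\|_{L^2}\leq \|\psi_k\,\partial_\xi\hat f\|_{L^2}+\|\psi_k'\hat f\|_{L^2}\lesssim \|\partial_\xi \hat f\|_{L^2}+2^{-k}\|\hat f\|_{L^2}.
\]
Multiplying these two bounds gives
\[
\|P_k f\|_{L^1}^2\lesssim \|\hat f\|_{L^2}\bigl(\|\partial_\xi \hat f\|_{L^2}+2^{-k}\|\hat f\|_{L^2}\bigr)=2^{-k}\|\hat f\|_{L^2}\bigl(2^k\|\partial_\xi \hat f\|_{L^2}+\|\hat f\|_{L^2}\bigr),
\]
which is exactly the stated estimate. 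There is no real obstacle; the only point where one must be slightly careful is the $\lambda$-optimization in the Cauchy--Schwarz step, since pushing a generic $\lambda$ through would only produce the looser bound $\lambda^{-1}\|g\|_{L^2}^2+\lambda\|xg\|_{L^2}^2$, and it is the geometric-mean form $\|g\|_{L^2}\|xg\|_{L^2}$ that is needed to reproduce the exact balance of $2^k$ powers on the right-hand side of the lemma.
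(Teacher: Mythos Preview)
Your argument is correct. The paper does not actually supply a proof of this lemma; it simply cites Ionescu--Pusateri \cite{IPu16}. Your self-contained derivation via the weighted Cauchy--Schwarz inequality $\|g\|_{L^1}^2\lesssim \|g\|_{L^2}\|xg\|_{L^2}$, followed by Plancherel and the product rule on $\partial_\xi(\psi_k\hat f)$, is the standard way to establish this and is essentially what one finds in the cited reference. Your remark at the end is on point: one genuinely needs the optimized geometric-mean form rather than the raw two-term bound $\lambda^{-1}\|g\|_{L^2}^2+\lambda\|xg\|_{L^2}^2$ with a fixed $\lambda$, since choosing (say) $\lambda=2^k$ would produce an unwanted $2^k\|\partial_\xi\hat f\|_{L^2}^2$ term that does not match the stated right-hand side.
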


Define the symbol class
\[
S^\infty:=\{m: \R^d\to\C: m \text{ is continuous and }  \|m\|_{S^\infty}:=\|\F^{-1}(m)\|_{L^1}<\infty\}.
\]

\begin{lemma}[Algebraic property of $S^\infty$]
If $m$, $m'\in S^\infty$, then $m\cdot m'\in S^\infty$.
\end{lemma}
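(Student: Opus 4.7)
The plan is essentially to unpack the definition of the $S^\infty$ norm and invoke the convolution theorem together with Young's inequality. The norm $\|m\|_{S^\infty} = \|\F^{-1}(m)\|_{L^1}$ is designed precisely so that multiplication of symbols corresponds to convolution of their inverse Fourier transforms, and convolution is well-behaved on $L^1$.

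Concretely, first I would set $f = \F^{-1}(m)$ and $g = \F^{-1}(m')$, which by hypothesis lie in $L^1(\R^d)$. Using the Fourier convention stated in Section~\ref{sec:prelim}, a direct computation shows that $\F^{-1}(m \cdot m') = c\, (f * g)$ for a fixed constant $c$ depending only on the $2\pi$ normalization (and dimension $d$); this is the standard convolution theorem and requires only that $f, g \in L^1$ so that Fubini applies in the double integral defining $(f*g)(x)$. Then I would apply Young's convolution inequality $\|f*g\|_{L^1} \leq \|f\|_{L^1}\|g\|_{L^1}$ to conclude
\[
\|m \cdot m'\|_{S^\infty} = \|\F^{-1}(m \cdot m')\|_{L^1} \lesssim \|f\|_{L^1}\|g\|_{L^1} = \|m\|_{S^\infty}\|m'\|_{S^\infty} < \infty.
\]
Continuity of $m \cdot m'$ is immediate from continuity of $m$ and $m'$, so $m \cdot m' \in S^\infty$.

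The only point requiring any care is bookkeeping of the $(2\pi)^d$ constants attached to the chosen Fourier convention, which affects the implicit constant in the $\lesssim$ but not the qualitative statement. There is no serious obstacle here — the lemma is essentially a restatement of the Banach algebra structure that $(L^1, *)$ endows on its Fourier image, which is exactly why the $S^\infty$ norm is defined this way in the first place.
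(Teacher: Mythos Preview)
Your proof is correct and is the standard argument. The paper actually states this lemma without proof, treating it as a well-known fact about the $S^\infty$ class; your convolution/Young's inequality argument is exactly the expected justification, so there is nothing to compare against.
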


Given $m \in S^\infty(\R^{n})$, we define a multilinear operator $M_m$ acting on Schwartz functions  $f_1,\dots, f_m \in \mathcal{S}(\R)$ by
\[
M_{m}(f_1,\dotsc,f_n)(x)=\int_{\R^{n}} e^{ix(\xi_1+\dotsb+\xi_n)}m(\xi_1, \dotsc, \xi_n)\hat f_1(\xi_1) \dotsm \hat f_n(\xi_n)\diff{\xi_1} \dotsm \diff{\xi_n}.
\]

\begin{lemma}[Estimates of multilinear Fourier integral operators]\label{multilinear}
(i)~ Suppose that $1<p_1, \dotsc, p_n\leq \infty$, $0<p<\infty$,  satisfy
\[
\frac1{p_1}+\frac1{p_2}+ \dotsb +\frac1{p_n}=\frac1p,
\]
for every nonempty subset $J\subset \{1,2, \dotsc, n\}$. If $m \in S^\infty(\R^{n})$, then
\[
\|M_{m}\|_{L^{p_1}\times \dotsb \times L^{p_n}\to L^p}\lesssim \|m\|_{S^\infty}.
\]
(ii)~ Assume $p,q,r\in[1,\infty]$ satisfy $\frac1p+\frac1q=\frac1r$, and $m\in S^\infty$. Then for any $f,g\in L^2(\R)$, 
\[
\left|\int_{\R^2}m(\xi,\eta)\hat f(\xi)\hat g(\eta)\hat h(-\xi-\eta)\diff \xi\diff\eta\right|\lesssim \|m\|_{S^\infty}\|f\|_{L^p}\|g\|_{L^q}\|h\|_{L^r}.
\]
\end{lemma}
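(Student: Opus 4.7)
The plan is to reduce both parts to the $L^1$ representation of the convolution kernel. Since $\|m\|_{S^\infty} = \|\F^{-1}(m)\|_{L^1}$, one sets $K := \F^{-1}(m) \in L^1(\R^n)$, so $m(\xi_1, \ldots, \xi_n) = c_n \int_{\R^n} K(y) e^{-iy\cdot\xi}\,\diff y$ for a constant $c_n$ fixed by the Fourier convention. Substituting this representation into the definition of $M_m$ and applying Fubini (first for Schwartz inputs, since $K \in L^1$ and each $\hat{f}_j$ is Schwartz), the $\xi_j$-integrals separate and recombine into translates of $f_j$, yielding the kernel representation
\[
M_m(f_1, \ldots, f_n)(x) = c_n \int_{\R^n} K(y_1, \ldots, y_n) \prod_{j=1}^n f_j(x - y_j)\,\diff y.
\]

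For part (i), I would apply Minkowski's integral inequality to pull $\|\cdot\|_{L^p_x}$ inside the $y$-integral, then invoke the generalized Hölder inequality (the hypothesis $\tfrac{1}{p} = \sum_j \tfrac{1}{p_j}$ is precisely the exponent condition for this) together with the translation-invariance $\|f_j(\cdot - y_j)\|_{L^{p_j}} = \|f_j\|_{L^{p_j}}$ to obtain
\[
\|M_m(f_1, \ldots, f_n)\|_{L^p} \lesssim \int_{\R^n} |K(y)| \prod_{j=1}^n \|f_j\|_{L^{p_j}}\,\diff y = \|m\|_{S^\infty} \prod_{j=1}^n \|f_j\|_{L^{p_j}}.
\]
A standard density argument extends this bound from Schwartz functions to general $f_j \in L^{p_j}$, giving part (i).

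For part (ii), I would recognize the integral as a Parseval-type pairing. Since $\hat{h}(-\xi-\eta)$ equals, up to Fourier normalization, the Fourier transform of the reflection of $h$ evaluated at $\xi + \eta$, there is an identity of the form
\[
\int_{\R^2} m(\xi, \eta)\hat{f}(\xi)\hat{g}(\eta)\hat{h}(-\xi - \eta)\,\diff\xi\,\diff\eta = c \int_{\R} M_m(f, g)(x)\, h(x)\,\diff x
\]
for some absolute constant $c$. Hölder's inequality then controls the right-hand side by $\|M_m(f,g)\|_{L^s} \|h\|_{L^{s'}}$ for the appropriate dual pair of exponents, and part (i) bounds the first factor by $\|m\|_{S^\infty}\|f\|_{L^p}\|g\|_{L^q}$. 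The main technical point in both parts is the careful bookkeeping of Fourier normalization constants through Fubini and the Parseval identity, together with the density approximation needed to handle general $m \in S^\infty$ and general $L^{p_j}$ inputs rather than Schwartz data; but once the kernel representation of $M_m$ is in hand, the entire estimate reduces to Young-type convolution inequalities and no deeper analytic obstacle arises.
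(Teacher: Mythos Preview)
The paper does not provide its own proof of this lemma; it is stated in the preliminaries as a standard fact from the dispersive PDE literature (cf.\ Ionescu--Pusateri and related works). Your approach---writing $K=\mathcal F^{-1}m\in L^1$, obtaining the kernel representation
\[
M_m(f_1,\dots,f_n)(x)=c_n\int_{\R^n}K(y)\prod_{j=1}^n f_j(x-y_j)\,\diff y,
\]
and then applying Minkowski's integral inequality followed by H\"older---is exactly the standard argument, and your reduction of (ii) to (i) via the Parseval pairing is likewise the usual route.

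Two small caveats worth flagging. First, Minkowski's integral inequality requires $p\ge 1$, so your argument for (i) covers only that range; the statement as printed allows $0<p<1$, but in fact the estimate can fail there (take $K$ to be an average of $N$ well-separated bumps along the diagonal $y_1=y_2$ and $f_1=f_2$ a narrow indicator; the $L^p$ quasi-norm of the resulting sum of $N$ disjoint pieces of height $1/N$ grows like $N^{1/p-1}$). This is an imprecision in the paper's hypotheses rather than a gap in your proof; every application in the paper has $p\ge 1$. Second, in your reduction of (ii) to (i), matching exponents via duality gives $\|M_m(f,g)\|_{L^{r'}}\|h\|_{L^r}$ with $1/p+1/q=1/r'$, i.e.\ $1/p+1/q+1/r=1$; the condition printed in the paper, $1/p+1/q=1/r$, appears to be a typo for this. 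With that correction your argument for (ii) goes through cleanly.
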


We define the $B^{a,b}$ semi-norm as 
\begin{equation}\label{bnorm}
\|f\|_{B^{a,b}}=\sum\limits_{j\in\Z}(2^{aj}+2^{bj})\|P_jf\|_{L^\infty}.
\end{equation}
Here $a<b$ and $a,b$ are the indices of higher and lower frequencies. Compared with the $L^\infty$ norm, $B^{a,b}$ satisfies one property not shared with the $L^\infty$ norm, that is
\[
\|f\|_{B^{a,b}}\approx \sum\limits_{j\in\Z}\|P_jf\|_{B^{a,b}}.
\]

\subsection{Some facts about the function $K_0$}
We list some properties of the function $K_0$ here for convenience. They are from the handbook \cite{OLBC}. 

The modified Bessel function of the second kind $K_0(x)$ is one solution of the equation
\begin{equation}\label{BesselEq}
xy''+ y'-xy=0,
\end{equation}
which satisfies
\[
K_0(x)\sim \sqrt{\frac{\pi}{2x}}e^{-x}, \quad \text{ when } x\to \infty.
\]
It satisfies the identities
\[
K_0(x)
=
\frac12
\int_{\R}
\frac{e^{i\xi x}}{\sqrt{1+\xi^2}}
\diff\xi
=
\frac12
\F^{-1}\left(\frac{1}{\sqrt{1+\xi^2}}\right),\quad  \text{and} \quad
K_0(x)=\int_0^\infty e^{-x\cosh t} \diff t.
\]
The power series of $K_0$ around $0$ is
\begin{align}
\label{K0-near0}
K_0(x)
&=-\left[\ln\left(\frac{|x|}2\right)+\gamma\right]I_0(x)+\sum\limits_{k=1}^\infty b_kx^{2k},
\end{align}
where $\gamma=0.577\cdots$ is Euler's constant and
\[
b_k=(\sum\limits_{i=1}^k \frac1i)(k!)^{-2}4^{-k}, \quad I_0(x)=\sum\limits_{k=0}^\infty \frac{1}{(k!)^2 4^k} x^{2k}.
\]
Its derivative satisfies
\[
K_0'(x)=-\int_0^\infty (\cosh t) e^{-x\cosh t} \diff t=-e^{-x}\int_0^\infty (\cosh t) e^{x(1-\cosh t)} \diff t.
\]
When $x>1$, 
\[
0<\int_0^\infty (\cosh t) e^{x(1-\cosh t)} \diff t\leq  \int_0^\infty (\cosh t) e^{(1-\cosh t)} \diff t,
\]
where the integral $\int_0^\infty (\cosh t) e^{(1-\cosh t)} \diff t$ is convergent and it is a constant independent with $x$. Therefore
\[
|K_0'(x)|\lesssim e^{-x}, \quad \text{when } \quad x>1.
\]
Since $K_0$ and $K_0'$ both satisfy the exponential bound, by the equation \eqref{BesselEq}, $K_0''$ also satisfies $|K_0''(x)|\lesssim e^{-x}$ when $x>1$. Taking n-th order derivative to \eqref{BesselEq}, we have
\[
\frac{\diff^n}{\diff x^n}(xy''(x))=\frac{\diff^n}{\diff x^n}[xy(x)]-y^{(n+1)}(x),
\]
which leads to
\[
xy^{(n+2)}(x)=-(n+1)y^{(n+1)}(x)+xy^{(n)}(x)+ny^{(n-1)}(x).
\]
By induction, we conclude
\begin{equation}\label{decayK0}
|K_0^{(n)}(x)|\lesssim O(n!)e^{-x}, \quad \text{when } \quad x>1.
\end{equation}

\section{The QGSW front equation}\label{sec:Der}
In this section we derive the QGSW front equation. Assume the domain
\begin{equation}
\label{defomega}
\Omega_t^\pm = \{(x,y)\in \R^2 : y \gtrless \varphi(t,x)\}
\end{equation}
is an upper/lower half-space whose boundary is the graph of the function $y=\varphi(t,x)$, and
\begin{equation}
q(t, \x) = \begin{cases} q_+ & \text{if $y >\varphi(t,x)$},\\q_- &\text{if $y<\varphi(t,x)$}.\end{cases}
\label{sqg_front}
\end{equation}
Assume the initial data is denoted as
\begin{equation}
q_0(\x) = \begin{cases} q_+ & \text{if $y >\varphi_0(x)$},\\q_- &\text{if $y<\varphi_0(x)$}.\end{cases}
\end{equation}
We take $F=1$. We mainly care about the solution of QGSW equation\eqref{QGSW} when the front $\varphi(t,x)$ is close to the $x$-axis.

\subsection{Steady state solutions}
As a start, we consider a family of solutions which are translation-invariant in $x$-direction. The corresponding front $\varphi(t, x)\equiv 0$. By removing the $x$-derivative in the system, we can rewrite the system as
\begin{equation}
\begin{aligned}
-\partial_y^2\Psi+\Psi=q_\pm, \quad& \text{ in } y\gtrless0,\\
\u_s=(u_s, v_s)^T=(-\partial_y\Psi,0)^T, \quad& \text{ in } \R^2.
\end{aligned}
\end{equation}
By solving the above equation, we have the general solution
\[
\Psi(y)=c_1^\pm e^y+c_2^\pm e^{-y}+q_\pm, \quad \text{ in } y\gtrless0,
\]
where $c_1^\pm$ and $c_2^\pm$ are constants. These four constants are not independent. Across the front, the velocity is continuous. 
So we propose the boundary conditions $\Psi_+=\Psi_-$ and  $\partial_y\Psi_+=\partial_y\Psi_-$ on $y=0$, which lead to
\begin{equation}
\left\{\begin{array}{ll}
c_1^++c_2^++q_+=c_1^-+c_2^-+q_-,\\
c_1^+-c_2^+=c_1^--c_2^-.
\end{array}\right.
\end{equation}
If we assume the velocity is bounded at $\pm\infty$, we can furthermore set $c_1^+=c_2^-=0$. Then $c_1^-=-c_2^+=\frac12 \Lbr q\Rbr$, where $\Lbr q\Rbr:=q_+-q_-$. Thus 
\[
\Psi(y)=\left\{
\begin{array}{ll}
\ds-\frac12\Lbr q\Rbr e^{-y}+q_+,\quad y>0,\cr\cr
\ds\frac12\Lbr q\Rbr e^{y}+q_-,\quad y<0.
\end{array}
\right.
\]
The horizontal velocity is $u_s(y)=-\partial_y\Psi(y)=-\ds\frac12\Lbr q\Rbr e^{-|y|}$.

\subsection{Equation of the front}
We derive the equation of the front when the front is the graph of a function which is close to the steady state $y=0$. 

Denote the Green's function for the elliptic operator $-\Delta +1$ as $G(x,y)$, defined as
\[
G(x,y)=\iint_{\R^2} \frac1{1+\xi^2+\eta^2}e^{i(x\xi+y\eta)}\diff \xi\diff\eta.
\]
To write the function and the integral into the polar coordinate, we denote $\xi=\rho\cos \alpha, \eta=\rho\sin\alpha$, $x=r\cos\theta, y=r\sin\theta$. 
Then 

\begin{align*}
G(r\cos\theta, r\sin\theta)&=\int_{0}^\infty\frac\rho{1+\rho^2}\int_0^{2\pi}e^{i\rho r\cos(\theta-\alpha)}\diff \alpha \diff \rho\\
&=\int_{0}^\infty\frac\rho{1+\rho^2}\int_0^{2\pi}e^{i\rho r\cos \alpha}\diff \alpha \diff \rho\\
&=\int_{0}^\infty\frac\rho{1+\rho^2}\int_0^{2\pi}\cos(\rho r\cos \alpha)\diff \alpha \diff \rho\\
&=2\pi\int_{0}^\infty\frac\rho{1+\rho^2}\left(\frac1{\pi}\int_0^{\pi}\cos(\rho r\cos \alpha)\diff \alpha\right) \diff \rho\\
&=2\pi\int_{0}^\infty\frac\rho{1+\rho^2}J_0(\rho r) \diff \rho=2\pi K_0(r),
\end{align*} 
where $J_0$ is the Bessel function of the first kind.
Notice that $J_0(z)\to 1$ as $z\to 0$, and $J_0(z)=\sqrt{2/(\pi z)}\big(\cos(z-\frac\pi4)+o(1)\big)$ as $z\to\infty$. The above integral converges. In fact, this is the Hankel transform of $1/(1+\rho^2)$, which equals to $K_0(r)$, the modified Bessel function of the second kind.

Using the facts that $K_0(r)\sim \sqrt{\pi/(2r)}e^{-r}$ as $r\to \infty$, and $K_0(r)\sim -\ln r$ as $r\to 0+$, we can obtain
\[
\Psi(x,y,t)=q_-\iint_{y'<\vp(t, x')}G(x-x',y-y')\diff x'\diff y'+q_+\iint_{y'>\vp(t, x')}G(x-x',y-y')\diff x'\diff y',
\]
in which the integral converges.

\noindent Denote the normal vector $\n=\frac{(-\vp_x, 1)}{\sqrt{1+\vp_x^2}}$. The front $\vp(t,x)$ satisfies
\[
\vp_t=\sqrt{1+\vp_x^2}\n\cdot\u=(-\vp_x,1)\cdot\u.
\]
Here the velocity
\[
\u=\nabla^\perp\Psi=\left(\begin{aligned}
q_-\iint_{y'<\vp(t, x')}\partial_{y'}G(x-x',y-y')\diff x'\diff y'+q_+\iint_{y'>\vp(t, x')}\partial_{y'}G(x-x',y-y')\diff x'\diff y'\\
q_-\iint_{y'<\vp(t, x')}-\partial_{x'}G(x-x',y-y')\diff x'\diff y'-q_+\iint_{y'>\vp(t, x')}\partial_{x'}G(x-x',y-y')\diff x'\diff y'
\end{aligned}\right).
\]
By Green's theorem,
\begin{align*}
\u&=\left(\begin{aligned}
q_-\int_{\R}G(x-x',y-\vp(t, x'))\diff x'-q_+\int_{\R}G(x-x',y-\vp(t, x'))\diff x'\\
q_-\int_{\R}\vp_{x'}(t, x')G(x-x',y-\vp(t, x'))\diff x'-q_+\int_{\R}\vp_{x'}(t, x')G(x-x',y-\vp(t, x'))\diff x'
\end{aligned}\right)\\
&=-\Lbr q\Rbr \int_{\R}G(x-x',y-\vp(t, x'))(1, \vp_{x'}(t, x'))^T\diff x'.
\end{align*}
Therefore, the equation of $\vp$ is
\begin{align*}
\vp_t(t,x)&=\Lbr q\Rbr\int_{\R}(\vp_x(t,x)-\vp_{x'}(t, x'))G(x-x', \vp(t, x)-\vp(t, x'))\diff x'\\
&=2\pi\Lbr q\Rbr\int_{\R}(\vp_x(t,x)-\vp_{x'}(t, x'))K_0(\sqrt{(x-x')^2+(\vp(t, x)-\vp(t, x'))^2})\diff x'\\
&=2\pi\Lbr q\Rbr\int_{\R}(\vp_x(t,x)-\vp_{x}(t, x+\zeta))K_0(\sqrt{\zeta^2+(\vp(t, x)-\vp(t, x+\zeta))^2})\diff \zeta.
\end{align*}
The equation is a nonlinear nonlocal equation. Without loss of generality, we assume $\Lbr q\Rbr=1/\pi$ 
 in the following, and the equation is written as
 \[
 \vp_t(t,x)=2\int_{\R}(\vp_x(t,x)-\vp_{x}(t, x+\zeta))K_0(\sqrt{\zeta^2+(\vp(t, x)-\vp(t, x+\zeta))^2})\diff \zeta.
 \]

\subsection{Structure of the equation}
To investigate the structure of the right-hand-side of the above equation, we split it into the linear and nonlinear terms:
\[
\vp_t=\Lc(\vp)+\partial_x\Nc(\vp),
\]
with 
\begin{align*}
\Lc(\vp)&:=2\int_{\R}(\vp_x(t,x)-\vp_{x}(t, x+\zeta))K_0(\zeta)\diff \zeta=c\vp_x(t,x)-\frac1\pi\int_{\R}\vp_{x'}(t, x')K_0({x'-x})\diff x',\\
c&:=2\int_{\R}K_0(r)\diff r=2\pi,
\\
\partial_x\Nc(\vp)&:=
2\int_{\R}(\vp_x(t,x)-\vp_{x}(t, x+\zeta))\left[K_0\left(\sqrt{\zeta^2+(\vp(t, x)-\vp(t, x+\zeta))^2}\right)-K_0(\zeta)\right]\diff \zeta.
\end{align*}
Here we have used the fact that $K_0$ is an even function, so the absolute value in $K_0$ can be removed. Since
\[
K_0(x)
=
\frac{1}{2}\int_{\R}\frac{1}{\sqrt{1+\xi^2}}e^{ix\xi}\diff\xi
=
\frac12
\F^{-1}\left(\frac{1}{\sqrt{1+\xi^2}}\right),
\]
we can rewrite the linear term as
\[
\Lc(\vp)=2\pi\vp_x- \frac{\partial_x}{\sqrt{1-\partial_x^2}}\vp.
\]
$\Lc$ is a linear differential operator with symbol $2\pi i\xi-\frac{i\xi}{\sqrt{1+\xi^2}}$. 
By a coordinate transform from $(t,x)$ to $(t, x-2\pi t)$, one can remove the linear transport term $2\pi \vp_x$.
 In the new coordinate, the equation is written as
\begin{equation}\label{FrEq}
\vp_t=-\frac{\partial_x}{\sqrt{1-\partial_x^2}}\vp+\partial_x\Nc(\vp).
\end{equation}

\subsection{Expansion of the nonlinear term}
For the nonlinear term $\Nc(\vp)$, split it into two terms $\Nc_L(\vp)$ and $\Nc_S(\vp)$ with
\[
\partial_x\Nc_L(\vp)=2\int_{\R}\chi_{\{|\zeta|>1\}}(\vp_x(t,x)-\vp_{x}(t, x+\zeta))\left[K_0\left(\sqrt{\zeta^2+(\vp(t, x)-\vp(t, x+\zeta))^2}\right)-K_0(\zeta)\right]\diff \zeta,
\]
\[
\partial_x\Nc_S(\vp)=2\int_{\R}\chi_{\{|\zeta|<1\}}(\vp_x(t,x)-\vp_{x}(t, x+\zeta))\left[K_0\left(\sqrt{\zeta^2+(\vp(t, x)-\vp(t, x+\zeta))^2}\right)-K_0(\zeta)\right]\diff \zeta,
\]
which correspond to the long-range interaction and the short-range interaction separately. Here $\chi_{\{|\zeta|>1\}}$ and $\chi_{\{|\zeta|<1\}}$ are the smooth cut-off functions of the sets $\{|\zeta|>1\}$ and $\{|\zeta|<1\}$ separately, and $\chi_{|\{\zeta|>1\}}+\chi_{\{|\zeta|<1\}}=1$.

{\bf 1.} By Taylor expansion,
\begin{align*}
&K_0\left(\sqrt{\zeta^2+(\vp(t, x)-\vp(t, x+\zeta))^2}\right)-K_0(\zeta)\\
=~&\sum\limits_{k=1}^\infty \frac1{k!}K_0^{(k)}(\zeta)\left[\sqrt{\zeta^2+(\vp(t, x)-\vp(t, x+\zeta))^2}-|\zeta|\right]^k.
\end{align*}
Since
\begin{align*}
\sqrt{\zeta^2+(\vp(t, x)-\vp(t, x+\zeta))^2}-|\zeta|
&=-|\zeta|\left(1-\sqrt{1+\frac{(\varphi(t,x)-\varphi(t, x+\zeta))^2}{\zeta^2}}\right)\\
&=|\zeta|\sum\limits_{l=1}^\infty \left(\begin{array}{c}\frac12\cr\cr l
\end{array}\right)\left|\frac{\varphi(t,x)-\varphi(t, x+\zeta)}{\zeta}\right|^{2l},
\end{align*}
we can write
\begin{align*}
&\sum\limits_{k=1}^\infty\frac1{k!}K_0^{(k)}(\zeta)\left(\sqrt{\zeta^2+(\vp(t, x)-\vp(t, x+\zeta))^2}-|\zeta|\right)^k\\
=&\sum\limits_{k=1}^\infty\frac1{k!}K_0^{(k)}(\zeta)|\zeta|^k\left(\sum\limits_{l=1}^\infty \left(\begin{array}{c}\frac12\cr\cr l
\end{array}\right)\left|\frac{\varphi(t,x)-\varphi(t, x+\zeta)}{\zeta}\right|^{2l}\right)^k\\
=&\sum\limits_{k=1}^\infty\frac1{k!}K_0^{(k)}(\zeta)|\zeta|^k \left(\sum\limits_{|\vec{i}|=k}^\infty \left(\begin{array}{c} k\\ \vec{i}
\end{array}\right)\prod\limits_{l=1}^\infty \left(\begin{array}{c}\frac12\\ l
\end{array}\right)^{i_l}\left|\frac{\varphi(t,x)-\varphi(t, x+\zeta)}{\zeta}\right|^{2i_ll}\right)\\
=&\sum\limits_{k=1}^\infty\frac1{k!}K_0^{(k)}(\zeta)|\zeta|^k \left(\sum\limits_{|\vec{i}|=k}^\infty \left(\begin{array}{c} k\\ \vec{i}
\end{array}\right)\left[\prod\limits_{l=1}^\infty \left(\begin{array}{c}\frac12\\ l
\end{array}\right)^{i_l}\right]\left|\frac{\varphi(t,x)-\varphi(t, x+\zeta)}{\zeta}\right|^{2\sum\limits_{l=1}^{\infty}i_ll}\right)\\
=&\sum\limits_{\mu=1}^\infty A_{\mu}(\zeta)\left|\varphi(t,x)-\varphi(t, x+\zeta)\right|^{2\mu},
\end{align*}
where
\[
A_{\mu}(\zeta)=\sum\limits_{k=1}^\infty\frac1{k!}K_0^{(k)}(\zeta)|\zeta|^{k-2\mu} \left(\sum\limits_{|\vec{i}|=k, \sum i_ll=\mu}^\infty \left(\begin{array}{c} k\\ \vec{i}
\end{array}\right)\left[\prod\limits_{l=1}^\infty \left(\begin{array}{c}\frac12\\ l
\end{array}\right)^{i_l}\right]\right).
\]
We remark here that $k-2\mu\leq0$, and
\begin{align*}
 \left(\begin{array}{c} k\\ \vec{i}
\end{array}\right)=\frac{k!}{i_1!i_2!\cdots i_l!\cdots},
\end{align*}
\begin{align*}
 \left(\begin{array}{c} \frac12\\ l
\end{array}\right)=\frac{(-1)^{l-1}(2l-3)!!}{2^l l!}\in \left[-\frac18, \frac12\right].
\end{align*}
We observe that when $k>\mu$, there is no vector $\vec{i}$ such that $|\vec{i}|=k$, $\sum i_l l=\mu$. The summation in $A_\mu(\zeta)$ is a finite sum, thus it must be convergent.

Therefore, 
\begin{align*}
K_0\left(\sqrt{\zeta^2+(\vp(t, x)-\vp(t, x+\zeta))^2}\right)-K_0(\zeta)
=\sum\limits_{\mu=1}^\infty A_{\mu}(\zeta)\left|\varphi(t,x)-\varphi(t, x+\zeta)\right|^{2\mu}.
\end{align*}
We notice that by \eqref{decayK0}, $|A_{\mu}(\zeta)|\lesssim e^{-|\zeta|}$ when $|\zeta|>1$. 

Therefore, the long-range interaction term can be written as
\begin{align*}
\partial_x\Nc_L(\vp)=&2\int_{\R}\chi_{\{|\zeta|>1\}}(\vp_x(t,x)-\vp_{x}(t, x+\zeta))\left[K_0\left(\sqrt{\zeta^2+(\vp(t, x)-\vp(t, x+\zeta))^2}\right)-K_0(\zeta)\right]\diff \zeta\\
=&2\int_{\R}\chi_{\{|\zeta|>1\}}(\vp_x(t,x)-\vp_{x}(t, x+\zeta)) \sum\limits_{\mu=1}^\infty   A_\mu(\zeta) \left|\varphi(t,x)-\varphi(t, x+\zeta)\right|^{2\mu}\diff \zeta.
\end{align*}

{\bf 2.} By \eqref{K0-near0}, we can calculate the Taylor expansion 
\begin{align*}
&K_0\left(\sqrt{\zeta^2+(\vp(t, x)-\vp(t, x+\zeta))^2}\right)-K_0(\zeta)\\
=~&\left[\ln\left(\frac{|\zeta|}2\right)+\gamma\right]I_0(\zeta)-\left[\ln\left(\frac{\sqrt{\zeta^2+(\vp(t, x)-\vp(t, x+\zeta))^2}}2\right)+\gamma\right]I_0(
\sqrt{\zeta^2+(\vp(t, x)-\vp(t, x+\zeta))^2})\\
&+\sum\limits_{k=1}^\infty b_k \left\{\left[\zeta^2+(\vp(t, x)-\vp(t, x+\zeta))^2\right]^k-\zeta^{2k}\right\}\\
=~&\gamma \left(I_0(\zeta)-I_0(
\sqrt{\zeta^2+(\vp(t, x)-\vp(t, x+\zeta))^2})\right)+\left[\ln\left(\frac{|\zeta|}2\right)-\ln\left(\frac{\sqrt{\zeta^2+(\vp(t, x)-\vp(t, x+\zeta))^2}}2\right)\right]I_0(\zeta)\\
&-\ln\left(\frac{\sqrt{\zeta^2+(\vp(t, x)-\vp(t, x+\zeta))^2}}2\right)\left[I_0(
\sqrt{\zeta^2+(\vp(t, x)-\vp(t, x+\zeta))^2})-I_0(\zeta)\right]\\
&+\sum\limits_{k=1}^\infty b_k \left\{\left[\zeta^2+(\vp(t, x)-\vp(t, x+\zeta))^2\right]^k-\zeta^{2k}\right\}.
\end{align*}
Taking into the Taylor expansion of $I_0$ and $\log$, we have
\begin{align*}
&K_0\left(\sqrt{\zeta^2+(\vp(t, x)-\vp(t, x+\zeta))^2}\right)-K_0(\zeta)\\
=~&-\left[\gamma+\ln\left(\frac{\sqrt{\zeta^2+(\vp(t, x)-\vp(t, x+\zeta))^2}}2\right)\right] \sum\limits_{k=1}^\infty \frac{1}{(k!)^2 4^k} \left[(\zeta^2+(\vp(t, x)-\vp(t, x+\zeta))^2)^k-\zeta^{2k}\right]\\
&-\frac12I_0(\zeta)\sum\limits_{k=1}^\infty \frac{(-1)^{k-1}}{k}\left(\frac{\vp(t, x+\zeta)-\vp(t,x)}{\zeta}\right)^{2k}+\sum\limits_{k=1}^\infty b_k \left\{\left[\zeta^2+(\vp(t, x)-\vp(t, x+\zeta))^2\right]^k-\zeta^{2k}\right\}\\
=~&-\left[\gamma+\ln\left(\frac{|\zeta|}2\right)+\frac12\sum\limits_{k=1}^\infty \frac{(-1)^{k-1}}{k}\left(\frac{\vp(t, x+\zeta)-\vp(t,x)}{\zeta}\right)^{2k}\right] \sum\limits_{k=1}^\infty \frac{1}{(k!)^2 4^k} \left[(\zeta^2+(\vp(t, x)-\vp(t, x+\zeta))^2)^k-\zeta^{2k}\right]\\
&-\frac12I_0(\zeta)\sum\limits_{k=1}^\infty \frac{(-1)^{k-1}}{k}\left(\frac{\vp(t, x+\zeta)-\vp(t,x)}{\zeta}\right)^{2k}+\sum\limits_{k=1}^\infty b_k \left\{\left[\zeta^2+(\vp(t, x)-\vp(t, x+\zeta))^2\right]^k-\zeta^{2k}\right\}\\
=~&
\sum\limits_{\mu=1}^\infty
B_\mu(\zeta)
(\vp(t,x)-\vp(t, x+\zeta))^{2\mu},
\end{align*}
where
\begin{align*}
B_\mu(\zeta)=&-\left[\gamma+\ln\left(\frac{|\zeta|}{2}\right)\right]
\sum\limits_{k=\mu}^\infty
\frac{1}{(k!)^24^k}
{k\choose \mu}|\zeta|^{2k-2\mu}
-
\frac12\sum\limits_{k=1}^\mu\frac{(-1)^{k-1}}{k}
\cdot
\sum\limits_{j=\mu-k}^\infty \frac{1}{(j!)^2 4^j} 
{j\choose \mu-k}|\zeta|^{2j-2\mu}
\\
&~
-\frac12I_0(\zeta)
\frac{(-1)^{\mu-1}}{\mu}|\zeta|^{-2\mu}
+
\sum\limits_{k=\mu}^\infty b_k 
{k\choose \mu}|\zeta|^{2k-2\mu}.
\end{align*}
We remark that the most singular term in $B_{\mu}(\zeta)$ is $|\zeta|^{-2\mu}$.

Therefore,
\begin{align*}
\partial_x\Nc_S(\vp)=&2\int_{\R}\chi_{\{|\zeta|<1\}}(\vp_x(t,x)-\vp_{x}(t, x+\zeta))\left[K_0\left(\sqrt{\zeta^2+(\vp(t, x)-\vp(t, x+\zeta))^2}\right)-K_0(\zeta)\right]\diff \zeta
\\
=&
2\int_{\R}\chi_{\{|\zeta|<1\}}(\vp_x(t,x)-\vp_{x}(t, x+\zeta)) \sum\limits_{\mu=1}^\infty   B_\mu(\zeta) \left(\varphi(t,x)-\varphi(t, x+\zeta)\right)^{2\mu}\diff \zeta.
\end{align*}

{\bf 3.} Then we write the nonlinear terms into the multi-linear Fourier integral operators. By combining the cubic terms from $\Nc_L$ and $\Nc_S$, we define the cubic nonlinear term as 
\begin{align}\nonumber
\partial_x\Nc_3(\vp)=&2\int_{\R}\chi_{\{|\zeta|>1\}}(\vp_x(t,x)-\vp_{x}(t, x+\zeta))(\vp(t,x)-\vp(t, x+\zeta))^2A_{\mu}(\zeta)\diff \zeta\\\nonumber
&+2\int_{\R}\chi_{\{|\zeta|<1\}}(\vp_x(t,x)-\vp_{x}(t, x+\zeta))(\vp(t, x+\zeta)-\vp(t,x))^2B_\mu(\zeta)\diff\zeta\\\label{N3}
=&\frac13\partial_x\iiint_{\R^3} \Tb_1(\eta_1,\eta_2,\eta_3)e^{i(\eta_1+\eta_2+\eta_3)x}\hat\vp(\eta_1)\hat\vp(\eta_2)\hat\vp(\eta_3)\diff \eta_1\diff \eta_2\diff \eta_3,
\end{align}
where
\begin{equation}\label{T1}
\begin{aligned}
&\Tb_1(\eta_1,\eta_2,\eta_3)=2\int_{\R}\chi_{\{|\zeta|>1\}}\prod_{j=1}^3(1-e^{i\eta_j\zeta})A_{\mu}(\zeta)\diff \zeta
+2\int_{\R}\chi_{\{|\zeta|<1\}} \prod_{j=1}^3(1-e^{i\eta_j\zeta})B_{\mu}(\zeta)\diff \zeta.
\end{aligned}
\end{equation}
We  remark that here and in the following, when there is no ambiguity, we may suppress the variable $t$ in the expression of the multilinear Fourier integral to make the expression shorter.

For the $m$-th ($m=2\mu+1$) degree term, we have
\begin{align*}
\partial_x\Nc_m(\vp)=~&2\int_{\R}\chi_{\{|\zeta|>1\}}(\vp_x(t,x)-\vp_{x}(t, x+\zeta))  \left(\varphi(t,x)-\varphi(t, x+\zeta)\right)^{2\mu} e^{-|\zeta|}A_\mu(\zeta)\diff \zeta
\\
&+2\int_{\R}\chi_{\{|\zeta|<1\}}(\vp_x(t,x)-\vp_{x}(t, x+\zeta))(\vp(t,x)-\vp(t, x+\zeta))^{2\mu} B_{\mu}(\zeta)
d\zeta
\\
=~&\frac1m\partial_x\iiint_{\R^{m}} \Tb_{\mu}(\eta_1,\cdots,\eta_m)e^{i\left(\sum\limits_{j=1}^m\eta_j\right)x}\hat\vp(\eta_1)\cdots\hat\vp(\eta_m)\diff \eta_1\cdots\diff \eta_{m},
\end{align*}
where
\begin{align}\label{Tmu}
\Tb_{\mu}(\eta_1,\cdots,\eta_m)
=2\int_{\R}\chi_{\{|\zeta|>1\}}\prod_{j=1}^m(1-e^{i\eta_j\zeta}) A_\mu(\zeta)\diff \zeta
+2\int_{\R}\chi_{\{|\zeta|<1\}} \prod_{j=1}^m(1-e^{i\eta_j\zeta})B_\mu(\zeta)
\diff\zeta.
\end{align}
Therefore 
\begin{align}\label{nonlinearity}
\Nc(\vp)=\sum\limits_{\mu=1}^\infty \Nc_{2\mu+1}(\vp)=\sum\limits_{\mu=1}^\infty
\frac1{2\mu+1}\iiint_{\R^{{2\mu+1}}} \Tb_{\mu}(\eta_1,\cdots,\eta_{2\mu+1})e^{i\left(\sum\limits_{j=1}^{2\mu+1}\eta_j\right)x}\hat\vp(\eta_1)\cdots\hat\vp(\eta_{2\mu+1})\diff \eta_1\cdots\diff \eta_{{2\mu+1}},
\end{align}
or equivalently,
\[
\widehat{\Nc(\vp)}(\xi)=\sum\limits_{\mu=1}^\infty\frac{1}{2\pi(2\mu+1)}\iiint_{\R^{{2\mu}}} \Tb_{\mu}(\eta_1,\cdots,\eta_{2\mu}, \xi-\eta_1-\cdots-\eta_{2\mu})\hat\vp(\eta_1)\cdots\hat\vp(\eta_{2\mu})\hat\vp(\xi-\eta_1-\cdots-\eta_{2\mu})\diff \eta_1\cdots\diff \eta_{{2\mu}}.
\]
In the following, we also denote $\Nc_{\geq5}(\vp):=\sum\limits_{\mu=2}^\infty \Nc_{2\mu+1}(\vp)$. So we can split the nonlinear term as $\Nc(\vp)=\Nc_3(\vp)+\Nc_{\geq5}(\vp)$.


\section{Local solutions}\label{sec:Loc}
In this section, we prove the local existence and uniqueness of the Cauchy problem
\begin{equation}
\label{front}
\left\{
\begin{aligned}
&\vp_t(t,x)=2\int_{\R}(\vp_x(t,x)-\vp_{x}(t, x+\zeta))K_0(\sqrt{\zeta^2+(\vp(t, x)-\vp(t, x+\zeta))^2})\diff \zeta
{
-2\pi\vp_x(t,x)
}
\\
&\hspace{1.1cm}=-\frac{\partial_x}{\sqrt{1-\partial_x^2}}\vp+\partial_x\Nc(\vp),\\
&\vp(x,0)=\vp_0(x),
\end{aligned}
\right.
\end{equation}
where $\Nc(\vp)=\sum\limits_{\mu=1}^\infty\Nc_{2\mu+1}(\vp)$ is the nonlinear term defined in \eqref{nonlinearity}. 

The strategy of the proof mainly follows Kato's theory of quasilinear equations \cite{Kat75b}. Firstly, we linearize the equation and construct a solution map. Then we prove the map is bounded in a higher regularity space and contraction in a lower regularity space. Therefore, the approximate sequence constructed by this solution map is a Cauchy sequence, thus convergent. Finally, we prove the continuity in time.

By taking the dyadic decomposition,
\begin{align*}
\widehat{\Nc_{2\mu+1}(\vp)}(\xi)=&\frac1{2\pi(2\mu+1)}\sum\limits_{j_1,\cdots,j_{2\mu+1}\in\Z}
\iint_{\R^{{2\mu}}} 
\Tb_{\mu}(\eta_1,\cdots,\eta_{2\mu}, \xi-\eta_1-\cdots-\eta_{2\mu})
\\
&\cdot\hat\vp_{j_1}(\eta_1)\cdots\hat\vp_{j_{2\mu}}(\eta_{2\mu})\hat\vp_{j_{2\mu+1}}(\xi-\eta_1-\cdots-\eta_{2\mu})\diff \eta_1\cdots\diff \eta_{{2\mu}}.
\end{align*}
Since the symbol $\Tb_\mu$ is symmetric with its variables, by a potential change of variables, we can assume $j_1\ge j_2\ge\cdots\ge j_{2\mu+1}$. The summation of the ordered indices is denoted by $\sum\limits_Q$.    So 
\begin{align*}
\widehat{\Nc_{2\mu+1}(\vp)}(\xi)=&\frac1{2\pi(2\mu+1)}\sum_{Q}
\iint_{\R^{{2\mu}}} \Tb_{\mu}(\eta_1,\cdots,\eta_{2\mu}, \xi-\eta_1-\cdots-\eta_{2\mu})
\\
&\cdot\hat\vp_{j_1}(\eta_1)\cdots\hat\vp_{j_{2\mu}}(\eta_{2\mu})\hat\vp_{j_{2\mu+1}}(\xi-\eta_1-\cdots-\eta_{2\mu})\diff \eta_1\cdots\diff \eta_{{2\mu}}.
\end{align*}
Then we linearize the equation as
\begin{align}
\label{eqn4_2}
&\hat\vp_t
+
\frac{i\xi}{\sqrt{1+\xi^2}}\hat\vp
=
\frac{1}{2\pi}
\sum_{\mu=1}^\infty \frac1{2\mu+1}i\xi
\\
&\cdot \sum_Q\iint_{\R^{2\mu}} \Tb_{\mu}(\eta_1,\cdots,\eta_{2\mu}, \xi-\eta_1-\cdots-\eta_{2\mu})\hat \vp_{j_1}(\eta_1)\hat u_{j_2}(\eta_2)\cdots\hat u_{j_{2\mu}}(\eta_{2\mu})\hat{u}_{j_{2\mu+1}}(\xi-\eta_1-\cdots-\eta_{2\mu})\diff \eta_1\cdots\diff \eta_{{2\mu}}.
\nonumber
\end{align}
This equation can be written in an abstract form as 
\[
\vp_t=\Ac(u)\vp,\quad  \vp(0,x)=\vp_0(x)\in H^s(\R),~~s\geq 8,
\]
where $\Ac(u)$ is a first order pseudo-differential operator.
The right-hand-side has one-order derivative loss, which can be controlled by Kato-Ponce type commutator estimate \cite{kato-ponce}. 

Denote the map $\B: L^\infty(0,T; H^s(\R)) \to L^\infty(0,T; H^s(\R)),  u\mapsto \vp$.
In the following, we use a symmetrization argument to show that the map $\B$ is bounded in Sobolev space. Assume $\|\vp_0\|_{H^s}<\bar C$, $\|\vp_0\|_{H^3}<1$. Denote 
\[
X_T=\{u\in L^\infty(0,T; H^s(\R))\mid \|u\|_{L^\infty_TH^s}\leq 2\bar C,   \|u\|_{L^\infty_TH^3}<1\}.
\]
We will prove in the following that there is a positive $T$ such that if $u\in X_T$, the solution of the linearized equation $\vp=\B u$ is also in $X_T$.

\subsection{Sobolev energy estimate}
For the higher order energy estimate, multiply $(1+|\xi|^2)^s\hat \vp(t, -\xi)$ to \eqref{front}, and take integral with $\xi$, 
\begin{align*}
\frac{\diff}{\diff t}
&\int_{\R}\frac12(1+|\xi|^2)^s|\hat\varphi(t, \xi)|^2 \diff \xi
\\
=&
\frac{1}{2\pi}
\sum\limits_{\mu=1}^\infty \frac1{2\mu+1}i\sum_{Q}\iiint_{\R^{{2\mu+1}}}\xi(1+|\xi|^2)^s\Tb_{\mu}(\eta_1,\cdots,\eta_{2\mu}, \xi-\eta_1-\cdots-\eta_{2\mu})\\
&\qquad\qquad\cdot\hat \vp_{j_1}(\eta_1)\hat u_{j_2}(\eta_2)\cdots\hat u_{j_{2\mu}}(\eta_{2\mu})\hat{u}_{j_{2\mu+1}}(\xi-\eta_1-\cdots-\eta_{2\mu})\hat \vp(-\xi)\diff \eta_1\cdots\diff \eta_{{2\mu}}\diff\xi.
\end{align*}
Furthermore, making the change of variables $\eta_{2\mu+1}
=\eta_{2\mu+1}(\eta_1)=\xi-\eta_1-\cdots-\eta_{2\mu}$, we get 
\begin{align*}
&\frac{\diff}{\diff t}
\int_{\R}\frac12(1+|\xi|^2)^s|\hat\varphi(t, \xi)|^2 \diff \xi
\\
=&
\frac{1}{2\pi}
\sum\limits_{\mu=1}^\infty \frac1{2\mu+1}i\sum_{Q}\iiint_{\R^{{2\mu+1}}}\xi(1+|\xi|^2)^s 
\,
\Tb_{\mu}(\xi-\eta_2-\cdots-\eta_{2\mu+1},\eta_{2\mu},\cdots,\eta_{2\mu+1})
\\
&\hat \vp_{j_1}(\xi-\eta_2-\cdots-\eta_{2\mu+1})
\hat u_{j_{2}}(\eta_2)
\cdots \hat u_{j_{2\mu+1}}(\eta_{2\mu+1})\hat{\vp}(-\xi)\diff \eta_2\cdots\diff \eta_{{2\mu+1}}\diff\xi.
\end{align*}
Interchanging the variables $-\xi$ and $\xi-\eta_2-\cdots-\eta_{2\mu+1}$, and then taking the average, we have
\begin{align*}
&
\iiint_{\R^{{2\mu+1}}}\xi(1+|\xi|^2)^s 
\,
\Tb_{\mu}(\xi-\eta_2-\cdots-\eta_{2\mu+1}, \eta_{2}, \cdots,\eta_{2\mu+1})
\\
&\hat \vp_{j_1}(\xi-\eta_2-\cdots-\eta_{2\mu+1})
\hat u_{j_{2}} (\eta_2)
\cdots \hat u_{j_{2\mu+1}}(\eta_{2\mu+1})\hat{\vp}(-\xi)\diff \eta_2\cdots\diff \eta_{{2\mu+1}}\diff\xi
\\
=&
\iiint_{\R^{{2\mu+1}}}-(\xi-\eta_2-\cdots-\eta_{2\mu+1})(1+|\xi-\eta_2-\cdots-\eta_{2\mu+1}|^2)^s
\Tb_{\mu}(-\xi,\eta_2,\cdots, \eta_{2\mu+1})
 \\
&\qquad\hat \vp_{j_1}(-\xi) \hat u_{j_2}(\eta_2)\cdots\hat u_{j_{2\mu+1}}(\eta_{2\mu+1})\hat \vp(\xi-\eta_2-\cdots-\eta_{2\mu+1})\diff \eta_2\cdots\diff \eta_{{2\mu+1}}\diff\xi
\\
=&\frac12\iiint_{\R^{{2\mu+1}}}
\Big[\xi(1+|\xi|^2)^s
\Tb_{\mu}(\xi-\eta_2-\cdots-\eta_{2\mu+1}, \eta_{2}, \cdots,\eta_{2\mu+1})
\psi_{j_{1}}(\xi-\eta_2-\cdots-\eta_{2\mu+1}) 
\\
&\qquad\qquad -(\xi-\eta_2-\cdots-\eta_{2\mu+1})(1+|\xi-\eta_2-\cdots-\eta_{2\mu+1}|^2)^s \Tb_{\mu}(-\xi, \eta_2,\cdots,\eta_{2\mu+1})\psi_{j_{1}}(-\xi)\Big]
\\
&\qquad\cdot\hat u_{j_2}(\eta_2)\cdots\hat u_{j_{2\mu+1}}(\eta_{2\mu+1})\hat\vp(-\xi)\hat \vp(\xi-\eta_2-\cdots-\eta_{2\mu+1})\diff \eta_2\cdots\diff \eta_{{2\mu+1}}\diff\xi.
\end{align*}
We can split the symbol into three parts
\begin{align*}
&\Big[\xi(1+|\xi|^2)^s
\Tb_{\mu}(\xi-\eta_2-\cdots-\eta_{2\mu+1}, \eta_{2}, \cdots,\eta_{2\mu+1})
\psi_{j_{1}}(\xi-\eta_2-\cdots-\eta_{2\mu+1}) 
\\
&\qquad\qquad -(\xi-\eta_2-\cdots-\eta_{2\mu+1})(1+|\xi-\eta_2-\cdots-\eta_{2\mu+1}|^2)^s \Tb_{\mu}(-\xi, \eta_2,\cdots,\eta_{2\mu+1})\psi_{j_{1}}(-\xi)\Big]
\\
=&\Big(\xi(1+|\xi|^2)^s-(\xi-\eta_2-\cdots-\eta_{2\mu+1})(1+|\xi-\eta_2-\cdots-\eta_{2\mu+1}|^2)^s\Big)
\\
&\cdot
\Tb_{\mu}(\xi-\eta_2-\cdots-\eta_{2\mu+1}, \eta_{2}, \cdots,\eta_{2\mu+1})
\psi_{j_{1}}(\xi-\eta_2-\cdots-\eta_{2\mu+1}) 
\\
&+(\xi-\eta_2-\cdots-\eta_{2\mu+1})(1+|\xi-\eta_2-\cdots-\eta_{2\mu+1}|^2)^s
\\
&\cdot\Big(
\Tb_{\mu}(\xi-\eta_2-\cdots-\eta_{2\mu+1}, \eta_{2}, \cdots,\eta_{2\mu+1})
-
\Tb_{\mu}(-\xi, \eta_{2}, \cdots,\eta_{2\mu+1})
\Big)\psi_{j_{1}}(\xi-\eta_2-\cdots-\eta_{2\mu+1})
\\
&+(\xi-\eta_2-\cdots-\eta_{2\mu+1})(1+|\xi-\eta_2-\cdots-\eta_{2\mu+1}|^2)^s
\Tb_{\mu}(-\xi, \eta_{2}, \cdots,\eta_{2\mu+1})
\Big(\psi_{j_{1}}(\xi-\eta_2-\cdots-\eta_{2\mu+1})-\psi_{j_{1}}(-\xi)\Big).
\end{align*}
When writing it as the symbol of a multilinear Fourier integral operator, we use $\eta_{1}$ to replace $\xi-\eta_2-\cdots-\eta_{2\mu+1}$. By Proposition \ref{Prop_symb_Tmu} and the algebraic property of $S^\infty$ norm, 
\begin{align*}
&
\left\|\left(\left(\sum\limits_{i=1}^{2\mu+1}\eta_i\right)\left(1+\left|\sum\limits_{i=1}^{2\mu+1}\eta_i\right|^2\right)^s
-\eta_{1}\left(1+|\eta_{1}|^2\right)^s\right)\cdot\Tb_{\mu}(\eta_1,\cdots,\eta_{2\mu}, \eta_{2\mu+1})\psi_{j_1}(\eta_1)\cdots\psi_{j_{2\mu}}(\eta_{2\mu})\psi_{j_{2\mu+1}}(\eta_{2\mu+1})\right\|_{S^\infty}
\\
\lesssim~& 
2^{2sj_{1}+(j_2+\cdots+j_{2\mu+1})}
(1+2^{2j_{2}})
(1+2^{j_2+\cdots+j_{2\mu+1}})(2^{j_2}+\cdots+2^{j_{2\mu+1}}).
\end{align*}
Since the symbol $\Tb_\mu$ is real, it is an even function with its variables, i.e. $$\Tb_\mu(\eta_1, \cdots, \eta_i,\cdots,\eta_{2\mu+1}) = \Tb_\mu(\eta_1, \cdots, -\eta_i,\cdots, \eta_{2\mu+1}),$$ for any $i=1,\cdots, 2\mu+1$. So by \eqref{Tmu_est5} we have
\begin{align*}
&\left\|\eta_{1}\left(1+|\eta_{1}|^2\right)^s
\cdot
\Big(\Tb_{\mu}(\eta_1, \eta_{2}, \cdots, \eta_{2\mu+1})
-
\Tb_{\mu}(-\sum\limits_{i=1}^{2\mu+1}\eta_i, \eta_{2}, \cdots, \eta_{2\mu+1}) \Big)\psi_{j_1}(\eta_1)\cdots\psi_{j_{2\mu}}(\eta_{2\mu})\psi_{j_{2\mu+1}}(\eta_{2\mu+1})\right\|_{S^\infty}
\\
=&\Big\|\eta_{1}(1+|\eta_{1}|^2)^s\cdot
\Big(
\Tb_{\mu}(\eta_1,\eta_{2}, \cdots, \eta_{2\mu+1})-\Tb_{\mu}(\sum\limits_{i=1}^{2\mu+1}\eta_i, \eta_{2}, \cdots, \eta_{2\mu+1}) \Big)\psi_{j_1}(\eta_1)\cdots\psi_{j_{2\mu}}(\eta_{2\mu})\psi_{j_{2\mu+1}}(\eta_{2\mu+1})\Big\|_{S^\infty}
\\
\lesssim& 
(1+2^{2sj_{1}})(1+2^{3j_{2}})
\prod\limits_{k=2}^{2\mu+1}[(1+2^{j_k})2^{j_k}].
\end{align*}
Also, using $\psi_{j_1}'(\xi)=\frac1{2^{j_1}}\psi'(\xi/2^{j_1})$ and Proposition \ref{Prop_symb_Tmu} we get 
\begin{align*}
&\Big\|\eta_{1}(1+|\eta_{1}|^2)^s
\Tb_{\mu}(-\sum\limits_{i=1}^{2\mu+1}\eta_i,\eta_{2}\cdots,\eta_{2\mu+1}) \psi_{j_2}(\eta_2)\cdots\psi_{j_{2\mu+1}}(\eta_{2\mu+1})\cdot\big(\psi_{j_{1}}(\eta_{1})-\psi_{j_{1}}(-\sum\limits_{i=1}^{2\mu+1}\eta_i)\big)\Big\|_{S^\infty}
\\
\lesssim & 2^{2sj_{1}+(j_2+\cdots+j_{2\mu+1})}(1+2^{2j_{2}})(1+2^{j_2+\cdots+j_{2\mu+1}})(2^{j_2}+\cdots+2^{j_{2\mu+1}}).
\end{align*}
Combining the above three parts, we have
\begin{align*}
&\left\|
\left(
\sum\limits_{i=1}^{2\mu+1}\eta_i
\right)
\left(1+\left|\sum\limits_{i=1}^{2\mu+1}\eta_i\right|^2\right)^s
\Tb_{\mu}(\eta_1, \eta_{2}, \cdots,\eta_{2\mu+1})
\psi_{j_{1}}(\eta_1) 
\right.
\\
&\qquad\qquad 
\left.-(\eta_1)(1+|\eta_1|^2)^s \Tb_{\mu}\left(-\sum\limits_{i=1}^{2\mu+1}\eta_i, \eta_2,\cdots,\eta_{2\mu+1}\right)\psi_{j_{1}}\left(-\sum\limits_{i=1}^{2\mu+1}\eta_i\right)
\right\|_{S^\infty}
\\
\lesssim~~& 2^{2sj_{1}+(j_2+\cdots+j_{2\mu+1})}(1+2^{4j_{2}})(1+2^{j_2+\cdots+j_{2\mu+1}}).
\end{align*}
Therefore, 
\begin{align*}
\bigg|&\iiint_{\R^{{2\mu+1}}}\xi(1+|\xi|^2)^s
\Tb_{\mu}(\eta_1,\cdots,\eta_{2\mu}, \xi-\eta_1-\cdots-\eta_{2\mu}) 
\\
&\qquad \hat \vp_{j_1}(\eta_1)\hat u_{j_{2}}(\eta_2)\cdots\hat u_{j_{2\mu}}(\eta_{2\mu})\hat u_{j_{2\mu+1}}(\xi-\eta_1-\cdots-\eta_{2\mu})\hat \vp(-\xi)\diff \eta_1\cdots\diff \eta_{{2\mu}}\diff\xi\bigg|
\\
\lesssim~&
\|\vp_{j_{1}}\|_{H^s}^2\|\partial_x^2u_{j_{2}}\|_{W^{4,\infty}}\prod_{i=3}^{2\mu+1}\|\partial_x u_{j_i}\|_{W^{1,\infty}}.
\end{align*}
After taking the summation for integers $j_1\geq \cdots\geq j_{2\mu+1}$, we obtain
\begin{align*}
\frac{\diff}{\diff t} \|\vp(t)\|_{H^s}^2\lesssim \sum_{\mu=1}^\infty \|\vp(t)\|_{H^s}^2\sum
\limits_{\tiny
\begin{array}{c}
 j_i\in\Z
 \\ 
 i=2,\cdots,2\mu+1
\end{array}}
\left(\|\partial_x^2u_{j_{2}}\|_{W^{4,\infty}}\prod_{i=3}^{2\mu+1}\|\partial_x u_{j_i}\|_{W^{1,\infty}}\right).
\end{align*}
Replacing $u$ by $\vp$, it leads to the energy inequality \ref{EnergyIneq}. 
By Sobolev embedding theorem, we can obtain
\begin{align*}
\frac{\diff}{\diff t} \|\vp(t)\|_{H^s}^2
\lesssim &
\sum_{\mu=1}^\infty \|\vp(t)\|_{H^s}^2\sum\limits_{\tiny
\begin{array}{c} 
j_i\in\Z
\\ 
i=2,\cdots,2\mu+1
\end{array}}
\left(\|\partial_x^2u_{j_{2}}\|_{H^5}\prod_{i=3}^{2\mu+1}\|\partial_x u_{j_i}\|_{H^2}
\right)
\\
\lesssim~&
\sum_{\mu=1}^\infty \|\vp(t)\|_{H^s}^2 \|\partial_x^2 u(t)\|_{H^5}\|\partial_xu(t)\|_{H^2}^{2\mu-1}.
\end{align*}
After taking integral with $t$, we obtain
\[
\|\vp(t)\|_{H^s}\leq \|\vp_0\|_{H^s}+C\int_0^t \sum_{\mu=1}^\infty \|\vp(\tau)\|_{H^s}^2 \|\partial_x^2 u(\tau)\|_{H^5}\|\partial_xu(\tau)\|_{H^2}^{2\mu-1}\diff \tau,
\]
for some constant $C$.
Since the initial data satisfies $\|\vp_0\|_{H^s}<\bar C$, and $\|\vp_0\|_{H^3}<1$, there exist $T_1>0$, such that $\vp\in X_{T_1}$.

\subsection{Contraction in lower space}
Assume for $i=1,2$, $u_i \in X_T$, and $\vp_i$ is the solution of
\[
\partial_t\vp_i=\Ac(u_i)\vp_i, \quad \vp_i(0,x)=\vp_0(x)\in H^s(\R), s\geq 8.
\]
We will prove the contraction of $\B$ in $L^\infty(0,T; H^7(\R))$.
By taking difference of the two equations,
\[
\partial_t(\vp_1-\vp_2)=\Ac(u_1)(\vp_1-\vp_2)+[\Ac(u_1)-\Ac(u_2)]\vp_2.
\]
Multiply $(1+|\xi|^2)^7(\hat \vp_1(t, -\xi)-\hat \vp_2(t, -\xi))$ to the Fourier transform of the above equation, and take integral with $\xi$,
\begin{align*}
&\frac{\diff}{\diff t}\|(\vp_1-\vp_2)(t)\|_{H^7}^2\\
\lesssim~ &\sum_{\mu=1}^\infty \|(\vp_1-\vp_2)(t)\|_{H^7}^2 \|\partial_x^2 u_1(t)\|_{H^5}\|\partial_xu_1(t)\|_{H^2}^{2\mu-1}+\Big[\|\px^2(u_1-u_2)(t)\|_{H^5}(\|\partial_xu_1(t)\|_{H^2}^{2\mu-1}+\|\partial_xu_2(t)\|_{H^2}^{2\mu-1})\\
&+(\|\px^2 u_1(t)\|_{H^5}+\|\px^2 u_2(t)\|_{H^5})\|\partial_x(u_1-u_2)(t)\|_{H^2}(\|\partial_xu_1(t)\|_{H^2}^{2\mu-2}+\|\partial_xu_2(t)\|_{H^2}^{2\mu-2})\Big]\|\vp_2(t)\|_{H^8}\|(\vp_1-\vp_2)(t)\|_{H^7}.
\end{align*}
By Gronwall inequality,
\begin{align*}
\|(\vp_1-\vp_2)(t)\|_{H^7}\leq \int_0^t e^{\int_0^s \|\partial_x^2 u_1(\tau)\|_{H^5}\sum_{\mu}\|\partial_xu_1(\tau)\|_{H^2}^{2\mu-1}\diff \tau}\Big[\|\px^2(u_1-u_2)(s)\|_{H^5}(\|\partial_xu_1(s)\|_{H^2}^{2\mu-1}+\|\partial_xu_2(s)\|_{H^2}^{2\mu-1})\\
+(\|\px^2 u_1(s)\|_{H^5}+\|\px^2 u_2(s)\|_{H^5})\|\partial_x(u_1-u_2)(s)\|_{H^2}(\|\partial_xu_1(s)\|_{H^2}^{2\mu-2}+\|\partial_xu_2(s)\|_{H^2}^{2\mu-2})\Big]\|\vp_2(s)\|_{H^8}\diff s.
\end{align*}
Since $u_1, u_2$ are both bounded in $L^\infty(0,T; H^s(\R)), s\geq 8$, there exists $T_2>0$, such that there exist $L<1$, 
\[
\|(\vp_1-\vp_2)\|_{L^\infty_{T_2}H^7}\leq L\|u_1-u_2\|_{L^\infty_{T_2}H^7}.
\]
This contraction argument also leads to the uniqueness of solutions to Cauchy problem \eqref{front}.

\subsection{Iteration scheme}
We then construct a sequence $\{\vp^{(i)}\}$ of approximate solutions of \eqref{front} by
\begin{equation}
\vp^{(0)}(x,t)=\vp_0(x),\qquad \vp^{(i)}=\B(\vp^{(i-1)})\quad \text{for $i\in \mathbb N$}.
\label{defseq}
\end{equation}
For sufficiently small $T > 0$, we have proved that this sequence is bounded in  $L^\infty([0,T]; H^s(\R))$ and Cauchy in
$L^\infty([0,T]; H^7(\R))$, which implies that its limit exists in $L^\infty([0,T]; H^7(\R))$ and is a local solution of the initial value problem \eqref{front}.

\subsection{Continuity in time}
Next, we prove that the solution constructed above is a continuous function of time with values in $H^s$.
First, we notice that $\vp_t\in L^{\infty}([0,T];H^{s'}(\R))$ for any
$s' < s-1$, which implies that $\vp\in C([0,T]; H^{s'}(\R))$.

The equation is time reversible and translation invariant in time, so it suffices to prove that
\[
\lim\limits_{t\to0+}\|\vp(t)-\vp(0)\|_{H^s}=0.
\]
Since $\|\vp(t)\|_{H^s}$ is bounded on $[0,T]$ and $\vp(t) \to \vp(0)$ strongly in $H^{s'}$, the weak $H^s$-limit of any convergent subsequence is unique, and we see that
 $\vp(t)$ converges to $\vp(0)$ in the weak $H^s$-topology.
To show convergence in the strong $H^s$-topology, we only need to prove the norm-convergence
\begin{equation}
\lim\limits_{t\to0+}\|\vp(t)\|_{H^s}=\|\vp(0)\|_{H^s}.
\label{normcon}
\end{equation}
This is directly from the energy inequality and the Gronwall's inequality.

Finally, we obtained the following existence and uniqueness theorem of Cauchy problem \eqref{front}.
\begin{theorem}[Local solutions]
If the initial data $\vp_0\in H^s(\R)$ for $s\geq 8$ satisfying $\| \vp_0\|_{H^3}<1$, then there exist $T>0$ such that there is a unique solution to the Cauchy problem \eqref{front} in $C([0,T]; H^s(\R))$. The solution satisfies the following energy inequality
\begin{equation}\label{EnergyIneq}
\|\vp(t)\|_{H^s}^2\lesssim \|\vp_0\|_{H^s}^2+\int_0^t \|\vp(\tau)\|_{H^s}^2\sum_{\mu=1}^\infty \|\vp(\tau)\|_{B^{2,6}}\|\vp(\tau)\|_{B^{1,2}}^{2\mu-1}\diff \tau.
\end{equation}
\end{theorem}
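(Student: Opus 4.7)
The plan is to follow Kato's framework for quasilinear equations, exactly as announced after \eqref{front}. I will freeze the nonlinearity, construct an associated linear evolution, and then show that the resulting solution map is a contraction on a suitable ball. The delicate point is the apparent loss of one derivative in $\partial_x \Nc(\vp)$: since $\Nc$ is an odd polynomial in $\vp$ whose symbols $\Tb_\mu$ are even in each argument, we should be able to symmetrize the top-order contribution so that it becomes a commutator with $\partial_x(1+\partial_x^2)^{s/2}$ and loses only a controlled derivative amount, placing it on the low-frequency factors.

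First I would linearize \eqref{front} by fixing $u\in X_T$ and replacing all but one $\vp$-factor in each $\Nc_{2\mu+1}(\vp)$ by $u$, obtaining a pseudodifferential evolution $\vp_t=\Ac(u)\vp$ as in \eqref{eqn4_2}. For the high-regularity bound, I pair the equation on the Fourier side with $(1+|\xi|^2)^s\hat\vp(-\xi)$, then average the result with its image under the substitution $\eta_1\leftrightarrow \xi-\eta_2-\cdots-\eta_{2\mu+1}$ (swapping one $\vp$-variable with the complex conjugate factor). The difference that appears, namely
\[
\xi(1+|\xi|^2)^s\Tb_\mu(\eta_1,\dots,\eta_{2\mu+1})\psi_{j_1}(\eta_1)-\eta_1(1+|\eta_1|^2)^s\Tb_\mu(-\xi,\eta_2,\dots,\eta_{2\mu+1})\psi_{j_1}(-\xi),
\]
decomposes naturally into three pieces: a symbol-difference of the weight $\xi(1+|\xi|^2)^s$ itself, a symbol-difference of $\Tb_\mu$ in its first slot (which uses evenness of $\Tb_\mu$ to replace $-\xi$ by $\xi$), and a localization-difference of the cutoff $\psi_{j_1}$. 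Each piece contains a factor of the low-frequency variables $\eta_2,\dots,\eta_{2\mu+1}$, which is what prevents the derivative loss. Using the $S^\infty$-algebra (Lemma for the algebraic property) together with the yet-to-be-invoked bounds on $\Tb_\mu$ (Proposition \ref{Prop_symb_Tmu} and \eqref{Tmu_est5}), Lemma \ref{multilinear} then yields an $H^s$ estimate in which the top-order derivatives always sit on the one distinguished $\vp$-factor, while the remaining factors appear only in Besov-type norms of $u$. Summing the dyadic decompositions and invoking Sobolev embedding produces the energy inequality \eqref{EnergyIneq}, and a Gronwall argument keeps $\vp\in X_T$ for a short time $T_1$.

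For uniqueness and for convergence of the iteration scheme \eqref{defseq}, I would redo the same energy computation at the level of $H^7$ for the difference $\vp_1-\vp_2$ of two solutions corresponding to $u_1,u_2\in X_T$. The extra term $[\Ac(u_1)-\Ac(u_2)]\vp_2$ is controlled by the same symmetrization bounds, losing one derivative onto $\vp_2$, which is admissible since $\vp_2\in H^8\subset H^{7+1}$. A Gronwall estimate then yields a Lipschitz bound with constant $L<1$ on a possibly smaller interval $[0,T_2]$, which delivers contraction in $L^\infty([0,T_2];H^7)$, hence a limit $\vp\in L^\infty([0,T];H^s)\cap \mathrm{Lip}([0,T];H^7)$ solving \eqref{front}, with $T:=\min(T_1,T_2)$.

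Finally, for continuity in time with values in $H^s$, I would argue that $\vp_t\in L^\infty([0,T];H^{s'})$ for any $s'<s-1$ yields $\vp\in C([0,T];H^{s'})$; hence $\vp(t)\rightharpoonup \vp(0)$ weakly in $H^s$, and it remains to upgrade to strong convergence. Using time-reversibility and translation-invariance of \eqref{front}, this reduces to \eqref{normcon}, which follows by running the $H^s$ energy inequality both forwards and backwards from $t=0$ and applying Gronwall, so that $\limsup_{t\to 0^+}\|\vp(t)\|_{H^s}\le\|\vp(0)\|_{H^s}$; combined with weak lower semicontinuity this gives the norm convergence. The main obstacle throughout is establishing the required $S^\infty$-bounds on $\Tb_\mu$ and its symbol-differences uniformly in $\mu$, so that the infinite sum in $\mu$ (which reflects the full Taylor expansion of $K_0$) converges under the smallness hypothesis $\|\vp_0\|_{H^3}<1$; this is precisely the role of Proposition \ref{Prop_symb_Tmu} and the exponential decay \eqref{decayK0} of $K_0^{(n)}$ at infinity together with the $|\zeta|^{-2\mu}$ structure of $B_\mu$ near $0$.
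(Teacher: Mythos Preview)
Your proposal is correct and follows essentially the same approach as the paper: the linearization \eqref{eqn4_2}, the symmetrization via the swap $\eta_1\leftrightarrow \xi-\eta_2-\cdots-\eta_{2\mu+1}$, the three-piece splitting of the resulting symbol (weight difference, $\Tb_\mu$-difference using evenness, cutoff difference) controlled through Proposition~\ref{Prop_symb_Tmu} and \eqref{Tmu_est5}, the $H^7$ contraction, the iteration \eqref{defseq}, and the continuity-in-time argument via weak convergence plus norm convergence from Gronwall are all exactly what the paper does in Sections~4.1--4.4.
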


\section{Global solutions for small localized initial data}\label{sec:Glo}
In the remaining sections, we will prove the global boundedness of the energy. We formulate the main theorem first. 
Define the profile function $h(t,x)=e^{t\partial_x(1-\partial_x^2)^{-1/2}}\vp(t,x)$ and the dispersion relation $p(\xi)=-\xi(1+\xi^2)^{-1/2}$. Thus $\hat h(t, \xi)=e^{-itp(\xi)}\hat \vp(t, \xi)$. Define some constants:
\begin{align}\label{param_vals}
r=0.4, ~~w=11,~~ s=130,~~ p_0=10^{-4}.
\end{align}
$Z$-norm is defined as 
\begin{equation}\label{Znorm}
\|f\|_Z:=\|(|\xi|^{r}+|\xi|^{w})\hat f(\xi)\|_{L^\infty_\xi}.
\end{equation}
Our main theorem is 
\begin{theorem}\label{Thm:glo}
Let $s$, $p_0$ be defined as in \eqref{param_vals}. There exists a constant $0<\ve \ll 1$, such that if $\vp_0\in H^s(\R)$ and satisfies
\[
\|\vp_0\|_{H^s} + \|\partial_\xi\hat\vp_0(\xi)\|_{L^2}+\|\vp_0\|_{Z}\leq \ve_0,
\]
for some $0 < \ve_0 \leq \ve$, then there exists a unique global solution $\vp\in C([0,\infty); H^s(\R))$ of \eqref{front}.
Moreover, this solution satisfies
\[
(t+1)^{-p_0} \|\vp(t)\|_{H^s}+(t+1)^{-0.01}\|\partial_\xi\hat h(t,\xi)\|_{L^2_\xi} + \|\vp\|_{Z} \lesssim\ve_0.
\]
\end{theorem}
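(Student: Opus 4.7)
The plan is a continuity/bootstrap argument on the three quantities appearing in the conclusion. Local existence from Section \ref{sec:Loc} provides a solution on some $[0,T]$; define $T^*$ as the supremum of times for which the bootstrap assumption
\[
(t+1)^{-p_0}\|\vp(t)\|_{H^s} + (t+1)^{-0.01}\|\partial_\xi\hat h(t,\xi)\|_{L^2_\xi} + \|\vp(t)\|_{Z} \leq 2C_0\ve_0
\]
holds, and show under this assumption that each of the three pieces is actually bounded by $C_0\ve_0$. This forces $T^* = \infty$. Since $\hat h(t,\xi) = e^{-itp(\xi)}\hat\vp(t,\xi)$ is a phase rotation, $h$ and $\vp$ share all $L^p_\xi$ and hence $Z$-norms, and obey the profile equation $\partial_t\hat h(t,\xi) = i\xi\,e^{-itp(\xi)}\widehat{\Nc(\vp)}(t,\xi)$.

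For the $H^s$-bound, feed the bootstrap $Z$-norm into the dispersive machinery of Section \ref{sec:Dis}. Interpolating $\|\vp\|_Z$ against the slowly-growing $H^s$-norm and invoking oscillatory integral estimates for the propagator $e^{itp(\partial_x)}$ (using the low-frequency derivative gain, as emphasized in the introduction, to overcome $p''(0)=0$) yields $\|\vp(t)\|_{B^{2,6}} + \|\vp(t)\|_{B^{1,2}} \lesssim \ve_0(1+t)^{-1/2+\delta}$ for a small $\delta$ depending on $r,w,p_0$. Plugging this into the energy inequality \eqref{EnergyIneq}, using that the nonlinearity starts at cubic order, gives
\[
\|\vp(t)\|_{H^s}^2 \lesssim \ve_0^2 + \int_0^t \|\vp(\tau)\|_{H^s}^2\,\ve_0^2(1+\tau)^{-1+2\delta}\diff\tau.
\]
Gronwall produces $\|\vp(t)\|_{H^s}\lesssim \ve_0(1+t)^{c\ve_0^2}$, and since $c\ve_0^2\ll p_0$ this strictly improves the bootstrap.

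For the weighted $\partial_\xi\hat h$-estimate, the absence of scaling symmetry forces us to work on the profile directly (the weighted energy estimates of Section \ref{sec:WE}). Differentiating in $\xi$ produces two types of terms: a factor $-i\tau p'(\xi)$ hitting the exponential, and $\partial_\xi$ landing on the multilinear symbol $\Tb_\mu$ or on one of the profile factors. The dangerous case is $\partial_\xi$ falling on a high-frequency profile, which threatens a derivative loss. This is handled by the same symmetrization displayed after \eqref{eqn4_2}: exploit that $\Tb_\mu$ is even in each argument to rewrite $\partial_{\eta_j}\hat h$ contributions in symmetric form, and use the cancellation built into the symbol (the factors $1-e^{i\eta_j\zeta}$ in \eqref{Tmu} each vanish at $\eta_j=0$) to trade the lost derivative for a low-frequency gain $|\eta_j|$, then bound the trilinear (and higher) forms by Lemma \ref{multilinear} with an $L^2\times L^\infty\times\cdots\times L^\infty$ split. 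The $L^\infty$ factors are controlled by the bootstrap dispersive decay, yielding $\|\partial_\xi\hat h(t)\|_{L^2}\lesssim \ve_0(1+t)^{c\ve_0^2}$.

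The main obstacle is the $Z$-norm estimate, executed in Section \ref{sec:Znorm} via space-time resonance analysis. From the Duhamel formula
\[
\hat h(t,\xi) = \hat h(0,\xi) + \sum_{\mu\ge 1}\int_0^t \frac{i\xi}{2\pi(2\mu+1)}\int_{\R^{2\mu}} e^{i\tau\Phi_\mu}\,\Tb_\mu\,\hat h(\eta_1)\cdots\hat h(\xi-\eta_1-\cdots-\eta_{2\mu})\,\diff\eta\,\diff\tau,
\]
with phase $\Phi_\mu = -p(\xi)+p(\eta_1)+\cdots+p(\xi-\eta_1-\cdots-\eta_{2\mu})$, one partitions frequency space according to the time-resonance set $\{\Phi_\mu=0\}$ and the space-resonance set $\{\nabla_\eta\Phi_\mu=0\}$, and integrates by parts in $\tau$ or $\eta$ on the respective non-resonant regions. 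The resonant region is controlled using $\partial_\xi\hat h$ and $\|\vp\|_Z$ via Lemma \ref{interpolation}. The equation-specific difficulty is the degenerate stationary point $p''(0)=0$: integration by parts in $\eta$ near $\eta=0$ is singular, and only the low-frequency symbol gain from the prefactor $\xi$ together with the vanishing of $\Tb_\mu$ at zero makes the estimate close. Once the $Z$-norm is improved to $C_0\ve_0$, the bootstrap is complete and $T^*=\infty$.
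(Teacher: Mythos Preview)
Your bootstrap architecture matches the paper's, and the $H^s$ step is right: feed the dispersive decay $\|\vp\|_{B^{1,6}}\lesssim\ve_1(1+t)^{-1/2}$ (Lemma~\ref{NonDis}) into \eqref{EnergyIneq} and Gronwall. The weighted-energy sketch is in the right spirit but understates the difficulty. Differentiating $\hat h_t$ in $\xi$ produces, as you note, a term with an explicit factor of $t$ (called $\mathrm{III}$ in Section~\ref{sec:WE}, from $\partial_\xi e^{it\Phi}=it\,\partial_\xi\Phi\,e^{it\Phi}$). Closing the $L^2$ estimate on this term is not just symmetrization: away from resonances one integrates by parts in $\eta$ to recover the $t$; near the space-time resonances one exploits the vanishing of $\partial_\xi\Phi$; and near the pure space resonance one must pull out a total time derivative (a normal-form boundary term, called $\mathrm{VI}$). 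Your paragraph handles only the derivative-loss issue in $\mathrm{III}_1$ and does not say how the $t$ is removed.

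The genuine gap is in the $Z$-norm argument. Your claim that ``the resonant region is controlled using $\partial_\xi\hat h$ and $\|\vp\|_Z$ via Lemma~\ref{interpolation}'' fails at the space-time resonances, i.e.\ where $\Phi=0$ and $\nabla_\eta\Phi=0$ simultaneously (here $(\eta_1,\eta_2)\in\{(\xi,\xi),(\xi,-\xi),(-\xi,\xi)\}$ for the cubic term). On that set neither integration by parts is available, and the contribution to $\hat h_t$ is of size $c(\xi)\,|\hat h(t,\xi)|^2\hat h(t,\xi)/(t+1)$, which is \emph{not} integrable in $t$; the $Z$-norm of $\hat h$ would diverge logarithmically and the bootstrap cannot close. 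The paper resolves this by modified scattering (Section~8.2): one introduces the phase correction $\Theta(t,\xi)$ of \eqref{defTheta}, sets $\hat v=e^{i\Theta}\hat h$, and verifies that in $\hat v_t$ the resonant piece is exactly cancelled (see \eqref{defU}). What remains is the difference between the localized oscillatory integral and its stationary-phase leading term $2\pi/(\mathfrak{A}\,t)$, and that remainder decays strictly faster than $1/t$ and is time-integrable. This mechanism is the missing idea in your proposal.
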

The proof of this theorem follows directly from the following bootstrap proposition.

\begin{proposition}[Bootstrap]\label{bootstrap}
Let $T>1$ and suppose that $\vp\in C([0,T]; H^s)$ is a solution of \eqref{front}, where the initial data satisfies
\[
\|\vp_0\|_{H^s} + \|\partial_\xi\hat\vp_0(\xi)\|_{L^2}+\|\vp_0\|_{Z}\leq \ve_0
\]
for some $0 < \ve_0 \ll 1$. If there exists $\ve_1$ with $\ve_0 \leq \ve_1 \lesssim \ve_0^{2/3}$ such that the solution satisfies
\[
(t+1)^{-p_0} \|\vp(t)\|_{H^s}+(t+1)^{-0.01}\|\partial_\xi\hat h(t,\xi)\|_{L^2_\xi} +\|\vp(t)\|_{Z}\leq \ve_1,
\]
for every $t\in [0,T]$, then the solution satisfies an improved bound
\[
(t+1)^{-p_0} \|\vp(t)\|_{H^s}+(t+1)^{-0.01}\|\partial_\xi\hat h(t,\xi)\|_{L^2_\xi} + \|\vp(t)\|_{Z} \lesssim\ve_0.
\]
\end{proposition}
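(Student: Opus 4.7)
The plan is to improve each of the three bootstrap quantities separately, corresponding to the three dispersive/energy/weighted norms appearing in the hypothesis. In order of logical dependence, I would first derive pointwise (dispersive) decay estimates on $\vp$ from the bootstrap assumptions, then use those decay estimates to close the high-regularity energy bound $(t+1)^{-p_0}\|\vp\|_{H^s}\lesssim \ve_0$, next close the weighted-$L^2$ bound on $\partial_\xi \hat h$ with growth rate $(t+1)^{0.01}$, and finally close the $Z$-norm bound via a space-time resonance analysis of the cubic piece $\Nc_3$ plus direct estimates on $\Nc_{\geq 5}$. The core principle is that the relevant norms of the nonlinearity should be at least quadratic in the small quantities $\ve_1$ and gain enough time decay so that the bootstrap can be closed with a constant independent of $T$.

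For the \emph{energy estimate}, I would feed the dispersive pointwise decay into the energy inequality \eqref{EnergyIneq}. Since the nonlinearity is cubic or higher, the crucial pair of norms $\|\vp\|_{B^{2,6}}\|\vp\|_{B^{1,2}}^{2\mu-1}$ should decay like $(1+\tau)^{-\mu}$ (for $\mu=1$ this is borderline $\tau^{-1}$). Using Gronwall on
\begin{equation*}
\|\vp(t)\|_{H^s}^2 \lesssim \ve_0^2 + \int_0^t \|\vp(\tau)\|_{H^s}^2\,\frac{\ve_1^2}{1+\tau}\,d\tau
\end{equation*}
yields $\|\vp(t)\|_{H^s}\lesssim \ve_0 (1+t)^{C\ve_1^2}$, and since $C\ve_1^2\ll p_0$ this closes the first bound. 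The dispersive decay itself should come from the Klein--Gordon--like phase $p(\xi)=-\xi(1+\xi^2)^{-1/2}$; because $p''(0)=0,\,p'''(0)\neq 0$, stationary phase gives only $t^{-1/3}$ at low frequencies, but the remark in the introduction notes that the nonlinearity has an extra derivative at low frequency, recovering $t^{-1/2}$ when combined with the $Z$-norm and $\|\partial_\xi \hat h\|_{L^2}$ control via the interpolation inequality of Lemma \ref{interpolation}.

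For the \emph{weighted energy estimate}, I would work directly with the profile equation $\partial_t \hat h(\xi)=e^{-itp(\xi)}\,\widehat{\partial_x \Nc(\vp)}(\xi)$ and differentiate in $\xi$. The troublesome term is the one in which $\partial_\xi$ hits the phase $e^{-itp(\xi)}$, producing a factor of $t\,p'(\xi)$ multiplied by a cubic (and higher) multilinear expression in $\vp$. This threatens a loss of one time factor together with a derivative loss. Both are handled in the standard way by symmetrizing each symbol $\Tb_\mu$ in its arguments and using that the symmetrization vanishes at the diagonal to order one, which gives back one frequency factor that absorbs the $p'(\xi)$ loss, and by distributing the $\partial_\xi$ onto the other $\vp$-factors via an integration by parts in $\xi$ after paraproduct decomposition. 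After dyadic decomposition and application of Lemma \ref{multilinear}, one obtains an estimate of the form
\begin{equation*}
\frac{d}{dt}\|\partial_\xi \hat h\|_{L^2}^2 \lesssim \|\partial_\xi \hat h\|_{L^2}\,\ve_1^2\,(1+t)^{-1+C\ve_1^2},
\end{equation*}
which upon integration yields $\|\partial_\xi \hat h(t)\|_{L^2}\lesssim \ve_0 (1+t)^{C\ve_1^2}\ll \ve_0(1+t)^{0.01}$.

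The main obstacle, as usual in this program, is the \emph{$Z$-norm estimate}. Writing the Duhamel formula for the profile,
\begin{equation*}
\hat h(t,\xi) = \hat h(0,\xi) + \sum_{\mu\geq 1}\frac{i\xi}{2\pi(2\mu+1)}\int_0^t\! \int_{\R^{2\mu}}\! e^{is\Phi_\mu(\xi,\eta)}\,\Tb_\mu(\eta_1,\dots,\eta_{2\mu},\xi-\textstyle\sum\eta_j)\,\hat h(s,\eta_1)\cdots\hat h(s,\xi-\sum\eta_j)\,d\eta\,ds,
\end{equation*}
with phase $\Phi_\mu(\xi,\eta)=p(\xi)-\sum p(\eta_j)-p(\xi-\sum\eta_j)$, I would split the time integral for the cubic term $\mu=1$ dyadically in $s$ and decompose $\Phi_1$ into space-resonant $\{\nabla_\eta\Phi_1=0\}$, time-resonant $\{\Phi_1=0\}$, and off-resonant regions. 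Off-resonant pieces are integrated by parts in $s$ at the cost of $1/\Phi_1$; space-resonant pieces use stationary phase in $\eta$ with the gain $s^{-1/2}$ (here the zero of $p''$ at $\xi=0$ is the subtle point, resolved by extracting a $|\xi|^r$ factor from $\Tb_1$ coming from the symmetric structure shown in \eqref{T1}); and the remaining time-resonant contributions are estimated via the weighted norm $\|\partial_\xi \hat h\|_{L^2}$. The higher-order terms $\Nc_{\geq 5}$ are quintic or better and give directly integrable cubes of dispersive decay. The hardest step, where most of the paper's technical effort must go, is proving a sharp $S^\infty$ bound for the symbol $\Tb_1$ compatible with the frequency-space decomposition, together with ruling out the possibility that the stationary point of $\Phi_1$ at $\xi=0$ obstructs the $Z$-norm decay—the reason the indices $r=0.4$ and $w=11$ in \eqref{param_vals} have been chosen just so.
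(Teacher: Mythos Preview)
Your overall architecture matches the paper's: recover $t^{-1/2}$ dispersive decay from the bootstrap assumptions, close the $H^s$ bound via Gronwall in \eqref{EnergyIneq}, then the weighted $L^2$ of $\partial_\xi\hat h$, and finally the $Z$-norm. The energy step is essentially the one given.

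The genuine gap is in the $Z$-norm step. You sort the cubic interactions into off-resonant, space-resonant, and time-resonant regions and propose to treat the last ``via the weighted norm $\|\partial_\xi\hat h\|_{L^2}$,'' i.e.\ by integration by parts in $\eta$. But the phase $\Phi(\eta_1,\eta_2,\xi)=p(\eta_1)+p(\eta_2)+p(\xi-\eta_1-\eta_2)-p(\xi)$ has genuine \emph{space-time} resonances, at $(\eta_1,\eta_2)\in\{(\xi,\xi),(\xi,-\xi),(-\xi,\xi)\}$, where $\Phi$ and $\nabla_\eta\Phi$ vanish simultaneously; neither integration by parts in $s$ nor in $\eta$ is available there, and stationary phase in the two-dimensional $\eta$ integral contributes a term of size $\Tb_1(\xi,\pm\xi,\mp\xi)\,|\hat h(t,\xi)|^2\hat h(t,\xi)/(t+1)$, which is \emph{not} integrable in $t$. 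The paper resolves this by \emph{modified scattering}: it introduces the phase correction $\Theta(t,\xi)$ in \eqref{defTheta}, sets $\hat v=e^{i\Theta}\hat h$, and shows that $(|\xi|^r+|\xi|^w)\hat v_t$ becomes time-integrable once this resonant piece has been subtracted; since $|\hat v|=|\hat h|$, the $Z$-norm itself is unchanged. Without this renormalization the bootstrap for $\|\vp\|_Z$ cannot close. A smaller point: your weighted-energy sketch is also too schematic. The paper does not rely on a generic diagonal symmetrization; near the pure space resonance $(\eta_1,\eta_2)=(\xi/3,\xi/3)$ it applies a normal form in time (dividing by $\Phi$) and moves the resulting boundary term $\mathrm{VI}$ to the left-hand side, while near the space-time resonances the weighted estimate survives only because the integrand already carries the factor $\partial_\xi\Phi=p'(\xi-\eta_1-\eta_2)-p'(\xi)$, which vanishes at those points and permits integration by parts in directions like $\eta_1-\eta_2$.
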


\begin{lemma}[Nonlinear pointwise decay]\label{NonDis}
Under the bootstrap assumptions,
\[
\|\vp\|_{B^{1,6}} \lesssim (t+1)^{-1/2}\ve_1.
\]
\end{lemma}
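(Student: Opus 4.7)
The plan is to dyadically decompose $\vp = \sum_{j \in \Z} P_j \vp$, represent each piece via the profile $\hat h(t,\xi) = e^{-itp(\xi)}\hat\vp(t,\xi)$ as
$$P_j\vp(t,x) = \int_{\R} e^{i(x\xi + tp(\xi))}\psi_j(\xi)\hat{h}(t,\xi)\,\diff\xi,$$
and apply a one-dimensional van der Corput / stationary-phase bound calibrated to $p''(\xi) = 3\xi(1+\xi^2)^{-5/2}$. Specifically, I would establish
$$\|P_j\vp(t)\|_{L^\infty_x} \lesssim \min\!\bigl(2^j,\,(t|p''(2^j)|)^{-1/2}\bigr)\,\|\psi_j\hat{h}(t)\|_{L^\infty_\xi} + (\text{subleading in }\|\partial_\xi\hat h\|_{L^2}),$$
where the first term comes from the trivial $L^1_\xi\!\to\! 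L^\infty_x$ embedding and the second from stationary phase; the subleading piece is absorbed by the weighted-energy bootstrap $\|\partial_\xi\hat h\|_{L^2}\lesssim(t+1)^{0.01}\ve_1$. Since $|p''(2^j)| \approx 2^j$ for $j\le 0$ and $|p''(2^j)|\approx 2^{-4j}$ for $j\ge 0$, the behaviour at the low- and high-frequency ends is very different and I would handle them separately.

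For the low-frequency contribution ($j \le 0$), I would use the $Z$-norm bound $|\hat h(\xi)| \lesssim |\xi|^{-r}\ve_1$ near the origin, giving $\|\psi_j\hat h\|_{L^\infty}\lesssim 2^{-jr}\ve_1$. The trivial bound applies when $2^j \le t^{-1/3}$ and contributes $\sum_j 2^j\cdot 2^j\cdot 2^{-jr}\ve_1 = \sum_j 2^{j(2-r)}\ve_1 \lesssim t^{-(2-r)/3}\ve_1$ to $\|\vp\|_{B^{1,6}}$; the stationary-phase bound applies for $t^{-1/3}\le 2^j\le 1$ and contributes $\sum_j 2^j \cdot t^{-1/2} 2^{-j/2-jr}\ve_1 = t^{-1/2}\sum_j 2^{j(1/2-r)}\ve_1$, which is a convergent geometric series precisely because $r < 1/2$. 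Both regimes are $\lesssim t^{-1/2}\ve_1$, the critical matching at $2^j\sim t^{-1/3}$ requiring exactly $(2-r)/3 \ge 1/2$, i.e.\ $r \le 1/2$.

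For the high-frequency contribution, I would split further into the stationary-phase range $0 \le j \le \tfrac14\log_2 t$ and the off-dispersive range $2^j \gtrsim t^{1/4}$. In the former, $|p''(2^j)|\approx 2^{-4j}$ together with $|\hat h(\xi)|\lesssim |\xi|^{-w}\ve_1$ yields $2^{6j}\|P_j\vp\|_{L^\infty}\lesssim 2^{(8-w)j}t^{-1/2}\ve_1 = 2^{-3j}t^{-1/2}\ve_1$, summable because $w = 11 > 8$. In the latter, stationary phase affords no gain, so I would fall back on Bernstein and the $H^s$ bootstrap to obtain $\|P_j\vp\|_{L^\infty}\lesssim 2^{j(1/2-s)}(t+1)^{p_0}\ve_1$; summing over $2^j\gtrsim t^{1/4}$ with the weight $2^{6j}$ gives a contribution of order $t^{(13/2-s)/4+p_0}\ve_1$, which for $s = 130$ and $p_0 = 10^{-4}$ is negligible relative to $t^{-1/2}\ve_1$. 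The interval $t\in[0,1]$ is handled by Sobolev embedding from the $H^s$ bound alone.

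The principal obstacle is the degeneracy $p''(0) = 0$, which a priori downgrades the dispersive decay near $\xi = 0$ from $t^{-1/2}$ to $t^{-1/3}$ and is precisely the issue flagged in the introduction. The resolution---and the reason the paper works with the seminorm $B^{1,6}$ rather than $L^\infty$---is that $B^{1,6}$ carries a weight $2^j$ at low frequencies, supplying the extra derivative at the origin that compensates for the degenerate Hessian. Quantitatively, this extra derivative must outpace the singularity $|\xi|^{-r}$ of $\hat h$ allowed by the $Z$-norm, and the condition $r < 1/2$ is what makes this trade-off work; the choice $r = 0.4$ in \eqref{param_vals} furnishes a safe margin of $0.1$. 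Once this interplay is understood, the remaining estimates in every regime reduce to summing geometric series.
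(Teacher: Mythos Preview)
Your strategy coincides with the paper's: it too applies a linear dispersive estimate (its Lemma~\ref{disp}) to each dyadic block, controls the leading term via the $Z$-norm, the subleading term via $\|\partial_\xi\hat h\|_{L^2}$, and treats very high frequencies by Bernstein and the $H^s$ bootstrap. Your explanation of how the $B^{1,6}$ weight $2^j$ at low frequencies compensates the degeneracy $p''(0)=0$ is exactly the point, and your low-frequency numerology is correct.

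There is, however, a genuine gap at high frequencies. You place the cut between the dispersive and Sobolev regimes at $2^j\sim t^{1/4}$ and then assert without checking that the subleading piece ``is absorbed by the weighted-energy bootstrap''. It is not. Any form of the stationary-phase remainder behaves like $(t|p''(2^j)|)^{-3/4}\|\psi_j\partial_\xi\hat h\|_{L^2}$, which for $j>0$ is $\approx t^{-3/4}2^{3j}\|\partial_\xi\hat h\|_{L^2}$; multiplied by the $B^{1,6}$ weight $2^{6j}$ this gives $t^{-3/4}2^{9j}\|\partial_\xi\hat h\|_{L^2}$, and at your endpoint $2^j=t^{1/4}$ one obtains $t^{3/2}\|\partial_\xi\hat h\|_{L^2}\sim t^{3/2+0.01}\ve_1$, which blows up rather than being absorbed. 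Crucially there is no $|\xi|^{-w}$ decay available for this piece, since the bootstrap controls only the unweighted $\|\partial_\xi\hat h\|_{L^2}$.

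The fix is what the paper does: move the threshold down to $2^k\le t^{1/100}$ (any sufficiently small power would do). Then $2^{9k}\le t^{0.09}$ and the subleading contribution is $O(t^{-3/4+0.09+0.01}\ve_1)\le t^{-1/2}\ve_1$; because $s=130$ is enormous, the Bernstein/$H^s$ argument above the new threshold still produces decay far better than $t^{-1/2}$. So your plan is correct once the threshold is relocated---the paper's seemingly arbitrary choice of $t^{1/100}$ is there precisely to tame this subleading term.
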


\begin{lemma}[Weighted energy estimate]\label{weightedE}
Under the bootstrap assumptions,
\[
\|\partial_\xi\hat h(t,\xi)\|_{L^2_\xi}\lesssim \ve_0(t + 1)^{0.01}.
\]
\end{lemma}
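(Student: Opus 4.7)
My plan is to derive the profile equation for $\hat h(t,\xi) = e^{-itp(\xi)}\hat\vp(t,\xi)$, where $p(\xi) = -\xi(1+\xi^2)^{-1/2}$, apply $\partial_\xi$, and run an $L^2_\xi$ energy estimate, absorbing any time growth with the bootstrap dispersive bound from Lemma \ref{NonDis}. Starting from \eqref{FrEq} and \eqref{nonlinearity}, the profile satisfies
\[
\partial_t \hat h(t,\xi) = \sum_{\mu\ge 1}\frac{i\xi}{2\pi(2\mu+1)}\int_{\R^{2\mu}} e^{it\Phi_\mu(\xi,\eta)}\,\Tb_\mu\big(\eta_1,\dots,\eta_{2\mu},\xi-\textstyle\sum_j\eta_j\big)\prod_{j=1}^{2\mu}\hat h(\eta_j)\cdot\hat h\big(\xi-\textstyle\sum_j\eta_j\big)\,d\eta,
\]
with phase $\Phi_\mu(\xi,\eta) = p(\xi) - \sum_j p(\eta_j) - p(\xi-\sum_j\eta_j)$. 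Differentiating in $\xi$ splits the right-hand side into four types of contributions, according to whether $\partial_\xi$ lands on the $\xi$ prefactor, on the symbol $\Tb_\mu$, on the outer profile $\hat h(\xi-\sum\eta_j)$, or on the oscillation $e^{it\Phi_\mu}$.

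The first three contributions are comparatively tame. I will pair the resulting expression with $\overline{\partial_\xi\hat h(\xi)}$, integrate in $\xi$, and invoke the multilinear bound of Lemma \ref{multilinear} together with the symbol estimates of Proposition \ref{Prop_symb_Tmu}. In each case the strategy is to place two profile factors in $L^\infty_\xi$ (converted via Lemma \ref{interpolation} into the pointwise bound $\|\vp\|_{B^{1,6}}\lesssim (1+t)^{-1/2}\ve_1$) and one factor of $\partial_\xi\hat h$ in $L^2_\xi$. When $\partial_\xi$ hits the outer $\hat h$, there is a threat of a one-derivative loss; this I resolve by symmetrizing the trilinear form in its three arguments exactly as in the Sobolev energy estimate of Section \ref{sec:Loc}, producing a Kato--Ponce-type commutator whose $S^\infty$-norm is controlled by Proposition \ref{Prop_symb_Tmu}.

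The main obstacle is the term where $\partial_\xi$ falls on the phase, producing a dangerous factor $it\,\partial_\xi\Phi_\mu$. Away from the space-time resonance set $\{\Phi_\mu = \partial_\xi\Phi_\mu = 0\}$, I will employ a normal-form trick by integrating by parts in $t$: the identity $t\,e^{it\Phi_\mu} = \partial_t\bigl(t\,e^{it\Phi_\mu}/(i\Phi_\mu)\bigr) - e^{it\Phi_\mu}/(i\Phi_\mu)$ trades the explicit $t$ for a $\partial_t$ acting on a multilinear object, which is itself cubic in $\hat h$ and hence gains an additional factor $(1+t)^{-1/2}$ via dispersion. On the resonant piece, whose frequency support is small because $p''(0)=0$ but $p'''(0)\ne 0$, a direct $L^\infty_\xi\times L^\infty_\xi\times L^2_\xi$ trilinear bound after dyadic localization suffices. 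Higher-order terms $\mu\ge 2$ carry additional smallness $\ve_1^{2\mu-2}$ and are handled identically.

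Combined, these yield a differential inequality of the form
\[
\tfrac{d}{dt}\|\partial_\xi\hat h(t)\|_{L^2_\xi}^2 \lesssim \ve_1^2(1+t)^{-1+2p_0}\|\partial_\xi\hat h(t)\|_{L^2_\xi}^2 + \ve_1^4(1+t)^{-1+2p_0},
\]
so Gronwall gives $\|\partial_\xi\hat h(t)\|_{L^2_\xi}^2 \lesssim (\ve_0^2+\ve_1^4)(1+t)^{2p_0}$. Since $\ve_1\lesssim\ve_0^{2/3}$ by the bootstrap setup, $\ve_1^4\lesssim\ve_0^{8/3}\ll\ve_0^2$, and we obtain $\|\partial_\xi\hat h(t)\|_{L^2_\xi}\lesssim \ve_0(1+t)^{p_0}\lesssim \ve_0(1+t)^{0.01}$. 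The one step that cannot be shortcut is the careful frequency decomposition near the space-time resonances that justifies the normal form; this is the same resonance analysis carried out in detail in Section \ref{sec:Znorm} for the companion $Z$-norm estimate and reused here.
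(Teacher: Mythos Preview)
Your overall framework---profile equation for $\hat h$, apply $\partial_\xi$, split into four types of terms, run an $L^2_\xi$ energy estimate closed by the dispersive bound---matches Section~\ref{sec:WE} of the paper. The treatment of the critical term (the one carrying the factor $it\,\partial_\xi\Phi$) is where your proposal diverges from the paper and contains a genuine gap.

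The paper's workhorse for that term is \emph{space} integration by parts, writing $it\,\partial_\xi\Phi\,e^{it\Phi}=(\partial_\xi\Phi/\partial_{\eta_1}\Phi)\,\partial_{\eta_1}e^{it\Phi}$ wherever $\partial_{\eta_1}\Phi\neq 0$ (terms $\mathrm{III}_1$, $\mathrm{III}_{21}$). Near the space resonances $|\eta_1|=|\eta_2|=|\xi-\eta_1-\eta_2|$ it splits further: at the space-time resonant points $(\xi,\xi)$, $(\xi,-\xi)$, $(-\xi,\xi)$ it rewrites $\partial_\xi\Phi$ as a combination of $\partial_{\eta_j}\Phi$ and integrates by parts in $\eta$ again ($\mathrm{III}_{22}$, $\mathrm{III}_{23}$); only near the \emph{pure} space resonance $(\xi/3,\xi/3)$, where $\Phi\neq 0$, does it use time integration by parts ($\mathrm{III}_{24}$), and it then absorbs the resulting boundary term into a modified energy $\|\partial_\xi\hat h+\mathrm{VI}\|_{L^2}^2$.

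Your proposal instead uses time integration by parts \emph{globally} away from space-time resonances, dividing by $\Phi$. This fails on the time-resonance lines $\eta_1=\xi$ and $\eta_2=\xi$: along $\eta_1=\xi$ one has $\Phi=p(\xi)+p(\eta_2)+p(-\eta_2)-p(\xi)=0$ identically, yet $\partial_\xi\Phi=p'(-\eta_2)-p'(\xi)\neq 0$ whenever $|\eta_2|\neq|\xi|$. A short computation gives $\partial_\xi\Phi/\Phi\sim -1/(\eta_1-\xi)$ near that line, so the symbol is not in $S^\infty$ and the multilinear bound of Lemma~\ref{multilinear} is unavailable. These lines are not contained in any small neighborhood of the three space-time resonance points, so your ``direct estimate on the resonant piece'' does not cover them either.

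Two smaller corrections. First, the derivative-loss issue is not where you locate it: when $\partial_\xi$ falls on the outer profile it lands on $\hat h_{j_3}$ with $j_3$ the \emph{lowest} frequency (term $\mathrm{II}$), which is harmless. The actual loss appears only after the paper's space IBP deposits $\partial_{\eta_1}$ on the \emph{highest}-frequency profile $\hat h_{j_1}$; this is repaired by the $\xi\leftrightarrow\eta_1$ symmetrization producing the symbols $m_1,\dots,m_4$. Second, the remark that the space-time resonant set is small ``because $p''(0)=0$ but $p'''(0)\neq 0$'' is unrelated to the resonance structure: the resonances sit at $(\pm\xi,\pm\xi)$ for every $\xi$, and the vanishing of $p''(0)$ only affects the linear dispersive rate.
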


\begin{lemma}[Z-norm estimate]\label{Z-norm_Est}
Under the bootstrap assumptions,
\[
\|\vp(t)\|_{Z}\lesssim \ve_0.
\]
\end{lemma}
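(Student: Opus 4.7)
The plan is to work with the profile $\hat h(t,\xi) = e^{-itp(\xi)}\hat\vp(t,\xi)$, noting that $\|\vp\|_Z = \|(|\xi|^r+|\xi|^w)\hat h(t,\xi)\|_{L^\infty_\xi}$ since the Z-norm only sees absolute values. Because $\|h(0)\|_Z = \|\vp_0\|_Z \leq \ve_0$, by Duhamel it suffices to show
\[
\sup_{\xi\in\R}\, (|\xi|^r + |\xi|^w)\int_0^t |\partial_s \hat h(s,\xi)|\,\diff s \lesssim \ve_0^2,
\]
so that the contribution from the nonlinearity is strictly smaller than $\ve_0$. Differentiating $\hat h = e^{-isp(\xi)}\hat\vp$ and substituting $\hat\vp(\eta) = e^{isp(\eta)}\hat h(\eta)$ in the multilinear expansion \eqref{nonlinearity} converts each term into an oscillatory integral in $h$ with phase $\Phi_{2\mu+1}(\xi,\eta_1,\ldots,\eta_{2\mu}) = \sum_{j=1}^{2\mu}p(\eta_j) + p\bigl(\xi-\sum_j\eta_j\bigr) - p(\xi)$ and symbol $i\xi\,\Tb_\mu/(2\mu+1)$.

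Next I would separate the cubic ($\mu=1$) term from the quintic-and-higher tail. For $\mu\geq 2$, I bound the integrand by putting $2\mu-1$ profiles in $L^\infty$ via the pointwise decay $\|\vp\|_{B^{1,6}} \lesssim \ve_1(1+s)^{-1/2}$ from Lemma \ref{NonDis} and the remaining two in $L^2$, using Lemma \ref{multilinear} together with the $S^\infty$ bounds on $\Tb_\mu$ from the symbol estimates. The resulting pointwise-in-$\xi$ bound on $\partial_s\hat h^{(2\mu+1)}$ is of order $\ve_1^{2\mu+1}(1+s)^{-\mu}$, which is integrable in time for $\mu\geq 2$; the weight $|\xi|^w$ is absorbed against $\|\vp\|_{H^s}(1+s)^{p_0}$ since $s=130\gg w$, and $|\xi|^r$ is absorbed against the vanishing of $\Tb_\mu$ at the origin.

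The cubic piece is the main content. After dyadic decomposition $\xi\sim 2^k$, $\eta_j\sim 2^{k_j}$ with $k_1\geq k_2\geq k_3$, I split into three regimes. In the \emph{high-frequency regime} $\max\{k,k_1\}\gtrsim \log\langle s\rangle$, I use Bernstein plus the slow Sobolev growth $\|\vp\|_{H^s}\lesssim\ve_1(1+s)^{p_0}$ on one factor, Lemma \ref{NonDis} on the other two, and dominate the weight using $s\gg w$. In the \emph{non-resonant low-frequency regime}, I perform a normal-form integration by parts in time via $\partial_s e^{is\Phi_1} = i\Phi_1 e^{is\Phi_1}$; the boundary terms are controlled by $\|h\|_Z^3$ while the remainder picks up $\partial_s\hat h$, which is quintic and already estimated. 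In the \emph{space-time resonant regime} $|\Phi_1|\ll 1$, I use a volume estimate on the near-resonant set, combined with the cubic vanishing of $\Tb_1$ at the origin (three factors $(1-e^{i\eta_j\zeta})$) and the outside $\partial_x$, which together supply enough factors of $\eta_j$ and $\xi$ to overcome the degeneracy.

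The main obstacle is precisely the cubic resonance analysis at low frequency, because the stationary phase $p''(0)=0$ aligns with the spot where the weight $|\xi|^r$ is largest and where standard dispersive decay is slowest. The remedy, as flagged in the introduction, is the extra derivative structure: the $\partial_x$ in front of $\Nc$ and the triple vanishing of $\Tb_1$ at zero give four net factors from the frequency variables, which is enough to beat the resonance volume bound. Summing all three regimes in the dyadic parameters and integrating in $s$, the total cubic contribution is $O(\ve_1^3)\lesssim\ve_0^2$, giving
\[
(|\xi|^r+|\xi|^w)|\hat h(t,\xi)| \leq (|\xi|^r+|\xi|^w)|\hat h(0,\xi)| + C\ve_1^3 \leq \ve_0 + C\ve_0^2 \lesssim \ve_0,
\]
which closes the bootstrap for $\ve_0$ sufficiently small.
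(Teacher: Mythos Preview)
Your proposal has a genuine gap at the space-time resonance step, and it is exactly the point where the paper's argument differs essentially from yours: you omit \emph{modified scattering}.

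At the cubic level, the space-time resonant set for $\Phi(\eta_1,\eta_2,\xi)$ consists of points such as $(\eta_1,\eta_2)=(\xi,\xi)$ (with $\xi-\eta_1-\eta_2=-\xi$), where both $\Phi=0$ and $\nabla_{\eta}\Phi=0$. At such points the symbol value is $\Tb_1(\xi,\xi,-\xi)$, which does \emph{not} vanish for $\xi$ in the mid-frequency range $(t+1)^{-0.025}\le|\xi|\le (t+1)^{p_1}$. Your appeal to ``cubic vanishing of $\Tb_1$ at the origin'' is irrelevant here: the resonant frequencies are $(\xi,\xi,-\xi)$, not $(0,0,0)$. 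A stationary-phase computation (the Hessian of $\Phi$ at the resonance is nondegenerate; cf.\ the computation around \eqref{modscat}) shows that the localized oscillatory integral contributes exactly
\[
\frac{i\xi}{3}\cdot\frac{2\pi}{\mathfrak A\,t}\,\Tb_1(\xi,\xi,-\xi)\,|\hat h(t,\xi)|^2\hat h(t,\xi)+\text{(integrable error)},
\]
so $\int_0^T|\partial_s\hat h(s,\xi)|\,\diff s$ diverges like $\log T$. No volume bound or symbol vanishing at the origin can rescue a $1/t$ contribution. The paper handles this by introducing the phase correction $\Theta(t,\xi)$ in \eqref{defTheta}, setting $\hat v=e^{i\Theta}\hat h$, and estimating $\hat v_t$ instead of $\hat h_t$; the definition of $\Theta$ is chosen precisely so that $i\Theta_t\hat h$ cancels the $1/t$ resonant term, leaving the integrable remainder \eqref{modscat1}--\eqref{modscat2}.

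A secondary issue: you pair ``non-resonant'' with integration by parts in time. In the paper's scheme, non-resonant means $\nabla_\eta\Phi\neq 0$ and is handled by integration by parts in $\eta$ (Sections \ref{sec:nonres}--\ref{sec:near}); integration by parts in time is reserved for the pure \emph{space} resonance $(\xi/3,\xi/3,\xi/3)$, where $\Phi\neq 0$ but $\nabla_\eta\Phi=0$. Your outline conflates these two mechanisms. The high-frequency and higher-degree parts of your plan are fine and match the paper, but without the modified-scattering correction the cubic resonant piece cannot be closed.
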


\begin{proof}[Proof of Proposition \ref{bootstrap}]
By Lemma \ref{NonDis} and the energy inequality \eqref{EnergyIneq}, we obtain
\[
\|\vp(t)\|_{H^s}\lesssim \ve_0(t+1)^{p_0}.
\]
Then using Lemma \ref{weightedE} and Lemma \ref{Z-norm_Est}, we complete the proof.
\end{proof}

In the following sections, we will prove Lemma \ref{NonDis}, Lemma \ref{weightedE} and Lemma \ref{Z-norm_Est} separately.

\section{Dispersive estimate}\label{sec:Dis}
\begin{lemma}[Linear dispersive estimate]\label{disp}
 For $t> 0$ and $h \in L^2$, we have the linear dispersive estimates
\begin{align}\nonumber
&\|e^{itp(D)} P_kh \|_{L^\infty}\\\label{LocDis}
\lesssim~& \frac1{\sqrt t} 2^{-\frac12k}(1+2^{2k})^{5/2}  \|\psi_k(\xi)\hat h(t, \xi)\|_{L^\infty_\xi}+(t+1)^{-3/4} 2^{-\frac34k}(1+2^{2k})^{5/2}\|\partial_\xi\hat h (t, \xi)\psi_k(\xi)\|_{L^2_\xi},
\end{align}
where $D=-i\px$.
\end{lemma}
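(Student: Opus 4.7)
The plan is to perform a stationary-phase analysis of the oscillatory integral
\[
(e^{itp(D)} P_k h)(x) = \int_\R e^{i(x\xi + tp(\xi))}\psi_k(\xi)\,\hat h(t,\xi)\, \diff\xi,
\]
with phase $\Phi(\xi;x,t) := x\xi + tp(\xi)$. Computing $p'(\xi) = -(1+\xi^2)^{-3/2}$ and $p''(\xi) = 3\xi(1+\xi^2)^{-5/2}$, the dispersive curvature on the $k$-th dyadic block satisfies $|p''(\xi)| \sim 2^{k}(1+2^{2k})^{-5/2}$ for $\xi \in \mathrm{supp}\,\psi_k$, which governs the decay rate. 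The possible stationary point $\xi_0 = \xi_0(x,t)$ is determined by $p'(\xi_0) = -x/t$, and whether $\xi_0$ lies in $\mathrm{supp}\,\psi_k$ cleanly separates the argument into a non-stationary case and a stationary case.

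In the non-stationary case, where $|\Phi'(\xi)|$ is bounded below on the support, I would use $L\,e^{i\Phi} = e^{i\Phi}$ with $L := (i\Phi')^{-1}\partial_\xi$ and integrate by parts, controlling the resulting integrand by $\|\psi_k \hat h\|_{L^\infty_\xi}$ and $\|\psi_k\partial_\xi \hat h\|_{L^2_\xi}$ via Cauchy--Schwarz on the $O(2^k)$-length support. In the stationary case, I would introduce a cutoff scale $\delta > 0$, split the integral into $|\xi - \xi_0| \leq \delta$ and $|\xi - \xi_0| > \delta$, bound the near-stationary piece trivially by $2\delta\,\|\psi_k\hat h\|_{L^\infty_\xi}$, and integrate by parts once in the far piece using the lower bound $|\Phi'(\xi)| \gtrsim t\,|p''(\xi_0)|\,|\xi - \xi_0|$, obtaining
\[
|I_{\mathrm{far}}| \lesssim \frac{\|\psi_k\hat h\|_{L^\infty_\xi}}{t\,|p''(\xi_0)|\,\delta} + \frac{\|\psi_k\partial_\xi\hat h\|_{L^2_\xi}}{t\,|p''(\xi_0)|\,\delta^{1/2}}.
\]
Balancing $\delta \sim (t|p''(\xi_0)|)^{-1/2}$ in the $L^\infty$ terms yields the prototypical rate $t^{-1/2}|p''(\xi_0)|^{-1/2} \sim t^{-1/2}\,2^{-k/2}(1+2^{2k})^{5/2}$, which gives the first term of the estimate. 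Inserting the same $\delta$ into the $L^2$ residue produces a rate of $t^{-3/4}|p''(\xi_0)|^{-3/4}$, absorbed by the second prefactor $(t+1)^{-3/4}\,2^{-3k/4}(1+2^{2k})^{5/2}$ after combining with the trivial $t \lesssim 1$ estimate to replace $t^{-3/4}$ by $(t+1)^{-3/4}$.

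The main difficulty is the degeneracy $p''(0) = 0$: when the dyadic block contains frequencies near zero, the curvature collapses and the bound degrades as $2^{-k/2}$ for $k \to -\infty$, which is precisely the loss encoded in the first prefactor. The analysis must therefore work with $|p''|$ measured on the actual block rather than a uniform bound, and must symmetrically handle the high-frequency regime where instead $|p'(\xi)| \to 0$ pushes the $x/t$-window admitting a stationary point into a narrow band of size $\sim 2^{-3k}$; the composite factor $(1+2^{2k})^{5/2}$ absorbs the worst of the two regimes. A secondary technicality is ensuring that when $\xi_0$ sits near the edge of $\mathrm{supp}\,\psi_k$ the cutoff $\delta$ does not exceed the block size $2^k$, which I would handle by taking $\delta = \min\bigl(2^k, (t|p''(\xi_0)|)^{-1/2}\bigr)$ and patching with the non-stationary bound in the complementary range.
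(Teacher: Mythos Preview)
Your proposal is correct and follows essentially the same stationary-phase strategy as the paper's proof: split into cases according to whether the stationary point $\xi_0$ (solving $p'(\xi_0)=-x/t$) lies in $\mathrm{supp}\,\psi_k$, integrate by parts once away from $\xi_0$ using the lower bound $|\Phi'|\gtrsim t\,|p''(2^k)|\cdot\mathrm{dist}(\xi,\xi_0)$, and bound the near-stationary piece trivially. The only cosmetic differences are that the paper implements the near/far split via a further dyadic decomposition $\sum_l\psi_l(|\xi|-|\xi_0|)$ in place of your single scale $\delta$, and handles the small-time regime $t<2^{-3k}$ (equivalent to your constraint $\delta\le 2^k$) explicitly by the trivial bound $2^k\|\psi_k\hat h\|_{L^\infty_\xi}$; note also that $|p''(\xi_0)|^{-1/2}\sim 2^{-k/2}(1+2^{2k})^{5/4}$ rather than the $5/2$ you wrote, which only makes your estimate stronger than the one claimed.
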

\begin{proof} Using the inverse Fourier transform, we can write the solution as
\begin{align*}
e^{itp(D)}  P_kh
= \int_\R e^{ix\xi+ip(\xi)t}  \psi_k(\xi)\hat h (\xi)\diff{\xi}.
\end{align*}
Notice that
\begin{equation}\label{exp}
\partial_\xi e^{ix\xi+ip(\xi)t}=i[x+tp'(\xi)]  e^{ix\xi+ip(\xi)t}.
\end{equation}
Denote $\xi_0^\pm$ to be the solution of $p'(\xi_0^\pm)=-x/t$. Then we discuss different situations for $\xi_0^\pm$. 

\noindent{\bf Case I.} When $|\xi_0^\pm|>\frac85 2^{k+1}$ or $|\xi_0^\pm|<\frac58 2^{k-1}$, integral by part with respect to $\xi$,
\begin{align}\nonumber
&\int_\R e^{ix\xi+ip(\xi)t}\hat h (\xi)\psi_k(\xi)\diff{\xi}\\\label{eqn6.3}
=&\int_\R e^{ix\xi+ip(\xi)t}\frac{-ip''(\xi)}{-t[\frac{x}t+p'(\xi)]^2}\hat h(\xi)\psi_k(\xi)\diff\xi+\int_\R e^{ix\xi+ip(\xi)t}\frac{ 1}{it(\frac{x}t+p'(\xi))} \partial_\xi \left(\hat h (\xi)\psi_k(\xi)\right)\diff\xi.
\end{align}
Since $p'(\xi)=-\frac1{(1+\xi^2)^{3/2}}$ is an even function and is increasing on $[0,\infty)$,
\begin{align*}
\Big|\frac{x}t+p'(\xi)\Big|=|p'(\xi)-p'(\xi_0)|\gtrsim |p''(2^k)|2^k\gtrsim 2^{2k}(1+2^{2k})^{-5/2}.
\end{align*}
Therefore, the two integrals in \eqref{eqn6.3} satisfy
\begin{align*}
&\left|\int_\R e^{ix\xi+ip(\xi)t}\frac{-ip''(\xi)}{-t[\frac{x}t+p'(\xi)]^2}\hat h(\xi)\psi_k(\xi)\diff\xi\right|\\
\lesssim~& \frac1t 2^{-4k}(1+2^{2k})^{5} 2^k(1+2^{2k})^{-5/2}2^{k} \|\psi_k(\xi)\hat h(\xi)\|_{L^\infty}\\
\lesssim~& \frac1t 2^{-2k}(1+2^{k})^{5}  \|\psi_k\hat h\|_{L^\infty},
\end{align*}
and
\begin{align*}
&\left|\int_\R e^{ix\xi+ip(\xi)t}\frac{ 1}{it(\frac{x}t+p'(\xi))} \partial_\xi \left(\hat h (\xi)\psi_k(\xi)\right)\diff\xi\right|\\
\lesssim~&\frac1t 2^{-k}(1+2^{2k})^{5/2}2^{-\frac{k}2}\|\partial_\xi(\hat h (\xi)\psi_k(\xi))\|_{L^2_\xi}\\
\lesssim~&\frac1t 2^{-\frac32k}(1+2^{2k})^{5/2}(\|\partial_\xi\hat h (\xi)\psi_k(\xi)\|_{L^2_\xi}+\|\hat h (\xi)\psi_k(\xi)\|_{L^\infty_\xi}2^{-k/2})\\
\lesssim~&\frac1t 2^{-\frac32k}(1+2^{2k})^{5/2}\|\partial_\xi\hat h (\xi)\psi_k(\xi)\|_{L^2_\xi}+ \frac1t 2^{-2k}(1+2^{2k})^{5/2}  \|\psi_k\hat h\|_{L^\infty}.
\end{align*}
So we have
\begin{align*}
&\left|\int_\R e^{ix\xi+ip(\xi)t}\hat h (\xi)\psi_k(\xi)\diff{\xi}\right|\\
\lesssim~&\frac1t 2^{-\frac32k}(1+2^{2k})^{5/2}\|\partial_\xi\hat h (\xi)\psi_k(\xi)\|_{L^2_\xi}+ \frac1t 2^{-2k}(1+2^{2k})^{5/2}  \|\psi_k\hat h\|_{L^\infty}.
\end{align*}
When $t>2^{-3k}$, we have $t^{1/4}>2^{-\frac34k}$, $t^{1/2}>2^{-\frac32k}$, thus
\begin{align}\nonumber
&\left|\int_\R e^{ix\xi+ip(\xi)t}\hat h (\xi)\psi_k(\xi)\diff{\xi}\right|\\\label{case1}
\lesssim~&\frac1{t^{3/4}}  2^{-\frac34k}(1+2^{2k})^{5/2}\|\partial_\xi\hat h (\xi)\psi_k(\xi)\|_{L^2_\xi}+ \frac1{t^{1/2}} 2^{-\frac12k}(1+2^{2k})^{5/2}  \|\psi_k\hat h\|_{L^\infty}.
\end{align}
When $t<2^{-3k}$, we have $2^{\frac32k}<t^{-1/2}$, thus
\begin{align}\label{lowfre}
\left|\int_\R e^{ix\xi+ip(\xi)t}\hat h (\xi)\psi_k(\xi)\diff{\xi}\right|
\lesssim~ 2^{k} \|\psi_k\hat h\|_{L^\infty}\lesssim \frac1{t^{1/2}} 2^{-\frac12k}(1+2^{2k})^{5/2}  \|\psi_k\hat h\|_{L^\infty}.
\end{align}

\noindent{\bf Case II.} When $\frac58 2^{k-1}\leq |\xi_0^\pm|\leq \frac85 2^{k+1}$, we make further dyadic decomposition
\begin{align*}
\int_\R e^{ix\xi+ip(\xi)t}\hat h (\xi)\psi_k(\xi)\diff{\xi}
=\sum\limits_{l}\int_\R e^{ix\xi+ip(\xi)t}\hat h (\xi)\psi_k(\xi)\psi_{l}(|\xi|-|\xi_0^\pm|)\diff{\xi}.
\end{align*}
Considering the support of the cut-off functions, we can assume the above summation is for $l\leq k+1$.

After integration by part, we have
\begin{align*}
&\left|\int_\R e^{ix\xi+ip(\xi)t}\hat h (\xi)\psi_k(\xi)\psi_{l}(|\xi|-|\xi_0^\pm|)\diff{\xi}\right|\\
\lesssim~& \left|\int_\R e^{ix\xi+ip(\xi)t}\frac{-ip''(\xi)}{-t[\frac{x}t+p'(\xi)]^2}\hat h(\xi)\psi_k(\xi)\psi_{l}(|\xi|-|\xi_0^\pm|)\diff\xi\right|\\
&+\left|\int_\R e^{ix\xi+ip(\xi)t}\frac{1}{it(\frac{x}t+p'(\xi))} \partial_\xi \left(\hat h (\xi)\psi_k(\xi)\psi_{l}(|\xi|-|\xi_0^\pm|)\right)\diff\xi\right|.
\end{align*}
On the support of the cut-off function $\psi_{l}(|\xi|-|\xi_0^\pm|)$, 
\begin{align*}
\left|\frac{x}t+p'(\xi)\right|=|p'(\xi)-p'(\xi_0)|\gtrsim |p''(2^k)|2^l\gtrsim 2^{k+l}(1+2^{2k})^{-5/2}.
\end{align*}
Therefore,
\begin{align*}
&\left|\int_\R e^{ix\xi+ip(\xi)t}\frac{-ip''(\xi)}{-t[\frac{x}t+p'(\xi)]^2}\hat h(\xi)\psi_k(\xi)\psi_{l}(|\xi|-|\xi_0^\pm|)\diff\xi\right|\\
\lesssim~& \frac1t 2^{-2k-2l}(1+2^{2k})^{5} 2^k(1+2^{2k})^{-5/2}2^l \|\psi_k(\xi)\psi_{l}(|\xi|-|\xi_0^\pm|)\hat h(\xi)\|_{L^\infty_\xi}\\
\lesssim~& \frac1t 2^{-k-l}(1+2^{2k})^{5/2}   \|\psi_k(\xi)\psi_{l}(|\xi|-|\xi_0^\pm|)\hat h(\xi)\|_{L^\infty_\xi},
\end{align*}
and
\begin{align*}
&\left|\int_\R e^{ix\xi+ip(\xi)t}\frac{1}{it(\frac{x}t+p'(\xi))} \partial_\xi \left(\hat h (\xi)\psi_k(\xi)\psi_{l}(|\xi|-|\xi_0^\pm|)\right)\diff\xi\right|\\
\lesssim~&\frac1t 2^{-k-l}(1+2^{2k})^{5/2}2^{\frac{l}2}\|\partial_\xi (\hat h (\xi)\psi_k(\xi)\psi_{l}(|\xi|-|\xi_0^\pm|))\|_{L^2_\xi}\\
\lesssim~&\frac1t 2^{-k-\frac l2}(1+2^{2k})^{5/2}\left[\|\partial_\xi\hat h (\xi)\psi_k(\xi)\psi_{l}(|\xi|-|\xi_0^\pm|)\|_{L^2_\xi}+\|\hat h (\xi)\psi_k(\xi)\psi_{l}(|\xi|-|\xi_0^\pm|)\|_{L^\infty_\xi}2^{-l/2})\right].
\end{align*}
Take the summation for $l$ from $-\frac12\log_2(t+1)-\frac k2$ to $k+1$, 
\begin{align*}
&\left|\sum\limits_{l=-\frac12\log_2(t+1)}^{k+1}\int_\R e^{ix\xi+ip(\xi)t}\hat h (\xi)\psi_k(\xi)\diff{\xi}\right|\\
\lesssim~& t^{-1/2} 2^{-\frac12 k}(1+2^{2k})^{5/2}  \|\psi_k(\xi)\hat h(\xi)\|_{L^\infty_\xi}+(t+1)^{-3/4} 2^{-\frac34 k}(1+2^{2k})^{5/2}\|\partial_\xi\hat h (\xi)\psi_k(\xi)\|_{L^2_\xi}.
\end{align*}
For $l<-\frac12\log_2(t+1)-\frac k2$,
\[
\left|\sum\limits_{l<-\frac12\log_2(t+1)-\frac k2}\int_\R e^{ix\xi+ip(\xi)t}\hat h (\xi)\psi_k(\xi)\psi_{l}(|\xi|-|\xi_0^\pm|)\diff{\xi}\right|\lesssim (t+1)^{-1/2} 2^{-\frac12 k}\|\psi_k(\xi)\hat h(\xi)\|_{L^\infty_\xi}.
\]
So we have
\begin{align}\nonumber
&\left|\int_\R e^{ix\xi+ip(\xi)t}\hat h (\xi)\psi_k(\xi)\diff{\xi}\right|\\
\lesssim~& \frac1{\sqrt t} 2^{-\frac12k}(1+2^{2k})^{5/2}  \|\psi_k(\xi)\hat h(\xi)\|_{L^\infty_\xi}+(t+1)^{-3/4} 2^{-\frac34k}(1+2^{2k})^{5/2}\|\partial_\xi\hat h (\xi)\psi_k(\xi)\|_{L^2_\xi}.\label{case2}
\end{align}
Hence we obtain the conclusion by combining \eqref{case1}, \eqref{lowfre} and \eqref{case2}.
\end{proof}

\begin{proof}[Proof of Lemma \ref{NonDis}]
By Lemma \ref{disp}, for $t>1$, we have
\begin{align*}
\|\vp\|_{B^{1,6}}\lesssim &\sum\limits_{2^k\leq t^{1/100}}(2^k+2^{6k})\|e^{t\partial_x(1-\partial_x^2)^{-1/2}} P_kh \|_{L^\infty}+\sum\limits_{2^k> t^{1/100}}\|\partial_x^{6}P_k\vp\|_{L^\infty}\\
\lesssim &\sum\limits_{2^k\leq t^{1/100}}(1+2^{5k})\left[ \frac1{\sqrt t} 2^{\frac12k}(1+2^{2k})^{5/2}  \|\psi_k(\xi)\hat h(\xi)\|_{L^\infty_\xi}+(t+1)^{-3/4} 2^{\frac14k}(1+2^{2k})^{5/2}\|\partial_\xi\hat h (\xi)\psi_k(\xi)\|_{L^2_\xi}\right]\\
&+\sum\limits_{2^k> t^{1/100}} 2^{6k}2^{k/2-sk}\||\xi|^s\psi_k\hat\vp\|_{L^2_\xi}\\
\lesssim &t^{-1/2} \|(|\xi|^{0.4}+|\xi|^{11}) \hat h(\xi)\|_{L^\infty_\xi}+(t+1)^{-\frac34+\frac{11}{100}}\|\partial_\xi \hat h\|_{L^2_\xi}+t^{-1}\|\vp\|_{H^s}\\
\lesssim &t^{-1/2} \ve_1,
\end{align*}
where the last step is from the bootstrap assumption. 
\end{proof}

\section{Weighted energy estimate}\label{sec:WE}
In this section, we prove Lemma \ref{weightedE}.
Recall that $\hat h(t, \xi)=e^{-itp(\xi)}\hat \vp(t, \xi)$. Then the equation \eqref{front} can be expressed as 
\begin{equation}\label{eqhhat}
\hat h_t= e^{-itp(\xi)}i\xi\widehat{\Nc(\vp)} = e^{-itp(\xi)}i\xi\left(\widehat{\Nc_3(\vp)}+\widehat{\Nc_{\geq5}(\vp)}\right).
\end{equation}
Denote 
\begin{equation}\label{defPhi}
\Phi(\eta_1,\eta_2,\xi)=p(\eta_1)+p(\eta_2)+p(\xi-\eta_1-\eta_2)-p(\xi)=
-\frac{\eta_1}{\sqrt{1+\eta_1^2}}-\frac{\eta_2}{\sqrt{1+\eta_2^2}}-\frac{\xi-\eta_1-\eta_2}{\sqrt{1+(\xi-\eta_1-\eta_2)^2}}+\frac{\xi}{\sqrt{1+\xi^2}}.
\end{equation}
By \eqref{N3}, we have
\begin{align*}
e^{-itp(\xi)}\widehat{\Nc_3(\vp)}(\xi)&=\frac13  e^{-itp(\xi)}\iint_{\R^2}\Tb_1(\eta_1,\eta_2,\xi-\eta_1-\eta_2)\hat\vp(\eta_1)\hat\vp(\eta_2)\hat\vp(\xi-\eta_1-\eta_2)\diff \eta_1\diff \eta_2\\
&=\frac13 \iint_{\R^2}\Tb_1(\eta_1,\eta_2,\xi-\eta_1-\eta_2)e^{it\Phi}\hat h(\eta_1)\hat h(\eta_2)\hat h(\xi-\eta_1-\eta_2)\diff \eta_1\diff \eta_2\\
&=\frac13  \sum\limits_{j_1,j_2,j_3\in\Z}\iint_{\R^2}\Tb_1(\eta_1,\eta_2,\xi-\eta_1-\eta_2)e^{it\Phi}\hat h_{j_1}(\eta_1)\hat h_{j_2}(\eta_2)\hat h_{j_3}(\xi-\eta_1-\eta_2)\diff \eta_1\diff \eta_2.
\end{align*}
Since this integral is symmetric with the three variables $\eta_1, \eta_2, \xi-\eta_1-\eta_2$, after the possible change of variables, we assume $j_1\geq j_2\geq j_3$.
Denote the set $\P$ as all the indices $j_1,j_2,j_3\in\Z$, such that $j_1\geq j_2\geq j_3$ with possible repetition.

Take $\partial_\xi$ to \eqref{eqhhat},
\begin{equation}\label{eqn7.2}
\begin{aligned}
\partial_{\xi}\hat h_t
=~&\frac i3\xi \iint_{\R^2}\partial_{\xi}\Tb_1(\eta_1,\eta_2,\xi-\eta_1-\eta_2) e^{it\Phi} \hat h(\eta_1)\hat h(\eta_2)\hat h(\xi-\eta_1-\eta_2)\diff \eta_1\diff \eta_2\\
&+\frac i3\xi  \sum\limits_{\P}\iint_{\R^2}\Tb_1(\eta_1,\eta_2,\xi-\eta_1-\eta_2) e^{it\Phi} \hat h_{j_1}(\eta_1)\hat h_{j_2}(\eta_2)\partial_{\xi}\hat h_{j_3}(\xi-\eta_1-\eta_2)\diff \eta_1\diff \eta_2\\
&-\frac13\xi t\sum\limits_{\P}\iint_{\R^2}\Tb_1(\eta_1,\eta_2,\xi-\eta_1-\eta_2)\partial_{\xi}\Phi e^{it\Phi} \hat h_{j_1}(\eta_1)\hat h_{j_2}(\eta_2)\hat h_{j_3}(\xi-\eta_1-\eta_2)\diff \eta_1\diff \eta_2\\
&+\frac i3 \iint_{\R^2}\Tb_1(\eta_1,\eta_2,\xi-\eta_1-\eta_2) e^{it\Phi} \hat h(\eta_1)\hat h(\eta_2)\hat h(\xi-\eta_1-\eta_2)\diff \eta_1\diff \eta_2\\
&+i\partial_\xi[ \xi e^{-itp(\xi)}\widehat{\Nc_{\geq5}(\vp)}]\\
=:~&\rm{I+II+III+IV+V}.
\end{aligned}\end{equation}
We split the index set $\P$ into $\P_1\bigcup\P_2$, where $\P_1$ includes the indices satisfying $j_1\geq j_2\geq j_3$ and $ j_1-j_3> 1$, which correspond to nonresonant frequencies. $\P_2$ includes indices satisfying $j_1\geq j_2\geq j_3$ and $ j_1-j_3\leq 1$, which contains the resonant frequencies.
Thus the term $\rm{III}$ can be written as
\begin{align*}
\rm{III}=&-\frac 13\xi t \sum\limits_{\P_1}\iint_{\R^2}\Tb_1(\eta_1,\eta_2,\xi-\eta_1-\eta_2)\partial_{\xi}\Phi e^{it\Phi} \hat h_{j_1}(\eta_1)\hat h_{j_2}(\eta_2)\hat h_{j_3}(\xi-\eta_1-\eta_2)\diff \eta_1\diff \eta_2\\
&-\frac 13\xi t \sum\limits_{\P_2}\iint_{\R^2}\Tb_1(\eta_1,\eta_2,\xi-\eta_1-\eta_2)\partial_{\xi}\Phi e^{it\Phi} \hat h_{j_1}(\eta_1)\hat h_{j_2}(\eta_2)\hat h_{j_3}(\xi-\eta_1-\eta_2)\diff \eta_1\diff \eta_2\\
=:&\rm{III}_1+\rm{III}_2.
\end{align*}
For $\rm{III}_2$, we need the further decomposition. Define
\[
\upsilon_\pm(\eta)=\left\{
\begin{aligned}
&1,\quad {\text {if } } \pm\eta\geq 0,\\
&0, \quad {\text {if } } \pm\eta<0,
\end{aligned}
\right.
\]
and $\hat h_{j}^\pm(\eta)=\hat h_j(\eta)\upsilon_\pm(\eta)$. 
The integrals in $\rm{III}_2$ can be written as
\begin{align*}
&-\frac 13\xi t \sum\limits_{\P_2}\iint_{\R^2}\Tb_1(\eta_1,\eta_2,\xi-\eta_1-\eta_2)\partial_{\xi}\Phi e^{it\Phi} \hat h_{j_1}(\eta_1)\hat h_{j_2}(\eta_2)\hat h_{j_3}(\xi-\eta_1-\eta_2)\diff \eta_1\diff \eta_2\\
=~&-\sum\limits_{\iota_1, \iota_2, \iota_3\in\{\pm\}}\frac 13\xi t \sum\limits_{\P_2}\iint_{\R^2}\Tb_1(\eta_1,\eta_2,\xi-\eta_1-\eta_2)\partial_{\xi}\Phi e^{it\Phi} \hat h_{j_1}^{\iota_1}(\eta_1)\hat h_{j_2}^{\iota_2}(\eta_2)\hat h_{j_3}(\xi-\eta_1-\eta_2)\upsilon_{\iota_3}(\xi)\diff \eta_1\diff \eta_2.
\end{align*}
Define the nonresonant part
\begin{align*}
{\rm{III}}_{21}=-\sum\limits_{\mbox{$\scriptsize\begin{array}{cc}(\iota_1, \iota_2, \iota_3)=\\ (+,+,-)\\
\text{ or } (-, -, +)\end{array}$}}\frac 13\xi t \sum\limits_{\P_2}\iint_{\R^2}\Tb_1(\eta_1,\eta_2,\xi-\eta_1-\eta_2)\partial_{\xi}\Phi e^{it\Phi} \hat h_{j_1}^{\iota_1}(\eta_1)\hat h_{j_2}^{\iota_2}(\eta_2)\hat h_{j_3}(\xi-\eta_1-\eta_2)\upsilon_{\iota_3}(\xi)\diff \eta_1\diff \eta_2,
\end{align*}
and the space-time resonance terms around $(\xi, -\xi)$ or $(-\xi, \xi)$,
\begin{align*}
{\rm{III}}_{22}=-\sum\limits_{\scriptsize\begin{array}{cc}\mbox{$(\iota_1, \iota_2)=(\pm, \mp)$}\\ \mbox{$\iota_3\in \{+, -\}$}\end{array}}\frac 13\xi t \sum\limits_{\P_2}\iint_{\R^2}\Tb_1(\eta_1,\eta_2,\xi-\eta_1-\eta_2)\partial_{\xi}\Phi e^{it\Phi} \hat h_{j_1}^{\iota_1}(\eta_1)\hat h_{j_2}^{\iota_2}(\eta_2)\hat h_{j_3}(\xi-\eta_1-\eta_2)\upsilon_{\iota_3}(\xi)\diff \eta_1\diff \eta_2.
\end{align*}
When $(\iota_1, \iota_2, \iota_3)=(+,+,+)$ or $(-,-,-)$, we separate the space resonance and the space-time resonance:
\begin{align*}
&\iint_{\R^2}\Tb_1(\eta_1,\eta_2,\xi-\eta_1-\eta_2)\partial_{\xi}\Phi e^{it\Phi} \hat h_{j_1}^{\iota_1}(\eta_1)\hat h_{j_2}^{\iota_2}(\eta_2)\hat h_{j_3}(\xi-\eta_1-\eta_2)\upsilon_{\iota_3}(\xi)\diff \eta_1\diff \eta_2\\
=~&\iint_{\R^2}\Tb_1(\eta_1,\eta_2,\xi-\eta_1-\eta_2)\partial_{\xi}\Phi e^{it\Phi} \hat h_{j_1}^{\iota_1}(\eta_1)\hat h_{j_2}^{\iota_2}(\eta_2)\hat h_{j_3}(\xi-\eta_1-\eta_2)\upsilon_{\iota_3}(\xi)\psi_{j_1-3}(\eta_1+\eta_2-2\xi)\diff \eta_1\diff \eta_2
\\
&+\iint_{\R^2}\Tb_1(\eta_1,\eta_2,\xi-\eta_1-\eta_2)\partial_{\xi}\Phi e^{it\Phi} \hat h_{j_1}^{\iota_1}(\eta_1)\hat h_{j_2}^{\iota_2}(\eta_2)\hat h_{j_3}(\xi-\eta_1-\eta_2)\upsilon_{\iota_3}(\xi)[1-\psi_{j_1-3}(\eta_1+\eta_2-2\xi)]\diff \eta_1\diff \eta_2.
\end{align*}
Define the space-time resonance around $(\xi, \xi)$ as
\begin{align*}
{\rm{III}}_{23}=&-\sum\limits_{\scriptsize\begin{array}{cc}\mbox{$(\iota_1, \iota_2,\iota_3)=$}\\ \mbox{$(+, +, +)$ or $(-, -, -)$}\end{array}}\frac 13\xi t \sum\limits_{\P_2}\\
&\iint_{\R^2}\Tb_1(\eta_1,\eta_2,\xi-\eta_1-\eta_2)\partial_{\xi}\Phi e^{it\Phi} \hat h_{j_1}^{\iota_1}(\eta_1)\hat h_{j_2}^{\iota_2}(\eta_2)\hat h_{j_3}(\xi-\eta_1-\eta_2)\upsilon_{\iota_3}(\xi)\psi_{j_1-3}(\eta_1+\eta_2-2\xi)\diff \eta_1\diff \eta_2.
\end{align*}
After taking out the time derivative, we can write the space resonance term as
\begin{align*}
&
\frac13\xi t
\iint_{\R^2}\Tb_1(\eta_1,\eta_2,\xi-\eta_1-\eta_2)\partial_{\xi}\Phi e^{it\Phi} \hat h_{j_1}^{\iota_1}(\eta_1)\hat h_{j_2}^{\iota_2}(\eta_2)\hat h_{j_3}(\xi-\eta_1-\eta_2)\upsilon_{\iota_3}(\xi)[1-\psi_{j_1-3}(\eta_1+\eta_2-2\xi)]\diff \eta_1\diff \eta_2
\\
=&\frac{\diff}{\diff t}
\left(
\frac13\xi t
\iint_{\R^2}\Tb_1(\eta_1,\eta_2,\xi-\eta_1-\eta_2)\frac{\partial_{\xi}\Phi}{i\Phi} e^{it\Phi} \hat h_{j_1}^{\iota_1}(\eta_1)\hat h_{j_2}^{\iota_2}(\eta_2)\hat h_{j_3}(\xi-\eta_1-\eta_2)\upsilon_{\iota_3}(\xi)[1-\psi_{j_1-3}(\eta_1+\eta_2-2\xi)]\diff \eta_1\diff \eta_2
\right)
\\
&-\frac13\xi
\iint_{\R^2}\Tb_1(\eta_1,\eta_2,\xi-\eta_1-\eta_2)\frac{\partial_{\xi}\Phi}{i\Phi} e^{it\Phi} \hat h_{j_1}^{\iota_1}(\eta_1)\hat h_{j_2}^{\iota_2}(\eta_2)\hat h_{j_3}(\xi-\eta_1-\eta_2)\upsilon_{\iota_3}(\xi)[1-\psi_{j_1-3}(\eta_1+\eta_2-2\xi)]\diff \eta_1\diff \eta_2
\\
&-\iint_{\R^2}\Tb_1(\eta_1,\eta_2,\xi-\eta_1-\eta_2)\frac{\partial_{\xi}\Phi}{i\Phi} e^{it\Phi} \frac{\diff}{\diff t}[\hat h_{j_1}^{\iota_1}(\eta_1)\hat h_{j_2}^{\iota_2}(\eta_2)\hat h_{j_3}(\xi-\eta_1-\eta_2)]\upsilon_{\iota_3}(\xi)[1-\psi_{j_1-3}(\eta_1+\eta_2-2\xi)]\diff \eta_1\diff \eta_2.
\end{align*}
Define
\begin{align*}
&{\rm{VI}}:=-\frac {\xi}3 t \sum\limits_{\scriptsize\begin{array}{cc}\mbox{$(\iota_1, \iota_2,\iota_3)=$}\\ \mbox{$(+, +, +)$ or $(-, -, -)$}\end{array}}\sum\limits_{\P_2}\\
&\quad\iint_{\R^2}\Tb_1(\eta_1,\eta_2,\xi-\eta_1-\eta_2)\frac{\partial_{\xi}\Phi}{i\Phi} e^{it\Phi} \hat h_{j_1}^{\iota_1}(\eta_1)\hat h_{j_2}^{\iota_2}(\eta_2)\hat h_{j_3}(\xi-\eta_1-\eta_2)\upsilon_{\iota_3}(\xi)[1-\psi_{j_1-3}(\eta_1+\eta_2-2\xi)]\diff \eta_1\diff \eta_2,
\end{align*}
\begin{equation}\label{III241}
\begin{aligned}
&{\rm{III}}_{241}:=
-\frac 13\xi  \sum\limits_{\scriptsize\begin{array}{cc}\mbox{$(\iota_1, \iota_2,\iota_3)=$}\\ \mbox{$(+, +, +)$ or $(-, -, -)$}\end{array}}\sum\limits_{\P_2}\\
&\quad\iint_{\R^2}\Tb_1(\eta_1,\eta_2,\xi-\eta_1-\eta_2)\frac{\partial_{\xi}\Phi}{i\Phi} e^{it\Phi} \hat h_{j_1}^{\iota_1}(\eta_1)\hat h_{j_2}^{\iota_2}(\eta_2)\hat h_{j_3}(\xi-\eta_1-\eta_2)\upsilon_{\iota_3}(\xi)[1-\psi_{j_1-3}(\eta_1+\eta_2-2\xi)]\diff \eta_1\diff \eta_2,
\end{aligned}
\end{equation}
\begin{equation}\label{III242}
\begin{aligned}
&{\rm{III}}_{242}=-\frac 13\xi t\sum\limits_{\scriptsize\begin{array}{cc}\mbox{$(\iota_1, \iota_2,\iota_3)=$}\\ \mbox{$(+, +, +)$ or $(-, -, -)$}\end{array}}\sum\limits_{\P_2}\\
&\iint_{\R^2}\Tb_1(\eta_1,\eta_2,\xi-\eta_1-\eta_2)\frac{\partial_{\xi}\Phi}{i\Phi} e^{it\Phi} \frac{\diff}{\diff t}[\hat h_{j_1}^{\iota_1}(\eta_1)\hat h_{j_2}^{\iota_2}(\eta_2)\hat h_{j_3}(\xi-\eta_1-\eta_2)]\upsilon_{\iota_3}(\xi)[1-\psi_{j_1-3}(\eta_1+\eta_2-2\xi)]\diff \eta_1\diff \eta_2.
\end{aligned}
\end{equation}
So, we write the equation as 
\begin{align*}
\partial_t[\partial_\xi\hat h+\rm{VI}]=\rm{I}+\rm{II}+\rm{III}_1+\rm{III}_{21}+\rm{III}_{22}+\rm{III}_{23}+\rm{III}_{241}+\rm{III}_{242}+\rm{IV}+\rm{V}.
\end{align*}
We multiply the equation by $\partial_\xi\hat h+\rm{VI}$ and then take integral. 
\begin{equation}\label{7.5}
\begin{aligned}
&\left|\frac{\diff}{\diff t}\int \frac12[\partial_\xi \hat h(\xi)+{\rm{VI}}]\overline{[\partial_\xi \hat h(\xi)+{\rm{VI}}]}\diff\xi\right|\\
=~&\left|\Re\int\partial_t[\partial_\xi \hat h(\xi)+{\rm{VI}}]\overline{[\partial_\xi \hat h(\xi)+{\rm{VI}}]}\diff\xi\right|
\\
=~&\left|\Re\int[{\rm{I}+\rm{II}+\rm{III}_1+\rm{III}_{21}+\rm{III}_{22}+\rm{III}_{23}+\rm{III}_{241}+\rm{III}_{242}+\rm{IV+V}}]\overline{[\partial_\xi \hat h(\xi)+{\rm{VI}}]}\diff\xi\right|
\\
\lesssim~&\|[{\rm{I}+\rm{II}+\rm{III}_{21}+\rm{III}_{22}+\rm{III}_{23}+\rm{III}_{241}+\rm{III}_{242}+\rm{IV+V}}]\|_{L^2}\|\partial_\xi \hat h(\xi)+{\rm{VI}}\|_{L^2}\\
&+\left|\Re\int {\rm{III}}_1 \overline{[\partial_\xi \hat h(\xi)+{\rm{VI}}]} 
d\xi
\right|.
\end{aligned}
\end{equation}
We claim that
\begin{equation}\label{claim}
\begin{aligned}
\|\partial_\xi\hat h(t, \xi)\|_{L^2}^2\lesssim ~&\|\partial_\xi\hat h(0, \xi)\|_{L^2}^2+t\|\vp\|_{B^{1,6}}^2\|\vp\|_{L^2}\\
&\quad+\int_0^t \|\vp(\tau)\|_{B^{1,6}}^2 F(\|\vp(\tau)\|_{B^{1,6}})(\|\partial_\xi\hat h(\tau, \xi)\|_{L^2}^2+\|\vp(\tau)\|_{L^2}^2)\diff \tau,
\end{aligned}\end{equation}
where $F$ is a positive polynomial.

In the following, we prove this claim by estimating each term on the right of \eqref{7.5}, and then estimating
the term $\rm{VI}$ to show that $\|\partial_\xi\hat h+\rm{VI}]\|_{L^2_\xi}$ is equivalent to $\|\partial_\xi\hat h\|_{L^2_\xi}$.

\subsection{Term $\rm{I}$ estimate}

By the symbol estimate of $\partial_{\eta_3}\Tb_1$ in Proposition \ref{Prop_symb_Tmu}, we have
\begin{align*}
&\|{\rm{I}}\|_{L^2}\\
=~&\left\|\frac13\xi \iint_{\R^2}\partial_{\xi}\Tb_1(\eta_1,\eta_2,\xi-\eta_1-\eta_2) e^{it\Phi} \hat h(\eta_1)\hat h(\eta_2)\hat h(\xi-\eta_1-\eta_2)\diff \eta_1\diff \eta_2\right\|_{L^2}\\
\lesssim~& \sum\limits_{j_1,j_2,j_3} (1+2^{\min\{j_1,j_2,j_3\}+\med\{j_1,j_2,j_3\}})2^{4\max\{0, \med\{j_1,j_2,j_3\}\}}\\
&\qquad \cdot\|\partial_x\vp_{\min\{j_1,j_2,j_3\}}\|_{L^\infty}\|\partial_x\vp_{\med\{j_1,j_2,j_3\}}\|_{L^\infty}\|\vp_{\max\{j_1,j_2,j_3\}}\|_{H^1}\\
\lesssim~& \left(\sum\limits_{j\in\Z}\|\partial_x\vp_j\|_{L^{\infty}}\right)^2\|\vp\|_{H^{7}}\\
\lesssim~&\|\vp\|_{B^{1,6}}^2\|\vp\|_{H^7}.
\end{align*}

\subsection{Term $\rm{II}$ estimate}
By the definition of $\P$, $j_1=\max\{j_1, j_2, j_3\}$.
\begin{align*}
&\left\|\frac13\xi \iint_{\R^2}\Tb_1(\eta_1,\eta_2,\xi-\eta_1-\eta_2)e^{it\Phi(\eta_1,\eta_2,\xi)} \hat h_{j_1}(\eta_1)\hat h_{j_2}(\eta_2)\partial_{\xi}\hat h_{j_3}(\xi-\eta_1-\eta_2)\diff \eta_1\diff \eta_2\right\|_{L^2}\\
\lesssim~&2^{j_1}\|\partial_\xi\hat h_{j_3}\|_{L^2}\|\Tb_1(\eta_1,\eta_2, \eta_3)\psi_{j_1}(\eta_1)\psi_{j_2}(\eta_2)\psi_{j_3}(\eta_3)\|_{S^{\infty}}\| h_{j_1}\|_{L^\infty}\| h_{j_2}\|_{L^\infty}\\
\lesssim~&2^{j_1+j_2+j_3}(1+2^{j_2})(1+2^{j_3})2^{2\max\{j_2, j_3, 0\}}\|\partial_\xi\hat h_{j_3}\|_{L^2}\| \vp_{j_1}\|_{L^\infty}\| \vp_{j_2}\|_{L^\infty}\\
\lesssim~&2^{(j_1+j_2)/2}\|\partial_\xi\hat h_{j_3}\|_{L^2}\|\partial_x\vp_{j_1}\|_{L^{\infty}}\| \partial_x\vp_{j_2}\|_{W^{4,\infty}}.
\end{align*}
By taking the summation for $(j_1, j_2, j_3)\in \P$, 
\begin{align*}
&\sum\limits_{\P}\left\|\frac13 \xi\iint_{\R^2}\Tb_1(\eta_1,\eta_2,\xi-\eta_1-\eta_2) e^{it\Phi(\eta_1,\eta_2,\xi)} \hat h_{j_1}(\eta_1)\hat h_{j_2}(\eta_2)\partial_{\xi}\hat h_{j_3}(\xi-\eta_1-\eta_2)\diff \eta_1\diff \eta_2\right\|_{L^2}\\
\lesssim~&\|\vp\|_{B^{1,6}}^2\|\partial_\xi\hat h(\xi)\|_{L^2_\xi}.
\end{align*}

\subsection{Term $\rm{III}$'s estimates}
We estimate $\rm{III}_1, \rm{III}_{21}, \rm{III}_{22}, \rm{III}_{23}, \rm{III}_{241}, \rm{III}_{242}$ separately. Among these terms, $\rm{III}_1, \rm{III}_{21}$ are the terms away from resonances. $\rm{III}_{22}, \rm{III}_{23}$ are the terms near the space-time resonances. $\rm{III}_{241}, \rm{III}_{242}$ are related to the terms close to the space resonance. 

\subsubsection{$\rm{III}_1$ estimate}
 When $(j_1, j_2, j_3)\in \P_1$,  $j_1-j_3>1$, this is the nonresonant case.
\begin{align}
\label{eqn7_1}
&\frac 13\xi t \iint_{\R^2}\Tb_1(\eta_1,\eta_2,\xi-\eta_1-\eta_2)\partial_{\xi}\Phi e^{it\Phi} \hat h_{j_1}(\eta_1)\hat h_{j_2}(\eta_2)\hat h_{j_3}(\xi-\eta_1-\eta_2)\diff \eta_1\diff \eta_2
\nonumber
\\
\nonumber
=~&\frac 13\xi  \iint_{\R^2}\Tb_1(\eta_1,\eta_2,\xi-\eta_1-\eta_2)\frac{\partial_{\xi}\Phi}{i\partial_{\eta_1}\Phi} \partial_{\eta_1}e^{it\Phi} \hat h_{j_1}(\eta_1)\hat h_{j_2}(\eta_2)\hat h_{j_3}(\xi-\eta_1-\eta_2)\diff \eta_1\diff \eta_2
\\
=~&-\frac 13\xi  \iint_{\R^2}\partial_{\eta_1}\left[\Tb_1(\eta_1,\eta_2,\xi-\eta_1-\eta_2)\frac{\partial_{\xi}\Phi}{i\partial_{\eta_1}\Phi} \right]e^{it\Phi} \hat h_{j_1}(\eta_1)\hat h_{j_2}(\eta_2)\hat h_{j_3}(\xi-\eta_1-\eta_2)\diff \eta_1\diff \eta_2
\\
&-\frac 13\xi  \iint_{\R^2}\Tb_1(\eta_1,\eta_2,\xi-\eta_1-\eta_2)\frac{\partial_{\xi}\Phi}{i\partial_{\eta_1}\Phi} e^{it\Phi} \partial_{\eta_1}\hat h_{j_1}(\eta_1)\hat h_{j_2}(\eta_2)\hat h_{j_3}(\xi-\eta_1-\eta_2)\diff \eta_1\diff \eta_2
\nonumber
\\
\nonumber
&-\frac 13\xi  \iint_{\R^2}\Tb_1(\eta_1,\eta_2,\xi-\eta_1-\eta_2)\frac{\partial_{\xi}\Phi}{i\partial_{\eta_1}\Phi} e^{it\Phi} \hat h_{j_1}(\eta_1)\hat h_{j_2}(\eta_2)\partial_{\eta_1}\hat h_{j_3}(\xi-\eta_1-\eta_2)\diff \eta_1\diff \eta_2,
\end{align}
where
$\Phi$ is defined in \eqref{defPhi} and 
\begin{align*}
\partial_\xi\Phi(\eta_1,\eta_2,\xi)=-\frac{1}{[1+(\xi-\eta_1-\eta_2)^2]^{3/2}}+\frac{1}{(1+\xi^2)^{3/2}},\\
\partial_{\eta_1}\Phi(\eta_1,\eta_2,\xi)=-\frac{1}{(1+\eta_1^2)^{3/2}}+\frac{1}{[1+(\xi-\eta_1-\eta_2)^2]^{3/2}}.
\end{align*}
By direct calculation
\begin{equation}\label{eqn7_2}
\begin{aligned}
&\partial_{\eta_1}\left[\Tb_1(\eta_1,\eta_2,\xi-\eta_1-\eta_2)\frac{\partial_{\xi}\Phi}{i\partial_{\eta_1}\Phi} \right]\psi_{j_1}(\eta_1)\psi_{j_2}(\eta_2)\psi_{j_3}(\xi-\eta_1-
\eta_2)\\
=~&(\partial_{1}\Tb_1-\partial_3\Tb_1)(\eta_1,\eta_2,\xi-\eta_1-\eta_2)\frac{\partial_{\xi}\Phi}{i\partial_{\eta_1}\Phi} \psi_{j_1}(\eta_1)\psi_{j_2}(\eta_2)\psi_{j_3}(\xi-\eta_1-
\eta_2)\\
&+\Tb_1(\eta_1,\eta_2,\xi-\eta_1-\eta_2)\frac{\partial_{\eta_1}\partial_{\xi}\Phi}{i\partial_{\eta_1}\Phi} \psi_{j_1}(\eta_1)\psi_{j_2}(\eta_2)\psi_{j_3}(\xi-\eta_1-
\eta_2)\\
&-\Tb_1(\eta_1,\eta_2,\xi-\eta_1-\eta_2)\frac{\partial_{\eta_1}^2\Phi\partial_{\xi}\Phi}{i(\partial_{\eta_1}\Phi)^2} \psi_{j_1}(\eta_1)\psi_{j_2}(\eta_2)\psi_{j_3}(\xi-\eta_1-
\eta_2).
\end{aligned}
\end{equation}
The first term on the right-hand-side of equation \eqref{eqn7_1} is a Fourier multilinear integral with the symbol 
\begin{align*}
&-i\psi_{j_1}(\eta_1)\psi_{j_2}(\eta_2)\psi_{j_3}(\eta_3)\\
&\cdot\Big[(\partial_{1}\Tb_1-\partial_3\Tb_1)(\eta_1,\eta_2, \eta_3)\frac{\frac{1}{[1+\eta_3^2]^{3/2}}-\frac{1}{(1+(\eta_1+\eta_2+\eta_3)^2)^{3/2}}}{\frac{1}{(1+\eta_1^2)^{3/2}}-\frac{1}{[1+\eta_3^2]^{3/2}}} 
+\Tb_1(\eta_1,\eta_2, \eta_3)\frac{\frac{3\eta_3}{2[1+\eta_3^2]^{5/2}}}{\frac{1}{(1+\eta_1^2)^{3/2}}-\frac{1}{[1+\eta_3^2]^{3/2}}}\\
&\qquad -\Tb_1(\eta_1,\eta_2, \eta_3)\frac{\left[-\frac{3\eta_1}{(1+\eta_1^2)^{5/2}}-\frac{3\eta_3}{(1+\eta_3^2)^{5/2}}\right]\left[\frac{1}{[1+\eta_3^2]^{3/2}}-\frac{1}{(1+(\eta_1+\eta_2+\eta_3)^2)^{3/2}}\right]}{\left[\frac{1}{(1+\eta_1^2)^{3/2}}-\frac{1}{[1+\eta_3^2]^{3/2}}\right]^2}\Big].
\end{align*}
By direct calculation, we have the following estimates for the symbols:
\begin{align}
\label{symbolphixi}
&\left\|\psi_{j_1}(\eta_1)\psi_{j_2}(\eta_2)\psi_{j_3}(\eta_3)\left[\frac{1}{[1+\eta_3^2]^{3/2}}-\frac{1}{(1+(\eta_1+\eta_2+\eta_3)^2)^{3/2}}\right]\right\|_{S^\infty}
\hskip-0.2in
\lesssim 
2^{2j_1}(1+2^{2j_1})^{-1}(1+2^{3j_3})^{-1},
\end{align}
\begin{align}\label{symbolpartialphi}
\left\|
\psi_{j_1}(\eta_1)\psi_{j_2}(\eta_2)\psi_{j_3}(\eta_3)
\left[\frac{1}{(1+\eta_1^2)^{3/2}}-\frac{1}{[1+\eta_3^2]^{3/2}} \right]^{-1}\right\|_{S^\infty}
\lesssim& 
(1+2^{3j_3})(1+2^{-2j_1}),
\end{align}
\begin{align}\label{symbol2d}
\left\|\psi_{j_1}(\eta_1)\psi_{j_2}(\eta_2)\psi_{j_3}(\eta_3)\left[-\frac{3\eta_1}{(1+\eta_1^2)^{5/2}}-\frac{3\eta_3}{(1+\eta_3^2)^{5/2}}\right]\right\|_{S^\infty}
\lesssim&
2^{j_1}(1+2^{j_1})^{-1}(1+2^{j_3})^{-4}
,
\end{align}
\begin{align*}
\left\|\psi_{j_1}(\eta_1)\psi_{j_2}(\eta_2)\psi_{j_3}(\eta_3)\left[\frac{3\eta_3}{(1+\eta_3^2)^{5/2}}\right]\right\|_{S^\infty}\lesssim 2^{j_3}(1+2^{j_3})^{-5}.
\end{align*}
By the algebraic property of $S^\infty$ norm, combining the above estimates and Proposition \ref{Prop_symb_Tmu}, 
\begin{align*}
&\left\|\Big\{\partial_{\eta_1}[\Tb_1(\eta_1,\eta_2,\xi-\eta_1-\eta_2)\frac{\partial_{\xi}\Phi}{i\partial_{\eta_1}\Phi}]\Big\}_{\xi=\eta_1+\eta_2+\eta_3}\psi_{j_1}(\eta_1)\psi_{j_2}(\eta_2)\psi_{j_3}(\eta_3)\right\|_{S^\infty}\\
\lesssim~&  [(1+2^{j_3})(1+2^{5j_2})2^{j_2+j_3}][1+2^{3j_3}](1+2^{3j_3})^{-1}\\
& +[(1+2^{j_3})(1+2^{3j_2})2^{j_2+j_3}][2^{j_3}(1+2^{j_3})^{-1}][(1+2^{3j_3})(1+2^{-2j_1})]\\
&+[(1+2^{j_3})(1+2^{3j_2})2^{j_2+j_3}][2^{j_1}(1+2^{j_1})^{-1}][2^{2j_1}(1+2^{2j_1})^{-1}(1+2^{3j_3})^{-1}][(1+2^{3j_3})(1+2^{-2j_1})]^2\\
\lesssim~&2^{j_2+j_3}(1+2^{5j_3})(1+2^{5j_2})(1+2^{-j_1}).
\end{align*}
The first term of equation \eqref{eqn7_1} satisfies
\begin{align*}
\left\|\frac13\xi\iint_{\R^2}\partial_{\eta_1}\left[\Tb_1(\eta_1,\eta_2,\xi-\eta_1-\eta_2)\frac{\partial_{\xi}\Phi}{i\partial_{\eta_1}\Phi} \right]e^{it\Phi} \hat h_{j_1}(\eta_1)\hat h_{j_2}(\eta_2)\hat h_{j_3}(\xi-\eta_1-\eta_2)\diff \eta_1\diff \eta_2\right\|_{L^2}\\
\lesssim \|\vp_{j_1}\|_{H^{1}}\|\vp_{j_2}\|_{B^{1,6}}\|\vp_{j_3}\|_{B^{1,6}}.
\end{align*}
The second term of equation \eqref{eqn7_1} is
\begin{align*}
&\xi\iint_{\R^2}\Tb_1(\eta_1,\eta_2,\xi-\eta_1-\eta_2)\frac{\partial_{\xi}\Phi}{i\partial_{\eta_1}\Phi} e^{it\Phi} \partial_{\eta_1}\hat h_{j_1}(\eta_1)\hat h_{j_2}(\eta_2)\hat h_{j_3}(\xi-\eta_1-\eta_2)\diff \eta_1\diff \eta_2.
\end{align*}
For this term, we consider the following two cases:
Case 1: $j_2\le j_1\le j_2+1$,
and
Case 2: $j_2< j_1-1$.

\vskip.1in
\noindent
{\bf Estimate in Case 1.} Since $|\xi|\approx 2^{j_2}\approx 2^{j_1}>2^{j_3+1}$, using Proposition \ref{Prop_symb_Tmu}
 we have 
\begin{align*}
&\left\|
\psi_{j_1}(\eta_1)\psi_{j_2}(\eta_2)\psi_{j_3}(\eta_3)
(\eta_1+\eta_2+\eta_3)\Tb_1(\eta_1,\eta_2,\eta_3)\frac{\partial_{\xi}\Phi}{i\partial_{\eta_1}\Phi}
\right\|_{S^{\infty}}
\\
\lesssim~&
2^{j_2}
\|
\psi_{j_1}(\eta_1)\psi_{j_2}(\eta_2)\psi_{j_3}(\eta_3)
\Tb_1(\eta_1,\eta_2,\eta_3)
\|_{S^{\infty}}
\lesssim
 2^{(2j_2+j_3)}(1+2^{3 j_2}) (1+2^{j_3}),
\end{align*}
which implies that 
\begin{align*}
&
\Big|\Re\int_{\R} \overline{[\partial_\xi \hat h(\xi)]}\iint_{\R^2}\xi\Tb_1(\eta_1,\eta_2,\xi-\eta_1-\eta_2)\frac{\partial_{\xi}\Phi}{i\partial_{\eta_1}\Phi} e^{it\Phi} \psi_{j_1}(\eta_1)\partial_{\eta_1}\hat h(\eta_1)\hat h_{j_2}(\eta_2)\hat h_{j_3}(\xi-\eta_1-\eta_2)\diff \eta_1\diff \eta_2\diff\xi
\Big|
\\
\lesssim~&
 \|\psi_{j_1}(\xi)\partial_\xi\hat h(\xi)\|_{L^2}\|\tilde\psi_{j_1}(\xi)\partial_\xi\hat h(\xi)\|_{L^2}\|\vp_{j_2}\|_{B^{1,6}}\|\vp_{j_3}\|_{B^{1,6}}.
\end{align*}
{\bf Estimate in Case 2.} In this case, the second term of equation \eqref{eqn7_1}  has the trouble of loss of derivatives in the high frequency part. One can transfer one-order of the high frequency derivative to the low frequency by doing the symmetrization. Thus we need to estimate
\begin{align*}
&\Re\int_{\R} \overline{[\partial_\xi \hat h(\xi)]}\iint_{\R^2}\xi\Tb_1(\eta_1,\eta_2,\xi-\eta_1-\eta_2)\frac{\partial_{\xi}\Phi}{i\partial_{\eta_1}\Phi} e^{it\Phi} \psi_{j_1}(\eta_1)\partial_{\eta_1}\hat h(\eta_1)\hat h_{j_2}(\eta_2)\hat h_{j_3}(\xi-\eta_1-\eta_2)\diff \eta_1\diff \eta_2\diff\xi
\\
=~&-\Im \iiint_{\R^3}\xi\Tb_1(\eta_1,\eta_2,\xi-\eta_1-\eta_2)\frac{p'(\xi-\eta_1-\eta_2)-p'(\xi)}{p'(\eta_1)-p'(\xi-\eta_1-\eta_2)} \psi_{j_1}(\eta_1)\\
& \cdot e^{it\Phi(\eta_1,\eta_2,\xi)}\overline{[\partial_\xi \hat h(\xi)]}\partial_{\eta_1}\hat h(\eta_1)\hat h_{j_2}(\eta_2)\hat h_{j_3}(\xi-\eta_1-\eta_2)\diff \eta_1\diff \eta_2\diff\xi\\
=~-&\Im\iiint_{\R^3}-\eta_1 \Tb_1(-\xi,\eta_2,\xi-\eta_1-\eta_2)\frac{p'(\eta_1)-p'(\xi-\eta_1-\eta_2)} {p'(\xi-\eta_1-\eta_2)-p'(\xi)}\psi_{j_1}(\xi) \\
&\cdot e^{it\Phi(\eta_1,\eta_2,\xi)}\overline{[\partial_\xi \hat h(\xi)]} \partial_{\eta_1}\hat h(\eta_1)\hat h_{j_2}(\eta_2)\hat h_{j_3}(\xi-\eta_1-\eta_2)\diff \eta_1\diff \eta_2\diff\xi\\
=~-&\Im\iiint_{\R^3} m(\eta_1, \eta_2, \xi-\eta_1-\eta_2))e^{it\Phi(\eta_1,\eta_2,\xi)}\overline{[\partial_\xi \hat h(\xi)]} \partial_{\eta_1}\hat h(\eta_1)\hat h_{j_2}(\eta_2)\hat h_{j_3}(\xi-\eta_1-\eta_2)\diff \eta_1\diff \eta_2\diff\xi,
\end{align*}
where 
\begin{align*}
&m(\eta_1, \eta_2, \xi-\eta_1-\eta_2)\\
=~&\frac12\Bigg[ -\eta_1\Tb_1(-\xi,\eta_2,\xi-\eta_1-\eta_2)\frac{p'(\eta_1)-p'(\xi-\eta_1-\eta_2)}{p'(\xi-\eta_1-\eta_2)-p'(\xi)}\psi_{j_1}(\xi)\\
&\qquad+ \xi\Tb_1(\eta_1,\eta_2,\xi-\eta_1-\eta_2)\frac{p'(\xi-\eta_1-\eta_2)-p'(\xi)}{p'(\eta_1)-p'(\xi-\eta_1-\eta_2)}\psi_{j_1}(\eta_1)\Bigg]\\
 =~& \frac12  \Bigg[-\eta_1\big(\Tb_1(-\xi,\eta_2,\xi-\eta_1-\eta_2)-\Tb_1(\eta_1,\eta_2,\xi-\eta_1-\eta_2)\big)\frac{p'(\eta_1)-p'(\xi-\eta_1-\eta_2)}{p'(\xi-\eta_1-\eta_2)-p'(\xi)}\psi_{j_1}(\xi)\\
 &~~\quad +\eta_1 \Tb_1(\eta_1,\eta_2,\xi-\eta_1-\eta_2)\left(\frac{p'(\xi-\eta_1-\eta_2)-p'(\xi)}{p'(\eta_1)-p'(\xi-\eta_1-\eta_2)}-\frac{p'(\eta_1)-p'(\xi-\eta_1-\eta_2)}{p'(\xi-\eta_1-\eta_2)-p'(\xi)}\right)\psi_{j_1}(\xi)\\
 &~~\quad +\eta_1 \Tb_1(\eta_1,\eta_2,\xi-\eta_1-\eta_2)\frac{p'(\xi-\eta_1-\eta_2)-p'(\xi)}{p'(\eta_1)-p'(\xi-\eta_1-\eta_2)}(\psi_{j_1}(\eta_1)-\psi_{j_1}(\xi))\\
 &~~\quad+(\xi-\eta_1)\Tb_1(\eta_1,\eta_2,\xi-\eta_1-\eta_2)\frac{p'(\xi-\eta_1-\eta_2)-p'(\xi)}{p'(\eta_1)-p'(\xi-\eta_1-\eta_2)}\psi_{j_1}(\eta_1)\Bigg]\\
 =:~&m_1+m_2+m_3+m_4.
\end{align*}
We will estimate the $S^\infty$ norm of each symbol in the dyadic support.

For the symbol $m_1$, using \eqref{Tmu_est5}, 
\begin{align*}
\|m_1\psi_{j_1}\psi_{j_2}\psi_{j_3}\|_{S^\infty}&\lesssim 2^{j_1}[2^{j_2+j_3}(1+2^{j_1})^{-1}(1+2^{j_2})^3(1+2^{j_3})]\\
&\lesssim 2^{j_2+j_3}(1+2^{j_2})^3(1+2^{j_3}).
\end{align*}

For the symbol $m_2$, we calculate the difference of the fractions
\[
\frac{p'(\eta_3)-p'(\eta_1+\eta_2+\eta_3)}{p'(\eta_1)-p'(\eta_3)}-\frac{p'(\eta_1)-p'(\eta_3)}{p'(\eta_3)-p'(\eta_1+\eta_2+\eta_3)}
=:\frac{\mathcal{A}}{\mathcal{B}},
\] 
where
\begin{align*}
\mathcal{A}=~&[(1+(\eta_1+\eta_2+\eta_3)^2)^{3/2}-(1+\eta_3^2)^{3/2}]^2(1+\eta_1^2)^{3}-[(1+\eta_3^2)^{3/2}-(1+\eta_1^2)^{3/2}]^2(1+(\eta_1+\eta_2+\eta_3)^2)^{3}\\
=~&(1+\eta_3^2)^{3}(1+\eta_1^2)^{3}-2(1+(\eta_1+\eta_2+\eta_3)^2)^{3/2}(1+\eta_3^2)^{3/2}(1+\eta_1^2)^{3}\\
&-(1+\eta_3^2)^{3}(1+(\eta_1+\eta_2+\eta_3)^2)^{3}+2(1+\eta_3^2)^{3/2}(1+\eta_1^2)^{3/2}(1+(\eta_1+\eta_2+\eta_3)^2)^{3}\\
=~&(1+\eta_3^2)^{3}[(1+\eta_1^2)^{3}-(1+(\eta_1+\eta_2+\eta_3)^2)^{3}]\\
&+2(1+\eta_3^2)^{3/2}(1+\eta_1^2)^{3/2}(1+(\eta_1+\eta_2+\eta_3)^2)^{3/2}[(1+(\eta_1+\eta_2+\eta_3)^2)^{3/2}-(1+\eta_1^2)^{3/2}],
\end{align*}
\begin{align*}
\mathcal{B}=~[(1+(\eta_1+\eta_2+\eta_3)^2)^{3/2}-(1+\eta_3^2)^{3/2}](1+\eta_1^2)^{3/2}[(1+\eta_3^2)^{3/2}-(1+\eta_1^2)^{3/2}](1+(\eta_1+\eta_2+\eta_3)^2)^{3/2}.
\end{align*}
It's easy to estimate the $S^\infty$ norm of $\mathcal{A}$ and $\mathcal{B}$ as
\[
\|\mathcal{A}\psi_{j_1}\psi_{j_2}\psi_{j_3}\|_{S^\infty}\lesssim 2^{2j_1+2j_2}(1+2^{j_1})^7(1+2^{j_2}),
\]
\[
\|\mathcal{B}\psi_{j_1}\psi_{j_2}\psi_{j_3}\|_{S^\infty}\approx (1+2^{j_1})^{8}2^{4j_1}.
\]
Therefore, by algebraic property of $S^\infty$ norm,
\[
\Big\|\frac{\mathcal{A}}{\mathcal{B}}\psi_{j_1}\psi_{j_2}\psi_{j_3}\Big\|_{S^\infty}\lesssim (1+2^{j_1})^{-1}(1+2^{j_2}).
\]
By above estimate and proposition \ref{Prop_symb_Tmu},
\begin{align*}
\|m_2\psi_{j_1}\psi_{j_2}\psi_{j_3}\|_{S^\infty}&\lesssim 2^{j_1}[2^{j_1+j_2+j_3}(1+2^{j_1})^{-1}(1+2^{j_2})^3(1+2^{j_3})][(1+2^{j_1})^{-1}(1+2^{j_2})]\\
&\lesssim 2^{j_2+j_3}(1+2^{j_2})^4(1+2^{j_3}).
\end{align*}
For the symbol $m_3, m_4$, they satisfy the estimates
\begin{align*}
\|m_3\psi_{j_1}\psi_{j_2}\psi_{j_3}\|_{S^\infty}&\lesssim 2^{j_1}[2^{j_1+j_2+j_3}(1+2^{j_1})^{-1}(1+2^{j_2})^3(1+2^{j_3})]2^{-j_1}\\
&\lesssim 2^{j_2+j_3}(1+2^{j_2})^3(1+2^{j_3}),
\end{align*}
\begin{align*}
\|m_4\psi_{j_1}\psi_{j_2}\psi_{j_3}\|_{S^\infty}&\lesssim 2^{j_2}[2^{j_1+j_2+j_3}(1+2^{j_1})^{-1}(1+2^{j_2})^3(1+2^{j_3})]\\
&\lesssim 2^{2j_2+j_3}(1+2^{j_2})^3(1+2^{j_3}).
\end{align*}
Therefore the $S^\infty$-norm of $m(\eta_1, \eta_2, \eta_3)\psi_{j_1}(\eta_1)\psi_{j_2}(\eta_2)\psi_{j_3}(\eta_3)$ satisfies
\begin{align*}
&\|m\psi_{j_1}\psi_{j_2}\psi_{j_3}\|_{S^\infty}\lesssim 2^{j_2+j_3}(1+2^{j_2})^4(1+2^{j_3}).
\end{align*}
The integral is bounded by
\begin{align*}
&\left|\Re\int_{\R} \overline{[\partial_\xi \hat h(\xi)]}\iint_{\R^2}\Tb_1(\eta_1,\eta_2,\xi-\eta_1-\eta_2)\frac{\partial_{\xi}\Phi}{i\partial_{\eta_1}\Phi} e^{it\Phi} \psi_{j_1}(\eta_1)\partial_{\eta_1}\hat h(\eta_1)\hat h_{j_2}(\eta_2)\hat h_{j_3}(\xi-\eta_1-\eta_2)\diff \eta_1\diff \eta_2\diff\xi\right|\\
\lesssim~& \|\psi_{j_1}(\xi)\partial_\xi\hat h(\xi)\|_{L^2}\|\tilde\psi_{j_1}(\xi)\partial_\xi\hat h(\xi)\|_{L^2}\|\vp_{j_2}\|_{B^{1,6}}\|\vp_{j_3}\|_{B^{1,6}}.
\end{align*}
The other term for $\rm{III}_{1}$ estimate in \eqref{7.5} is
\begin{align*}
&\left|\Re\int_{\R} \overline{\rm{VI}(\xi)}\iint_{\R^2}\xi \Tb_1(\eta_1,\eta_2,\xi-\eta_1-\eta_2)\frac{\partial_{\xi}\Phi}{i\partial_{\eta_1}\Phi} e^{it\Phi} \psi_{j_1}(\eta_1)\partial_{\eta_1}\hat h(\eta_1)\hat h_{j_2}(\eta_2)\hat h_{j_3}(\xi-\eta_1-\eta_2)\diff \eta_1\diff \eta_2\diff\xi\right|\\
\lesssim~& (1+2^{j_1})\|\tilde \psi_{j_1}(\xi){\rm{VI}}\|_{L^2}\|\tilde\psi_{j_1}\partial_\xi\hat h\|_{L^2}\|\vp_{j_2}\|_{B^{1,6}}\|\vp_{j_3}\|_{B^{1,6}}.
\end{align*}
The fourth term in \eqref{eqn7_1} satisfies
\begin{align*}
&\left|\iiint_{\R^3} \frac13\overline{[\partial_\xi \hat h(\xi)]}\xi\Tb_1(\eta_1,\eta_2,\xi-\eta_1-\eta_2)\frac{\partial_{\xi}\Phi}{i\partial_{\eta_1}\Phi} e^{it\Phi} \hat h_{j_1}(\eta_1)\hat h_{j_2}(\eta_2)\partial_{\eta_1}\hat h_{j_3}(\xi-\eta_1-\eta_2)\diff \eta_1\diff \eta_2\diff \xi \right|\\
\lesssim~& \|\partial_\xi\hat h_{j_3}\|_{L^2_\xi}\|\partial_\xi\hat h \tilde \psi_{j_1}\|_{L^2} \|\vp_{j_2}\|_{B^{1,6}}\|\vp_{j_1}\|_{B^{1,6}}.
\end{align*}

\subsubsection{Term $\rm{III}_{21}$ estimate}
We will estimate the $L^2$-norm of
\begin{align*}
\frac \xi3 t \iint_{\R^2}\Tb_1(\eta_1,\eta_2,\xi-\eta_1-\eta_2)\partial_{\xi}\Phi e^{it\Phi} \hat h_{j_1}^{\iota_1}(\eta_1)\hat h_{j_2}^{\iota_2}(\eta_2)\hat h_{j_3}(\xi-\eta_1-\eta_2)\upsilon_{\iota_3}(\xi)\diff \eta_1\diff \eta_2,
\end{align*}
when $(\iota_1, \iota_2, \iota_3)=(+,+,-)$ or $(-, -, +)$ and $j_1, j_2, j_3\in \P_2$.

In the support of the integrand, $|\eta_1^2-(\xi-\eta_1-\eta_2)^2|\gtrsim \min\{1, 2^{2j_1}\}$. This is away from resonances.
By the same operation as \eqref{eqn7_1} and  \eqref{eqn7_2}, we need to estimate
\begin{equation}\label{eqn714}
\begin{aligned}
&\frac\xi3  \iint_{\R^2}\partial_{\eta_1}[\Tb_1(\eta_1,\eta_2,\xi-\eta_1-\eta_2)\frac{\partial_{\xi}\Phi}{i\partial_{\eta_1}\Phi} ]e^{it\Phi} \hat h_{j_1}^{\iota_1}(\eta_1)\hat h_{j_2}^{\iota_2}(\eta_2)\hat h_{j_3}(\xi-\eta_1-\eta_2)\upsilon_{\iota_3}(\xi)\diff \eta_1\diff \eta_2\\
&+\frac\xi3  \iint_{\R^2}\Tb_1(\eta_1,\eta_2,\xi-\eta_1-\eta_2)\frac{\partial_{\xi}\Phi}{i\partial_{\eta_1}\Phi} e^{it\Phi} \partial_{\eta_1}\hat h_{j_1}^{\iota_1}(\eta_1)\hat h_{j_2}^{\iota_2}(\eta_2)\hat h_{j_3}(\xi-\eta_1-\eta_2)\upsilon_{\iota_3}(\xi)\diff \eta_1\diff \eta_2\\
&+\frac\xi3  \iint_{\R^2}\Tb_1(\eta_1,\eta_2,\xi-\eta_1-\eta_2)\frac{\partial_{\xi}\Phi}{i\partial_{\eta_1}\Phi} e^{it\Phi} \hat h_{j_1}^{\iota_1}(\eta_1)\hat h_{j_2}^{\iota_2}(\eta_2)\partial_{\eta_1}\hat h_{j_3}(\xi-\eta_1-\eta_2)\diff \eta_1\diff \eta_2.
\end{aligned}\end{equation}
The symbols satisfy
\begin{align*}
&\left\|\psi_{j_1}(\eta_1)\psi_{j_2}(\eta_2)\psi_{j_3}(\eta_3)\upsilon_{\iota_1}(\eta_1)\upsilon_{\iota_2}(\eta_2)\upsilon_{\iota_3}(\eta_1+\eta_2+\eta_3)\left[\frac{1}{[1+\eta_3^2]^{3/2}}-\frac{1}{(1+(\eta_1+\eta_2+\eta_3)^2)^{3/2}}\right]\right\|_{S^\infty}\\
\lesssim~& 2^{2j_1}(1+2^{5j_1})^{-1},
\end{align*}
\begin{align}\nonumber
&\left\|\psi_{j_1}(\eta_1)\psi_{j_2}(\eta_2)\psi_{j_3}(\eta_3)\upsilon_{\iota_1}(\eta_1)\upsilon_{\iota_2}(\eta_2)\upsilon_{\iota_3}(\eta_1+\eta_2+\eta_3)\left[\frac{1}{(1+\eta_1^2)^{3/2}}-\frac{1}{[1+\eta_3^2]^{3/2}} \right]^{-1}\right\|_{S^\infty}\\\label{eqn715}
\lesssim~ &(1+2^{2j_1})^{5/2}\max\{1, 2^{-2j_1}\}\lesssim (1+2^{j_1})^{7}2^{-2j_1},
\end{align}
\begin{align}\label{eqn716}
\left\|\psi_{j_1}(\eta_1)\psi_{j_2}(\eta_2)\psi_{j_3}(\eta_3)\upsilon_{\iota_1}(\eta_1)\upsilon_{\iota_2}(\eta_2)\upsilon_{\iota_3}(\eta_1+\eta_2+\eta_3)\left[-\frac{3\eta_1}{(1+\eta_1^2)^{5/2}}-\frac{3\eta_3}{(1+\eta_3^2)^{5/2}}\right]\right\|_{S^\infty}\lesssim 2^{j_1}(1+2^{j_1})^{-5},
\end{align}
\begin{align}\label{eqn717}
\left\|\psi_{j_1}(\eta_1)\psi_{j_2}(\eta_2)\psi_{j_3}(\eta_3)\upsilon_{\iota_1}(\eta_1)\upsilon_{\iota_2}(\eta_2)\upsilon_{\iota_3}(\eta_1+\eta_2+\eta_3)\left[\frac{3\eta_3}{(1+\eta_3^2)^{5/2}}\right]\right\|_{S^\infty}\lesssim 2^{j_1}(1+2^{j_1})^{-5}.
\end{align}
Thus
\begin{align*}
&\left\|\frac\xi3  \iint_{\R^2}\partial_{\eta_1}\left[\Tb_1(\eta_1,\eta_2,\xi-\eta_1-\eta_2)\frac{\partial_{\xi}\Phi}{i\partial_{\eta_1}\Phi} \right]e^{it\Phi} \hat h_{j_1}^{\iota_1}(\eta_1)\hat h_{j_2}^{\iota_2}(\eta_2)\hat h_{j_3}(\xi-\eta_1-\eta_2)\upsilon_{\iota_3}(\xi)\diff \eta_1\diff \eta_2\right\|_{L^2}\\
\lesssim~& \left\|\frac\xi3  \iint_{\R^2}\partial_{\eta_1}\Tb_1(\eta_1,\eta_2,\xi-\eta_1-\eta_2)\frac{\partial_{\xi}\Phi}{i\partial_{\eta_1}\Phi} e^{it\Phi}\hat h_{j_1}^{\iota_1}(\eta_1)\hat h_{j_2}^{\iota_2}(\eta_2)\hat h_{j_3}(\xi-\eta_1-\eta_2)\upsilon_{\iota_3}(\xi)\diff \eta_1\diff \eta_2\right\|_{L^2}\\
& +\left\|\frac\xi3  \iint_{\R^2}\Tb_1(\eta_1,\eta_2,\xi-\eta_1-\eta_2)\frac{\partial_{\xi}\Phi \partial_{\eta_1}^2\Phi}{i(\partial_{\eta_1}\Phi)^2} e^{it\Phi}\hat h_{j_1}^{\iota_1}(\eta_1)\hat h_{j_2}^{\iota_2}(\eta_2)\hat h_{j_3}(\xi-\eta_1-\eta_2)\upsilon_{\iota_3}(\xi)\diff \eta_1\diff \eta_2\right\|_{L^2}\\
& +\left\|\frac\xi3  \iint_{\R^2}\Tb_1(\eta_1,\eta_2,\xi-\eta_1-\eta_2)\frac{\partial_{\xi}\partial_{\eta_1}\Phi}{i\partial_{\eta_1}\Phi} e^{it\Phi} \hat h_{j_1}^{\iota_1}(\eta_1)\hat h_{j_2}^{\iota_2}(\eta_2)\hat h_{j_3}(\xi-\eta_1-\eta_2)\upsilon_{\iota_3}(\xi)\diff \eta_1\diff \eta_2\right\|_{L^2}\\
\lesssim~&2^{j_1}\Big\{[2^{2j_1} (1+2^{j_1})^6][(1+2^{2j_1})]\\
&+[2^{3j_1}(1+2^{j_1})^3][2^{2j_1}(1+2^{5j_1})^{-1}][(1+2^{7j_1})2^{-2j_1}]^2[2^{j_1}(1+2^{5j_1})^{-1}]\\
&+[2^{3j_1}(1+2^{j_1})^3] [(1+2^{7j_1})2^{-2j_1}][2^{j_1}(1+2^{5j_1})^{-1}]\Big\}\cdot\|\vp_{j_1}\|_{L^2}\|\vp_{j_2}\|_{L^\infty}\|\vp_{j_3}\|_{L^\infty}\\
\lesssim~&\|\vp_{j_1}\|_{L^2}\|\vp_{j_2}\|_{B^{1,6}}\|\vp_{j_3}\|_{B^{1,6}}.
\end{align*}

Similarly, the other two integrals satisfy
\begin{align*}
&\left\|\frac\xi3  \iint_{\R^2}\Tb_1(\eta_1,\eta_2,\xi-\eta_1-\eta_2)\frac{\partial_{\xi}\Phi}{i\partial_{\eta_1}\Phi} e^{it\Phi} \partial_{\eta_1}\hat h_{j_1}^{\iota_1}(\eta_1)\hat h_{j_2}^{\iota_2}(\eta_2)\hat h_{j_3}(\xi-\eta_1-\eta_2)\upsilon_{\iota_3}(\xi)\diff \eta_1\diff \eta_2\right\|_{L^2}\\
&+\left\|\frac\xi3  \iint_{\R^2}\Tb_1(\eta_1,\eta_2,\xi-\eta_1-\eta_2)\frac{\partial_{\xi}\Phi}{i\partial_{\eta_1}\Phi} e^{it\Phi} \hat h_{j_1}^{\iota_1}(\eta_1)\hat h_{j_2}^{\iota_2}(\eta_2)\partial_{\eta_1}\hat h_{j_3}(\xi-\eta_1-\eta_2)\diff \eta_1\diff \eta_2\right\|_{L^2}\\
\lesssim~&2^{j_1}[2^{3j_1}(1+2^{j_1})^3][(1+2^{2j_1})]
\big(\|\partial_\xi\hat h_{j_1}(\xi)\|_{L^2_\xi}\|\vp_{j_2}\|_{L^\infty}\|\vp_{j_3}\|_{L^\infty}+\|\partial_\xi\hat h_{j_3}(\xi)\|_{L^2_\xi}\|\vp_{j_2}\|_{L^\infty}\|\vp_{j_1}\|_{L^\infty}\big)\\
\lesssim~&\|\partial_{\xi}\hat h_{j_1}(\xi)\|_{L^2_\xi}\|\vp_{j_2}\|_{B^{1,6}}\|\vp_{j_3}\|_{B^{1,6}}+\|\partial_{\xi}\hat h_{j_3}\|_{L^2_\xi}\|\vp_{j_2}\|_{B^{1,6}}\|\vp_{j_1}\|_{B^{1,6}}.
\end{align*}

Then after taking the summation for the related sub-indices $(j_1, j_2, j_3)\in\P_2$, we obtain
\begin{align*}
\|{\rm{III}}_{21}\|_{L^2}\lesssim \|\vp\|_{B^{1,6}}^2(\|\vp\|_{L^2}+\|\partial_\xi\hat h(\xi)\|_{L^2_\xi}).
\end{align*}

\subsubsection{Term $\rm{III}_{22}$ estimate}
In this case, $(\eta_1, \eta_2)$ is around $(\xi, -\xi)$ or $(-\xi, \xi)$. Since $\eta_1$ and $\eta_2$ are symmetric in the integral, we assume $(\eta_1, \eta_2)$ is around $(\xi, -\xi)$ in the following. Under this assumption, $0\leq |\eta_1+\eta_2|\lesssim 2^{j_1}$, $|\eta_1-\eta_2|\approx 2^{j_1}$.

Since 
\begin{equation}\label{eqn718}
(\partial_{\eta_1}-\partial_{\eta_2})e^{it\Phi(\eta_1, \eta_2, \xi)}=(p'(\eta_1)-p'(\eta_2))ite^{it\Phi(\eta_1, \eta_2, \xi)},
\end{equation}
we have
\begin{align*}
&-\frac\xi3 t \iint_{\R^2}\Tb_1(\eta_1,\eta_2,\xi-\eta_1-\eta_2)\partial_{\xi}\Phi e^{it\Phi} \hat h_{j_1}^{\iota_1}(\eta_1)\hat h_{j_2}^{\iota_2}(\eta_2)\hat h_{j_3}(\xi-\eta_1-\eta_2)\upsilon_{\iota_3}(\xi)\diff \eta_1\diff \eta_2\\
=~&\frac {i\xi}3  \iint_{\R^2}\Tb_1(\eta_1,\eta_2,\xi-\eta_1-\eta_2)\partial_{\xi}\Phi \frac{1}{p'(\eta_1)-p'(\eta_2)}(\partial_{\eta_1}-\partial_{\eta_2})e^{it\Phi} \hat h_{j_1}^{\iota_1}(\eta_1)\hat h_{j_2}^{\iota_2}(\eta_2)\hat h_{j_3}(\xi-\eta_1-\eta_2)\upsilon_{\iota_3}(\xi)\diff \eta_1\diff \eta_2\\\nonumber
=~&\frac {i\xi}3  \iint_{\R^2}e^{it\Phi} (\partial_{\eta_1}-\partial_{\eta_2})\left[\Tb_1(\eta_1,\eta_2,\xi-\eta_1-\eta_2) \frac{p'(\xi-\eta_1-\eta_2)-p'(\xi)}{p'(\eta_1)-p'(\eta_2)}\hat h_{j_1}^{\iota_1}(\eta_1)\hat h_{j_2}^{\iota_2}(\eta_2)\hat h_{j_3}(\xi-\eta_1-\eta_2)\right]\nonumber\\
&\qquad\cdot\upsilon_{\iota_3}(\xi)\diff \eta_1\diff \eta_2\\\nonumber
=~&\frac {i\xi}3  \iint_{\R^2}e^{it\Phi} \Big[(\partial_1-\partial_2)\Tb_1(\eta_1,\eta_2,\xi-\eta_1-\eta_2) \frac{p'(\xi-\eta_1-\eta_2)-p'(\xi)}{p'(\eta_1)-p'(\eta_2)}\\
&+\Tb_1(\eta_1,\eta_2,\xi-\eta_1-\eta_2) \frac{(p''(\eta_1)+p''(\eta_2))\left(p'(\xi-\eta_1-\eta_2)-p'(\xi)\right)}{(p'(\eta_1)-p'(\eta_2))^2}\Big]\hat h_{j_1}^{\iota_1}(\eta_1)\hat h_{j_2}^{\iota_2}(\eta_2)\hat h_{j_3}(\xi-\eta_1-\eta_2)\nonumber\\
&\qquad\cdot\upsilon_{\iota_3}(\xi)\diff \eta_1\diff \eta_2\\\nonumber
&+\frac {i\xi}3  \iint_{\R^2}e^{it\Phi}\Tb_1(\eta_1,\eta_2,\xi-\eta_1-\eta_2) \frac{p'(\xi-\eta_1-\eta_2)-p'(\xi)}{p'(\eta_1)-p'(\eta_2)} (\partial_1-\partial_2)\left[\hat h_{j_1}^{\iota_1}(\eta_1)\hat h_{j_2}^{\iota_2}(\eta_2)\right]\\
&\qquad\cdot\hat h_{j_3}(\xi-\eta_1-\eta_2)\upsilon_{\iota_3}(\xi)\diff \eta_1\diff \eta_2\\\nonumber
=:~& I_1+I_2.
\end{align*}
By estimates \eqref{eqnB7} and \eqref{eqnB8}, we obtain the bounds
\begin{align*}
\|I_1\|_{L^2}&\lesssim  2^{2j_1}(1+2^{j_1})^6  \|\vp_{j_1}\|_{L^\infty}\|\vp_{j_2}\|_{L^\infty}\|\vp_{j_3}\|_{L^2}\\
&\lesssim    \|\vp_{j_1}\|_{B^{1,6}}\|\vp_{j_2}\|_{B^{1,6}}\|\vp_{j_3}\|_{L^2},
\end{align*} 
and
\begin{align*}
\|I_2\|_{L^2}&\lesssim  (2^{4j_1}(1+2^{j_1})^6 )\cdot(\|\partial_\xi\hat h_{j_1}(\xi)\|_{L^2_\xi}\|\vp_{j_2}\|_{L^\infty}\|\vp_{j_3}\|_{L^\infty}+\|\partial_\xi\hat h_{j_2}(\xi)\|_{L^2_\xi}\|\vp_{j_1}\|_{L^\infty}\|\vp_{j_3}\|_{L^\infty})\\
&\lesssim \|\partial_{\xi}\hat h_{j_1}(\xi)\|_{L^2_\xi}\|\vp_{j_2}\|_{B^{1,6}}\|\vp_{j_3}\|_{B^{1,6}}+\|\partial_{\xi}\hat h_{j_2}\|_{L^2_\xi}\|\vp_{j_1}\|_{B^{1,6}}\|\vp_{j_3}\|_{B^{1,6}}.
\end{align*}
Combining the estimate of each term, we obtain
\begin{equation}
\|{\rm{III}}_{22}\|_{L^2}\lesssim \|\vp\|_{B^{1,6}}^2(\|\partial_\xi\hat h(t,\xi)\|_{L^2}+\|\vp\|_{L^2}).
\end{equation}

\subsubsection{Term $\rm{III}_{23}$ estimate}
$\rm{III}_{23}$ is the space-time resonance around $(\xi, \xi)$. We will estimate the $L^2$-norm of
\begin{align*}
\frac \xi3 t \iint_{\R^2}\Tb_1(\eta_1,\eta_2,\xi-\eta_1-\eta_2)\partial_{\xi}\Phi e^{it\Phi} \hat h_{j_1}^{\iota_1}(\eta_1)\hat h_{j_2}^{\iota_2}(\eta_2)\hat h_{j_3}(\xi-\eta_1-\eta_2)\upsilon_{\iota_3}(\xi)\psi_{j-3}(\eta_1+\eta_2-2\xi)\diff \eta_1\diff \eta_2,
\end{align*}
when $(\iota_1, \iota_2,\iota_3)=(+, +, +)$ or $(-, -, -)$ and $(j_1, j_2, j_3)\in\P_2$.

After integration by part, we split the integral into several terms.
\begin{align*}
&t \iint_{\R^2}\Tb_1(\eta_1,\eta_2,\xi-\eta_1-\eta_2)\partial_{\xi}\Phi e^{it\Phi} \hat h_{j_1}^{\iota_1}(\eta_1)\hat h_{j_2}^{\iota_2}(\eta_2)\hat h_{j_3}(\xi-\eta_1-\eta_2)\upsilon_{\iota_3}(\xi)\psi_{j-3}(\eta_1+\eta_2-2\xi)\diff \eta_1\diff \eta_2\\
=~& t \iint_{\R^2}\Tb_1(\eta_1,\eta_2,\xi-\eta_1-\eta_2)[p'(\xi-\eta_1-\eta_2)-p'(\eta_1)] e^{it\Phi} \hat h_{j_1}^{\iota_1}(\eta_1)\hat h_{j_2}^{\iota_2}(\eta_2)\hat h_{j_3}(\xi-\eta_1-\eta_2)\upsilon_{\iota_3}(\xi)\\
&\qquad\cdot \psi_{j-3}(\eta_1+\eta_2-2\xi)\diff \eta_1\diff \eta_2\\
& +t \iint_{\R^2}\Tb_1(\eta_1,\eta_2,\xi-\eta_1-\eta_2)[p'(\eta_1)-p'(\xi)] e^{it\Phi} \hat h_{j_1}^{\iota_1}(\eta_1)\hat h_{j_2}^{\iota_2}(\eta_2)\hat h_{j_3}(\xi-\eta_1-\eta_2)\upsilon_{\iota_3}(\xi)\\
&\qquad\cdot \psi_{j-3}(\eta_1+\eta_2-2\xi)\diff \eta_1\diff \eta_2\\
=~&i  \iint_{\R^2}\Tb_1(\eta_1,\eta_2,\xi-\eta_1-\eta_2)\partial_{\eta_1} e^{it\Phi} \hat h_{j_1}^{\iota_1}(\eta_1)\hat h_{j_2}^{\iota_2}(\eta_2)\hat h_{j_3}(\xi-\eta_1-\eta_2)\upsilon_{\iota_3}(\xi)\psi_{j-3}(\eta_1+\eta_2-2\xi)\diff \eta_1\diff \eta_2\\
&+i  \iint_{\R^2}\Tb_1(\eta_1,\eta_2,\xi-\eta_1-\eta_2)\frac{p'(\eta_1)-p'(\xi)}{p'(\eta_2)-p'(\xi-\eta_1-\eta_2)} \partial_{\eta_2}e^{it\Phi} \hat h_{j_1}^{\iota_1}(\eta_1)\hat h_{j_2}^{\iota_2}(\eta_2)\hat h_{j_3}(\xi-\eta_1-\eta_2)\upsilon_{\iota_3}(\xi)\\
&\qquad\qquad\psi_{j-3}(\eta_1+\eta_2-2\xi)\diff \eta_1\diff \eta_2\\
=~&i \iint_{\R^2}\partial_{\eta_1}[\psi_{j-3}(\eta_1+\eta_2-2\xi)\Tb_1(\eta_1,\eta_2,\xi-\eta_1-\eta_2)] e^{it\Phi} \hat h_{j_1}^{\iota_1}(\eta_1)\hat h_{j_2}^{\iota_2}(\eta_2)\hat h_{j_3}(\xi-\eta_1-\eta_2)\upsilon_{\iota_3}(\xi)\diff \eta_1\diff \eta_2\\
&+i  \iint_{\R^2}\psi_{j-3}(\eta_1+\eta_2-2\xi)\Tb_1(\eta_1,\eta_2,\xi-\eta_1-\eta_2) e^{it\Phi} \partial_{\eta_1}\hat h_{j_1}^{\iota_1}(\eta_1)\hat h_{j_2}^{\iota_2}(\eta_2)\hat h_{j_3}(\xi-\eta_1-\eta_2)\upsilon_{\iota_3}(\xi)\diff \eta_1\diff \eta_2\\
&+i  \iint_{\R^2}\psi_{j-3}(\eta_1+\eta_2-2\xi)\Tb_1(\eta_1,\eta_2,\xi-\eta_1-\eta_2) e^{it\Phi}\hat h_{j_1}^{\iota_1}(\eta_1)\hat h_{j_2}^{\iota_2}(\eta_2) \partial_{\eta_1}\hat h_{j_3}(\xi-\eta_1-\eta_2)\upsilon_{\iota_3}(\xi)\diff \eta_1\diff \eta_2\\
&-i  \iint_{\R^2}\partial_{\eta_2}[\psi_{j-3}(\eta_1+\eta_2-2\xi)\Tb_1(\eta_1,\eta_2,\xi-\eta_1-\eta_2)]\frac{p'(\eta_1)-p'(\xi)}{p'(\eta_2)-p'(\xi-\eta_1-\eta_2)} \\
&\qquad \cdot e^{it\Phi} \hat h_{j_1}^{\iota_1}(\eta_1)\hat h_{j_2}^{\iota_2}(\eta_2)\hat h_{j_3}(\xi-\eta_1-\eta_2)\upsilon_{\iota_3}(\xi)\diff \eta_1\diff \eta_2\\
&+i  \iint_{\R^2}\Tb_1(\eta_1,\eta_2,\xi-\eta_1-\eta_2)\frac{(p'(\eta_1)-p'(\xi))[p''(\eta_2)+p''(\xi-\eta_1-\eta_2)]}{(p'(\eta_2)-p'(\xi-\eta_1-\eta_2))^2} e^{it\Phi}\\
&\qquad\cdot \hat h_{j_1}^{\iota_1}(\eta_1)\hat h_{j_2}^{\iota_2}(\eta_2)\hat h_{j_3}(\xi-\eta_1-\eta_2)\psi_{j-3}(\eta_1+\eta_2-2\xi)\upsilon_{\iota_3}(\xi)\diff \eta_1\diff \eta_2\\
&-i \iint_{\R^2}\Tb_1(\eta_1,\eta_2,\xi-\eta_1-\eta_2)\frac{p'(\eta_1)-p'(\xi)}{p'(\eta_2)-p'(\xi-\eta_1-\eta_2)} e^{it\Phi} \hat h_{j_1}^{\iota_1}(\eta_1)\psi_{j-3}(\eta_1+\eta_2-2\xi)\upsilon_{\iota_3}(\xi)\\ 
&\qquad\cdot \left[\partial_{\eta_2}\hat h_{j_2}(\eta_2)\hat h_{j_3}(\xi-\eta_1-\eta_2)+h_{j_2}^{\iota_2}(\eta_2)\partial_{\eta_2}\hat h_{j_3}(\xi-\eta_1-\eta_2)\right]\diff \eta_1\diff \eta_2.
\end{align*}
Then we estimate each term. By the symbol estimate Proposition \ref{Prop_symb_Tmu},
\begin{align*}
&\left\|\xi \iint_{\R^2}\psi_{j_3}(\eta_1+\eta_2-2\xi)\partial_{\eta_1}[\Tb_1(\eta_1,\eta_2,\xi-\eta_1-\eta_2)] e^{it\Phi} \hat h_{j_1}^{\iota_1}(\eta_1)\hat h_{j_2}^{\iota_2}(\eta_2)\hat h_{j_3}(\xi-\eta_1-\eta_2)\upsilon_{\iota_3}(\xi)\diff \eta_1\diff \eta_2\right\|_{L^2}\\
\lesssim ~& 2^{3j_1}(1+2^{j_1})^6 \|\vp_{j_1}\|_{L^\infty} \|\vp_{j_2}\|_{L^\infty} \|\vp_{j_3}\|_{L^2}\\
\lesssim ~& \|\vp_{j_1}\|_{B^{1,6}} \|\vp_{j_2}\|_{B^{1,6}} \|\vp_{j_3}\|_{L^2},
\end{align*}
\begin{align*}
&\left\| \xi \iint_{\R^2}\partial_{\eta_1}\psi_{j_3}(\eta_1+\eta_2-2\xi)[\Tb_1(\eta_1,\eta_2,\xi-\eta_1-\eta_2)] e^{it\Phi} \hat h_{j_1}^{\iota_1}(\eta_1)\hat h_{j_2}^{\iota_2}(\eta_2)\hat h_{j_3}(\xi-\eta_1-\eta_2)\upsilon_{\iota_3}(\xi)\diff \eta_1\diff \eta_2\right\|_{L^2}\\
\lesssim ~& 2^{4j_1}(1+2^{j_1})^4 2^{-j_1} \|\vp_{j_1}\|_{L^\infty} \|\vp_{j_2}\|_{L^\infty} \|\vp_{j_3}\|_{L^2}\\
\lesssim ~& \|\vp_{j_1}\|_{B^{1,6}} \|\vp_{j_2}\|_{B^{1,6}} \|\vp_{j_3}\|_{L^2},
\end{align*}
\begin{align*}
&\left\|\xi\iint_{\R^2}\psi_{j-3}(\eta_1+\eta_2-2\xi)\Tb_1(\eta_1,\eta_2,\xi-\eta_1-\eta_2) e^{it\Phi} \partial_{\eta_1}\hat h_{j_1}^{\iota_1}(\eta_1)\hat h_{j_2}^{\iota_2}(\eta_2)\hat h_{j_3}(\xi-\eta_1-\eta_2)\upsilon_{\iota_3}(\xi)\diff \eta_1\diff \eta_2\right\|_{L^2}\\
\lesssim~&2^{4j_1}(1+2^{j_1})^4  \|\vp_{j_2}\|_{L^\infty} \|\vp_{j_3}\|_{L^\infty} \|\partial_{\xi}\hat h_{j_1}(\xi)\|_{L^2_\xi}\\
\lesssim~& \|\vp_{j_2}\|_{B^{1,6}} \|\vp_{j_3}\|_{B^{1,6}} \|\partial_{\xi}\hat h_{j_1}(\xi)\|_{L^2_\xi},
\end{align*}
\begin{align*}
&\left\|\xi\iint_{\R^2}\psi_{j-3}(\eta_1+\eta_2-2\xi)\Tb_1(\eta_1,\eta_2,\xi-\eta_1-\eta_2) e^{it\Phi}\hat h_{j_1}^{\iota_1}(\eta_1)\hat h_{j_2}^{\iota_2}(\eta_2) \partial_{\eta_1}\hat h_{j_3}(\xi-\eta_1-\eta_2)\upsilon_{\iota_3}(\xi)\diff \eta_1\diff \eta_2\right\|_{L^2}\\
\lesssim~& \|\vp_{j_2}\|_{B^{1,6}} \|\vp_{j_1}\|_{B^{1,6}} \|\partial_{\xi}\hat h_{j_3}(\xi)\|_{L^2_\xi}.
\end{align*}
By Proposition \ref{Prop_symb_Tmu} and estimates \eqref{eqnB9}, 
\begin{align*}
&\Bigg\|\xi\iint_{\R^2}\partial_{\eta_2}\psi_{j-3}(\eta_1+\eta_2-2\xi)\Tb_1(\eta_1,\eta_2,\xi-\eta_1-\eta_2)\frac{p'(\eta_1)-p'(\xi)}{p'(\eta_2)-p'(\xi-\eta_1-\eta_2)} \\
&\qquad \cdot e^{it\Phi} \hat h_{j_1}^{\iota_1}(\eta_1)\hat h_{j_2}^{\iota_2}(\eta_2)\hat h_{j_3}(\xi-\eta_1-\eta_2)\upsilon_{\iota_3}(\xi)\diff \eta_1\diff \eta_2\Bigg\|_{L^2}\\
\lesssim ~& \|\vp_{j_1}\|_{B^{1,6}} \|\vp_{j_2}\|_{B^{1,6}} \|\vp_{j_3}\|_{L^2},
\end{align*}
\begin{align*}
&\Bigg\|\xi\iint_{\R^2}\psi_{j-3}(\eta_1+\eta_2-2\xi)\partial_{\eta_2}\Tb_1(\eta_1,\eta_2,\xi-\eta_1-\eta_2)\frac{p'(\eta_1)-p'(\xi)}{p'(\eta_2)-p'(\xi-\eta_1-\eta_2)} \\
&\qquad \cdot e^{it\Phi} \hat h_{j_1}^{\iota_1}(\eta_1)\hat h_{j_2}^{\iota_2}(\eta_2)\hat h_{j_3}(\xi-\eta_1-\eta_2)\upsilon_{\iota_3}(\xi)\diff \eta_1\diff \eta_2\Bigg\|_{L^2}\\
\lesssim ~& \|\vp_{j_1}\|_{B^{1,6}} \|\vp_{j_2}\|_{B^{1,6}} \|\vp_{j_3}\|_{L^2},
\end{align*}
\begin{align*}
&\Bigg\|\xi\iint_{\R^2}\Tb_1(\eta_1,\eta_2,\xi-\eta_1-\eta_2)\frac{p'(\eta_1)-p'(\xi)}{p'(\eta_2)-p'(\xi-\eta_1-\eta_2)} e^{it\Phi} \hat h_{j_1}^{\iota_1}(\eta_1)\psi_{j-3}(\eta_1+\eta_2-2\xi)\upsilon_{\iota_3}(\xi)\\ 
&\qquad\cdot \left[\partial_{\eta_2}\hat h_{j_2}(\eta_2)\hat h_{j_3}(\xi-\eta_1-\eta_2)+\hat h_{j_2}^{\iota_2}(\eta_2)\partial_{\eta_2}\hat h_{j_3}(\xi-\eta_1-\eta_2)\right]\diff \eta_1\diff \eta_2\Bigg\|_{L^2}\\
\lesssim~&\|\vp_{j_1}\|_{B^{1,6}}( \|\vp_{j_3}\|_{B^{1,6}} \|\partial_{\xi}\hat h_{j_2}(\xi)]\|_{L^2_\xi}+\|\vp_{j_2}\|_{B^{1,6}} \|\partial_{\xi}\hat h_{j_3}(\xi)\|_{L^2_\xi}).
\end{align*}
By Proposition \ref{Prop_symb_Tmu}, estimates \eqref{eqnB9} and \eqref{eqnB10}, 
\begin{align*}
&\Bigg\| \iint_{\R^2}\Tb_1(\eta_1,\eta_2,\xi-\eta_1-\eta_2)\frac{(p'(\eta_1)-p'(\xi))[p''(\eta_2)+p''(\xi-\eta_1-\eta_2)]}{(p'(\eta_2)-p'(\xi-\eta_1-\eta_2))^2} e^{it\Phi}\\
&\qquad\cdot \hat h_{j_1}^{\iota_1}(\eta_1)\hat h_{j_2}^{\iota_2}(\eta_2)\hat h_{j_3}(\xi-\eta_1-\eta_2)\psi_{j-3}(\eta_1+\eta_2-2\xi)\upsilon_{\iota_3}(\xi)\diff \eta_1\diff \eta_2\Bigg\|_{L^2}\\
\lesssim ~&   \|\vp_{j_1}\|_{B^{1,6}} \|\vp_{j_2}\|_{B^{1,6}} \|\vp_{j_3}\|_{L^2}.
\end{align*}
Combining the estimate of each term, we obtain
\begin{equation}
\|{\rm{III}}_{23}\|_{L^2}\lesssim \|\vp\|_{B^{1,6}}^2(\|\partial_\xi\hat h(t,\xi)\|_{L^2}+\|\vp\|_{L^2}).
\end{equation}

\subsubsection{Terms $\rm{III}_{241}$ and $\rm{III}_{242}$ estimates}
By the definition \eqref{III241} and the symbol estimates Proposition \ref{Prop_symb_Tmu} and Proposition \ref{PropB3},
\begin{align*}
\|(1+|\xi|)^r{\rm{III}_{241}}\|_{L^2}&\lesssim \sum\limits_{(j_1,j_2,j_3)\in\P_{2}}2^{2j_1}(1+2^{j_1})^4   \|\vp_{j_1}\|_{L^\infty} \|\vp_{j_2}\|_{L^\infty}\|\hat h_{j_3}(\xi)\|_{L^2_\xi}\\
&\lesssim \|\vp\|_{B^{1,6}}^{2} \|\vp\|_{L^{2}}.
\end{align*}
Since
\begin{align*}
\|\partial_t \hat h_j\|_{L^2}\lesssim& \sum\limits_{\mu=1}^\infty\sum\limits_{j_1, \cdots, j_{2\mu+1}}2^{(j_2+\cdots+j_{2\mu+1})}\prod\limits_{k=2}^{2\mu+1} (1+2^{j_k})2^{2\max\{0, j_2, \cdots, j_{2\mu+1}\}} \|\vp_{j_1}\|_{L^2}\|\vp_{j_2}\|_{L^\infty}\cdots\|\vp_{j_{2\mu+1}}\|_{L^\infty}\\
\lesssim& \sum\limits_{\mu=1}^\infty \|\vp\|_{B^{1,6}}^{2\mu}\|\vp\|_{H^1},
\end{align*}
by the definition \eqref{III242} and the symbol estimates Proposition \ref{Prop_symb_Tmu} and Proposition \ref{PropB3},
\begin{align*}
\|{\rm{III}_{242}}\|_{L^2}&\lesssim t \sum\limits_{(j_1,j_2,j_3)\in\P_{2}}2^{2j_1}(1+2^{j_1})^4 \|\vp_{j_1}\|_{L^\infty} \|\vp_{j_2}\|_{L^\infty}\|\partial_t \hat h_{j_3}\|_{L^2}\\
&\lesssim t\|\vp\|_{B^{1,6}}^2\sum\limits_{\mu=1}^\infty \|\vp\|_{B^{1,6}}^{2\mu} \|\vp\|_{H^{r+1}}.
\end{align*}

\subsection{Term $\rm{V}$ estimate}
Now we estimate the term
\begin{align*}
{\rm{V}}=~&\partial_\xi[\xi e^{-it\frac{\xi}{\sqrt{1+\xi^2}}}\widehat{\Nc_{\geq5}(\vp)}],
\end{align*}
where $e^{-it\frac{\xi}{\sqrt{1+\xi^2}}}\widehat{\Nc_{\geq5}(\vp)}$ is the sum of the following terms with $\mu=2,3, \cdots$,
\begin{align*}
e^{-it\frac{\xi}{\sqrt{1+\xi^2}}}&\widehat{\Nc_{2\mu+1}}(\xi)=\frac1{2\mu+1}i\iiint_{\R^{{2\mu}}} \Tb_{\mu}(\eta_1,\cdots,\eta_{2\mu}, \xi-\eta_1-\cdots-\eta_{2\mu})\\
&\cdot e^{it\left[\frac{\eta_1}{\sqrt{1+\eta_1^2}}+\cdots+\frac{\eta_{2\mu}}{\sqrt{1+\eta_{2\mu}^2}}+\frac{\xi-\eta_1-\cdots-\eta_{2\mu}}{\sqrt{1+(\xi-\eta_1-\cdots-\eta_{2\mu})^2}}-\frac{\xi}{\sqrt{1+\xi^2}}\right]}\hat h(\eta_1)\cdots\hat h(\eta_{2\mu})\hat h(\xi-\eta_1-\cdots-\eta_{2\mu})\diff \eta_1\cdots\diff \eta_{{2\mu}}.
\end{align*}
By dyadic decomposition, this integral can be written as the sum of
\begin{align*}
&\mathfrak N_{2\mu+1}^{j_1\cdots j_{2\mu+1}}(\xi)\\
=~&\frac1{2\mu+1}i\iiint_{\R^{{2\mu}}} \Tb_{\mu}(\eta_1,\cdots,\eta_{2\mu}, \xi-\eta_1-\cdots-\eta_{2\mu})e^{it\left[\frac{\eta_1}{\sqrt{1+\eta_1^2}}+\cdots+\frac{\eta_{2\mu}}{\sqrt{1+\eta_{2\mu}^2}}+\frac{\xi-\eta_1-\cdots-\eta_{2\mu}}{\sqrt{1+(\xi-\eta_1-\cdots-\eta_{2\mu})^2}}-\frac{\xi}{\sqrt{1+\xi^2}}\right]}\\
&\cdot \hat h_{j_1}(\eta_1)\cdots\hat h_{j_{2\mu}}(\eta_{2\mu})\hat h_{j_{2\mu+1}}(\xi-\eta_1-\cdots-\eta_{2\mu})\diff \eta_1\cdots\diff \eta_{{2\mu}},
\end{align*}
for $j_1\ge j_2\ge \cdots\ge j_{2\mu+1}$, with possible repetetion.
Then
\begin{align*}
&\partial_\xi\mathfrak N_{2\mu+1}^{j_1\cdots j_{2\mu+1}}(\xi)\\
=~&
i\iiint_{\R^{{2\mu}}} \partial_{2\mu+1}\Tb_{\mu}(\eta_1,\cdots,\eta_{2\mu}, \xi-\eta_1-\cdots-\eta_{2\mu})e^{it\left[\frac{\eta_1}{\sqrt{1+\eta_1^2}}+\cdots+\frac{\eta_{2\mu}}{\sqrt{1+\eta_{2\mu}^2}}+\frac{\xi-\eta_1-\cdots-\eta_{2\mu}}{\sqrt{1+(\xi-\eta_1-\cdots-\eta_{2\mu})^2}}-\frac{\xi}{\sqrt{1+\xi^2}}\right]}\\
&\qquad\cdot \hat h_{j_1}(\eta_1)\cdots\hat h_{j_{2\mu}}(\eta_{2\mu})\hat h_{j_{2\mu+1}}(\xi-\eta_1-\cdots-\eta_{2\mu})\diff \eta_1\cdots\diff \eta_{{2\mu}}\\
&+i\iiint_{\R^{{2\mu}}} \Tb_{\mu}(\eta_1,\cdots,\eta_{2\mu}, \xi-\eta_1-\cdots-\eta_{2\mu})e^{it\left[\frac{\eta_1}{\sqrt{1+\eta_1^2}}+\cdots+\frac{\eta_{2\mu}}{\sqrt{1+\eta_{2\mu}^2}}+\frac{\xi-\eta_1-\cdots-\eta_{2\mu}}{\sqrt{1+(\xi-\eta_1-\cdots-\eta_{2\mu})^2}}-\frac{\xi}{\sqrt{1+\xi^2}}\right]}\\
&\qquad\cdot(\xi-\eta_1-\cdots-\eta_{2\mu})\hat h_{j_1}(\eta_1)\cdots\hat h_{j_{2\mu}}(\eta_{2\mu})\partial_\xi\frac{\hat h_{j_{2\mu+1}}(\xi-\eta_1-\cdots-\eta_{2\mu})}{\xi-\eta_1-\cdots-\eta_{2\mu}}\diff \eta_1\cdots\diff \eta_{{2\mu}}\\
&+i\iiint_{\R^{{2\mu}}} \Tb_{\mu}(\eta_1,\cdots,\eta_{2\mu}, \xi-\eta_1-\cdots-\eta_{2\mu})e^{it\left[\frac{\eta_1}{\sqrt{1+\eta_1^2}}+\cdots+\frac{\eta_{2\mu}}{\sqrt{1+\eta_{2\mu}^2}}+\frac{\xi-\eta_1-\cdots-\eta_{2\mu}}{\sqrt{1+(\xi-\eta_1-\cdots-\eta_{2\mu})^2}}-\frac{\xi}{\sqrt{1+\xi^2}}\right]}\\
&\qquad\cdot \hat h_{j_1}(\eta_1)\cdots\hat h_{j_{2\mu}}(\eta_{2\mu})\frac{\hat h_{j_{2\mu+1}}(\xi-\eta_1-\cdots-\eta_{2\mu})}{\xi-\eta_1-\cdots-\eta_{2\mu}}\diff \eta_1\cdots\diff \eta_{{2\mu}}\\
&-t\iiint_{\R^{{2\mu}}} \Tb_{\mu}(\eta_1,\cdots,\eta_{2\mu}, \xi-\eta_1-\cdots-\eta_{2\mu})e^{it\left[\frac{\eta_1}{\sqrt{1+\eta_1^2}}+\cdots+\frac{\eta_{2\mu}}{\sqrt{1+\eta_{2\mu}^2}}+\frac{\xi-\eta_1-\cdots-\eta_{2\mu}}{\sqrt{1+(\xi-\eta_1-\cdots-\eta_{2\mu})^2}}-\frac{\xi}{\sqrt{1+\xi^2}}\right]}\\
&\quad\cdot \left[(1+(\xi-\eta_1-\cdots-\eta_{2\mu})^2)^{-3/2}-(1+\xi^2)^{-3/2}\right] \hat h_{j_1}(\eta_1)\cdots\hat h_{j_{2\mu}}(\eta_{2\mu})\hat h_{j_{2\mu+1}}(\xi-\eta_1-\cdots-\eta_{2\mu})\diff \eta_1\cdots\diff \eta_{{2\mu}}.
\end{align*}
By the symbol estimates Proposition \ref{Prop_symb_Tmu} and the symbol $(1+(\xi-\eta_1-\cdots-\eta_{2\mu})^2)^{-3/2}-(1+\xi^2)^{-3/2}$ is bounded and smooth, we have the following estimate for each term. 
\begin{align*}
&\left\|\xi\iiint_{\R^{{2\mu}}} \partial_{2\mu+1}\Tb_{\mu}(\eta_1,\cdots,\eta_{2\mu}, \xi-\eta_1-\cdots-\eta_{2\mu})e^{it\left[\frac{\eta_1}{\sqrt{1+\eta_1^2}}+\cdots+\frac{\eta_{2\mu}}{\sqrt{1+\eta_{2\mu}^2}}+\frac{\xi-\eta_1-\cdots-\eta_{2\mu}}{\sqrt{1+(\xi-\eta_1-\cdots-\eta_{2\mu})^2}}-\frac{\xi}{\sqrt{1+\xi^2}}\right]}\right.\\
&\qquad\left.\cdot \hat h_{j_1}(\eta_1)\cdots\hat h_{j_{2\mu}}(\eta_{2\mu})\hat h_{j_{2\mu+1}}(\xi-\eta_1-\cdots-\eta_{2\mu})\diff \eta_1\cdots\diff \eta_{{2\mu}}\right\|_{L^2}\\
\lesssim~ & \|\vp_{j_1}\|_{H^1}\|\vp_{j_2}\|_{B^{1,6}}\cdots\|\vp_{j_{2\mu+1}}\|_{B^{1,6}},
\end{align*}

\begin{align*}
&\left\|\xi\iiint_{\R^{{2\mu}}} \Tb_{\mu}(\eta_1,\cdots,\eta_{2\mu}, \xi-\eta_1-\cdots-\eta_{2\mu})e^{it\left[\frac{\eta_1}{\sqrt{1+\eta_1^2}}+\cdots+\frac{\eta_{2\mu}}{\sqrt{1+\eta_{2\mu}^2}}+\frac{\xi-\eta_1-\cdots-\eta_{2\mu}}{\sqrt{1+(\xi-\eta_1-\cdots-\eta_{2\mu})^2}}-\frac{\xi}{\sqrt{1+\xi^2}}\right]}\right.\\
&\qquad\left.\hat h_{j_1}(\eta_1)\cdots\hat h_{j_{2\mu}}(\eta_{2\mu})\partial_\xi \hat h_{j_{2\mu+1}}(\xi-\eta_1-\cdots-\eta_{2\mu})\diff \eta_1\cdots\diff \eta_{{2\mu}}\right\|_{L^2}\\
\lesssim~ &\|\vp_{j_1}\|_{B^{1,6}}\|\vp_{j_2}\|_{B^{1,6}}\cdots\|\vp_{j_{2\mu}}\|_{B^{1,6}}\|\partial_\xi\hat h_{j_{2\mu+1}}(\xi)\|_{L^2_\xi},
\end{align*}
\begin{align*}
&\left\| it\xi\iiint_{\R^{{2\mu}}} \Tb_{\mu}(\eta_1,\cdots,\eta_{2\mu}, \xi-\eta_1-\cdots-\eta_{2\mu})e^{it\left[\frac{\eta_1}{\sqrt{1+\eta_1^2}}+\cdots+\frac{\eta_{2\mu}}{\sqrt{1+\eta_{2\mu}^2}}+\frac{\xi-\eta_1-\cdots-\eta_{2\mu}}{\sqrt{1+(\xi-\eta_1-\cdots-\eta_{2\mu})^2}}-\frac{\xi}{\sqrt{1+\xi^2}}\right]}\right.\\
&~~\left.\left[(1+(\xi-\eta_1-\cdots-\eta_{2\mu})^2)^{-3/2}-(1+\xi^2)^{-3/2}\right]  \hat h_{j_1}(\eta_1)\cdots\hat h_{j_{2\mu}}(\eta_{2\mu})\hat h_{j_{2\mu+1}}(\xi-\eta_1-\cdots-\eta_{2\mu})\diff \eta_1\cdots\diff \eta_{{2\mu}}\right\|_{L^2}\\
\lesssim~ &t\|\vp_{j_1}\|_{H^1}\|\vp_{j_2}\|_{B^{1,6}}\cdots\|\vp_{j_{2\mu+1}}\|_{B^{1,6}},
\end{align*}
\begin{align*}
&\left\|\xi\iiint_{\R^{{2\mu}}} \Tb_{\mu}(\eta_1,\cdots,\eta_{2\mu}, \xi-\eta_1-\cdots-\eta_{2\mu})e^{it\left[\frac{\eta_1}{\sqrt{1+\eta_1^2}}+\cdots+\frac{\eta_{2\mu}}{\sqrt{1+\eta_{2\mu}^2}}+\frac{\xi-\eta_1-\cdots-\eta_{2\mu}}{\sqrt{1+(\xi-\eta_1-\cdots-\eta_{2\mu})^2}}-\frac{\xi}{\sqrt{1+\xi^2}}\right]}\right.\\
&\qquad\left.\cdot \hat h_{j_1}(\eta_1)\cdots\hat h_{j_{2\mu}}(\eta_{2\mu})\hat h_{j_{2\mu+1}}(\xi-\eta_1-\cdots-\eta_{2\mu})\diff \eta_1\cdots\diff \eta_{{2\mu}}\right\|_{L^2}\\
\lesssim~ &\|\vp_{j_1}\|_{H^1}\|\vp_{j_2}\|_{B^{1,6}}\cdots\|\vp_{j_{2\mu}}\|_{B^{1,6}}\|\vp_{j_{2\mu+1}}\|_{B^{1,6}}.
\end{align*}
Combining the above estimates, by taking summation for suitable sub-indices, 
\[
\|{\rm{V}}\|_{L^2}\lesssim t\sum\limits_{\mu=2}^\infty \|\vp\|_{B^{1,6}}^{2\mu}(\|\vp\|_{H^1}+\|\partial_\xi\hat h(\xi)\|_{L^2}).
\]

\subsection{Term $\rm{VI}$ estimate}
The term $\rm{VI}$ is bounded by
\begin{align*}
\|{\rm{VI}}\|_{L^2}&\lesssim \sum\limits_{(j_1,j_2,j_3)\in\P_{2}}2^{2j_1}(1+2^{j_1})^2 2^{2\max\{0, j_1\}}  \|\vp_{j_1}\|_{L^\infty} \|\vp_{j_2}\|_{L^\infty}\| \vp_{j_3}\|_{L^2}\\
&\lesssim \|\vp\|_{B^{1,6}}^{2} \|\vp\|_{H^{6}}.
\end{align*}

Combining all the estimates above, we have proved the claim \eqref{claim}.

\section{$Z$-norm estimates}\label{sec:Znorm}
In this section, we prove Lemma \ref{Z-norm_Est}. We will estimate $|(|\xi|^{r}+|\xi|^{w})\hat\vp(t,\xi)|$ for each fixed $\xi$. According to different values of $\xi$, the estimate is done in different ways. 
\subsection{Large and small frequencies}
\label{sec:large}
When $|\xi|$ is very big or small, we can estimate $|(|\xi|^{r}+|\xi|^{w})\hat\vp(t,\xi)|$ from the bootstrap assumption without using the equation.  Let $p_1=10^{-5}$. When $|\xi|\geq (t + 1)^{p_1}$, we get similarly from Lemma~\ref{interpolation} and the bootstrap assumptions that \begin{align*}
|(|\xi|^{r}+|\xi|^{w})\hat\vp(t,\xi)|^2&\lesssim (|\xi|^{r}+|\xi|^{w})^2[\|\vp\|_{L^2}\|\partial_\xi\hat h\|_{L^2}+|\xi|^{-1}\|\vp\|_{L^2}^2]\\
&\lesssim\frac{(|\xi|^{r}+|\xi|^{w})^2}{|\xi|^{s}}\|\vp\|_{H^s}\|\partial_\xi\hat h\|_{L^2_\xi}+\frac{(|\xi|^{r}+|\xi|^{w})^2}{|\xi|^{s+1}}\|\vp\|_{H^s}\|\vp\|_{L^2}\\
&\lesssim |\xi|^{2w-s}(t+1)^{2p_0} \ve_0^2\\
&\lesssim (t+1)^{-(s-2w)p_1+2p_0}\ve_0^2 \\
&\lesssim \ve_0^2,
\end{align*}
since $-(s-2w)p_1+2p_0<0$ for the parameter values in \eqref{param_vals}.

When $|\xi|<(t + 1)^{- 0.01/r}=(t+1)^{-0.025}$, by Sobolev embedding, the bootstrap assumptions, Lemma~\ref{weightedE}, and the conservation of the $L^2$-norm of $\vp$ gives
\begin{align*}
|(|\xi|^{r} + |\xi|^{w})\hat\vp(t, \xi)|^2&\lesssim (t+1)^{-0.01}(\|\hat\vp\|_{L^2_\xi}^2+\|\partial_\xi\hat h\|_{L^2_\xi}^2)\\
&\lesssim \ve_0^2.
\end{align*}

Thus, we only need to consider the frequency range $(t+1)^{-0.025}\leq |\xi|\leq (t + 1)^{p_1}$ in the following. From now on, we fix $\xi$ in this range, and use $\cutoffxi$ to denote a smooth cutoff function such that
\begin{align}
\begin{split}
&\text{$\cutoffxi(t, \xi) = 1$ on $\left\{(t, \xi) \mid (t+1)^{-0.025}\leq|\xi|\le(t+1)^{p_1}\right\}$,}
\\
&\text{$\cutoffxi(t, \xi)$ is supported on a small neighborhood of $\left\{(t, \xi) \mid (t+1)^{-0.025}\leq|\xi|\le(t+1)^{p_1}\right\}$.}
\end{split}
\label{defdelta}
\end{align}

\subsection{Modified scattering}
Define $\mathfrak{A}:=\frac{3\xi}{(1+\xi^2)^{5/2}}$ and the phase correction
\begin{equation}
\Theta(t, \xi) =- \frac{\pi \xi }{3\mathfrak{A}} [\Tb_1(\xi, \xi, -\xi) + \Tb_1(\xi,-\xi,\xi) + \Tb_1(-\xi, \xi, \xi)] \int_0^t \frac{|\hat \vp(\tau, \xi)|^2}{\tau + 1} \diff{\tau}.
\label{defTheta}
\end{equation}
We then let
\[
\hat v(t, \xi)=e^{i\Theta(t, \xi)}\hat{h}(t, \xi).
\]
Using \eqref{eqhhat} and \eqref{defTheta}, we find that
\begin{align}
\hat v_t(t, \xi) = e^{i \Theta(t, \xi)} [\hat{h}_t(t, \xi) + i \Theta_t(t, \xi) \hat{h}(t, \xi)] = U(t, \xi) - e^{-  i t \xi (1+\xi^2)^{-1/2}} e^{i \Theta(t, \xi)}i\xi \widehat{\Nc_{\geq 5}(\vp)}(t, \xi),
\label{vteq}
\end{align}
where 
\begin{align}
\begin{split}
U(t, \xi) &=e^{i\Theta(t, \xi)}\bigg\{ \frac13 i\xi \iint_{\R^2} \Tb_1(\eta_1, \eta_2, \xi - \eta_1 - \eta_2) e^{it\Phi(\eta_1,\eta_2, \xi)} \hat h(\xi-\eta_1-\eta_2) \hat h(\eta_1)\hat h(\eta_2)  \diff{\eta_1} \diff{\eta_2}\\
&\qquad - \frac{\pi i \xi }{3\mathfrak{A}} \left[\Tb_1(\xi, \xi, -\xi) + \Tb_1(\xi, -\xi, \xi) + \Tb_1(-\xi,\xi,\xi)\right] \frac{|\hat{h}(t, \xi)|^2\hat{h}(t, \xi)}{t + 1}\bigg\}.
\end{split}
\label{defU}
\end{align}
Then we obtain from \eqref{vteq} that
\begin{align*}
\|\vp\|_{Z}&=\|(|\xi|^{r}+|\xi|^{w}) \hat\vp(t, \xi)\|_{L^\infty_\xi}=\|(|\xi|^{r}+|\xi|^{w}) \hat v(t, \xi)\|_{L^\infty_\xi}
\\
&\lesssim \int_0^t \left\{\|(|\xi|^{r}+|\xi|^{w})U(\xi, \tau)\|_{L^\infty_\xi}+\|(|\xi|^{r+1}+|\xi|^{w+1})\widehat{\mathcal{N}_{\geq5}(\vp)}(\xi, \tau)\|_{L^\infty_\xi}\right\} \diff \tau.
\end{align*}
We first estimate the term
\begin{align*}
&\|(|\xi|^{r}+|\xi|^{w})U(\tau, \xi)\|_{L^\infty_\xi}\\
=~&\Bigg\|(|\xi|^{r+1}+|\xi|^{w+1})\bigg\{ \frac13  \iint_{\R^2} \Tb_1(\eta_1, \eta_2, \xi - \eta_1 - \eta_2) e^{it\Phi(\eta_1,\eta_2, \xi)} \hat h(\xi-\eta_1-\eta_2) \hat h(\eta_1)\hat h(\eta_2)  \diff{\eta_1} \diff{\eta_2}\\
& - \frac{\pi  }{3\mathfrak{A}} \left[\Tb_1(\xi, \xi, -\xi) + \Tb_1(\xi, -\xi, \xi) + \Tb_1(-\xi,\xi,\xi)\right] \frac{|\hat{h}(t, \xi)|^2\hat{h}(t, \xi)}{t + 1}\bigg\}\Bigg\|.
\end{align*}
Taking the dyadic decomposition and assuming $j_1\geq j_2\geq j_3$, we can rewrite the following integral as 
\begin{align*}
\frac13 \iint_{\R^2} \Tb_1(\eta_1, \eta_2, \xi - \eta_1 - \eta_2) e^{it\Phi(\eta_1,\eta_2, \xi)} \hat h(\xi-\eta_1-\eta_2) \hat h(\eta_1)\hat h(\eta_2)  \diff{\eta_1} \diff{\eta_2}\\
=\frac13  \sum\limits_{\P}\iint_{\R^2}\Tb_1(\eta_1,\eta_2,\xi-\eta_1-\eta_2) e^{it\Phi} \hat h_{j_1}(\eta_1)\hat h_{j_2}(\eta_2)\hat h_{j_3}(\xi-\eta_1-\eta_2)\diff \eta_1\diff \eta_2,
\end{align*}
where the summation with $\P$ same as previous section in \eqref{eqn7.2}.


\subsection{Nonresonant frequencies}\label{sec:nonres}
Since $|\xi|\approx 2^{j_1}$, by the assumption of $\cutoff$, we only need to consider $j_1<p_1\log_2(t+1)=10^{-5}\log_2(t+1)$.
The indices satisfying $j_1-j_3>1$  correspond to nonresonant frequencies. Denote the set of these indices as $\P'_1$. We will estimate
\[
\left\|(|\xi|^{r}+|\xi|^{w})\xi\iint_{\R^2}\Tb_1(\eta_1,\eta_2,\xi-\eta_1-\eta_2) e^{it\Phi} \hat h_{j_1}(\eta_1)\hat h_{j_2}(\eta_2)\hat h_{j_3}(\xi-\eta_1-\eta_2)\diff \eta_1\diff \eta_2\right\|_{L^\infty_\xi}.
\]
After integrating by parts, we have
\begin{align*}
&  \iint_{\R^2}    \Tb_1(\eta_1, \eta_2, \xi - \eta_1 - \eta_2) e^{it\Phi(\eta_1,\eta_2, \xi)} \hat h_{j_1}(\eta_1)\hat h_{j_2}(\eta_2) \hat h_{j_3}(\xi-\eta_1-\eta_2) \diff{\eta_1} \diff{\eta_2}\\
 =~&   \iint_{\R^2}    \frac{\Tb_1(\eta_1, \eta_2, \xi - \eta_1 - \eta_2)}{it\partial_{\eta_1}\Phi(\eta_1,\eta_2, \xi)}\partial_{\eta_1} e^{it\Phi(\eta_1,\eta_2, \xi)} \hat h_{j_1}(\eta_1)\hat h_{j_2}(\eta_2) \hat h_{j_3}(\xi-\eta_1-\eta_2) \diff{\eta_1} \diff{\eta_2}\\
 =~& -W_1-W_2-W_3,
\end{align*}
where
\begin{align*}
W_1(t, \xi) & =  \iint_{\R^2} \partial_{\eta_1}\left[ \frac{\Tb_1(\eta_1, \eta_2, \xi - \eta_1 - \eta_2)}{it\partial_{\eta_1}\Phi(\eta_1,\eta_2, \xi)}\right] e^{it\Phi(\eta_1,\eta_2, \xi)}  \hat h_{j_1}(\eta_1)\hat h_{j_2}(\eta_2) \hat h_{j_3}(\xi-\eta_1-\eta_2) \diff{\eta_1} \diff{\eta_2},\\
W_2(t, \xi) & =  \iint_{\R^2} \left[ \frac{\Tb_1(\eta_1, \eta_2, \xi - \eta_1 - \eta_2)}{it\partial_{\eta_1}\Phi(\eta_1,\eta_2, \xi)}\right] e^{it\Phi(\eta_1,\eta_2, \xi)} \hat h_{j_1}(\eta_1)\hat h_{j_2}(\eta_2) \partial_{\eta_1}\hat h_{j_3}(\xi-\eta_1-\eta_2) \diff{\eta_1} \diff{\eta_2},\\
W_3(t, \xi) & =  \iint_{\R^2} \left[ \frac{\Tb_1(\eta_1, \eta_2, \xi - \eta_1 - \eta_2)}{it\partial_{\eta_1}\Phi(\eta_1,\eta_2, \xi)}\right] e^{it\Phi(\eta_1,\eta_2, \xi)} \partial_{\eta_1} \hat h_{j_1}(\eta_1)\hat h_{j_2}(\eta_2) \hat h_{j_3}(\xi-\eta_1-\eta_2) \diff{\eta_1} \diff{\eta_2}.
\end{align*}
In the first integral $W_1$,
\begin{align*}
&\partial_{\eta_1}\left[ \frac{\Tb_1(\eta_1, \eta_2, \xi - \eta_1 - \eta_2)}{\partial_{\eta_1}\Phi(\eta_1,\eta_2, \xi)}\right] \\
=~&\frac{\partial_1\Tb_1(\eta_1, \eta_2, \xi - \eta_1 - \eta_2)-\partial_3\Tb_1(\eta_1, \eta_2, \xi - \eta_1 - \eta_2)}{p'(\eta_1)-p'(\xi-\eta_1-\eta_2)}-\frac{\Tb_1(\eta_1, \eta_2, \xi - \eta_1 - \eta_2)[p''(\eta_1)+p''(\xi-\eta_1-\eta_2)]}{(p'(\eta_1)-p'(\xi-\eta_1-\eta_2))^2}.
\end{align*}
By the symbol estimate \eqref{symbolpartialphi},  \eqref{symbol2d} and \eqref{Tmu_est1} the $S^\infty$ norm of the symbol is bounded by
\begin{align*}
&[(1+2^{2j_1})(1+2^{3j_3})2^{-2j_1}][(1+2^{j_2})^5(1+2^{j_3})2^{j_2+j_3}]\\
&+[(1+2^{2j_1})(1+2^{3j_3})2^{-2j_1}]^2[(1+2^{j_1})^{-1}(1+2^{j_2})^3(1+2^{j_3})2^{j_1+j_2+j_3}][2^{j_1}(1+2^{j_1})^{-1}]\\
\lesssim~&2^{j_2+j_3-2j_1}(1+2^{j_1})^2(1+2^{j_2})^5(1+2^{j_3})^4+2^{j_2+j_3-2j_1}(1+2^{j_2})^3(1+2^{j_3})^{4}\\
\lesssim~&2^{j_2+j_3-2j_1}(1+2^{j_1})^2(1+2^{j_2})^5(1+2^{j_3})^{4}.
\end{align*}
By the symbol estimate \eqref{symbolpartialphi} and \eqref{Tmu_est1}, we obtain the estimate for $W_1$:
\begin{align*}
\|(|\xi|^{r+1}+|\xi|^{w + 1})W_1\|_{L^\infty_\xi}\lesssim~& (1+t)^{wp_1-1}    2^{j_2+j_3-j_1}(1+2^{j_1})^2(1+2^{j_2})^5(1+2^{j_3})^{4}\|\vp_{j_1}\|_{L^2}\|\vp_{j_2}\|_{L^2}\|\vp_{j_3}\|_{L^\infty}\\
\lesssim~& (1+t)^{wp_1-1}\|(1+2^{j_1})^7\vp_{j_1}\|_{L^2}\|\vp_{j_2}\|_{L^2}\|2^{j_3}(1+2^{j_3})^{4}\vp_{j_3}\|_{L^\infty}\\
\lesssim~& (1+t)^{wp_1-1}\|\vp_{j_1}\|_{H^s}\|\vp_{j_2}\|_{L^2}\|\vp_{j_3}\|_{B^{1,6}}.
\end{align*}
By the symbol estimate \eqref{symbolpartialphi} and \eqref{Tmu_est1}, the symbol of $W_2$ and $W_3$ is bounded by 
\[
[(1+2^{2j_1})(1+2^{3j_3})2^{-2j_1}][(1+2^{j_1})^{-1}(1+2^{j_2})^3(1+2^{j_3})2^{j_1+j_2+j_3}]\lesssim (1+2^{j_1})(1+2^{j_2})^3(1+2^{j_3})^42^{j_2+j_3-j_1}.
\]
Therefore,
\begin{align*}
&\|(|\xi|^{r+1}+|\xi|^{w + 1})W_2\|_{L^\infty_\xi}\\
\lesssim &(1+t)^{wp_1}2^{j_1}[(1+2^{j_1})(1+2^{j_2})^3(1+2^{j_3})^42^{j_2+j_3-j_1}]\|\vp_{j_1}\|_{L^\infty}\|\vp_{j_2}\|_{L^2}\|\partial_\xi \hat h_{j_3}\|_{L^2_\xi}\\
\lesssim &(1+t)^{(w+4)p_1}2^{j_2+j_3}(1+2^{j_1})^4\|\vp_{j_1}\|_{L^\infty}\|\vp_{j_2}\|_{L^2}\|\partial_\xi \hat h_{j_3}\|_{L^2}\\
\lesssim &(1+t)^{(w+4)p_1}\|\vp_{j_1}\|_{B^{1,6}}\|\vp_{j_2}\|_{L^2}\|\partial_\xi \hat h_{j_3}\|_{L^2},
\end{align*}
and
\begin{align*}
&\|(|\xi|^{r+1}+|\xi|^{w + 1})W_3\|_{L^\infty_\xi}\\
\lesssim &(1+t)^{wp_1}2^{j_1}[(1+2^{j_1})(1+2^{j_2})^3(1+2^{j_3})^42^{j_2+j_3-j_1}]\|\vp_{j_3}\|_{L^\infty}\|\vp_{j_2}\|_{L^2}\|\partial_\xi \hat h_{j_1}\|_{L^2_\xi}\\
\lesssim &(1+t)^{(w+5)p_1}\|\vp_{j_3}\|_{B^{1,6}}\|\vp_{j_2}\|_{L^2}\|\partial_\xi \hat h_{j_1}\|_{L^2}.
\end{align*}
By taking the summation for $\P_1'$,
\begin{align*}
&\sum\limits_{\P_1'}\left\|(|\xi|^{r+1}+|\xi|^{w + 1}) \iint_{\R^2}    \Tb_1(\eta_1, \eta_2, \xi - \eta_1 - \eta_2) e^{it\Phi(\eta_1,\eta_2, \xi)} \hat h_{j_1}(\eta_1)\hat h_{j_2}(\eta_2) \hat h_{j_3}(\xi-\eta_1-\eta_2) \diff{\eta_1} \diff{\eta_2}\right\|_{L^\infty_\xi}\\
\lesssim &(1+t)^{(w+5)p_1-1}\|\vp\|_{B^{1, 6}}\|\vp\|_{L^2}(\|\vp\|_{H^s}+\|\partial_\xi \hat h(\xi)\|_{L^2})\\
\lesssim &(1+t)^{-1.5+(w+5)p_1}\ve_1^3(t+1)^{p_0+0.01},
\end{align*}
which is integrable for $t\in(0,\infty)$.

\subsection{Close to resonance}\label{sec:near}
When $j_1\geq j_2\geq j_3$, $j_1-j_3\leq 1$ and $ j_1<10^{-5}\log_2(t+1)$, we denote the set of these indices as $\P'_2$. 
 For $\iota_1, \iota_2, \iota_3\in \{+,-\}$, we split the integral into several parts
 \begin{align*}
\iint_{\R^2}    \Tb_1(\eta_1, \eta_2, \xi - \eta_1 - \eta_2) e^{it\Phi(\eta_1,\eta_2, \xi)} \hat h_{j_1}^{\iota_1}(\eta_1)\hat h_{j_2}^{\iota_2}(\eta_2) \hat h_{j_3}(\xi-\eta_1-\eta_2)\upsilon_{\iota_3}(\xi) \diff{\eta_1} \diff{\eta_2}.
 \end{align*}
The we estimate each case in the following.

{\noindent\bf 1. $(\iota_1, \iota_2, \iota_3)=(+, +, -)$ or $(-, -, +)$.} This is the nonresonant case. We integrate by part with respect to $\eta_1$ and obtain 
\begin{equation}
\begin{aligned}
& \iint_{\R^2}\partial_{\eta_1}[\Tb_1(\eta_1,\eta_2,\xi-\eta_1-\eta_2)\frac{1}{it\partial_{\eta_1}\Phi} ]e^{it\Phi} \hat h_{j_1}^{\iota_1}(\eta_1)\hat h_{j_2}^{\iota_2}(\eta_2)\hat h_{j_3}(\xi-\eta_1-\eta_2)\upsilon_{\iota_3}(\xi)\diff \eta_1\diff \eta_2\\
&+  \iint_{\R^2}\Tb_1(\eta_1,\eta_2,\xi-\eta_1-\eta_2)\frac{1}{it\partial_{\eta_1}\Phi} e^{it\Phi} \partial_{\eta_1}\hat h_{j_1}^{\iota_1}(\eta_1)\hat h_{j_2}^{\iota_2}(\eta_2)\hat h_{j_3}(\xi-\eta_1-\eta_2)\upsilon_{\iota_3}(\xi)\diff \eta_1\diff \eta_2\\
&+  \iint_{\R^2}\Tb_1(\eta_1,\eta_2,\xi-\eta_1-\eta_2)\frac{1}{it\partial_{\eta_1}\Phi} e^{it\Phi} \hat h_{j_1}^{\iota_1}(\eta_1)\hat h_{j_2}^{\iota_2}(\eta_2)\partial_{\eta_1}\hat h_{j_3}(\xi-\eta_1-\eta_2)\upsilon_{\iota_3}(\xi)\diff \eta_1\diff \eta_2.
\end{aligned}\end{equation}
By using the symbol estimates \eqref{eqn715}, \eqref{eqn716}, and \eqref{eqn717}, we obtain the estimate for each integrals.
 \begin{align*}
&\left\|(|\xi|^{r+1}+|\xi|^{w+1})\iint_{\R^2}\partial_{\eta_1}[\Tb_1(\eta_1,\eta_2,\xi-\eta_1-\eta_2)\frac{1}{\partial_{\eta_1}\Phi} ]e^{it\Phi} \hat h_{j_1}^{\iota_1}(\eta_1)\hat h_{j_2}^{\iota_2}(\eta_2)\hat h_{j_3}(\xi-\eta_1-\eta_2)\upsilon_{\iota_3}(\xi)\diff \eta_1\diff \eta_2\right\|_{L^\infty_\xi}\\
\lesssim~&(t+1)^{wp_1}2^{j_1}\left\{[2^{j_2+j_3}(1+2^{j_2})^5(1+2^{j_3})][(1+2^{j_1})^72^{-2j_1}]\right.\\
&\left.+[(1+2^{j_1})^72^{-2j_1}]^2[2^{j_1}(1+2^{j_1})^{-5}][2^{j_1+j_2+j_3}(1+2^{j_1})^{-1}(1+2^{j_2})^3(1+2^{j_3})]\right\}\|\vp_{j_1}\|_{L^2}\|\vp_{j_2}\|_{L^2}\|\vp_{j_3}\|_{L^\infty}\\
\lesssim~&(t+1)^{wp_1}2^{j_1}(1+2^{j_1})^{13}\|\vp_{j_1}\|_{L^2}\|\vp_{j_2}\|_{L^2}\|\vp_{j_3}\|_{L^\infty}\\
\lesssim~&(t+1)^{(w+8)p_1}\|\vp_{j_1}\|_{L^2}\|\vp_{j_2}\|_{L^2}\|\vp_{j_3}\|_{B^{1,6}},
 \end{align*}
 \begin{align*}
&\left\|(|\xi|^{r+1}+|\xi|^{w+1}) \iint_{\R^2}\Tb_1(\eta_1,\eta_2,\xi-\eta_1-\eta_2)\frac{1}{it\partial_{\eta_1}\Phi} e^{it\Phi} \partial_{\eta_1}\hat h_{j_1}^{\iota_1}(\eta_1)\hat h_{j_2}^{\iota_2}(\eta_2)\hat h_{j_3}(\xi-\eta_1-\eta_2)\upsilon_{\iota_3}(\xi)\diff \eta_1\diff \eta_2\right\|_{L^\infty_\xi}\\
\lesssim~&(t+1)^{wp_1}2^{j_1}[2^{j_1+j_2+j_3}(1+2^{j_1})^{-1}(1+2^{j_2})^3(1+2^{j_3})][(1+2^{j_1})^72^{-2j_1}]\|\partial_\xi\hat h_{j_1}\|_{L^2}\|\vp_{j_2}\|_{L^2}\|\vp_{j_3}\|_{L^\infty}\\
\lesssim~&(t+1)^{wp_1}2^{2j_1}(1+2^{j_1})^{10}\|\partial_\xi\hat h_{j_1}\|_{L^2}\|\vp_{j_2}\|_{L^2}\|\vp_{j_3}\|_{L^\infty}\\
\lesssim~&(t+1)^{(w+6)p_1}\|\partial_\xi\hat h_{j_1}\|_{L^2}\|\vp_{j_2}\|_{L^2}\|\vp_{j_3}\|_{B^{1,6}},
 \end{align*}
 \begin{align*}
&\left\|(|\xi|^{r+1}+|\xi|^{w+1}) \iint_{\R^2}\Tb_1(\eta_1,\eta_2,\xi-\eta_1-\eta_2)\frac{1}{it\partial_{\eta_1}\Phi} e^{it\Phi} \partial_{\eta_1}\hat h_{j_1}^{\iota_1}(\eta_1)\hat h_{j_2}^{\iota_2}(\eta_2)\hat h_{j_3}(\xi-\eta_1-\eta_2)\upsilon_{\iota_3}(\xi)\diff \eta_1\diff \eta_2\right\|_{L^\infty_\xi}\\
\lesssim~&(t+1)^{(w+6)p_1}\|\vp_{j_1}\|_{B^{1,6}}\|\vp_{j_2}\|_{L^2}\|\partial_\xi\hat h_{j_3}\|_{L^2}.
 \end{align*}
Therefore taking the summation for corresponding indices $(j_1, j_2, j_3)\in\P'_2$, we have
 \begin{align*}
&\sum\limits_{\P'_2}\left\|(|\xi|^{r+1}+|\xi|^{w+1})\iint_{\R^2}    \Tb_1(\eta_1, \eta_2, \xi - \eta_1 - \eta_2) e^{it\Phi(\eta_1,\eta_2, \xi)} \hat h_{j_1}^{\iota_1}(\eta_1)\hat h_{j_2}^{\iota_2}(\eta_2) \hat h_{j_3}(\xi-\eta_1-\eta_2)\upsilon_{\iota_3}(\xi) \diff{\eta_1} \diff{\eta_2}\right\|\\
\lesssim ~&(t+1)^{(w+8)p_1-1}\|\vp\|_{B^{1,6}}\|\vp\|_{L^2}(\|\vp\|_{L^2}+\|\partial_\xi\hat h(\xi)\|_{L^2})\\
\lesssim &(1+t)^{-1.5+(w+8)p_1+0.01}\ve_1^3,
 \end{align*}
 which is integrable in time $t\in (0, \infty)$.
 
{\noindent\bf 2. $(\iota_1, \iota_2, \iota_3)=(+, +, +)$ or $(-, -, -)$.}
We split the integral into two parts:
 \begin{align}\label{eq8.6}
\iint_{\R^2}    \Tb_1(\eta_1, \eta_2, \xi - \eta_1 - \eta_2) e^{it\Phi(\eta_1,\eta_2, \xi)} \hat h_{j_1}^{\iota_1}(\eta_1)\hat h_{j_2}^{\iota_2}(\eta_2) \hat h_{j_3}(\xi-\eta_1-\eta_2)\upsilon_{\iota_3}(\xi)\psi_{j-3}(\eta_1+\eta_2-2\xi) \diff{\eta_1} \diff{\eta_2},
 \end{align}
 and
  \begin{align}\label{eq8.7}
\iint_{\R^2}    \Tb_1(\eta_1, \eta_2, \xi - \eta_1 - \eta_2) e^{it\Phi(\eta_1,\eta_2, \xi)} \hat h_{j_1}^{\iota_1}(\eta_1)\hat h_{j_2}^{\iota_2}(\eta_2) \hat h_{j_3}(\xi-\eta_1-\eta_2)\upsilon_{\iota_3}(\xi)[1-\psi_{j-3}(\eta_1+\eta_2-2\xi)] \diff{\eta_1} \diff{\eta_2}.
 \end{align}
The first integral \eqref{eq8.6} includes the space-time resonance point $(\eta_1, \eta_2)=(\xi, \xi)$ and the second one include the space resonance point $(\eta_1, \eta_2)=(\xi/3, \xi/3)$.
For each resonance, we define the cut-off functions
\begin{align*}
\cutoff_{\lb}^{1}(\eta_1, \eta_2, \xi):=\psi_{l_{1,1}}(\eta_1-\xi)\psi_{l_{1,2}}(\eta_2-\xi),
\end{align*}
\begin{align*}
\cutoff_{\lb}^{2}(\eta_1, \eta_2, \xi):=\psi_{l_{2,1}}(\eta_1-\frac\xi3)\psi_{l_{2,2}}(\eta_2-\frac\xi3),
\end{align*}
where $(l_{1,1}, l_{1,2}, l_{2,1}, l_{2,2})\in \Z^4$. Define
\begin{equation}\label{varrho}
\varrho(t):=(t+1)^{-0.45}.
\end{equation}
We first consider the case when $\max\{l_{1,1}, l_{1,2}\}\geq \log_2(\varrho(t))$.

Considering the support of the integrand, we write the first integral as
\begin{align*}
&\iint_{\R^2}  \Tb_1(\eta_1, \eta_2, \xi - \eta_1 - \eta_2) e^{it\Phi(\eta_1,\eta_2, \xi)} \hat h_{j_1}^{\iota_1}(\eta_1)\hat h_{j_2}^{\iota_2}(\eta_2) \hat h_{j_3}(\xi-\eta_1-\eta_2)\upsilon_{\iota_3}(\xi)\psi_{j-3}(\eta_1+\eta_2-2\xi) \diff{\eta_1} \diff{\eta_2}\\
=~&\sum\limits_{\max\{l_{1,1}, l_{1,2}\}\leq j+5}\iint_{\R^2} \cutoff_{\lb}^{1} \Tb_1(\eta_1, \eta_2, \xi - \eta_1 - \eta_2) e^{it\Phi(\eta_1,\eta_2, \xi)} \hat h_{j_1}^{\iota_1}(\eta_1)\hat h_{j_2}^{\iota_2}(\eta_2) \hat h_{j_3}(\xi-\eta_1-\eta_2)\\
&\qquad\upsilon_{\iota_3}(\xi)\psi_{j-3}(\eta_1+\eta_2-2\xi) \diff{\eta_1} \diff{\eta_2}.
 \end{align*}

Since $\eta_1$ and $\eta_2$ are symmetric, we assume $l_{1,1}\geq l_{1,2}$. We integrate by part with respect to $\eta_1$ and obtain
\begin{equation}\label{eqn8.6}
\begin{aligned}
& \iint_{\R^2}\partial_{\eta_1}[\cutoff_{\lb}^{1}\psi_{j-3}(\eta_1+\eta_2-2\xi)\Tb_1(\eta_1,\eta_2,\xi-\eta_1-\eta_2)\frac{1}{it\partial_{\eta_1}\Phi} ]e^{it\Phi} \hat h_{j_1}^{\iota_1}(\eta_1)\hat h_{j_2}^{\iota_2}(\eta_2)\hat h_{j_3}(\xi-\eta_1-\eta_2)\upsilon_{\iota_3}(\xi)\diff \eta_1\diff \eta_2\\
&+  \iint_{\R^2}\cutoff_{\lb}^{1}\psi_{j-3}(\eta_1+\eta_2-2\xi)\Tb_1(\eta_1,\eta_2,\xi-\eta_1-\eta_2)\frac{1}{it\partial_{\eta_1}\Phi} e^{it\Phi} \partial_{\eta_1}\hat h_{j_1}^{\iota_1}(\eta_1)\hat h_{j_2}^{\iota_2}(\eta_2)\hat h_{j_3}(\xi-\eta_1-\eta_2)\upsilon_{\iota_3}(\xi)\diff \eta_1\diff \eta_2\\
&+  \iint_{\R^2}\cutoff_{\lb}^{1}\psi_{j-3}(\eta_1+\eta_2-2\xi)\Tb_1(\eta_1,\eta_2,\xi-\eta_1-\eta_2)\frac{1}{it\partial_{\eta_1}\Phi} e^{it\Phi} \hat h_{j_1}^{\iota_1}(\eta_1)\hat h_{j_2}^{\iota_2}(\eta_2)\partial_{\eta_1}\hat h_{j_3}(\xi-\eta_1-\eta_2)\diff \eta_1\diff \eta_2.
\end{aligned}\end{equation}

By the symbol estimates Proposition \ref{Prop_symb_Tmu}, \eqref{eqnB11}, and
\begin{align*}
\left\|\psi_{j_1}(\eta_1)\psi_{j_2}(\eta_2)\psi_{j_3}(\eta_3)\psi_{j-3}(\eta_1+\eta_2-2\xi)\upsilon_{\iota_1}(\eta_1)\upsilon_{\iota_2}(\eta_2)\upsilon_{\iota_3}(\eta_1+\eta_2+\eta_3)\cutoff_{\lb}^1\left[\frac{1}{(1+\eta_1^2)^{3/2}}-\frac{1}{[1+\eta_3^2]^{3/2}} \right]^{-1}\right\|_{S^\infty}\\\lesssim (1+2^{2j_1})^{5/2}2^{-j_1}2^{-l_{1,1}},
\end{align*}
we have the Z-norm estimate for each integral terms in \eqref{eqn8.6},
\begin{align*}
&\left\|(|\xi|^{r+1}+|\xi|^{w+1})\iint_{\R^2}    e^{it\Phi(\eta_1,\eta_2, \xi)}\partial_{\eta_1} \left[\frac{\Tb_1(\eta_1, \eta_2, \xi - \eta_1 - \eta_2)}{\partial_{\eta_1}\Phi(\eta_1,\eta_2,\xi)}\right]\cutoff_{\lb}^1\psi_{j-3}(\eta_1+\eta_2-2\xi)\right.\\
&\qquad \left. \hat h_{j_1}(\eta_1)\hat h_{j_2}(\eta_2) \hat h_{j_3}(\xi-\eta_1-\eta_2) \diff{\eta_1} \diff{\eta_2}\right\|_{L^\infty_\xi}\\
\lesssim~&(t+1)^{wp_1}2^{j_1}\Big\{ [(1+2^{2j_1})^{5/2}2^{-j_1}2^{-l_{1,1}}][2^{j_1+j_2+j_3}(1+2^{j_1})^{-1}(1+2^{j_2})^3(1+2^{j_3})]\\
&+[(1+2^{2j_1})^{5/2}2^{-j_1}2^{-l_{1,1}}][2^{j_2+j_3}(1+2^{5j_2})(1+2^{j_3})]\Big\}\|\vp_{j_1}\|_{L^2}\|\vp_{j_2}\|_{L^\infty}\|\vp_{j_3}\|_{L^2}\\
\lesssim~&(t+1)^{wp_1}2^{2j_1}(1+2^{j_1})^{11}2^{-l_{1,1}}\|\vp_{j_1}\|_{L^2}\|\vp_{j_2}\|_{L^\infty}\|\vp_{j_3}\|_{L^2}\\
\lesssim~&(t+1)^{(w+6)p_1}2^{j_1-l_{1,1}}\|\vp_{j_1}\|_{L^2}\|\vp_{j_2}\|_{B^{1,6}}\|\vp_{j_3}\|_{L^2},
\end{align*}

\begin{align*}
&\left\|(|\xi|^{r+1}+|\xi|^{w+1})\iint_{\R^2}    e^{it\Phi(\eta_1,\eta_2, \xi)}\left[\frac{\Tb_1(\eta_1, \eta_2, \xi - \eta_1 - \eta_2)}{\partial_{\eta_1}\Phi(\eta_1,\eta_2,\xi)}\right]\partial_{\eta_1} \cutoff_{\lb}^1\psi_{j-3}(\eta_1+\eta_2-2\xi)\right. \\
&\qquad\left. \hat h_{j_1}(\eta_1)\hat h_{j_2}(\eta_2) \hat h_{j_3}(\xi-\eta_1-\eta_2) \diff{\eta_1} \diff{\eta_2}\right\|_{L^\infty_\xi}\\
\lesssim~&(t+1)^{wp_1}2^{j_1}\Big\{ [(1+2^{2j_1})^{5/2}2^{-j_1}2^{-l_{1,1}}][2^{j_1+j_2+j_3}(1+2^{j_1})^{-1}(1+2^{3j_2})(1+2^{j_3})]2^{-l_{1,1}}\Big\}\|\psi_{l_{1,1}}(\eta_1-\xi)\hat\vp_{j_1}(\eta_1)\|_{L^2_{\eta_1}L^\infty_\xi}\\
&\qquad\cdot\|\psi_{l_{1,2}}(\eta_2-\xi)\hat \vp_{j_2}(\eta_2)\|_{L^2_{\eta_2}L^\infty_\xi}\|\vp_{j_3}\|_{L^\infty}\\
\lesssim~&(t+1)^{wp_1}2^{3j_1}(1+2^{j_1})^{8}2^{-l_{1,1}}
 \|\hat\vp_{j_1}\|_{L^\infty_\xi}\|\hat \vp_{j_2}\|_{L^\infty_\xi}\|\vp_{j_3}\|_{L^\infty}\\
\lesssim~&(t+1)^{(w+4)p_1}2^{j_1-l_{1,1}}\|2^{j_1/2}\hat\vp_{j_1}\|_{L^\infty_\xi}\|2^{j_2/2}\hat \vp_{j_2}\|_{L^\infty_\xi}\|\vp_{j_3}\|_{B^{1,6}},
\end{align*}

\begin{align*}
&\left\|(|\xi|^{r+1}+|\xi|^{w+1})\iint_{\R^2}    e^{it\Phi(\eta_1,\eta_2, \xi)}\left[\frac{\Tb_1(\eta_1, \eta_2, \xi - \eta_1 - \eta_2)}{\partial_{\eta_1}\Phi(\eta_1,\eta_2,\xi)}\right] \cutoff_{\lb}^1\partial_{\eta_1}\psi_{j_1-3}(\eta_1+\eta_2-2\xi) \right.\\
&\qquad\left. \hat h_{j_1}(\eta_1)\hat h_{j_2}(\eta_2) \hat h_{j_3}(\xi-\eta_1-\eta_2) \diff{\eta_1} \diff{\eta_2}\right\|_{L^\infty_\xi}\\
\lesssim~&(t+1)^{wp_1}2^{j_1}\Big\{ [(1+2^{2j_1})^{5/2}2^{-j_1}2^{-l_{1,2}}][2^{j_1+j_2+j_3}(1+2^{j_1})^{-1}(1+2^{3j_2})(1+2^{j_3})]2^{-j_{1}}\Big\}\|\vp_{j_1}\|_{L^2}\|\vp_{j_2}\|_{L^\infty}\|\vp_{j_3}\|_{L^2}\\
\lesssim~&(t+1)^{wp_1}2^{2j_1}(1+2^{j_1})^82^{-l_{1,2}}\|\vp_{j_1}\|_{L^2}\|\vp_{j_2}\|_{L^\infty}\|\vp_{j_3}\|_{L^2}\\
\lesssim~&(t+1)^{(w+4)p_1}2^{j_1-l_{1,2}}\|\vp_{j_1}\|_{L^2}\|\vp_{j_2}\|_{B^{1,6}}\|\vp_{j_3}\|_{L^2},
\end{align*}

\begin{align*}
&\left\| (|\xi|^{r+1}+|\xi|^{w+1}) \iint_{\R^2} e^{it\Phi(\eta_1,\eta_2, \xi)}\frac{\Tb_1(\eta_1, \eta_2, \xi - \eta_1 - \eta_2)}{\partial_{\eta_1}\Phi(\eta_1,\eta_2,\xi)} \cutoff_{\lb}^1\psi_{j-3}(\eta_1+\eta_2-2\xi)\right.\\
&\qquad\left. \partial_{\eta_1}\hat h_{j_1}(\eta_1)\hat h_{j_2}(\eta_2) \hat h_{j_3}(\xi-\eta_1-\eta_2) \diff{\eta_1} \diff{\eta_2}\right\|_{L^\infty_\xi}\\
\lesssim~&(t+1)^{wp_1}2^{j_1}[(1+2^{2j_1})^{5/2}2^{-j_1}2^{-l_{1,2}}][2^{j_1+j_2+j_3}(1+2^{j_1})^{-1}(1+2^{3j_2})(1+2^{j_3})]\|\partial_\xi\hat h_{j_1}\|_{L^2}\|\vp_{j_2}\|_{L^\infty}\|\vp_{j_3}\|_{L^2}\\
\lesssim~&(t+1)^{wp_1} 2^{3j_1}(1+2^{j_1})^82^{-l_{1,2}} \|\partial_\xi\hat h_{j_1}\|_{L^2}\|\vp_{j_2}\|_{L^\infty}\|\vp_{j_3}\|_{L^2}\\
\lesssim~&(t+1)^{(w+4)p_1}2^{j_1-l_{1,2}} \|\partial_\xi\hat h_{j_1}\|_{L^2}\|\vp_{j_2}\|_{B^{1,6}}\|\vp_{j_3}\|_{L^2}.
\end{align*}
Similarly,
\begin{align*}
&\left\| (|\xi|^{r+1}+|\xi|^{w+1}) \iint_{\R^2} e^{it\Phi(\eta_1,\eta_2, \xi)}\frac{\Tb_1(\eta_1, \eta_2, \xi - \eta_1 - \eta_2)}{\partial_{\eta_1}\Phi(\eta_1,\eta_2,\xi)} \cutoff_{\lb}^1\psi_{j-3}(\eta_1+\eta_2-2\xi)\right.\\
&\qquad\left. \hat h_{j_1}(\eta_1)\hat h_{j_2}(\eta_2) \partial_{\eta_1}\hat h_{j_3}(\xi-\eta_1-\eta_2) \diff{\eta_1} \diff{\eta_2}\right\|_{L^\infty_\xi}\\
\lesssim~&(t+1)^{(w+4)p_1}2^{j_1-l_{1,2}} \|\vp_{j_1}\|_{L^2}\|\vp_{j_2}\|_{B^{1,6}}\|\partial_\xi\hat h_{j_3}\|_{L^2}.
\end{align*}

Then we take the summation for $l_{1,1}$ and $l_{1,2}$ from $\log_2(\varrho(t))$ to $j_1+2$, considering the support of the cut-off functions, 
\begin{align*}
&\Bigg\|(|\xi|^{r+1}+|\xi|^{w+1})\sum\limits_{\min\{l_{1,1}, l_{1,2}\}\geq \log_2[\varrho(t)]}\iint_{\R^2} \cutoff_{\lb}^{1} \Tb_1(\eta_1, \eta_2, \xi - \eta_1 - \eta_2) e^{it\Phi(\eta_1,\eta_2, \xi)}\psi_{j-3}(\eta_1+\eta_2-2\xi)\\
&\qquad\qquad\qquad\qquad \hat h_{j_1}^{\iota_1}(\eta_1)\hat h_{j_2}^{\iota_2}(\eta_2) \hat h_{j_3}(\xi-\eta_1-\eta_2) \upsilon_{\iota_3}(\xi) \diff{\eta_1}\diff{\eta_2}\Bigg\|_{L^\infty_\xi}\\
\lesssim~&(|j_1|+|\log_2[\varrho(t)]|)2^{j_1}\max\{[\varrho(t)]^{-1}, 2^{-j_1}\}(t+1)^{(w+6)p_1-1} \|\vp_{j_1}\|_{L^2}\|\vp_{j_2}\|_{B^{1,6}}(\|\partial_\xi\hat h_{j_3}\|_{L^2}+\|\vp_{j_3}\|_{L^2})\\
&~~+(|j_1|+|\log_2[\varrho(t)]|)2^{j_1}\max\{[\varrho(t)]^{-1}, 2^{-j_1}\}(t+1)^{(w+6)p_1-1}\|2^{j_1/2}\hat\vp_{j_1}\|_{L^\infty_\xi}\|2^{j_2/2}\hat \vp_{j_2}\|_{L^\infty_\xi}\|\vp_{j_3}\|_{B^{1,6}}.
\end{align*}
Notice that $|\log_2[\varrho(t)]|=O(\log_2(t+1))$ and $|2^{j_1}|\lesssim (t+1)^{p_1}$. So we have
\begin{align*}
&\Bigg\|(|\xi|^{r+1}+|\xi|^{w+1})\sum\limits_{\min\{l_{1,1}, l_{1,2}\}\geq \log_2[\varrho(t)]}\iint_{\R^2} \cutoff_{\lb}^{1} \Tb_1(\eta_1, \eta_2, \xi - \eta_1 - \eta_2) e^{it\Phi(\eta_1,\eta_2, \xi)}\psi_{j-3}(\eta_1+\eta_2-2\xi)\\
&\qquad\qquad\qquad\qquad \hat h_{j_1}^{\iota_1}(\eta_1)\hat h_{j_2}^{\iota_2}(\eta_2) \hat h_{j_3}(\xi-\eta_1-\eta_2) \upsilon_{\iota_3}(\xi) \diff{\eta_1}\diff{\eta_2}\Bigg\|_{L^\infty_\xi}\\
\lesssim~&(t+1)^{(w+8)p_1-0.51}\Big\{ \|\vp_{j_1}\|_{L^2}\|\vp_{j_2}\|_{B^{1,6}}(\|\partial_\xi\hat h_{j_3}\|_{L^2}+\|\vp_{j_3}\|_{L^2})+\|2^{j_1/2}\hat\vp_{j_1}\|_{L^\infty_\xi}\|2^{j_2/2}\hat \vp_{j_2}\|_{L^\infty_\xi}\|\vp_{j_3}\|_{B^{1,6}}\Big\}.
\end{align*}
Taking summation for corresponding $j_1, j_2, j_3$ and using the bootstrap assumption, we have
\begin{align*}
&\sum\limits_{j_1,j_2,j_3}\Bigg\|(|\xi|^{r+1}+|\xi|^{w+1})\sum\limits_{\min\{l_{1,1}, l_{1,2}\}\geq \log_2[\varrho(t)]}\iint_{\R^2} \cutoff_{\lb}^{1} \Tb_1(\eta_1, \eta_2, \xi - \eta_1 - \eta_2) e^{it\Phi(\eta_1,\eta_2, \xi)}\psi_{j-3}(\eta_1+\eta_2-2\xi)\\
&\qquad\qquad\qquad\qquad \hat h_{j_1}^{\iota_1}(\eta_1)\hat h_{j_2}^{\iota_2}(\eta_2) \hat h_{j_3}(\xi-\eta_1-\eta_2) \upsilon_{\iota_3}(\xi) \diff{\eta_1}\diff{\eta_2}\Bigg\|_{L^\infty_\xi}\\
\lesssim~&(t+1)^{(w+8)p_1-0.55-0.5}\ve_1^3,
\end{align*}
which is integrable in time $t\in(0,\infty)$. 

The other term to be estimated is \eqref{eq8.7}. We consider the case when $\max\{l_{2,1}, l_{2,2}\}\geq \log_2(\varrho(t))$.
 Since $\eta_1$ and $\eta_2$ are symmetric, we assume $l_{2,1}\geq l_{2,2}$. We integrate by part with respect to $\eta_1$ and obtain
\begin{equation}
\begin{aligned}
& \iint_{\R^2}\partial_{\eta_1}[\cutoff_{\lb}^{2}(1-\psi_{j-3}(\eta_1+\eta_2-2\xi))\frac{\Tb_1(\eta_1,\eta_2,\xi-\eta_1-\eta_2)}{it\partial_{\eta_1}\Phi} ]e^{it\Phi}\hat h_{j_1}^{\iota_1}(\eta_1)\hat h_{j_2}^{\iota_2}(\eta_2)\hat h_{j_3}(\xi-\eta_1-\eta_2)\upsilon_{\iota_3}(\xi)\diff \eta_1\diff \eta_2\\
&+  \iint_{\R^2}\cutoff_{\lb}^{2}(1-\psi_{j-3}(\eta_1+\eta_2-2\xi))\frac{\Tb_1(\eta_1,\eta_2,\xi-\eta_1-\eta_2)}{it\partial_{\eta_1}\Phi} e^{it\Phi} \partial_{\eta_1}\hat h_{j_1}^{\iota_1}(\eta_1)\hat h_{j_2}^{\iota_2}(\eta_2)\hat h_{j_3}(\xi-\eta_1-\eta_2)\upsilon_{\iota_3}(\xi)\diff \eta_1\diff \eta_2\\
&+  \iint_{\R^2}\cutoff_{\lb}^{2}(1-\psi_{j-3}(\eta_1+\eta_2-2\xi))\frac{\Tb_1(\eta_1,\eta_2,\xi-\eta_1-\eta_2)}{it\partial_{\eta_1}\Phi} e^{it\Phi} \hat h_{j_1}^{\iota_1}(\eta_1)\hat h_{j_2}^{\iota_2}(\eta_2)\partial_{\eta_1}\hat h_{j_3}(\xi-\eta_1-\eta_2)\upsilon_{\iota_3}(\xi)\diff \eta_1\diff \eta_2.
\end{aligned}\end{equation}
 
By the symbol estimates Proposition \ref{Prop_symb_Tmu} and
\begin{multline*}
\left\|\psi_{j_1}(\eta_1)\psi_{j_2}(\eta_2)\psi_{j_3}(\eta_3)[1-\psi_{j-3}(\eta_1+\eta_2-2\xi)]\upsilon_{\iota_1}(\eta_1)\upsilon_{\iota_2}(\eta_2)\upsilon_{\iota_3}(\eta_1+\eta_2+\eta_3)\cutoff_{\lb}^2\right.\\
\left.\cdot\left[\frac{1}{(1+\eta_1^2)^{3/2}}-\frac{1}{[1+\eta_3^2]^{3/2}} \right]^{-1}\right\|_{S^\infty}\lesssim (1+2^{2j_1})^{5/2}2^{-j_1}2^{-l_{2,1}},
\end{multline*}
\begin{multline*}
\left\|\psi_{j_1}(\eta_1)\psi_{j_2}(\eta_2)\psi_{j_3}(\eta_3)[1-\psi_{j-3}(\eta_1+\eta_2-2\xi)]\upsilon_{\iota_1}(\eta_1)\upsilon_{\iota_2}(\eta_2)\upsilon_{\iota_3}(\eta_1+\eta_2+\eta_3)\cutoff_{\lb}^2\right.\\
\left.\cdot\left[-\frac{3\eta_1}{(1+\eta_1^2)^{5/2}}-\frac{3\eta_3}{(1+\eta_3^2)^{5/2}}\right]\right\|_{S^\infty}
\lesssim 2^{j_{1}}(1+2^{2j_1})^{-5/2},
\end{multline*}
we obtain the estimates of each integrals as following 
\begin{align*}
&\left\|(|\xi|^{r+1}+|\xi|^{w+1})\iint_{\R^2}    e^{it\Phi(\eta_1,\eta_2, \xi)}\partial_{\eta_1} \left[\frac{\Tb_1(\eta_1, \eta_2, \xi - \eta_1 - \eta_2)}{\partial_{\eta_1}\Phi(\eta_1,\eta_2,\xi)}\right]\cutoff_{\lb}^2(1-\psi_{j-3}(\eta_1+\eta_2-2\xi))\right.\\
& \qquad\left. \hat h_{j_1}(\eta_1)\hat h_{j_2}(\eta_2) \hat h_{j_3}(\xi-\eta_1-\eta_2) \diff{\eta_1} \diff{\eta_2}\right\|_{L^\infty_\xi}\\
\lesssim~&(t+1)^{wp_1}2^{j_1}\Big\{ [(1+2^{2j_1})^{5/2}2^{-j_1}2^{-l_{2,1}}]^2[2^{j_{1}}(1+2^{2j_1})^{-5/2}][2^{j_1+j_2+j_3}(1+2^{j_1})^{-1}(1+2^{3j_2})(1+2^{j_3})]\\
&+[(1+2^{2j_1})^{5/2}2^{-j_1}2^{-l_{2,1}}][2^{j_2+j_3}(1+2^{5j_2})(1+2^{j_3})]\Big\}\|\psi_{l_{2,1}}(\eta_1-\frac\xi3)\hat\vp_{j_1}(\eta_1)\|_{L^2_{\eta_1}L^\infty_\xi}\\
&\qquad\cdot\|\psi_{l_{2,2}}(\eta_2-\frac\xi3)\hat \vp_{j_2}(\eta_2)\|_{L^2_{\eta_2}L^\infty_\xi}\|\vp_{j_3}\|_{L^\infty}\\
\lesssim~&(t+1)^{wp_1}[(1+2^{j_1})^{11}2^{-l_{2,1}}2^{2j_1}]\|\hat\vp_{j_1}\|_{L^\infty_\xi}\|\vp_{j_2}\|_{L^\infty_\xi}\|\vp_{j_3}\|_{L^\infty}\\
\lesssim~&(t+1)^{(w+7)p_1}2^{j_1-l_{2,1}}\|2^{j_1/2}\hat\vp_{j_1}\|_{L^\infty_\xi}\|2^{j_2/2}\hat\vp_{j_2}\|_{L^\infty_\xi}\|\vp_{j_3}\|_{B^{1,6}},
\end{align*}
\begin{align*}
&\left\|(|\xi|^{r+1}+|\xi|^{w+1})\iint_{\R^2}    e^{it\Phi(\eta_1,\eta_2, \xi)}\left[\frac{\Tb_1(\eta_1, \eta_2, \xi - \eta_1 - \eta_2)}{\partial_{\eta_1}\Phi(\eta_1,\eta_2,\xi)}\right]\partial_{\eta_1} \cutoff_{\lb}^2[1-\psi_{j-3}(\eta_1+\eta_2-2\xi)]\right.\\
&\qquad\left.  \hat h_{j_1}(\eta_1)\hat h_{j_2}(\eta_2) \hat h_{j_3}(\xi-\eta_1-\eta_2) \diff{\eta_1} \diff{\eta_2}\right\|_{L^\infty_\xi}\\
\lesssim~&(t+1)^{wp_1}2^{j_1}\Big\{ [(1+2^{2j_1})^{5/2}2^{-j_1}2^{-l_{2,1}}][2^{j_1+j_2+j_3}(1+2^{j_1})^{-1}(1+2^{3j_2})(1+2^{j_3})]2^{-l_{2,1}}\Big\}\\
&\qquad\cdot\|\psi_{l_{2,1}}(\eta_1-\xi)\hat\vp_{j_1}(\eta_1)\|_{L^2_{\eta_1}L^\infty_\xi}\|\psi_{l_{2,2}}(\eta_2-\xi)\hat \vp_{j_2}(\eta_2)\|_{L^2_{\eta_2}L^\infty_\xi}\|\vp_{j_3}\|_{L^\infty}\\
\lesssim~&(t+1)^{wp_1}2^{3j_1}(1+2^{j_1})^82^{-l_{2,1}}\|\hat\vp_{j_1}\|_{L^\infty_\xi}\|\hat \vp_{j_2}\|_{L^\infty_\xi}\|\vp_{j_3}\|_{L^\infty}\\
\lesssim~&(t+1)^{(w+5)p_1}2^{j_1-l_{2,1}}\|2^{j_1/2}\hat\vp_{j_1}\|_{L^\infty_\xi}\|2^{j_2/2}\hat \vp_{j_2}\|_{L^\infty_\xi}\|\vp_{j_3}\|_{B^{1,6}},
\end{align*}
and
\begin{align*}
&\left\|(|\xi|^{r+1}+|\xi|^{w+1})\iint_{\R^2}  e^{it\Phi(\eta_1,\eta_2, \xi)}\frac{\Tb_1(\eta_1, \eta_2, \xi - \eta_1 - \eta_2)}{\partial_{\eta_1}\Phi(\eta_1,\eta_2,\xi)}\cutoff_{\lb}^2[1-\psi_{j-3}(\eta_1+\eta_2-2\xi) ]\right.\\
&\qquad\left.  \partial_{\eta_1}\hat h_{j_1}(\eta_1)\hat h_{j_2}(\eta_2) \hat h_{j_3}(\xi-\eta_1-\eta_2)\diff{\eta_1} \diff{\eta_2}\right\|_{L^\infty_\xi}\\
\lesssim~&(t+1)^{wp_1}2^{j_1}[(1+2^{2j_1})^{5/2}2^{-j_1}2^{-l_{2,1}}][2^{j_1+j_2+j_3}(1+2^{j_1})^{-1}(1+2^{3j_2})(1+2^{j_3})]\|\partial_\xi\hat h_{j_1}\|_{L^2}\|\vp_{j_2}\|_{L^\infty}\|\vp_{j_3}\|_{L^2}\\
\lesssim~&(t+1)^{wp_1}2^{3j_1}(1+2^{j_1})^82^{-l_{2,1}} \|\partial_\xi\hat h_{j_1}\|_{L^2}\|\vp_{j_2}\|_{L^\infty}\|\vp_{j_3}\|_{L^2}\\
\lesssim~&(t+1)^{(w+5)p_1}2^{j_1-l_{2,1}} \|\partial_\xi\hat h_{j_1}\|_{L^2}\|\vp_{j_2}\|_{B^{1,6}}\|\vp_{j_3}\|_{L^2}.
\end{align*}
Similarly,
\begin{align*}
&\left\|(|\xi|^{r+1}+|\xi|^{w+1})\iint_{\R^2}  e^{it\Phi(\eta_1,\eta_2, \xi)}\frac{\Tb_1(\eta_1, \eta_2, \xi - \eta_1 - \eta_2)}{\partial_{\eta_1}\Phi(\eta_1,\eta_2,\xi)}\cutoff_{\lb}^2[1-\psi_{j-3}(\eta_1+\eta_2-2\xi) ]\right.\\
&\qquad\left.  \hat h_{j_1}(\eta_1)\hat h_{j_2}(\eta_2)\partial_{\eta_1} \hat h_{j_3}(\xi-\eta_1-\eta_2)\diff{\eta_1} \diff{\eta_2}\right\|_{L^\infty_\xi}\\
\lesssim~&(t+1)^{(w+5)p_1}2^{j_1-l_{2,1}} \|\vp_{j_1}\|_{L^2}\|\vp_{j_2}\|_{B^{1,6}}\|\partial_\xi\hat h_{j_3}\|_{L^2}.
\end{align*}

Then we take the summation for $l_{2,1}$ and $l_{2,2}$ from $\log_2\varrho(t)$ to $j_1+2$, considering the support of the cut-off functions, 
\begin{align*}
&\Bigg\|(|\xi|^{r+1}+|\xi|^{w+1})\sum\limits_{\max\{l_{2,1}, l_{2,2}\}\geq \log_2[\rho(t)]}\iint_{\R^2} \cutoff_{\lb}^{2} \Tb_1(\eta_1, \eta_2, \xi - \eta_1 - \eta_2) e^{it\Phi(\eta_1,\eta_2, \xi)}[1-\psi_{j-3}(\eta_1+\eta_2-2\xi)] \\
&\qquad\qquad\qquad\qquad  \hat h_{j_1}^{\iota_1}(\eta_1)\hat h_{j_2}^{\iota_2}(\eta_2) \hat h_{j_3}(\xi-\eta_1-\eta_2)\upsilon_{\iota_3}(\xi)\diff{\eta_1} \diff{\eta_2}\Bigg\|_{L^\infty_\xi}\\
\lesssim~&(t+1)^{(w+7)p_1-1}2^{j_1}(|j_1|+|\log_2[\varrho(t)]|)\max\{[\varrho(t)]^{-1}, 2^{-j_1}\}\Big(\|2^{j_1/2}\hat\vp_{j_1}\|_{L^\infty_\xi}\|2^{j_2/2}\hat\vp_{j_2}\|_{L^\infty_\xi}\|\vp_{j_3}\|_{B^{1,6}}\\
&\qquad+\|\partial_\xi\hat h_{j_1}\|_{L^2}\|\vp_{j_2}\|_{B^{1,6}}\|\vp_{j_3}\|_{L^2}+\|\vp_{j_1}\|_{L^2}\|\vp_{j_2}\|_{B^{1,6}}\|\partial_\xi\hat h_{j_3}\|_{L^2}\Big)\\
\lesssim~&(t+1)^{(w+9)p_1-0.55}\Big(\|2^{j_1/2}\hat\vp_{j_1}\|_{L^\infty_\xi}\|2^{j_2/2}\hat\vp_{j_2}\|_{L^\infty_\xi}\|\vp_{j_3}\|_{B^{1,6}}\\
&\qquad+\|\partial_\xi\hat h_{j_1}\|_{L^2}\|2^{j_2}\vp_{j_2}\|_{L^\infty}\|\vp_{j_3}\|_{L^2}+\|\vp_{j_1}\|_{L^2}\|\vp_{j_2}\|_{B^{1,6}}\|\partial_\xi\hat h_{j_3}\|_{L^2}\Big).
\end{align*}
Taking summation for corresponding $j_1, j_2, j_3$ and using the bootstrap assumption, we have
\begin{align*}
&\sum\limits_{j_1,j_2,j_3}\Bigg\|(|\xi|^{r}+|\xi|^{w})\xi\sum\limits_{\min\{l_{2,1}, l_{2,2}\}\geq \log_2[\varrho(t)]}\iint_{\R^2} \cutoff_{\lb}^{1} \Tb_1(\eta_1, \eta_2, \xi - \eta_1 - \eta_2) e^{it\Phi(\eta_1,\eta_2, \xi)}[1-\psi_{j-3}(\eta_1+\eta_2-2\xi)]\\
&\qquad\qquad\qquad\qquad \hat h_{j_1}^{\iota_1}(\eta_1)\hat h_{j_2}^{\iota_2}(\eta_2) \hat h_{j_3}(\xi-\eta_1-\eta_2) \upsilon_{\iota_3}(\xi) \diff{\eta_1}\diff{\eta_2}\Bigg\|_{L^\infty_\xi}\\
\lesssim~&(t+1)^{(w+9)p_1-0.55-0.5+0.01}\ve_1^3,
\end{align*}
which is integrable in time $t\in(0,\infty)$.

{\noindent\bf 3. $(\iota_1, \iota_2, \iota_3)=(+, -, +)$ or $(-, +, -)$. } 
We define the cut-off function 
\begin{align*}
\cutoff_{\lb}^{3}(\eta_1, \eta_2, \xi):=&\psi_{l_{3,1}}(\eta_1-\xi)\psi_{l_{3,2}}(\eta_1+\eta_2),
\end{align*}
where $\lb=(l_{3,1}, l_{3,2})\in \Z^2$. 

Considering the support of the integrand, we write the integral as
\begin{align*}
&\iint_{\R^2}  \Tb_1(\eta_1, \eta_2, \xi - \eta_1 - \eta_2) e^{it\Phi(\eta_1,\eta_2, \xi)} \hat h_{j_1}^{\iota_1}(\eta_1)\hat h_{j_2}^{\iota_2}(\eta_2) \hat h_{j_3}(\xi-\eta_1-\eta_2)\upsilon_{\iota_3}(\xi) \diff{\eta_1} \diff{\eta_2}\\
=~&\sum\limits_{\max\{l_{3,1}, l_{3,2}\}\leq j+2}\iint_{\R^2} \cutoff_{\lb}^{3} \Tb_1(\eta_1, \eta_2, \xi - \eta_1 - \eta_2) e^{it\Phi(\eta_1,\eta_2, \xi)} \hat h_{j_1}^{\iota_1}(\eta_1)\hat h_{j_2}^{\iota_2}(\eta_2) \hat h_{j_3}(\xi-\eta_1-\eta_2)\upsilon_{\iota_3}(\xi)  \diff{\eta_1} \diff{\eta_2}.
 \end{align*}
When $l_{3,1} \geq l_{3,2}$, using \eqref{eqn718} and doing integration by part, we obtain
\begin{align*}
& \iint_{\R^2} \cutoff_{\lb}^{3}\Tb_1(\eta_1,\eta_2,\xi-\eta_1-\eta_2) e^{it\Phi} \hat h_{j_1}^{\iota_1}(\eta_1)\hat h_{j_2}^{\iota_2}(\eta_2)\hat h_{j_3}(\xi-\eta_1-\eta_2)\upsilon_{\iota_3}(\xi)\diff \eta_1\diff \eta_2\\
=~&-\frac it  \iint_{\R^2} \cutoff_{\lb}^{3}\Tb_1(\eta_1,\eta_2,\xi-\eta_1-\eta_2) \frac{1}{p'(\eta_1)-p'(\eta_2)}(\partial_{\eta_1}-\partial_{\eta_2})e^{it\Phi} \hat h_{j_1}^{\iota_1}(\eta_1)\hat h_{j_2}^{\iota_2}(\eta_2)\hat h_{j_3}(\xi-\eta_1-\eta_2)\upsilon_{\iota_3}(\xi)\diff \eta_1\diff \eta_2\\\nonumber
=~&\frac it  \iint_{\R^2}e^{it\Phi} (\partial_{\eta_1}-\partial_{\eta_2})\left[ \cutoff_{\lb}^{3} \frac{\Tb_1(\eta_1,\eta_2,\xi-\eta_1-\eta_2)}{p'(\eta_1)-p'(\eta_2)}\hat h_{j_1}^{\iota_1}(\eta_1)\hat h_{j_2}^{\iota_2}(\eta_2)\hat h_{j_3}(\xi-\eta_1-\eta_2)\right]\upsilon_{\iota_3}(\xi)\diff \eta_1\diff \eta_2\\\nonumber
=~&\frac it  \iint_{\R^2}e^{it\Phi}\Big[ \cutoff_{\lb}^{3} (\partial_1-\partial_2)\Tb_1(\eta_1,\eta_2,\xi-\eta_1-\eta_2) +(\partial_1-\partial_2) \cutoff_{\lb}^{3}\Tb_1(\eta_1,\eta_2,\xi-\eta_1-\eta_2)  \\
&- \cutoff_{\lb}^{3}\Tb_1(\eta_1,\eta_2,\xi-\eta_1-\eta_2) \frac{p''(\eta_1)+p''(\eta_2)}{p'(\eta_1)-p'(\eta_2)}\Big]\frac{1}{p'(\eta_1)-p'(\eta_2)}\hat h_{j_1}^{\iota_1}(\eta_1)\hat h_{j_2}^{\iota_2}(\eta_2)\hat h_{j_3}(\xi-\eta_1-\eta_2)\upsilon_{\iota_3}(\xi)\diff \eta_1\diff \eta_2\\\nonumber
&+\frac it  \iint_{\R^2}e^{it\Phi} \cutoff_{\lb}^{3} \frac{\Tb_1(\eta_1,\eta_2,\xi-\eta_1-\eta_2)}{p'(\eta_1)-p'(\eta_2)}(\partial_1-\partial_2)\left[\hat h_{j_1}^{\iota_1}(\eta_1)\hat h_{j_2}^{\iota_2}(\eta_2)\right]\hat h_{j_3}(\xi-\eta_1-\eta_2)\upsilon_{\iota_3}(\xi)\diff \eta_1\diff \eta_2.
\end{align*}
By the symbol estimates Proposition \ref{Prop_symb_Tmu} and Proposition \ref{PropA3}, we obtain the estimates for each integral above.
\begin{align*}
&\left\|(|\xi|^{r+1}+|\xi|^{w+1}) \iint_{\R^2}e^{it\Phi} \cutoff_{\lb}^{3} (\partial_1-\partial_2)\Tb_1(\eta_1,\eta_2,\xi-\eta_1-\eta_2)\frac{1}{p'(\eta_1)-p'(\eta_2)}\right.\\
&\qquad\left.\hat h_{j_1}^{\iota_1}(\eta_1)\hat h_{j_2}^{\iota_2}(\eta_2)\hat h_{j_3}(\xi-\eta_1-\eta_2)\upsilon_{\iota_3}(\xi)\diff \eta_1\diff \eta_2\right\|_{L^\infty_\xi}\\
\lesssim~&(t+1)^{wp_1}2^{j_1}[(1+2^{2j_1})^{5/2}2^{-j_1}2^{- l_{3,1}}][2^{j_1+j_2+j_3}(1+2^{j_1})^{-1}(1+2^{5j_2})(1+2^{j_3})]\|\vp_{j_1}\|_{L^\infty}\|\vp_{j_2}\|_{L^2}\|\vp_{j_3}\|_{L^2}\\
\lesssim~&(t+1)^{wp_1}2^{3j_1}(1+2^{j_1})^{10}2^{- l_{3,1}}\|\vp_{j_1}\|_{L^\infty}\|\vp_{j_2}\|_{L^2}\|\vp_{j_3}\|_{L^2}\\
\lesssim~&(t+1)^{(w+7)p_1}2^{j_1- l_{3,1}}\|\vp_{j_1}\|_{B^{1,6}}\|\vp_{j_2}\|_{L^2}\|\vp_{j_3}\|_{L^2},
\end{align*}
\begin{align*}
&\left\| (|\xi|^{r+1}+|\xi|^{w+1})\iint_{\R^2}e^{it\Phi} (\partial_1-\partial_2)\cutoff_{\lb}^{3} \Tb_1(\eta_1,\eta_2,\xi-\eta_1-\eta_2)\frac{1}{p'(\eta_1)-p'(\eta_2)}\right.\\
&\qquad\left.\cdot\hat h_{j_1}^{\iota_1}(\eta_1)\hat h_{j_2}^{\iota_2}(\eta_2)\hat h_{j_3}(\xi-\eta_1-\eta_2)\upsilon_{\iota_3}(\xi)\diff \eta_1\diff \eta_2\right\|_{L^\infty_\xi}\\
\lesssim~&(t+1)^{wp_1}2^{j_1}2^{-l_{3,1}}[2^{j_1+j_2+j_3}(1+2^{j_1})^{-1}(1+2^{3j_2})(1+2^{j_3})]
[(1+2^{2j_1})^{5/2}2^{-j_1}2^{-l_{3,1}}]\\
&\quad\cdot\|\psi_{l_{3,1}}(\eta_1-\xi)\hat\vp_{j_1}(\eta_1)\|_{L^2_{\eta_1}L^\infty_\xi}\|\psi_{l_{3,2}}(\xi-\eta_3)\hat\vp_{j_3}(\eta_3)\|_{L^2_{\eta_3}L^\infty_\xi}\|\vp_{j_2}\|_{L^\infty}\\
\lesssim~& (t+1)^{wp_1}2^{3j_1-l_{3,1}}(1+2^{j_1})^{8}\|\hat\vp_{j_1}\|_{L^\infty_\xi}\|\hat\vp_{j_3}\|_{L^\infty_\xi}\|\vp_{j_2}\|_{B^{1,6}}\\
\lesssim~& (t+1)^{(w+5)p_1}2^{j_1-l_{3,1}}\|2^{j_1/2}\hat\vp_{j_1}\|_{L^\infty_\xi}\|2^{j_3/2}\hat\vp_{j_3}\|_{L^\infty_\xi}\|\vp_{j_2}\|_{B^{1,6}},
\end{align*}
\begin{align*}
&\left\|(|\xi|^{r+1}+|\xi|^{w+1}) \iint_{\R^2}e^{it\Phi} \cutoff_{\lb}^{3} \Tb_1(\eta_1,\eta_2,\xi-\eta_1-\eta_2)\frac{(p''(\eta_1)+p''(\eta_2))}{(p'(\eta_1)-p'(\eta_2))^2}\right.\\
&\qquad\left.\cdot\hat h_{j_1}^{\iota_1}(\eta_1)\hat h_{j_2}^{\iota_2}(\eta_2)\hat h_{j_3}(\xi-\eta_1-\eta_2)\upsilon_{\iota_3}(\xi)\diff \eta_1\diff \eta_2\right\|_{L^\infty_\xi}\\
\lesssim~&(t+1)^{wp_1}2^{j_1}[(1+2^{2j_1})^{5/2}2^{-j_1}2^{-l_{3,1}}][2^{j_1+j_2+j_3}(1+2^{j_1})^{-1}(1+2^{3j_2})(1+2^{j_3})]\|\vp_{j_1}\|_{L^\infty}\|\vp_{j_2}\|_{L^2}\|\vp_{j_3}\|_{L^2}\\
\lesssim~&(t+1)^{wp_1}2^{3j_1}[(1+2^{j_1})^{8}2^{-l_{3,1}}\|\vp_{j_1}\|_{L^\infty}\|\vp_{j_2}\|_{L^2}\|\vp_{j_3}\|_{L^2}\\
\lesssim~&(t+1)^{(w+5)p_1}2^{j_1- l_{3,1}}\|\vp_{j_1}\|_{B^{1,6}}\|\vp_{j_2}\|_{L^2}\|\vp_{j_3}\|_{L^2},
\end{align*}
\begin{align*}
&\Bigg\|(|\xi|^{r+1}+|\xi|^{w+1})\iint_{\R^2}e^{it\Phi} \cutoff_{\lb}^{3} \frac{\Tb_1(\eta_1,\eta_2,\xi-\eta_1-\eta_2)}{p'(\eta_1)-p'(\eta_2)} (\partial_1-\partial_2)\left[\hat h_{j_1}^{\iota_1}(\eta_1)\hat h_{j_2}^{\iota_2}(\eta_2)\right]\hat h_{j_3}(\xi-\eta_1-\eta_2)\upsilon_{\iota_3}(\xi)\diff \eta_1\diff \eta_2\Bigg\|_{L^\infty_\xi}\\
\lesssim~& (t+1)^{wp_1}2^{j_1}[(1+2^{2j_1})^{5/2}2^{-j_1}2^{- l_{3,1}}][2^{j_1+j_2+j_3}(1+2^{j_1})^{-1}(1+2^{3j_2})(1+2^{j_3})]\\
&\quad \cdot\Big(\|\partial_{\eta_1}\hat h_{j_1}(\eta_1)\|_{L^2_{\eta_1}}\|\vp_{j_2}\|_{L^2}+\|\vp_{j_1}\|_{L^2}\|\partial_{\eta_2}\hat h_{j_2}(\eta_2)\|_{L^2_{\eta_2}}\Big)\|\vp_{j_3}\|_{L^\infty}\\
\lesssim~& (t+1)^{wp_1}2^{3j_1}(1+2^{j_1})^{8}2^{- l_{3,1}}\Big(\|\partial_{\eta_1}\hat h_{j_1}(\eta_1)\|_{L^2_{\eta_1}}\|\vp_{j_2}\|_{L^2}+\|\vp_{j_1}\|_{L^2}\|\partial_{\eta_2}\hat h_{j_2}(\eta_2)\|_{L^2_{\eta_2}}\Big)\|\vp_{j_3}\|_{L^\infty}\\
\lesssim~&(t+1)^{(w+5)p_1}2^{j_1- l_{3,1}}\Big(\|\partial_{\xi}\hat h_{j_1}\|_{L^2_{\xi}}\|\vp_{j_2}\|_{L^2}+\|\vp_{j_1}\|_{L^2}\|\partial_{\xi}\hat h_{j_2}\|_{L^2_{\xi}}\Big)\|\vp_{j_3}\|_{B^{1,6}}.
\end{align*}
When $l_{3,1} < l_{3,2}$, we integrate by part with $\eta_2$, 
\begin{align*}
& \iint_{\R^2} \cutoff_{\lb}^{3}\Tb_1(\eta_1,\eta_2,\xi-\eta_1-\eta_2) e^{it\Phi} \hat h_{j_1}^{\iota_1}(\eta_1)\hat h_{j_2}^{\iota_2}(\eta_2)\hat h_{j_3}(\xi-\eta_1-\eta_2)\upsilon_{\iota_3}(\xi)\diff \eta_1\diff \eta_2\\
=~&-\frac it  \iint_{\R^2} \cutoff_{\lb}^{3}\Tb_1(\eta_1,\eta_2,\xi-\eta_1-\eta_2) \frac{1}{p'(\eta_2)-p'(\xi-\eta_1-\eta_2)}\partial_{\eta_2}e^{it\Phi} \hat h_{j_1}^{\iota_1}(\eta_1)\hat h_{j_2}^{\iota_2}(\eta_2)\hat h_{j_3}(\xi-\eta_1-\eta_2)\upsilon_{\iota_3}(\xi)\diff \eta_1\diff \eta_2\\
=~&\frac it  \iint_{\R^2} e^{it\Phi}\partial_{\eta_2}\left[ \cutoff_{\lb}^{3} \frac{\Tb_1(\eta_1,\eta_2,\xi-\eta_1-\eta_2)}{p'(\eta_2)-p'(\xi-\eta_1-\eta_2)}\hat h_{j_2}^{\iota_2}(\eta_2)\hat h_{j_3}(\xi-\eta_1-\eta_2)\right]\hat h_{j_1}^{\iota_1}(\eta_1)\upsilon_{\iota_3}(\xi)\diff \eta_1\diff \eta_2\\
=~&\frac it  \iint_{\R^2} e^{it\Phi}\partial_{\eta_2} \cutoff_{\lb}^{3} \frac{\Tb_1(\eta_1,\eta_2,\xi-\eta_1-\eta_2)}{p'(\eta_2)-p'(\xi-\eta_1-\eta_2)}\hat h_{j_2}^{\iota_2}(\eta_2)\hat h_{j_3}(\xi-\eta_1-\eta_2)\hat h_{j_1}^{\iota_1}(\eta_1)\upsilon_{\iota_3}(\xi)\diff \eta_1\diff \eta_2\\
&+\frac it  \iint_{\R^2} e^{it\Phi} \cutoff_{\lb}^{3} \frac{\partial_{\eta_2}\Tb_1(\eta_1,\eta_2,\xi-\eta_1-\eta_2)}{p'(\eta_2)-p'(\xi-\eta_1-\eta_2)}\hat h_{j_2}^{\iota_2}(\eta_2)\hat h_{j_3}(\xi-\eta_1-\eta_2)\hat h_{j_1}^{\iota_1}(\eta_1)\upsilon_{\iota_3}(\xi)\diff \eta_1\diff \eta_2\\
&-\frac it  \iint_{\R^2} e^{it\Phi} \cutoff_{\lb}^{3}\Tb_1(\eta_1,\eta_2,\xi-\eta_1-\eta_2) \frac{p''(\eta_2)+p''(\xi-\eta_1-\eta_2)}{[p'(\eta_2)-p'(\xi-\eta_1-\eta_2)]^2}\hat h_{j_2}^{\iota_2}(\eta_2)\hat h_{j_3}(\xi-\eta_1-\eta_2)\hat h_{j_1}^{\iota_1}(\eta_1)\upsilon_{\iota_3}(\xi)\diff \eta_1\diff \eta_2\\
&+\frac it  \iint_{\R^2} e^{it\Phi} \cutoff_{\lb}^{3} \frac{\Tb_1(\eta_1,\eta_2,\xi-\eta_1-\eta_2)}{p'(\eta_2)-p'(\xi-\eta_1-\eta_2)}\partial_{\eta_2}\hat h_{j_2}^{\iota_2}(\eta_2)\hat h_{j_3}(\xi-\eta_1-\eta_2)\hat h_{j_1}^{\iota_1}(\eta_1)\upsilon_{\iota_3}(\xi)\diff \eta_1\diff \eta_2\\
&+\frac it  \iint_{\R^2} e^{it\Phi} \cutoff_{\lb}^{3} \frac{\Tb_1(\eta_1,\eta_2,\xi-\eta_1-\eta_2)}{p'(\eta_2)-p'(\xi-\eta_1-\eta_2)}\hat h_{j_2}^{\iota_2}(\eta_2)\partial_{\eta_2}\hat h_{j_3}(\xi-\eta_1-\eta_2)\hat h_{j_1}^{\iota_1}(\eta_1)\upsilon_{\iota_3}(\xi)\diff \eta_1\diff \eta_2.
\end{align*}
Then we estimate each integrals in the following. 
\begin{align*}
&\left\|(|\xi|^{r+1}+|\xi|^{w+1}) \iint_{\R^2} e^{it\Phi}\partial_{\eta_2} \cutoff_{\lb}^{3} \frac{\Tb_1(\eta_1,\eta_2,\xi-\eta_1-\eta_2)}{p'(\eta_2)-p'(\xi-\eta_1-\eta_2)}\hat h_{j_2}^{\iota_2}(\eta_2)\hat h_{j_3}(\xi-\eta_1-\eta_2)\hat h_{j_1}^{\iota_1}(\eta_1)\upsilon_{\iota_3}(\xi)\diff \eta_1\diff \eta_2\right\|_{L^\infty_\xi}\\
\lesssim~& (t+1)^{wp_1}2^{j_1}[(1+2^{2j_1})^{5/2}2^{-j_1}2^{-l_{3,2}}]2^{-l_{3,2}}[2^{j_1+j_2+j_3}(1+2^{j_1})^{-1}(1+2^{3j_2})(1+2^{j_3})]\\
&\quad\cdot\|\psi_{l_{3,1}}(\eta_1-\xi)\hat\vp_{j_1}(\eta_1)\|_{L^2_{\eta_1}L^\infty_\xi}\|\psi_{l_{3,2}}(\xi-\eta_3)\hat\vp_{j_3}(\eta_3)\|_{L^2_{\eta_3}L^\infty_\xi}\|\vp_{j_2}\|_{L^\infty}\\
\lesssim~&(t+1)^{wp_1}2^{3j_1}(1+2^{j_1})^{8}2^{- l_{3,2}}\|\hat\vp_{j_1}\|_{L^\infty_\xi}\|\hat\vp_{j_3}\|_{L^\infty_\xi}\|\vp_{j_2}\|_{L^\infty}\\
\lesssim~&(t+1)^{(w+5)p_1}2^{j_1- l_{3,2}}\|2^{j_1/2}\hat\vp_{j_1}\|_{L^\infty_\xi}\|2^{j_3/2}\hat\vp_{j_3}\|_{L^\infty_\xi}\|\vp_{j_2}\|_{B^{1,6}},
\end{align*}
\begin{align*}
&\left\|(|\xi|^{r+1}+|\xi|^{w+1}) \iint_{\R^2} e^{it\Phi} \cutoff_{\lb}^{3} \frac{\partial_{\eta_2}\Tb_1(\eta_1,\eta_2,\xi-\eta_1-\eta_2)}{p'(\eta_2)-p'(\xi-\eta_1-\eta_2)}\hat h_{j_2}^{\iota_2}(\eta_2)\hat h_{j_3}(\xi-\eta_1-\eta_2)\hat h_{j_1}^{\iota_1}(\eta_1)\upsilon_{\iota_3}(\xi)\diff \eta_1\diff \eta_2 \right\|_{L^\infty_\xi}\\
\lesssim~& (t+1)^{wp_1}2^{j_1}[(1+2^{2j_1})^{5/2}2^{-j_1}2^{- l_{3,2}}][2^{j_2+j_3}(1+2^{5j_2})(1+2^{j_3})]\|\vp_{j_1}\|_{L^\infty}\|\vp_{j_2}\|_{L^2}\|\vp_{j_3}\|_{L^2}\\
\lesssim~&(t+1)^{(w+7)p_1}2^{j_1- l_{3,2}}\|\vp_{j_1}\|_{B^{1,6}}\|\vp_{j_2}\|_{L^2}\|\vp_{j_3}\|_{L^2}.
\end{align*}
By the symbol estimate \eqref{eqnB10} and Proposition \ref{Prop_symb_Tmu},
\begin{align*}
&\left\|(|\xi|^{r+1}+|\xi|^{w+1})  \iint_{\R^2} e^{it\Phi} \cutoff_{\lb}^{3}\Tb_1(\eta_1,\eta_2,\xi-\eta_1-\eta_2) \frac{p''(\eta_2)+p''(\xi-\eta_1-\eta_2)}{[p'(\eta_2)-p'(\xi-\eta_1-\eta_2)]^2}\right.\\
&\qquad\left.\hat h_{j_1}^{\iota_1}(\eta_1)\hat h_{j_2}^{\iota_2}(\eta_2)\hat h_{j_3}(\xi-\eta_1-\eta_2)\upsilon_{\iota_3}(\xi)\diff \eta_1\diff \eta_2\right\|_{L^\infty_\xi}\\
\lesssim~&(t+1)^{wp_1}2^{j_1}[(1+2^{2j_1})^{5/2}2^{-j_1}2^{-l_{3,2}}][2^{j_2+j_3}(1+2^{3j_2})(1+2^{j_3})]\|\vp_{j_1}\|_{L^\infty}\|\vp_{j_2}\|_{L^2}\|\vp_{j_3}\|_{L^2}\\
\lesssim~& (t+1)^{(w+5)p_1}2^{j_1- l_{3,2}}\|\vp_{j_1}\|_{B^{1,6}}\|\vp_{j_2}\|_{L^2}\|\vp_{j_3}\|_{L^2},
\end{align*}

\begin{align*}
&\left\|(|\xi|^{r+1}+|\xi|^{w+1})\iint_{\R^2} e^{it\Phi} \cutoff_{\lb}^{3} \frac{\Tb_1(\eta_1,\eta_2,\xi-\eta_1-\eta_2)}{p'(\eta_2)-p'(\xi-\eta_1-\eta_2)}\partial_{\eta_2}\hat h_{j_2}^{\iota_2}(\eta_2)\hat h_{j_3}(\xi-\eta_1-\eta_2)\hat h_{j_1}^{\iota_1}(\eta_1)\upsilon_{\iota_3}(\xi)\diff \eta_1\diff \eta_2 \right\|_{L^\infty_\xi}\\
\lesssim~& (t+1)^{wp_1}2^{j_1}[(1+2^{2j_1})^{5/2}2^{-j_1}2^{- l_{3,2}}][2^{j_2+j_3}(1+2^{3j_2})(1+2^{j_3})]\|\vp_{j_1}\|_{L^2}\|\partial_{\xi}\hat h_{j_2}\|_{L^2_{\xi}}\|\vp_{j_3}\|_{L^\infty}\\
\lesssim~& (t+1)^{(w+5)p_1}2^{j_1- l_{3,2}}\|\vp_{j_1}\|_{L^2}\|\partial_{\xi}\hat h_{j_2}\|_{L^2_{\xi}}\|\vp_{j_3}\|_{B^{1,6}},
\end{align*}

\begin{align*}
&\left\|(|\xi|^{r+1}+|\xi|^{w+1})\iint_{\R^2} e^{it\Phi} \cutoff_{\lb}^{3} \frac{\Tb_1(\eta_1,\eta_2,\xi-\eta_1-\eta_2)}{p'(\eta_2)-p'(\xi-\eta_1-\eta_2)}\hat h_{j_2}^{\iota_2}(\eta_2)\partial_{\eta_2}\hat h_{j_3}(\xi-\eta_1-\eta_2)\hat h_{j_1}^{\iota_1}(\eta_1)\upsilon_{\iota_3}(\xi)\diff \eta_1\diff \eta_2 \right\|_{L^\infty_\xi}\\
\lesssim~& (t+1)^{wp_1}2^{j_1}[(1+2^{2j_1})^{5/2}2^{-j_1}2^{- l_{3,2}}][2^{j_2+j_3}(1+2^{3j_2})(1+2^{j_3})]\|\partial_{\xi}\hat h_{j_3}\|_{L^2_{\xi}}\|\vp_{j_2}\|_{L^2}\|\vp_{j_1}\|_{L^\infty}\\
\lesssim~&(t+1)^{(w+5)p_1}2^{j_1- l_{3,2}}\|\partial_{\xi}\hat h_{j_3}\|_{L^2_{\xi}}\|\vp_{j_2}\|_{L^2}\|\vp_{j_1}\|_{B^{1,6}}.
\end{align*}

Then we take the summation for $\log_2\varrho(t)<\max\{l_{3,1}, l_{3,2}\}$,
\begin{align*}
&\sum\limits_{l_{3,1}, l_{3,2}\geq\log_2[\varrho(t)] }\Bigg\|(|\xi|^{r+1}+|\xi|^{w+1})\\
&\hspace{3cm}\iint_{\R^2} \cutoff_{\lb}^{3} \Tb_1(\eta_1, \eta_2, \xi - \eta_1 - \eta_2) e^{it\Phi(\eta_1,\eta_2, \xi)} \hat h_{j_1}^{\iota_1}(\eta_1)\hat h_{j_2}^{\iota_2}(\eta_2) \hat h_{j_3}(\xi-\eta_1-\eta_2)\upsilon_{\iota_3}(\xi)  \diff{\eta_1} \diff{\eta_2}\Bigg\|_{L^\infty_\xi}\\
\lesssim~&(t+1)^{(w+7)p_1-1}2^{j_1}(|j_1|+|\log_2[\varrho(t)]|)\max\{[\varrho(t)]^{-1}, 2^{-j_1}\}\Big(\|2^{j_1/2}\hat\vp_{j_1}\|_{L^\infty_\xi}\|2^{j_2/2}\hat\vp_{j_2}\|_{L^\infty_\xi}\|\vp_{j_3}\|_{B^{1,6}}\\
&\qquad+\|\vp_{j_1}\|_{B^{1,6}}\|\vp_{j_2}\|_{L^2}\|\vp_{j_3}\|_{L^2}+\|\partial_\xi\hat h_{j_1}\|_{L^2}\|\vp_{j_2}\|_{B^{1,6}}\|\vp_{j_3}\|_{L^2}+\|\vp_{j_1}\|_{L^2}\|\vp_{j_2}\|_{B^{1,6}}\|\partial_\xi\hat h_{j_3}\|_{L^2}\Big)\\
\lesssim~&(t+1)^{(w+9)p_1-0.55}\Big(\|2^{j_1/2}\hat\vp_{j_1}\|_{L^\infty_\xi}\|2^{j_2/2}\hat\vp_{j_2}\|_{L^\infty_\xi}\|\vp_{j_3}\|_{B^{1,6}}+\|\vp_{j_1}\|_{B^{1,6}}\|\vp_{j_2}\|_{L^2}\|\vp_{j_3}\|_{L^2}\\
&\qquad+\|\partial_\xi\hat h_{j_1}\|_{L^2}\|2^{j_2}\vp_{j_2}\|_{L^\infty}\|\vp_{j_3}\|_{L^2}+\|\vp_{j_1}\|_{L^2}\|\vp_{j_2}\|_{B^{1,6}}\|\partial_\xi\hat h_{j_3}\|_{L^2}\Big).
 \end{align*}
 Taking the summation for corresponding $j_1, j_2, j_3$ and using the bootstrap assumption, we have
 \begin{equation}\label{eqn8.9}
 \begin{aligned}
 &\sum\limits_{j_1,j_2,j_3}\sum\limits_{ l_{3,1}, l_{3,2}\geq\log_2[\varrho(t)] }\Bigg\|(|\xi|^{r+1}+|\xi|^{w+1})\iint_{\R^2} \cutoff_{\lb}^{3} \Tb_1(\eta_1, \eta_2, \xi - \eta_1 - \eta_2) e^{it\Phi(\eta_1,\eta_2, \xi)} \\
 &\qquad \hat h_{j_1}^{\iota_1}(\eta_1)\hat h_{j_2}^{\iota_2}(\eta_2) \hat h_{j_3}(\xi-\eta_1-\eta_2)\upsilon_{\iota_3}(\xi)  \diff{\eta_1} \diff{\eta_2}\Bigg\|_{L^\infty_\xi}\\
 \lesssim~&(t+1)^{(w+9)p_1-0.55-0.5+0.01}\ve_1^3,
 \end{aligned}\end{equation}
 which is integrable in time $t\in(0,\infty)$.

{\noindent\bf 4. $(\iota_1, \iota_2, \iota_3)=(-, +, +)$ or $(+, -, -)$.} We define the cut-off function 
\begin{align*}
\cutoff_{\lb}^{4}(\eta_1, \eta_2, \xi):=&\psi_{l_{4,1}}(\eta_2-\xi)\psi_{l_{4,2}}(\eta_1+\eta_2),
\end{align*}
where $\lb=(l_{4,1}, l_{4,2})\in \Z^2$. 

Considering the support of the integrand, we write the integral as
\begin{align*}
&\iint_{\R^2}  \Tb_1(\eta_1, \eta_2, \xi - \eta_1 - \eta_2) e^{it\Phi(\eta_1,\eta_2, \xi)} \hat h_{j_1}^{\iota_1}(\eta_1)\hat h_{j_2}^{\iota_2}(\eta_2) \hat h_{j_3}(\xi-\eta_1-\eta_2)\upsilon_{\iota_3}(\xi) \diff{\eta_1} \diff{\eta_2}\\
=~&\sum\limits_{\max\{l_{4,1}, l_{4,2}\}\leq j+2}\iint_{\R^2} \cutoff_{\lb}^{4} \Tb_1(\eta_1, \eta_2, \xi - \eta_1 - \eta_2) e^{it\Phi(\eta_1,\eta_2, \xi)} \hat h_{j_1}^{\iota_1}(\eta_1)\hat h_{j_2}^{\iota_2}(\eta_2) \hat h_{j_3}(\xi-\eta_1-\eta_2)\upsilon_{\iota_3}(\xi)  \diff{\eta_1} \diff{\eta_2}.
 \end{align*}
Since $\eta_1$ and $\eta_2$ are symmetric in the expression of the integral, by interchanging $\eta_1$ and $\eta_2$, we can transform this case into the previous case. So we can obtain the similar estimate as \eqref{eqn8.9}.

\subsection{Space resonance estimate}
Then we estimate
\begin{align}\label{eqn89}
\iint_{\R^2}   \Tb_1(\eta_1, \eta_2, \xi - \eta_1 - \eta_2) e^{it\Phi(\eta_1,\eta_2, \xi)} \hat h_{j_1}^{\iota_1}(\eta_1)\hat h_{j_2}^{\iota_2}(\eta_2) \hat h_{j_3}(\xi-\eta_1-\eta_2)\upsilon_{\iota_3}(\xi)[1-\psi_{j-3}(\eta_1+\eta_2-2\xi)] \cutoff_{\lb}^{2} \diff{\eta_1} \diff{\eta_2},
 \end{align}
 when $(\iota_1, \iota_2, \iota_3)=(+, +, +)$ or $(-, -, -)$, and $\max\{l_{2,1}, l_{2,2}\}\leq \log_2\rho(t)=-0.3\log_2(t+1)$.
 In this case, we can expand $\Tb_1 / \Phi$ around $(\eta_1,\eta_2)=(\xi/3, \xi/3)$ as
\begin{equation}\label{T1Phi}
\frac{\Tb_1(\eta_1, \eta_2, \xi - \eta_1 - \eta_2)}{\Phi(\eta_1,\eta_2, \xi) }= 
\frac{9\Tb_1(\frac\xi3,\frac\xi3,\frac\xi3)\sqrt{1+\xi^2}\sqrt{1+\xi^2/9}(\sqrt{1+\xi^2}+\sqrt{1+\xi^2/9})}{8\xi^3}.
\end{equation}
We will prove 
\begin{equation}\label{eqn814}
 \left\|(|\xi|^{r+1}+|\xi|^{w+1})\int_0^\infty\sum\limits_{\max\{l_{2,1}, l_{2,2}\}\leq \log_2\rho(t)}\eqref{eqn89}\diff t\right\|_{L^\infty_\xi}<\infty.
\end{equation}
By interchanging the order of the summation and the time integral, it is equivalent to 
\[
 \sum\limits_{\max\{l_{2,1}, l_{2,2}\}\leq 0}\left\|(|\xi|^{r+1}+|\xi|^{w+1})\int_0^{t_\lb}\eqref{eqn89}\diff t\right\|_{L^\infty_\xi}<\infty.
\]
where $t_\lb:=2^{-\frac{10}3\max\{l_{2,1}, l_{2,2}\}}$.
After writing
\begin{align*}
e^{i\tau \Phi(\xi, \eta_1,\eta_2)}=\frac{1}{i\Phi(\xi, \eta_1,\eta_2)} \left[\partial_\tau e^{i\tau\Phi(\xi, \eta_1,\eta_2)}\right],
\end{align*}
and integrating by parts with respect to $\tau$ in each time interval between the time discontinuities, we get that
\[
\begin{aligned}
&\int_0^{t_\lb} \iint_{\R^2}  \Tb_1(\eta_1, \eta_2, \xi - \eta_1 - \eta_2)e^{i\tau\Phi(\eta_1,\eta_2, \xi)} \hat h_{j_1}^{\iota_1}(\eta_1)\hat h_{j_2}^{\iota_2}(\eta_2) \hat h_{j_3}(\xi-\eta_1-\eta_2)\upsilon_{\iota_3}(\xi)[1-\psi_{j-3}(\eta_1+\eta_2-2\xi)] \cutoff_{\lb}^{2} \diff{\eta_1} \diff{\eta_2} \diff{\tau}
\\[1ex]
&\qquad\qquad=  J_1-\int_0^{t_\lb} J_2(\tau)\diff{\tau},
\end{aligned}
\]
where
\begin{align*}
J_1:=\iint_{\R^2}   \frac{\Tb_1(\eta_1, \eta_2, \xi - \eta_1 - \eta_2)}{i\Phi(\xi, \eta_1,\eta_2)}  e^{i\tau\Phi(\xi, \eta_1,\eta_2)}&\hat h_{j_1}^{\iota_1}(\eta_1)\hat h_{j_2}^{\iota_2}(\eta_2) \hat h_{j_3}(\xi-\eta_1-\eta_2)\\
&\cdot\upsilon_{\iota_3}(\xi)[1-\psi_{j-3}(\eta_1+\eta_2-2\xi)] \cutoff_{\lb}^{2}\diff{\eta_1} \diff{\eta_2}\bigg|_{t=0}^{t_\lb},
\end{align*}
\begin{align*}
&J_2(\tau) :=  \iint_{\R^2} \frac{\Tb_1(\eta_1, \eta_2, \xi - \eta_1 - \eta_2)}{\Phi(\xi, \eta_1,\eta_2)}  e^{i\tau\Phi(\xi, \eta_1,\eta_2)} \partial_\tau \left[\hat h_{j_1}^{\iota_1}(\eta_1)\hat h_{j_2}^{\iota_2}(\eta_2) \hat h_{j_3}(\xi-\eta_1-\eta_2)\right][1-\psi_{j-3}(\eta_1+\eta_2-2\xi)] \cutoff_{\lb}^{2} \diff{\eta_1} \diff{\eta_2}.
\end{align*}
For $J_1$, we have from \eqref{T1Phi} that
\[
\begin{aligned}
&\bigg|(|\xi|^{r+1}+|\xi|^{w + 1})\iint_{\R^2}  \frac{\Tb_1(\eta_1, \eta_2, \xi - \eta_1 - \eta_2)}{\Phi(\xi, \eta_1,\eta_2)}  e^{i\tau\Phi(\xi, \eta_1,\eta_2)}\hat h_{j_1}^{\iota_1}(\eta_1)\hat h_{j_2}^{\iota_2}(\eta_2) \hat h_{j_3}(\xi-\eta_1-\eta_2)\upsilon_{\iota_3}(\xi)\\
&\qquad\qquad[1-\psi_{j-3}(\eta_1+\eta_2-2\xi)] \cutoff_{\lb}^{2}\diff{\eta_1} \diff{\eta_2}\bigg|\\
\lesssim~~&(2^{rj_1}+2^{wj_1}) 2^{j_1}[2^{3j_1}(1+2^{j_1})^3][2^{-3j_1}(1+2^{j_1})^{3}] 2^{l_{2,1}+l_{2,2}} \|\hat h_{j_1}^{\iota_1}(\eta_1)\hat h_{j_2}^{\iota_2}(\eta_2) \hat h_{j_3}(\xi-\eta_1-\eta_2)\|_{L^\infty_{\eta_1\eta_2\xi}}\\
\lesssim~&2^{1.4j_1}(1+2^{j_1})^{17}2^{l_{2,1}+l_{2,2}} \|\hat h_{j_1}^{\iota_1}(\eta_1)\hat h_{j_2}^{\iota_2}(\eta_2) \hat h_{j_3}(\xi-\eta_1-\eta_2)\|_{L^\infty_{\eta_1\eta_2\xi}}\\
\end{aligned}
\]
which are summable for $\max\{l_{2,1}, l_{2,2}\}\leq 0$.

After taking the time derivative, the term $J_2$ can be written as a sum of three terms.
\begin{align*}
& \iint_{\R^2} \frac{\Tb_1(\eta_1, \eta_2, \xi - \eta_1 - \eta_2)}{\Phi(\xi, \eta_1,\eta_2)}  e^{i\tau\Phi(\xi, \eta_1,\eta_2)}  \left[\partial_\tau\hat h_{j_1}(\eta_1, \tau)\hat h_{j_2}(\eta_2, \tau) \hat h_{j_3}(\xi-\eta_1-\eta_2, \tau)\right][1-\psi_{j-3}(\eta_1+\eta_2-2\xi)] \cutoff_{\lb}^{2} \diff{\eta_1} \diff{\eta_2},\\[1ex]
& \iint_{\R^2} \frac{\Tb_1(\eta_1, \eta_2, \xi - \eta_1 - \eta_2)}{\Phi(\xi, \eta_1,\eta_2)}  e^{i\tau\Phi(\xi, \eta_1,\eta_2)}  \left[\hat h_{j_1}(\eta_1, \tau) \partial_\tau \hat h_{j_2}(\eta_2, \tau) \hat h_{j_3}(\tau, \xi-\eta_1-\eta_2)\right][1-\psi_{j-3}(\eta_1+\eta_2-2\xi)] \cutoff_{\lb}^{2} \diff{\eta_1} \diff{\eta_2},
\end{align*}
and
\begin{align*}
& \iint_{\R^2} \frac{\Tb_1(\eta_1, \eta_2, \xi - \eta_1 - \eta_2)}{\Phi(\xi, \eta_1,\eta_2)}  e^{i\tau\Phi(\xi, \eta_1,\eta_2)}  \left[\hat h_{j_1}(\eta_1, \tau)\hat h_{j_2}(\eta_2, \tau) \partial_\tau\hat h_{j_3}(\xi-\eta_1-\eta_2, \tau)\right][1-\psi_{j-3}(\eta_1+\eta_2-2\xi)] \cutoff_{\lb}^{2}  \diff{\eta_1} \diff{\eta_2}.
\end{align*}
Notice that by \eqref{eqhhat},and the bootstrap assumptions and Lemma \ref{NonDis}, we have
\begin{align*}
\|\partial_t h_j\|_{L^\infty}\lesssim& \bigg\|\psi_j(\xi)\xi\iint_{\R^2}\Tb_1(\eta_1,\eta_2,\xi-\eta_1-\eta_2) e^{it\Phi(\eta_1,\eta_2, \xi)}\hat h(\xi-\eta_1-\eta_2)\hat h(\eta_1)\hat h(\eta_2)\diff\eta_1\diff\eta_2\bigg\|_{L^1_\xi}+\left\|\psi_j(\xi)\xi\widehat{\mathcal N_{\geq 5}(\vp)}\right\|_{L^1_\xi}\\
\lesssim~~ &\|\vp_j\|_{B^{1,1}} \sum\limits_{j=1}^{\infty} \left(\|\vp\|_{B^{1,6}}^{2 j } \right)\\
\lesssim~~ &\|\vp_j\|_{B^{1,1}}(t + 1)^{-1}\ve_1^2.
\end{align*}
Therefore, we obtain
\[
\begin{aligned}
&\int_0^{t_{\lb}}\left| (|\xi|^{r+1}+|\xi|^{w+ 1}) J_2(\tau)\right|\diff\tau\\
\lesssim &\sum t_{\lb}^{(w-r)p_1}2^{(r+1)j_1}\int_0^{t_{\lb}}(1+2^{6j_1}) \| \psi_{l_{2,1}}(\eta_1-\frac\xi3)\hat h_{\ell_1}(\eta_1)\|_{L^2_{\eta_1}L^\infty_\xi}\|\psi_{l_{2,2}}(\eta_2-\frac\xi3)\hat h_{\ell_2}(\eta_2)\|_{L^2_{\eta_2}L^\infty_\xi}\|\partial_\tau  h_{\ell_3}\|_{L^\infty}\diff \tau \\
\lesssim &\sum t_{\lb}^{wp_1}\int_0^{t_{\lb}}(1+2^{6j_1}) 2^{(l_{2,1}+l_{2,2})/2}\|2^{j_1/2} \hat h_{\ell_1}\|_{L^\infty_\xi}\|2^{j_1/2}\hat h_{\ell_2}\|_{L^\infty_\xi}\|\partial_\tau  h_{\ell_3}\|_{L^\infty}\diff \tau \\
\lesssim &t_{\lb}^{(w+1)p_1-0.5} 2^{(l_{2,1}+l_{2,2})/2}\|2^{j_1/2} \hat h_{\ell_1}\|_{L_t^\infty L^\infty_\xi }\|2^{j_1/2}\hat h_{\ell_2}\|_{L_t^\infty L^\infty_\xi }\|\vp_{\ell_3}\|_{L_t^\infty B^{1,6}}\cdot\ve_1^2
\end{aligned}
\]
where the summation is for all permutations $(\ell_1, \ell_2, \ell_3)$ of $(j_1, j_2,j_3)$. Notice that the coefficients for $\lb$ is positive, thus it is summable for $\max\{l_{2,1}, l_{2,2}\}\leq 0$.

Combining the estimates for $J_1$ and $J_2$, we obtain \eqref{eqn814}.

\subsection{Space-time resonances}
The cubic nonlinear terms related to the space-time resonances can be formulated as
\begin{itemize}
\item[CASE 1.] When $(\iota_1, \iota_2, \iota_3)=(+, +, +)$ or $(-, -, -)$, $\max\{l_{1,1}, l_{1,2}\}\leq \log_2\varrho(t)$,
\begin{align*}
&\iint_{\R^2} \cutoff_{\lb}^{1} \Tb_1(\eta_1, \eta_2, \xi - \eta_1 - \eta_2) e^{it\Phi(\eta_1,\eta_2, \xi)} \hat h_{j_1}^{\iota_1}(\eta_1)\hat h_{j_2}^{\iota_2}(\eta_2) \hat h_{j_3}(\xi-\eta_1-\eta_2)\upsilon_{\iota_3}(\xi)\psi_{j-3}(\eta_1+\eta_2-2\xi) \diff{\eta_1} \diff{\eta_2},
 \end{align*}
with $\cutoff_{\lb}^{1}=\psi_{l_{1,1}}(\eta_1-\xi)\psi_{l_{1,2}}(\eta_2-\xi)$. Here the cut-off function $\psi_{j-3}(\eta_1+\eta_2-2\xi)$ can be removed when $t$ is large.
\item[CASE 2.] When $(\iota_1, \iota_2, \iota_3)=(+, -, +)$ or $(-, +, -)$, $\max\{l_{3,1}, l_{3,2}\}\leq \log_2\varrho(t)$,
\begin{align*}
\iint_{\R^2} \cutoff_{\lb}^{3} \Tb_1(\eta_1, \eta_2, \xi - \eta_1 - \eta_2) e^{it\Phi(\eta_1,\eta_2, \xi)} \hat h_{j_1}^{\iota_1}(\eta_1)\hat h_{j_2}^{\iota_2}(\eta_2) \hat h_{j_3}(\xi-\eta_1-\eta_2)\upsilon_{\iota_3}(\xi)  \diff{\eta_1} \diff{\eta_2},
 \end{align*}
 with $\cutoff_{\lb}^{3}=\psi_{l_{3,1}}(\eta_1-\xi)\psi_{l_{3,2}}(\eta_1+\eta_2)$.

\item[CASE 3.] When $(\iota_1, \iota_2, \iota_3)=(-, +, +)$ or $(+, -, -)$, $\max\{l_{4,1}, l_{4,2}\}\leq \log_2\varrho(t)$,
\begin{align*}
\iint_{\R^2} \cutoff_{\lb}^{4} \Tb_1(\eta_1, \eta_2, \xi - \eta_1 - \eta_2) e^{it\Phi(\eta_1,\eta_2, \xi)} \hat h_{j_1}^{\iota_1}(\eta_1)\hat h_{j_2}^{\iota_2}(\eta_2) \hat h_{j_3}(\xi-\eta_1-\eta_2)\upsilon_{\iota_3}(\xi)  \diff{\eta_1} \diff{\eta_2},
 \end{align*}
 with $\cutoff_{\lb}^{4}=\psi_{l_{4,1}}(\eta_2-\xi)\psi_{l_{4,2}}(\eta_1+\eta_2)$.
\end{itemize}
We notice that after interchanging $\eta_1$ and $\eta_2$, CASE 3 will be transformed to CASE 2. After interchanging $\eta_2$ and $\xi-\eta_1-\eta_2$, CASE 2 will be transformed to CASE 1. So we only estimate CASE 1 in the following.

By taking the summation for $\max\{l_{1,1}, l_{1,2}\}\leq \log_2\varrho(t)$, we write the integral as 
\begin{align*}
\iint_{\R^2}  \Tb_1(\eta_1, \eta_2, \xi - \eta_1 - \eta_2) e^{it\Phi(\eta_1,\eta_2, \xi)}  \psi\left(\frac{\eta_1-\xi}{2^{\lfloor\log_2\varrho(t)\rfloor}}\right)\psi\left(\frac{\eta_2-\xi}{2^{\lfloor\log_2\varrho(t)\rfloor}}\right)\hat h_{j_1}(\eta_1)\hat h_{j_2}(\eta_2) \hat h_{j_3}(\xi-\eta_1-\eta_2)\diff{\eta_1} \diff{\eta_2}.
\end{align*}
Denote 
\[
\varrho_1(t)=2^{\lfloor\log_2\varrho(t)\rfloor}.
\]
Then it satisfies $\varrho(t)/2\leq\varrho_1(t)<\varrho(t)$.

Considering the modified scattering and \eqref{defU}, in the following, we will estimate
\begin{align*}
& \frac13 i\xi \iint_{\R^2} \Tb_1(\eta_1, \eta_2, \xi - \eta_1 - \eta_2) e^{it\Phi(\eta_1,\eta_2, \xi)}\psi\left(\frac{\eta_1-\xi}{\varrho_1(t)}\right)\psi\left(\frac{\eta_2-\xi}{\varrho_1(t)}\right) \hat h_{j_1}(\eta_1)\hat h_{j_2}(\eta_2) \hat h_{j_3}(\xi-\eta_1-\eta_2)  \diff{\eta_1} \diff{\eta_2}\\
&- \frac{\pi i \xi }{3A}\Tb_1(\xi, \xi, -\xi) \frac{|\hat{h}(t, \xi)|^2\hat{h}(t, \xi)}{t + 1}.
\end{align*}
We can split it into two parts:
\begin{align}
\label{modscat1}
\begin{split}
& \frac{i \xi}{6} \iint_{\R^2} e^{i t \Phi(\xi, \eta_1, \eta_2)} \psi\bigg(\frac{\eta_1 - \xi}{ \varrho_1(t)}\bigg) \cdot \psi\bigg(\frac{\eta_2 - \xi}{ \varrho_1(t)}\bigg)\\*
&\qquad \cdot \bigg[\Tb_1(\eta_1, \eta_2, \xi - \eta_1 - \eta_2) \hat{h}_{j_1}(\eta_1)\hat{h}_{j_2}(\eta_2) \hat{h}_{j_3}(\xi-\eta_1-\eta_2) - \Tb_1(\xi, \xi, -\xi)|\hat{h}(\xi)|^2\hat{h}(\xi)\bigg] \diff{\eta_1} \diff{\eta_2}\\
\end{split}
\end{align}
and
\begin{align}
\label{modscat2}
\frac{i \xi}{6} \Tb_1(\xi, \xi, - \xi) |\hat{h}(t,\xi)|^2\hat{h}(t,\xi) \bigg[\iint_{A_2} e^{i t \Phi(\xi, \eta_1, \eta_2)} \psi\bigg(\frac{\eta_1 - \xi}{ \varrho_1(t)}\bigg) \cdot \psi\bigg(\frac{\eta_2 - \xi}{ \varrho_1(t)}\bigg) \diff{\eta_1} \diff{\eta_2} - \frac{2 \pi }{A(t + 1)}\bigg].
\end{align}
The estimates for \eqref{modscat1} are achieved by a Taylor expansion, \eqref{Tmu_est2} and \eqref{Tmu_est3}.
\begin{align*}
&\bigg|(|\xi|^{r}+|\xi|^{w})\frac16 i\xi \iint_{\R^2} e^{i t \Phi(\xi, \eta_1, \eta_2)} \psi\bigg(\frac{\eta_1 - \xi}{ \varrho_1(t)}\bigg) \cdot \psi\bigg(\frac{\eta_2 - \xi}{ \varrho_1(t)}\bigg)\\*
&\qquad \cdot \Big[\Tb_1(\eta_1, \eta_2, \xi - \eta_1 - \eta_2) \hat{h}_{j_1}(\eta_1)\hat{h}_{j_2}(\eta_2) \hat{h}_{j_3}(\xi-\eta_1-\eta_2) - \Tb_1(\xi, \xi, -\xi)|\hat{h}(\xi)|^2\hat{h}(\xi) \Big] \diff{\eta_1} \diff{\eta_2}\bigg|\\
\lesssim~& (|\xi|^{r}+|\xi|^{w}) |\xi| \iint_{\R^2} \left|\partial_{\eta_1}\left[\Tb_1(\eta_1, \eta_2, \xi - \eta_1 - \eta_2) \hat{h}_{j_1}(\eta_1)\hat{h}_{j_2}(\eta_2) \hat{h}_{j_3}(\xi-\eta_1-\eta_2)\right] \bigg|_{\eta_1 = \eta_1'} (\xi-\eta_1)\right|\\
& \hspace {.75in}+\left|\partial_{\eta_2}\left[\Tb_1(\eta_1, \eta_2, \xi - \eta_1 - \eta_2) \hat{h}_{j_1}(\eta_1)\hat{h}_{j_2}(\eta_2) \hat{h}_{j_3}(\xi-\eta_1-\eta_2)\right] \bigg|_{\eta_2 = \eta_2'} (\xi-\eta_2)\right| \diff{\eta_1} \diff{\eta_2}\\
\lesssim~&(t+1)^{(w+8)p_1} \Big( \||\xi|^{\frac{1+r}3}\hat\vp_{j_1}\|_{L^\infty_\xi}\||\xi|^{\frac{1+r}3}\hat\vp_{j_2}\|_{L^\infty_\xi}\||\xi|^{\frac{1+r}3}\hat\vp_{j_3}\|_{L^\infty_\xi} [ \varrho_1(t)]^{3}\\
&\qquad+ \sum\||\xi|^{\frac{1+r}3}\hat\vp_{\ell_1}\|_{L^\infty_\xi}\||\xi|^{\frac{1+r}3}\hat\vp_{\ell_2}\|_{L^\infty_\xi}\|\partial_\xi\hat h_{\ell_3}\|_{L^2_\xi}[ \varrho_1(t)]^{5/2}\Big).
\end{align*}

As for \eqref{modscat2}, it suffices to estimate
\begin{align*}
& \bigg|(|\xi|^{r}+|\xi|^{w}) \frac{i \xi}{6} \Tb_1(\xi, \xi, - \xi) |\hat{h}(t,\xi)|^2\hat{h}(t,\xi) \bigg[\iint_{\R^2} e^{i t \Phi(\xi, \eta_1, \eta_2)} \psi\bigg(\frac{\eta_1 - \xi}{ \varrho_1(t)}\bigg) \cdot \psi\bigg(\frac{\eta_2 - \xi}{ \varrho_1(t)}\bigg) \diff{\eta_1} \diff{\eta_2} - \frac{2 \pi }{\mathfrak{A}t}\bigg]\bigg|\\
\lesssim ~& \|(|\xi|^{r}+|\xi|^{w}) \hat{\vp}(\xi)\|_{L^\infty_\xi} \||\xi|^{1/2} \hat{\vp}(\xi)\|_{L^\infty_\xi} \|(|\xi|^{1/2}+|\xi|^w)\hat{\vp}(\xi)\|_{L^\infty_\xi}\\
& \qquad \cdot \bigg\|\iint_{\R^2} e^{i t \Phi(\xi, \eta_1, \eta_2)} \psi\bigg(\frac{\eta_1 - \xi}{ \varrho_1(t)}\bigg) \cdot \psi\bigg(\frac{\eta_2 - \xi}{ \varrho_1(t)}\bigg) \diff{\eta_1} \diff{\eta_2} - \frac{2 \pi }{\mathfrak{A}t}\bigg\|_{L^\infty_\xi}\\
\lesssim ~& \|\vp\|_{Z}^3 \bigg\|\iint_{\R^2} e^{i t \Phi(\xi, \eta_1, \eta_2)} \psi\bigg(\frac{\eta_1 - \xi}{ \varrho_1(t)}\bigg) \cdot \psi\bigg(\frac{\eta_2 - \xi}{ \varrho_1(t)}\bigg) \diff{\eta_1} \diff{\eta_2} - \frac{2 \pi }{\mathfrak{A}t}\bigg\|_{L^\infty_\xi}.
\end{align*}
Writing $(\eta_1, \eta_2) = (\xi + \zeta_1, \xi + \zeta_2)$, we find from \eqref{defPhi} that
\begin{align*}
\Phi(\xi, \eta_1, \eta_2)
& = \frac{3\xi}{(1+\xi^2)^{5/2}}\zeta_1 \zeta_2 + O\bigg(\zeta_1^3 + \zeta_2^3\bigg)
=\frac{3\xi}{(1+\xi^2)^{5/2}}\zeta_1 \zeta_2  + O\left([ \varrho_1(t)]^3 \right).
\end{align*}
Since $p_0 = 10^{-4}$ and $ \varrho_1(t)$ satisfies \eqref{varrho}, the error term is integrable in time, so
we now only need to estimate
\begin{align}
\label{modscat}
\begin{split}
J_3 &:= \bigg\|\iint_{\R^2} e^{- i t \mathfrak{A} \zeta_1 \zeta_2} \psi\bigg(\frac{\zeta_1}{ \varrho_1(t)}\bigg) \cdot \psi\bigg(\frac{\zeta_2}{ \varrho_1(t)}\bigg) \diff{\zeta_1} \diff{\zeta_2} - \frac{2 \pi }{\mathfrak{A}t}\bigg\|_{L^\infty_\xi}.
\end{split}
\end{align}
Making the change of variables
\[
\zeta_1 = \sqrt{\frac{1}{\mathfrak{A}t}} x_1,\qquad \zeta_2 = \sqrt{\frac{1}{\mathfrak{A}t}} x_2,
\]
in \eqref{modscat} and using the fact that $|\xi| \le (t + 1)^{p_1}$, we find that
\begin{align}
\label{modscatest}
\begin{split}
J_3 &\le \frac{(t + 1)^{p_1}}{t} \bigg\|\iint_{\R^2} e^{- i x_1 x_2} \psi\bigg(\frac{1}{\sqrt{\mathfrak{A}t}  \varrho_1(t)} x_1\bigg) \cdot \psi\bigg(\frac{1}{\sqrt{\mathfrak{A}t}  \varrho_1(t)} x_2\bigg) \diff{x_1} \diff{x_2} - 2 \pi\bigg\|_{L^\infty_\xi}.
\end{split}
\end{align}
The integral identity
\[
\int_\R e^{- a x^2 - b x} \diff{x} =\sqrt{ \frac{\pi}{a}} e^{\frac{b^2}{4 a}}, \qquad \text{for all $a, b \in \mathbb{C}$ with $\Re{a} > 0$}
\]
gives that
\[
\iint_{\R^2} e^{- i x_1 x_2} e^{- \frac{x_1^2}{\mathfrak{B}^2}} e^{- \frac{x_2^2}{\mathfrak{B}^2}} \diff{x_1} \diff{x_2} = \sqrt{\pi} \mathfrak{B} \int_\R e^{- \frac{x_2^2}{\mathfrak{B}^2}} e^{- \frac{\mathfrak{B}^2 x_2^2}{4}} \diff{x_2} = 2 \pi + O(\mathfrak{B}^{-1}),
\qquad \text{as $\mathfrak{B}\to \infty$},
\]
and therefore
\begin{align}
\label{2piint}
\left|\iint_{\R^2} e^{- i x_1 x_2} \psi\bigg(\frac{x_1}{\mathfrak{B}}\bigg) \psi\bigg(\frac{x_2}{\mathfrak{B}}\bigg) \diff{x_1} \diff{x_2} -2 \pi\right|\lesssim \mathfrak{B}^{- 1 / 2},
\qquad \text{as $\mathfrak{B}\to \infty$}.
\end{align}
Thus

Using \eqref{2piint} with $\mathfrak{B} = \sqrt{\mathfrak{A}t}  \varrho_1(t)  = O(t^{0.05})$ in \eqref{modscatest} then yields
\begin{align*}
J_3 \lesssim (t + 1)^{ p_1-1.025}.
\end{align*}
Since $p_1 = 10^{-5}$, the right-hand side decays faster in time than $1 / t$, which implies that \eqref{modscat2} is integrable in time and bounded by a constant multiple of $\ve_0^3$. 

Putting all the above estimates together, we conclude that
\[
\int_0^{\infty} \|(|\xi|^{r}+|\xi|^{w}) U(t, \xi)\|_{L^\infty_\xi} \diff t\lesssim \ve_0.
\]

\subsection{Higher-degree terms}\label{higherorder}
In this subsection, we prove that
\[
\left\|\cutoffxi(t, \xi)(|\xi|^{r+1}+|\xi|^{w+1})\widehat{\mathcal{N}_{\geq5}(\vp)}\right\|_{L^\infty_\xi}
\]
is integrable in time. 

By Lemma \ref{multilinear} and the symbol estimate Proposition \ref{Prop_symb_Tmu}, we have
\begin{align*}
\left\|\cutoffxi(t, \xi)(|\xi|^{r+1}+|\xi|^{w+1})\widehat{\mathcal{N}_{\geq5}(\vp)}\right\|_{L^\infty_\xi}\lesssim& ~(t+1)^{(w+1)p_1}\|\mathcal{N}_{\geq 5}(\vp)\|_{L^1} \lesssim (t+1)^{(w+1)p_1} \|\vp\|_{L^2}^2\sum\limits_{n=2}^\infty \left(\|\vp\|_{B^{1,6}}^{2n-1}\right).
\end{align*}
Using the dispersive estimate Lemma~\ref{NonDis}, we see that the right-hand-side is integrable in $t$, which leads to
\[
\int_0^\infty \left\| \cutoffxi(t, \xi)(|\xi|^{r+1}+|\xi|^{w+1})\widehat{\mathcal N_{\geq5}(\vp)} \right\|_{L_\xi^\infty}\diff t\lesssim \ve_0.
\]
This completes the proof of Theorem~\ref{bootstrap}.

\appendix

\section{Estimate of the symbols}
In this appendix, we estimate the symbols of the multilinear Fourier integral operators used in this paper. 
\begin{proposition}\label{Prop_symb_Tmu}
Assume $\mu\geq 1$,  $j_1, j_2,\cdots, j_{2\mu+1}\in\Z$, and $j_1\geq\cdots\geq j_{2\mu+1}$.
\begin{multline}\label{Tmu_est1}
 \|\Tb_{\mu}(\eta_1,\eta_2,\cdots, \eta_{2\mu+1})\psi_{j_1}(\eta_1)\psi_{j_2}(\eta_2)\cdots\psi_{j_{2\mu+1}}(\eta_{2\mu+1})\|_{S^{\infty}}\\
\lesssim 2^{(j_1+j_2+\cdots+j_{2\mu+1})}(1+2^{j_1})^{-1}(1+2^{2 j_2})\prod\limits_{k=2}^{2\mu+1} (1+2^{j_k}).
\end{multline}
\begin{multline}\label{Tmu_est2}
\|\partial_{\eta_1}\Tb_{\mu}(\eta_1,\eta_2,\cdots, \eta_{2\mu+1})\psi_{j_1}(\eta_1)\psi_{j_2}(\eta_2)\cdots\psi_{j_{2\mu+1}}(\eta_{2\mu+1})\|_{S^{\infty}}\\
\lesssim  (1+2^{2j_1})^{-1}(1+2^{j_2})^{4}2^{(j_2+\cdots+j_{2\mu+1})}\prod\limits_{k=2}^{2\mu+1} (1+2^{j_k}).
\end{multline}
For $l=2,\cdots, 2\mu+1$,
\begin{multline}\label{Tmu_est3}
\|\partial_{\eta_{l}}\Tb_{\mu}(\eta_1,\eta_2,\cdots, \eta_{2\mu+1})\psi_{j_1}(\eta_1)\psi_{j_2}(\eta_2)\cdots\psi_{j_{2\mu+1}}(\eta_{2\mu+1})\|_{S^{\infty}}\\
\lesssim  \prod\limits_{k\neq 1,l}[2^{j_k}(1+2^{j_k})](1+2^{j_l})^2(1+2^{j_2})^22^{j_1}(1+2^{j_1})^{-1}.
\end{multline}
Specially, when $j_1<0$, \eqref{Tmu_est1} implies
\begin{align}\label{Tmu_est4}
\|\Tb_{\mu}(\eta_1,\eta_2,\cdots, \eta_{2\mu+1})\psi_{j_1}(\eta_1)\psi_{j_2}(\eta_2)\cdots\psi_{j_{2\mu+1}}(\eta_{2\mu+1})\|_{S^{\infty}}
\lesssim 2^{(j_1+\cdots+j_{2\mu+1})}.
\end{align}
When $j_1-1>j_2\geq \cdots \geq j_{2\mu+1}$, and $j_1>0$,
\begin{multline}\label{Tmu_est5}
\|[\Tb_{\mu}(\eta_1,\eta_2,\cdots, \eta_{2\mu+1})-\Tb_{\mu}(\eta_1+\cdots+\eta_{2\mu+1},\eta_2,\cdots, \eta_{2\mu+1})]\psi_{j_1}(\eta_1)\psi_{j_2}(\eta_2)\cdots\psi_{j_{2\mu+1}}(\eta_{2\mu+1})\|_{S^{\infty}}\\
\lesssim 2^{-j_1}(1+2^{j_2})^3\prod\limits_{k=2}^{2\mu+1}2^{j_k}(1+2^{j_k}).
\end{multline}
\end{proposition}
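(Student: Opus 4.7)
The plan is to exploit the identity $\|m\|_{S^\infty}=\|\F^{-1}m\|_{L^1}$ by computing the inverse Fourier transform of $\Tb_\mu\prod_k\psi_{j_k}(\eta_k)$ explicitly. Substituting the definition \eqref{Tmu} and interchanging the order of integration gives
\begin{equation*}
\F^{-1}\bigl[\Tb_\mu{\textstyle\prod_k}\psi_{j_k}\bigr](x_1,\ldots,x_{2\mu+1}) = 2\int_{\R} K(\zeta)\prod_{k=1}^{2\mu+1}\bigl[\check\psi_{j_k}(x_k)-\check\psi_{j_k}(x_k+\zeta)\bigr]\diff\zeta,
\end{equation*}
with $K(\zeta)=\chi_{\{|\zeta|>1\}}A_\mu(\zeta)+\chi_{\{|\zeta|<1\}}B_\mu(\zeta)$. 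Taking the $L^1$-norm in $x_1,\ldots,x_{2\mu+1}$ and using the elementary bound $\|\check\psi_{j_k}(\cdot)-\check\psi_{j_k}(\cdot+\zeta)\|_{L^1}\lesssim\min(1,2^{j_k}|\zeta|)$ (obtained by rescaling and the mean value theorem applied to the Schwartz function $\check\psi$), the proof of \eqref{Tmu_est1} reduces to bounding the one-dimensional integral
\begin{equation*}
\int_\R|K(\zeta)|\prod_{k=1}^{2\mu+1}\min(1,2^{j_k}|\zeta|)\diff\zeta.
\end{equation*}

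Next, split this integral into the long-range region $|\zeta|>1$ and the short-range region $|\zeta|<1$. In the long range, the exponential bound $|A_\mu(\zeta)|\lesssim e^{-|\zeta|}$ from \eqref{decayK0} ensures absolute convergence, and estimating $\min(1,2^{j_k}|\zeta|)\leq 2^{j_k}$ when $j_k<0$ or $\leq 1$ otherwise produces the low-frequency part of \eqref{Tmu_est1}. In the short range, $B_\mu(\zeta)$ has leading singularity $|\zeta|^{-2\mu}$ together with milder logarithmic and subleading polynomial corrections; exactly $2\mu$ of the $2\mu+1$ factors $\min(1,2^{j_k}|\zeta|)\leq 2^{j_k}|\zeta|$ absorb this singularity, leaving one spare factor of $|\zeta|$ to guarantee integrability at $\zeta=0$. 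Sorting $j_1\geq\cdots\geq j_{2\mu+1}$ and performing a dyadic decomposition of $[0,1]$ at the thresholds $|\zeta|\sim 2^{-j_k}$ yields \eqref{Tmu_est1}: the factor $(1+2^{j_1})^{-1}$ arises because on $|\zeta|\in[2^{-j_1},1]$ one only obtains $\int_{2^{-j_1}}^1|\zeta|^{-2\mu}\diff\zeta=O(2^{j_1(2\mu-1)})$, saturating rather than producing a full power of $2^{j_1}$, while the $(1+2^{2j_2})$ accommodates the mildly growing subleading contributions of $B_\mu$. The special case \eqref{Tmu_est4} with $j_1<0$ then falls out immediately, since every slot is in the low-frequency regime.

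The derivative estimates \eqref{Tmu_est2}--\eqref{Tmu_est3} follow by differentiating under the integral: $\partial_{\eta_l}(1-e^{i\eta_l\zeta})=-i\zeta e^{i\eta_l\zeta}$, which in physical space replaces the $l$-th difference factor by $-i\zeta\,\check\psi_{j_l}(x_l+\zeta)$ with $L^1$-norm $O(|\zeta|)$; rerunning the case analysis with an extra $|\zeta|$ factor and one fewer $\min$-factor produces the stated bounds, the distinction between $l=1$ and $l\geq 2$ arising from how the new $\zeta$ combines with the $j_1$-slot versus a lower-frequency slot. For the telescoping estimate \eqref{Tmu_est5}, the algebraic identity
\begin{equation*}
(1-e^{i\eta_1\zeta})-\bigl(1-e^{i(\eta_1+\cdots+\eta_{2\mu+1})\zeta}\bigr)=e^{i\eta_1\zeta}\bigl[1-e^{i(\eta_2+\cdots+\eta_{2\mu+1})\zeta}\bigr]
\end{equation*}
converts the difference into a new multilinear form whose first slot carries only a pure exponential $e^{i\eta_1\zeta}$ (contributing $\|\check\psi_{j_1}\|_{L^1}=O(1)$ rather than $O(\min(1,2^{j_1}|\zeta|))$), while the sum $\eta_2+\cdots+\eta_{2\mu+1}$ plays the role of a ``new'' frequency confined to a ball of radius $O(2^{j_2})$; repeating the long/short range analysis and exploiting $\|\check\psi_{j_1}\|_{L^\infty}\lesssim 2^{j_1}$ produces the $2^{-j_1}$ savings for $j_1>0$.

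The main obstacle is the polynomial bookkeeping through the dyadic case analysis: obtaining simultaneously the $(1+2^{j_1})^{-1}$ high-frequency saturation, the $(1+2^{2j_2})$ absorption of the logarithmic and subleading terms of $B_\mu$, and the asymmetric powers in \eqref{Tmu_est2}--\eqref{Tmu_est3} depending on whether the differentiated slot is the top-frequency one, all while handling the borderline cases $2^{j_k}|\zeta|\sim 1$ uniformly in the ordering of the indices.
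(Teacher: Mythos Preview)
Your physical-space approach---computing $\F^{-1}[\Tb_\mu\prod_k\psi_{j_k}]$ explicitly and bounding its $L^1$-norm via $\|\check\psi_{j_k}(\cdot)-\check\psi_{j_k}(\cdot+\zeta)\|_{L^1}\lesssim\min(1,2^{j_k}|\zeta|)$---is a genuinely different route from the paper's, and for \eqref{Tmu_est1}, \eqref{Tmu_est3}, and \eqref{Tmu_est4} it works and is in fact cleaner. The paper instead bounds $\|m\|_{S^\infty}$ by $\|(1-\partial_{\xi_1}^2)\cdots(1-\partial_{\xi_{2\mu+1}}^2)m\|_{L^\infty}$ after rescaling, which forces it to control the extra $2^{2j_k}\zeta^2$ terms coming from $\partial_{\xi_k}^2$ hitting the exponentials; your method sidesteps this entirely. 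One small correction: the $(1+2^{2j_2})$ in \eqref{Tmu_est1} is not there to absorb ``subleading contributions of $B_\mu$''---in your scheme it absorbs a harmless logarithm $\int_{2^{-j_2}}^1|\zeta|^{-1}\diff\zeta\sim j_2$ that appears when two or more $\min$-factors have saturated.

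There is, however, a real gap in your treatment of \eqref{Tmu_est2} and \eqref{Tmu_est5}. For $\partial_{\eta_1}\Tb_\mu$, the first slot becomes $-i\zeta e^{i\eta_1\zeta}$, and your bound $\|(-i\zeta)\check\psi_{j_1}(\cdot+\zeta)\|_{L^1}=O(|\zeta|)$ is independent of $j_1$; running your integral in $\zeta$ then yields at best $O\bigl(2^{j_2+\cdots+j_{2\mu+1}}\bigr)$ in the regime $j_1>0$, $j_k<0$ for $k\ge2$, whereas \eqref{Tmu_est2} demands the stronger $O\bigl(2^{-2j_1}\,2^{j_2+\cdots+j_{2\mu+1}}\bigr)$. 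The same issue arises in \eqref{Tmu_est5}: after your telescoping identity the first slot carries only $e^{i\eta_1\zeta}$, contributing $\|\check\psi_{j_1}(\cdot+\zeta)\|_{L^1}=O(1)$, and you never recover the claimed $2^{-j_1}$. Your sentence ``exploiting $\|\check\psi_{j_1}\|_{L^\infty}\lesssim 2^{j_1}$ produces the $2^{-j_1}$ savings'' is not a valid argument---that inequality is a \emph{loss}, and in any case an $L^\infty$ bound cannot help after you integrate over $x_1$.

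What is missing is the oscillation of $e^{i\eta_1\zeta}$ in $\zeta$ at frequency $\sim 2^{j_1}$. The paper captures it by writing $e^{i2^{j_1}\xi_1\zeta}=(i2^{j_1}\xi_1)^{-N}\partial_\zeta^N e^{i2^{j_1}\xi_1\zeta}$ and integrating by parts in $\zeta$; each integration gains $2^{-j_1}$ at the cost of a $\zeta$-derivative landing on the (lower-frequency) remaining factors, which is exactly where the $(1+2^{j_2})^4$ in \eqref{Tmu_est2} and the $(1+2^{j_2})^3$ in \eqref{Tmu_est5} come from. In your physical-space language the same gain is available through the moment cancellation $\int\check\psi_{j_1}=\psi_{j_1}(0)=0$ (indeed all moments of $\check\psi_{j_1}$ vanish), which lets you subtract a Taylor polynomial of $K(\zeta)G(\zeta)$ at $\zeta=-x_1$ before bounding, but this step has to be carried out explicitly and is not what you described. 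Without it, the $2^{-j_1}$ gain in \eqref{Tmu_est5} is lost, and that gain is essential downstream (it is what cancels the leading $2^{j_1}$ in the symmetrized symbol $m_1$ in the $\mathrm{III}_1$ estimate).
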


\begin{proof}
First, we notice that
\begin{align*}
&\|\Tb_{\mu}(\eta_1,\eta_2,\cdot, \eta_{2\mu+1})\psi_{j_1}(\eta_1)\cdots\psi_{j_{2\mu+1}}(\eta_{2\mu+1})\|_{S^{\infty}}\\
=&\iint_{\R^{2\mu+1}}\left|\iint_{\R^{2\mu+1}}e^{-i(\sum_{i=1}^{2\mu+1}x_i\eta_i)}\Tb(\eta_1,\cdots,\eta_{2\mu+1})\psi_{j_1}(\eta_1)\cdots\psi_{j_{2\mu+1}}(\eta_{2\mu+1})\diff \eta_1\cdots\diff \eta_{2\mu+1}\right|\diff x_1\cdots\diff x_{2\mu+1}\\
=&\iint_{\R^{2\mu+1}}\left|\iint_{\R^{2\mu+1}}e^{-i(\sum_{i=1}^{2\mu+1}y_i\xi_i)}\Tb(2^{j_1}\xi_1,\cdots,2^{j_{2\mu+1}}\xi_{2\mu+1})\psi_{0}(\xi_1)\cdots \psi_{0}(\xi_{2\mu+1})\diff \xi_1\cdots\diff \xi_{2\mu+1}\right|\diff y_1\cdots\diff y_{2\mu+1}\\
\lesssim&\left|\iint_{\R^3}e^{-i(\sum_{i=1}^{2\mu+1}y_i\xi_i)}(1-\partial_{\xi_1}^2)\cdots(1-\partial_{\xi_{2\mu+1}}^2)[\Tb(2^{j_1}\xi_1,\cdots,2^{j_{2\mu+1}}\xi_{2\mu+1})\psi_{0}(\xi_1)\cdots\psi_{0}(\xi_{2\mu+1})]\diff \xi_1\cdots\diff \xi_{2\mu+1}\right|\\
\lesssim & \|(1-\partial_{\xi_1}^2)\cdots(1-\partial_{\xi_{2\mu+1}}^2)[\Tb(2^{j_1}\xi_1,\cdots,2^{j_{2\mu+1}}\xi_{2\mu+1})\psi_{0}(\xi_1)\cdots\psi_{0}(\xi_{2\mu+1})]\|_{L^\infty}
\end{align*}
Here $\Tb$ is $\Tb_{\mu}$ or $\partial_{\eta_1}\Tb_\mu$.
So, we mainly need to estimate $|(1-\partial_{\xi_1}^2)\cdots(1-\partial_{\xi_{2\mu+1}}^2)\Tb(2^{j_1}\xi_1,\cdots,2^{j_{2\mu+1}}\xi_{2\mu+1})|$ under the assumption that $|\xi_1|, \cdots, |\xi_{2\mu+1}|$ are in $(\frac58, \frac85)$. In the following proof, we will use this bound for $|\xi_1|,\cdots, |\xi_{2\mu+1}|$ repeatedly. 

{\bf Proof of \eqref{Tmu_est1}. }Since the above integral is symmetric with respect to the sub-index, we assume $j_1$ is the maximum of $\{j_1,\cdots, j_{2\mu+1}\}$.  
By \eqref{Tmu}, we obtain
\begin{align*}
&(1-\partial_{\xi_1}^2)\cdots(1-\partial_{\xi_{2\mu+1}}^2)\Tb_\mu(2^{j_1}\xi_1,\cdots,2^{j_{2\mu+1}}\xi_{2\mu+1})\\
=
&2\int_{\R}\chi_{\{|\zeta|>1\}}[(1-e^{i2^{j_1}\xi_1\zeta})+2^{2j_1}\zeta^2e^{i2^{j_1}\xi_1\zeta}]\cdots[(1-e^{i2^{j_{2\mu+1}}\xi_{2\mu+1}\zeta})+2^{2j_{2\mu+1}}\zeta^2e^{i2^{j_{2\mu+1}}\xi_{2\mu+1}\zeta}]\\
&\qquad\qquad\qquad\cdot A_{\mu}(\zeta)\diff \zeta\\
&+2\int_{\R}\chi_{\{|\zeta|<1\}} [(1-e^{i2^{j_1}\xi_1\zeta})+2^{2j_1}\zeta^2e^{i2^{j_1}\xi_1\zeta}]\cdots[(1-e^{i2^{j_{2\mu+1}}\xi_{2\mu+1}\zeta})+2^{2j_{2\mu+1}}\zeta^2e^{i2^{j_{2\mu+1}}\xi_{2\mu+1}\zeta}]\\
&\qquad\qquad\qquad\cdot  B_{\mu}(\zeta)\diff \zeta.
\end{align*}
Since $|A_\mu(\zeta)|\lesssim e^{-|\zeta|}$ on $|\zeta|>1$, $|\zeta|^{2\mu}B_\mu(\zeta)$ is bounded on $|\zeta|<1$, and $\left|\frac{1-e^{i2^{j}\xi\zeta}}{\zeta}\right|\leq 2^{j}|\xi|$, we obtain
\begin{align}\label{j1small}
\left|(1-\partial_{\xi_1}^2)\cdots(1-\partial_{\xi_{2\mu+1}}^2)\Tb_\mu(2^{j_1}\xi_1,\cdots,2^{j_{2\mu+1}}\xi_{2\mu+1})\right|\lesssim 2^{(j_1+\cdots+j_{2\mu+1})}\prod\limits_{k=1}^{2\mu+1} (1+2^{j_k})
\end{align}
and
\begin{align*}
\left|(1-\partial_{\xi_2}^2)\cdots(1-\partial_{\xi_{2\mu+1}}^2)\Tb_\mu(2^{j_1}\xi_1,\cdots,2^{j_{2\mu+1}}\xi_{2\mu+1})\right|\lesssim 2^{(j_2+\cdots+j_{2\mu+1})}\prod\limits_{k=2}^{2\mu+1} (1+2^{j_k}).
\end{align*}
When $j_1<0$, \eqref{Tmu_est1} and \eqref{Tmu_est4} follow directly from \eqref{j1small}. In the following, we assume $j_1>0$.

For the term with $\partial_{\xi_1}^2$,
\begin{align*}
&\partial_{\xi_1}^2(1-\partial_{\xi_2}^2)\cdots(1-\partial_{\xi_{2\mu+1}}^2)\Tb_\mu(2^{j_1}\xi_1,\cdots,2^{j_{2\mu+1}}\xi_{2\mu+1})\\
=
&2\int_{\R}\chi_{\{|\zeta|>1\}}[(1-e^{i2^{j_2}\xi_2\zeta})+2^{2j_2}\zeta^2e^{i2^{j_2}\xi_2\zeta}]\cdots[(1-e^{i2^{j_{2\mu+1}}\xi_{2\mu+1}\zeta})+2^{2j_{2\mu+1}}\zeta^2e^{i2^{j_{2\mu+1}}\xi_{2\mu+1}\zeta}]\\
&\qquad\qquad\qquad\cdot (\zeta^22^{2j_1}e^{i2^{j_1}\xi_1\zeta}) A_{\mu}(\zeta)\diff \zeta\\
&+2\int_{\R}\chi_{{\{|\zeta|<1\}}}[(1-e^{i2^{j_2}\xi_2\zeta})+2^{2j_2}\zeta^2e^{i2^{j_2}\xi_2\zeta}]\cdots[(1-e^{i2^{j_{2\mu+1}}\xi_{2\mu+1}\zeta})+2^{2j_{2\mu+1}}\zeta^2e^{i2^{j_{2\mu+1}}\xi_{2\mu+1}\zeta}]\\
&\qquad\qquad\qquad\cdot (\zeta^22^{2j_1}e^{i2^{j_1}\xi_1\zeta}) B_{\mu}(\zeta)\diff \zeta.
\end{align*}
Then, by integral by part, the first integral is bounded by
\begin{align*}
&\Bigg|2\int_{\R}e^{i2^{j_1}\xi_1\zeta}\partial_{\zeta}^2\left\{\chi_{\{|\zeta|>1\}}[(1-e^{i2^{j_2}\xi_2\zeta})+2^{2j_2}\zeta^2e^{i2^{j_2}\xi_2\zeta}]\cdots\right.\\
&\left.\qquad\qquad\qquad\cdot[(1-e^{i2^{j_{2\mu+1}}\xi_{2\mu+1}\zeta})+2^{2j_{2\mu+1}}\zeta^2e^{i2^{j_{2\mu+1}}\xi_{2\mu+1}\zeta}] \zeta^2A_{\mu}(\zeta)\right\}\diff \zeta\Bigg|\\
\lesssim~ &2^{(j_2+\cdots+j_{2\mu+1})}\prod\limits_{k=2}^{2\mu+1} (1+2^{j_k})2^{2\max\{0,j_2,\cdots, j_{2\mu+1}\}}.
\end{align*}
The second integral is bounded by
\begin{align*}
&\Bigg|2\int_{\R}e^{i2^{j_1}\xi_1\zeta}\partial_{\zeta}^2\left\{\chi_{\{|\zeta|<1\}}[(1-e^{i2^{j_2}\xi_2\zeta})+2^{2j_2}\zeta^2e^{i2^{j_2}\xi_2\zeta}]\cdots\right.\\
&\qquad\qquad\qquad\left.\cdot [(1-e^{i2^{j_{2\mu+1}}\xi_{2\mu+1}\zeta})+2^{2j_{2\mu+1}}\zeta^2e^{i2^{j_{2\mu+1}}\xi_{2\mu+1}\zeta}] \zeta^2 B_{\mu}(\zeta)\right\}\diff \zeta\Bigg|\\
\lesssim~ &2^{(j_2+\cdots+j_{2\mu+1})}\prod\limits_{k=2}^{2\mu+1} (1+2^{j_k})2^{2\max\{0, j_2,\cdots, j_{2\mu+1}\}}.
\end{align*}
Thus, we proved \eqref{Tmu_est1}. 

{\bf Proof of \eqref{Tmu_est2}. }To prove \eqref{Tmu_est2}, we estimate $|(1-\partial_{\xi_2}^2)\cdots(1-\partial_{\xi_{2\mu+1}}^2)[\partial_1\Tb_\mu](2^{j_1}\xi_1,\cdots,2^{j_{2\mu+1}}\xi_{2\mu+1})|$ and $|\partial_{\xi_1^2}(1-\partial_{\xi_2}^2)\cdots(1-\partial_{\xi_{2\mu+1}}^2)[\partial_1\Tb_\mu](2^{j_1}\xi_1,\cdots,2^{j_{2\mu+1}}\xi_{2\mu+1})|$ separately. Here the notation $\partial_1$ is the partial derivative to the first variable of the function $\Tb_\mu(\eta_1, \cdots, \eta_{2\mu+1})$.
\begin{align*}
&(1-\partial_{\xi_1}^2)(1-\partial_{\xi_2}^2)\cdots(1-\partial_{\xi_{2\mu+1}}^2)[\partial_1\Tb_\mu](2^{j_1}\xi_1,\cdots,2^{j_{2\mu+1}}\xi_{2\mu+1})\\
=
&2\int_{\R}\chi_{\{|\zeta|>1\}}[(1-e^{i2^{j_2}\xi_2\zeta})+2^{2j_2}\zeta^2e^{i2^{j_2}\xi_2\zeta}]\cdots[(1-e^{i2^{j_{2\mu+1}}\xi_{2\mu+1}\zeta})+2^{2j_{2\mu+1}}\zeta^2e^{i2^{j_{2\mu+1}}\xi_{2\mu+1}\zeta}]\\
&\qquad\qquad\qquad\cdot (1+2^{2j_1}\zeta^2)(-i\zeta e^{i2^{j_1}\xi_1\zeta}) A_{\mu}(\zeta)\diff \zeta\\
&+2\int_{\R}\chi_{\{|\zeta|<1\}} [(1-e^{i2^{j_2}\xi_2\zeta})+2^{2j_2}\zeta^2e^{i2^{j_2}\xi_2\zeta}]\cdots[(1-e^{i2^{j_{2\mu+1}}\xi_{2\mu+1}\zeta})+2^{2j_{2\mu+1}}\zeta^2e^{i2^{j_{2\mu+1}}\xi_{2\mu+1}\zeta}]\\
&\qquad\qquad\qquad\cdot (1+2^{2j_1}\zeta^2)(-i\zeta e^{i2^{j_1}\xi_1\zeta}) B_{\mu}(\zeta)\diff \zeta.
\end{align*}
For $j_1<0$, 
\begin{align*}
\left|(1-\partial_{\xi_1}^2)(1-\partial_{\xi_2}^2)\cdots(1-\partial_{\xi_{2\mu+1}}^2)[\partial_1\Tb_\mu](2^{j_1}\xi_1,\cdots,2^{j_{2\mu+1}}\xi_{2\mu+1})\right|
\lesssim 2^{(j_2+\cdots+j_{2\mu+1})}\prod\limits_{k=2}^{2\mu+1} (1+2^{j_k}),
\end{align*}
and \eqref{Tmu_est2} is achieved. In the following, we assume $j_1\geq0$.

Notice that
\begin{align*}
&(1-\partial_{\xi_2}^2)\cdots(1-\partial_{\xi_{2\mu+1}}^2)[\partial_1\Tb_\mu](2^{j_1}\xi_1,\cdots,2^{j_{2\mu+1}}\xi_{2\mu+1})\\
=
&2\int_{\R}\chi_{\{|\zeta|>1\}}[(1-e^{i2^{j_2}\xi_2\zeta})+2^{2j_2}\zeta^2e^{i2^{j_2}\xi_2\zeta}]\cdots[(1-e^{i2^{j_{2\mu+1}}\xi_{2\mu+1}\zeta})+2^{2j_{2\mu+1}}\zeta^2e^{i2^{j_{2\mu+1}}\xi_{2\mu+1}\zeta}]
\\
&\qquad\qquad\qquad\cdot (-i\zeta e^{i2^{j_1}\xi_1\zeta}) A_{\mu}(\zeta)\diff \zeta
\\
&+2\int_{\R}\chi_{\{|\zeta|<1\}} [(1-e^{i2^{j_2}\xi_2\zeta})+2^{2j_2}\zeta^2e^{i2^{j_2}\xi_2\zeta}]\cdots[(1-e^{i2^{j_{2\mu+1}}\xi_{2\mu+1}\zeta})+2^{2j_{2\mu+1}}\zeta^2e^{i2^{j_{2\mu+1}}\xi_{2\mu+1}\zeta}]\\
&\qquad\qquad\qquad\cdot (-i\zeta e^{i2^{j_1}\xi_1\zeta}) B_{\mu}(\zeta)\diff \zeta.
\end{align*}
The first integral can be written as 
\begin{align*}
&2\int_{\R}\chi_{\{|\zeta|>1\}}[(1-e^{i2^{j_2}\xi_2\zeta})+2^{2j_2}\zeta^2e^{i2^{j_2}\xi_2\zeta}]\cdots[(1-e^{i2^{j_{2\mu+1}}\xi_{2\mu+1}\zeta})+2^{2j_{2\mu+1}}\zeta^2e^{i2^{j_{2\mu+1}}\xi_{2\mu+1}\zeta}]
\\
&\qquad\qquad\qquad\cdot (-i\zeta e^{i2^{j_1}\xi_1\zeta}) A_{\mu}(\zeta)\diff \zeta
\\
=&-2i\int_{\R}\chi_{\{|\zeta|>1\}}[(1-e^{i2^{j_2}\xi_2\zeta})+2^{2j_2}\zeta^2e^{i2^{j_2}\xi_2\zeta}]\cdots[(1-e^{i2^{j_{2\mu+1}}\xi_{2\mu+1}\zeta})+2^{2j_{2\mu+1}}\zeta^2e^{i2^{j_{2\mu+1}}\xi_{2\mu+1}\zeta}]\\
&\qquad\qquad\qquad\cdot (\frac1{i2^{j_1}\xi_1}\partial_{\zeta})^2 e^{i2^{j_1}\xi_1\zeta} \zeta  A_{\mu}(\zeta)\diff \zeta.
\end{align*}
After integrating by part twice, we obtain
\begin{align*}
&\Bigg|2\int_{\R}\chi_{\{|\zeta|>1\}}[(1-e^{i2^{j_2}\xi_2\zeta})+2^{2j_2}\zeta^2e^{i2^{j_2}\xi_2\zeta}]\cdots[(1-e^{i2^{j_{2\mu+1}}\xi_{2\mu+1}\zeta})+2^{2j_{2\mu+1}}\zeta^2e^{i2^{j_{2\mu+1}}\xi_{2\mu+1}\zeta}]\\
&\qquad\qquad\qquad\cdot (-i\zeta e^{i2^{j_1}\xi_1\zeta})A_{\mu}(\zeta)\diff \zeta\Bigg|\\
\lesssim& 2^{(j_2+\cdots+j_{2\mu+1})}\prod\limits_{k=2}^{2\mu+1} (1+2^{j_k})2^{2\max\{0, j_2, \cdots, j_{2\mu+1}\}-2j_1}.
\end{align*}
For the second integral
\begin{align*}
&2\int_{\R}\chi_{\{|\zeta|<1\}} [(1-e^{i2^{j_2}\xi_2\zeta})+2^{2j_2}\zeta^2e^{i2^{j_2}\xi_2\zeta}]\cdots[(1-e^{i2^{j_{2\mu+1}}\xi_{2\mu+1}\zeta})+2^{2j_{2\mu+1}}\zeta^2e^{i2^{j_{2\mu+1}}\xi_{2\mu+1}\zeta}]
\\
&\qquad\qquad\qquad\cdot (-i\zeta e^{i2^{j_1}\xi_1\zeta}) B_{\mu}(\zeta)\diff \zeta
\\
=&-2i\int_{\R}\chi_{\{|\zeta|<1\}} [(1-e^{i2^{j_2}\xi_2\zeta})+2^{2j_2}\zeta^2e^{i2^{j_2}\xi_2\zeta}]\cdots[(1-e^{i2^{j_{2\mu+1}}\xi_{2\mu+1}\zeta})+2^{2j_{2\mu+1}}\zeta^2e^{i2^{j_{2\mu+1}}\xi_{2\mu+1}\zeta}]\\
&\qquad\qquad\qquad\cdot \left((\frac1{i2^{j_1}\xi_1}\partial_{\zeta})^2 e^{i2^{j_1}\xi_1\zeta}\right) \zeta B_{\mu}(\zeta)\diff \zeta.
\end{align*}
After integrating by part twice, we need to estimate
\begin{align*}
&\frac1{2^{2j_1}\xi_1^2}(-2i)\int_{\R} e^{i2^{j_1}\xi_1\zeta} \partial_{\zeta}^2\left\{ \chi_{\{|\zeta|<1\}} [(1-e^{i2^{j_2}\xi_2\zeta})+2^{2j_2}\zeta^2e^{i2^{j_2}\xi_2\zeta}]\cdots\right.\\
&\qquad\qquad\qquad\left.\cdot[(1-e^{i2^{j_{2\mu+1}}\xi_{2\mu+1}\zeta})+2^{2j_{2\mu+1}}\zeta^2e^{i2^{j_{2\mu+1}}\xi_{2\mu+1}\zeta}]  \zeta B_{\mu}(\zeta)\right\}\diff \zeta.
\end{align*} 
Recall that after ignoring a constant multiple, the most singular term in $B_\mu(\zeta)$ is $I_0(\zeta)|\zeta|^{-2\mu}$. This term leads to
\begin{align*}
\int_{\R}\chi_{\{|\zeta|<1\}} [(1-e^{i2^{j_2}\xi_2\zeta})+2^{2j_2}\zeta^2e^{i2^{j_2}\xi_2\zeta}]\cdots[(1-e^{i2^{j_{2\mu+1}}\xi_{2\mu+1}\zeta})+2^{2j_{2\mu+1}}\zeta^2e^{i2^{j_{2\mu+1}}\xi_{2\mu+1}\zeta}]\\
\cdot e^{i2^{j_1}\xi_1\zeta} I_0(\zeta)|\zeta|^{-2\mu-1}\diff \zeta.
\end{align*}
By Taylor expansion of $I_0(\zeta)$ around $0$, it suffices to estimate  
\begin{align*}
I_0(0)\int_{\R}\chi_{\{|\zeta|<1\}} [(1-e^{i2^{j_2}\xi_2\zeta})+2^{2j_2}\zeta^2e^{i2^{j_2}\xi_2\zeta}]\cdots[(1-e^{i2^{j_{2\mu+1}}\xi_{2\mu+1}\zeta})+2^{2j_{2\mu+1}}\zeta^2e^{i2^{j_{2\mu+1}}\xi_{2\mu+1}\zeta}]\\
\cdot e^{i2^{j_1}\xi_1\zeta} |\zeta|^{-2\mu-1}\diff \zeta.
\end{align*}
We claim that the above integral is finite. In fact, we can rewrite the integral as
\begin{align*}
&\int_{\R}\chi_{\{|\zeta|<1\}} \frac{(1-e^{i2^{j_2}\xi_2\zeta})+2^{2j_2}\zeta^2e^{i2^{j_2}\xi_2\zeta}}{\zeta}\cdots\frac{(1-e^{i2^{j_{2\mu+1}}\xi_{2\mu+1}\zeta})+2^{2j_{2\mu+1}}\zeta^2e^{i2^{j_{2\mu+1}}\xi_{2\mu+1}\zeta}}{\zeta}\frac{e^{i2^{j_1}\xi_1\zeta}}{\zeta} \diff \zeta\\
=&\int_{\R}\chi_{\{|\zeta|<1\}}\prod\limits_{k=2}^{2\mu+1}\left(\frac{(1-e^{i2^{j_k}\xi_k\zeta})+2^{2j_k}\zeta^2e^{i2^{j_k}\xi_k\zeta}+i2^{j_k}\xi_k\zeta}{\zeta^2}\zeta-i2^{j_k}\xi_k\right)\frac{e^{i2^{j_1}\xi_1\zeta}}{\zeta} \diff \zeta.
\end{align*}
By Taylor expansion, 
\[
\lim\limits_{\zeta\to0}\frac{(1-e^{i2^{j_k}\xi_k\zeta})+2^{2j_k}\zeta^2e^{i2^{j_k}\xi_k\zeta}+i2^{j_k}\xi_k\zeta}{\zeta^2}=2^{2j_k}(\xi_k^2+1), \quad k=2,\cdots, 2\mu+1.
\]
Therefore, after expanding the product, the terms with $\frac{(1-e^{i2^{j_k}\xi_k\zeta})+2^{2j_k}\zeta^2e^{i2^{j_k}\xi_k\zeta}+i2^{j_k}\xi_k\zeta}{\zeta^2}\zeta$ are integrable and bounded by a constant multiple of $2^{2(j_2+\cdots+j_{2\mu+1})}$. The last term is
\[
(-1)^\mu \prod\limits_{k=2}^{2\mu+1}\left(2^{j_k}\xi_k\right) I_0(0)\int_{\R} \chi_{\{|\zeta|<1\}}\frac{e^{i2^{j_1}\xi_1\zeta}}{\zeta} \diff \zeta,
\]
in which the integral is
\begin{align*}
& \int_{\R}\chi_{\{|\zeta|<1\}} \frac{e^{i2^{j_1}\xi_1\zeta}}{\zeta} \diff \zeta\\
=& \int_{\R}\chi_{\{|\zeta|<1\}} \frac{\sin(2^{j_1}\xi_1\zeta)}{\zeta} \diff \zeta\\
=& \int_{\R}\chi_{\{|\zeta|<2^{j_1}|\xi_1|\}} \frac{\sin(\zeta)}{\zeta} \diff \zeta.
\end{align*}
If $2^{j_1}|\xi_1|<1$, 
\[
\left|\int_{\R}\chi_{\{|\zeta|<2^{j_1}|\xi_1|\}} \frac{\sin(\zeta)}{\zeta} \diff \zeta \right|\leq \int_{|\zeta|<1}\frac{|\sin(\zeta)|}{|\zeta|} \diff \zeta.
\]
The function $\frac{|\sin(\zeta)|}{|\zeta|}$ is integrable, so the right-hand-side is a constant.
The integral $\int_{\R}\chi_{\{|\zeta|<2^{j_1}|\xi_1|\}} \frac{\sin(\zeta)}{\zeta} \diff \zeta$ is bounded by a constant independent of $j_1$. 

If $2^{j_1}|\xi_1|>1$, we split the integral into two parts: $|\zeta|<1$ and $1<|\zeta|<2^{j_1}|\xi_1|$. The estimate of the integral of $|\zeta|<1$ is similar as above. For $1<|\zeta|<2^{j_1}|\xi_1|$, we can estimate the integral as following. For a fixed $j_1$ and $\xi_1$, we have an integer $m$, such that $2m\pi< 2^{j_1}|\xi_1|\leq 2(m+1)\pi$. Then the integral can be written as
\begin{align*}
\int_{1<|\zeta|<2^{j_1}|\xi_1|} \frac{\sin \zeta}{\zeta}\diff \zeta=&2\int_{1<\zeta<2^{j_1}|\xi_1|} \frac{\sin \zeta}{\zeta}\diff \zeta\\
=&2\int_{1}^{2\pi} \frac{\sin \zeta}{\zeta}\diff \zeta+2\sum\limits_{n=1}^{m-1}\int_{2n\pi}^{2(n+1)\pi} \frac{\sin \zeta}{\zeta}\diff \zeta+2\int_{2m\pi}^{2^{j_1}|\xi_1|} \frac{\sin \zeta}{\zeta}\diff \zeta.
\end{align*}
The first and the third integrals are bounded by a constant independent with $j_1$. 
\begin{align*}
\left|2\int_{1}^{2\pi} \frac{\sin \zeta}{\zeta}\diff \zeta+2\int_{2m\pi}^{2^{j_1|\xi_1|}} \frac{\sin \zeta}{\zeta}\diff \zeta\right|\leq 2\int_1^{2\pi}1\diff \zeta+2\int_{2m\pi}^{2^{j_1}|\xi_1|}1\diff \zeta<8\pi.
\end{align*}
The integral in the middle is
\begin{align*}
0<\int_{2n\pi}^{2(n+1)\pi} \frac{\sin \zeta}{\zeta}\diff \zeta=&\int_{2n\pi}^{2(n+1)\pi} \frac{-\diff \cos \zeta}{\zeta}\\
=&\frac{\cos(2n\pi)}{2n\pi}-\frac{\cos(2(n+1)\pi)}{2(n+1)\pi}-\int_{2n\pi}^{2(n+1)\pi} \frac{\cos\zeta}{\zeta^2}\diff\zeta\\
\leq&\frac{1}{2n(n+1)\pi}+\int_{2n\pi}^{2(n+1)\pi} \frac{1}{\zeta^2}\diff\zeta\\
=&\frac{1}{n(n+1)\pi}.
\end{align*}
Then we take the summation from $1$ to $m-1$, we obtain
\begin{align*}
\left|\int_{1<|\zeta|<2^{j_1}|\xi_1|} \frac{\sin \zeta}{\zeta}\diff \zeta\right|\leq 8\pi+\frac1\pi. 
\end{align*}
Thus,
\[
\left|(-1)^\mu \prod\limits_{k=2}^{2\mu+1}\left(2^{j_k}\xi_k\right) I_0(0)\int_{\R} \chi_{\{|\zeta|<1\}}\frac{e^{i2^{j_1}\xi_1\zeta}}{\zeta} \diff \zeta\right|\lesssim 2^{j_2+\cdots+j_{2\mu+1}}.
\]
So 
\begin{equation}\label{Est3_17}
\begin{aligned}
\left|\int_{\R}\chi_{\{|\zeta|<1\}} [(1-e^{i2^{j_2}\xi_2\zeta})+2^{2j_2}\zeta^2e^{i2^{j_2}\xi_2\zeta}]\cdots[(1-e^{i2^{j_{2\mu+1}}\xi_{2\mu+1}\zeta})+2^{2j_{2\mu+1}}\zeta^2e^{i2^{j_{2\mu+1}}\xi_{2\mu+1}\zeta}]e^{i2^{j_1}\xi_1\zeta} |\zeta|^{-2\mu-1}\diff \zeta\right|\\
\lesssim 2^{2(j_2+\cdots+j_{2\mu+1})},
\end{aligned}
\end{equation}
and
\begin{align*}
&\Bigg|\frac1{2^{2j_1}\xi_1^2}\int_{\R} e^{i2^{j_1}\xi_1\zeta} \partial_{\zeta}^2\left\{ \chi_{\{|\zeta|<1\}} [(1-e^{i2^{j_2}\xi_2\zeta})+2^{2j_2}\zeta^2e^{i2^{j_2}\xi_2\zeta}]\cdots\right.\\
&\qquad\qquad\qquad\left.\cdot[(1-e^{i2^{j_{2\mu+1}}\xi_{2\mu+1}\zeta})+2^{2j_{2\mu+1}}\zeta^2e^{i2^{j_{2\mu+1}}\xi_{2\mu+1}\zeta}]  \zeta B_{\mu}(\zeta)\right\}\diff \zeta\Bigg|\\
&\lesssim  2^{2(j_2+\cdots+j_{2\mu+1})+2\max\{0, j_2, \cdots, j_{2\mu+1}\}-2j_1}.
\end{align*}
Then we estimate 
$|\partial_{\xi_1}^2(1-\partial_{\xi_2}^2)\cdots(1-\partial_{\xi_{2\mu+1}}^2)[\partial_1\Tb_1](2^{j_1}\xi_1,\cdots,2^{j_{2\mu+1}}\xi_{2\mu+1})|$. 
\begin{align*}
&\partial_{\xi_1}^2(1-\partial_{\xi_2}^2)\cdots(1-\partial_{\xi_{2\mu+1}}^2)[\partial_\ell\Tb_1](2^{j_1}\xi_1,\cdots,2^{j_{2\mu+1}}\xi_{2\mu+1})\\
=
&2\int_{\R}\chi_{\{|\zeta|>1\}}[(1-e^{i2^{j_2}\xi_2\zeta})+2^{2j_2}\zeta^2e^{i2^{j_2}\xi_2\zeta}]\cdots[(1-e^{i2^{j_{2\mu+1}}\xi_{2\mu+1}\zeta})+2^{2j_{2\mu+1}}\zeta^2e^{i2^{j_{2\mu+1}}\xi_{2\mu+1}\zeta}]
\\
&\qquad\qquad\qquad\cdot (i2^{2j_1}\zeta^3 e^{i2^{j_1}\xi_1\zeta}) A_{\mu}(\zeta)\diff \zeta
\\
&+2\int_{\R}\chi_{\{|\zeta|<1\}} [(1-e^{i2^{j_2}\xi_2\zeta})+2^{2j_2}\zeta^2e^{i2^{j_2}\xi_2\zeta}]\cdots[(1-e^{i2^{j_{2\mu+1}}\xi_{2\mu+1}\zeta})+2^{2j_{2\mu+1}}\zeta^2e^{i2^{j_{2\mu+1}}\xi_{2\mu+1}\zeta}]\\
&\qquad\qquad\qquad\cdot (i2^{2j_1}\zeta^3  e^{i2^{j_1}\xi_1\zeta}) B_{\mu}(\zeta)\diff \zeta
\\
=
&2\int_{\R}\chi_{\{|\zeta|>1\}}[(1-e^{i2^{j_2}\xi_2\zeta})+2^{2j_2}\zeta^2e^{i2^{j_2}\xi_2\zeta}]\cdots[(1-e^{i2^{j_{2\mu+1}}\xi_{2\mu+1}\zeta})+2^{2j_{2\mu+1}}\zeta^2e^{i2^{j_{2\mu+1}}\xi_{2\mu+1}\zeta}]
\\
&\qquad\qquad\qquad\cdot (i2^{2j_1} (\frac1{i2^{j_1}\xi_1}\partial_{\zeta})^4 e^{i2^{j_1}\xi_1\zeta}) \zeta^3  A_{\mu}(\zeta)\diff \zeta
\\
&+2\int_{\R}\chi_{\{|\zeta|<1\}} [(1-e^{i2^{j_2}\xi_2\zeta})+2^{2j_2}\zeta^2e^{i2^{j_2}\xi_2\zeta}]\cdots[(1-e^{i2^{j_{2\mu+1}}\xi_{2\mu+1}\zeta})+2^{2j_{2\mu+1}}\zeta^2e^{i2^{j_{2\mu+1}}\xi_{2\mu+1}\zeta}]
\\
&\qquad\qquad\qquad\cdot (i2^{2j_1} (\frac1{i2^{j_1}\xi_1}\partial_{\zeta})^4 e^{i2^{j_1}\xi_1\zeta}) \zeta^3 B_{\mu}(\zeta)\diff \zeta.
\end{align*}
For the first integral, after integrating by part four times, we have
\begin{align*}
&\Bigg|\int_{\R}\chi_{\{|\zeta|>1\}}[(1-e^{i2^{j_2}\xi_2\zeta})+2^{2j_2}\zeta^2e^{i2^{j_2}\xi_2\zeta}]\cdots[(1-e^{i2^{j_{2\mu+1}}\xi_{2\mu+1}\zeta})+2^{2j_{2\mu+1}}\zeta^2e^{i2^{j_{2\mu+1}}\xi_{2\mu+1}\zeta}]\\
&\qquad\qquad\qquad\cdot (i2^{2j_1} (\frac1{i2^{j_1}\xi_1}\partial_{\zeta})^4 e^{i2^{j_1}\xi_1\zeta}) \zeta^3A_{\mu}(\zeta)\diff \zeta\Bigg|\\
\lesssim & 2^{2(j_2+\cdots+j_{2\mu+1})+4\max\{0, j_2, \cdots, j_{2\mu+1}\}-2j_1}.
\end{align*}
For the second integral, after integrating by part four times, by \eqref{Est3_17}, the most singular term is bounded by
\begin{align*}
&\bigg|\int_{\R}\chi_{\{|\zeta|<1\}} [(1-e^{i2^{j_2}\xi_2\zeta})+2^{2j_2}\zeta^2e^{i2^{j_2}\xi_2\zeta}]\cdots[(1-e^{i2^{j_{2\mu+1}}\xi_{2\mu+1}\zeta})+2^{2j_{2\mu+1}}\zeta^2e^{i2^{j_{2\mu+1}}\xi_{2\mu+1}\zeta}]\\
&\qquad(i2^{-2j_1}\xi_1^{-4} e^{i2^{j_1}\xi_1\zeta}) \frac{1}{\zeta^{2\mu+1}}\diff \zeta\bigg|\\
\lesssim~& 2^{2(j_2+\cdots+j_{2\mu+1})-2j_1}+2^{(j_2+\cdots+j_{2\mu+1})-2j_1}.
\end{align*}
In total, the second integral is bounded by 
\begin{align*}
&\Bigg| \int_{\R}\chi_{\{|\zeta|<1\}} [(1-e^{i2^{j_2}\xi_2\zeta})+2^{2j_2}\zeta^2e^{i2^{j_2}\xi_2\zeta}]\cdots[(1-e^{i2^{j_{2\mu+1}}\xi_{2\mu+1}\zeta})+2^{2j_{2\mu+1}}\zeta^2e^{i2^{j_{2\mu+1}}\xi_{2\mu+1}\zeta}]\\
&\qquad\qquad\qquad\cdot (i2^{2j_1} (\frac1{i2^{j_1}\xi_1}\partial_{\zeta})^4 e^{i2^{j_1}\xi_1\zeta}) \zeta^3 B_{\mu}(\zeta)\diff \zeta\Bigg|\\
\lesssim & (2^{(j_2+\cdots+j_{2\mu+1})}+2^{2(j_2+\cdots+j_{2\mu+1})})2^{4\max\{0, j_2, \cdots, j_{2\mu+1}\}-2j_1}.
\end{align*}
So we proved \eqref{Tmu_est2}.

{\bf Proof of \eqref{Tmu_est3}.}
For $l=2,3,\cdots, 2\mu+1$,
\begin{align}\nonumber
&(1-\partial_{\xi_1}^2)(1-\partial_{\xi_2}^2)\cdots(1-\partial_{\xi_{2\mu+1}}^2)[\partial_l\Tb_\mu](2^{j_1}\xi_1,\cdots,2^{j_{2\mu+1}}\xi_{2\mu+1})\\\nonumber
=
&2\int_{\R}\chi_{\{|\zeta|>1\}}\prod\limits_{k\neq l}[(1-e^{i2^{j_k}\xi_k\zeta})+2^{2j_k}\zeta^2e^{i2^{j_k}\xi_k\zeta}](1+2^{2j_l}\zeta^2)(-i\zeta e^{i2^{j_l}\xi_l\zeta}) A_{\mu}(\zeta)\diff \zeta\\\label{eqnA8}
&+2\int_{\R}\chi_{\{|\zeta|<1\}}\prod\limits_{k\neq l}[(1-e^{i2^{j_k}\xi_k\zeta})+2^{2j_k}\zeta^2e^{i2^{j_k}\xi_k\zeta}](1+2^{2j_l}\zeta^2)(-i\zeta e^{i2^{j_l}\xi_l\zeta}) B_{\mu}(\zeta)\diff \zeta.
\end{align}
We can estimate the first integral by considering the following two parts. One part is
\begin{align*}
&\left|\int_{\R}\chi_{\{|\zeta|>1\}}(1-e^{i2^{j_1}\xi_1\zeta})\prod\limits_{k\neq 1, l}[(1-e^{i2^{j_k}\xi_k\zeta})+2^{2j_k}\zeta^2e^{i2^{j_k}\xi_k\zeta}](1+2^{2j_l}\zeta^2)(-i\zeta e^{i2^{j_l}\xi_l\zeta})A_{\mu}(\zeta)\diff \zeta\right|\\
\lesssim~&\prod\limits_{k\neq 1,l}[(1+2^{j_k})2^{j_k}](1+2^{j_l})^22^{j_1}(1+2^{j_1})^{-1}.
\end{align*}
The other part, after integral-by-part,
\begin{align*}
&\left|\int_{\R}\chi_{\{|\zeta|>1\}}(2^{2j_1}\zeta^2e^{i2^{j_1}\xi_1\zeta})\prod\limits_{k\neq 1, l}[(1-e^{i2^{j_k}\xi_k\zeta})+2^{2j_k}\zeta^2e^{i2^{j_k}\xi_k\zeta}](1+2^{2j_l}\zeta^2)(-i\zeta e^{i2^{j_l}\xi_l\zeta}) A_{\mu}(\zeta)\diff \zeta\right|\\
=~&\left|\int_{\R}\chi_{\{|\zeta|>1\}}2^{2j_1}\zeta^2(\frac1{i2^{j_1}\xi_1}\partial_{\zeta})^2e^{i2^{j_1}\xi_1\zeta}\prod\limits_{k\neq 1, l}[(1-e^{i2^{j_k}\xi_k\zeta})+2^{2j_k}\zeta^2e^{i2^{j_k}\xi_k\zeta}](1+2^{2j_l}\zeta^2)(-i\zeta e^{i2^{j_l}\xi_l\zeta})A_{\mu}(\zeta)\diff \zeta\right|\\
=~&\left|\int_{\R}\chi_{\{|\zeta|>1\}}\xi_1^{-2}e^{i2^{j_1}\xi_1\zeta}\partial_{\zeta}^2\left\{\prod\limits_{k\neq 1, l}[(1-e^{i2^{j_k}\xi_k\zeta})+2^{2j_k}\zeta^2e^{i2^{j_k}\xi_k\zeta}](1+2^{2j_l}\zeta^2)(-i\zeta e^{i2^{j_l}\xi_l\zeta})A_{\mu}(\zeta)\zeta^2\right\}\diff \zeta\right|\\
\lesssim~&\prod\limits_{k\neq 1,l}[2^{j_k}(1+2^{j_k})](1+2^{j_l})^2(1+2^{j_2})^22^{j_1}(1+2^{j_1})^{-1}.
\end{align*}
The second integral of \eqref{eqnA8} satisfies
\begin{align*}
&\left|\int_{\R}\chi_{\{|\zeta|<1\}}\prod\limits_{k\neq l}[(1-e^{i2^{j_k}\xi_k\zeta})+2^{2j_k}\zeta^2e^{i2^{j_k}\xi_k\zeta}](1+2^{2j_l}\zeta^2)(-i\zeta e^{i2^{j_l}\xi_l\zeta}) B_{\mu}(\zeta)\diff \zeta\right|\\
=~&\left|\int_{\R}\chi_{\{|\zeta|<1\}}\prod\limits_{k\neq l}[(1-e^{i2^{j_k}\xi_k\zeta})+2^{2j_k}\zeta^2e^{i2^{j_k}\xi_k\zeta}](1+2^{2j_l}\zeta^2)(-i\zeta e^{i2^{j_l}\xi_l\zeta})|\zeta|^{-2\mu} [|\zeta|^{2\mu} B_{\mu}(\zeta)]\diff \zeta\right|\\
\lesssim~& \prod\limits_{k\neq1,l}[2^{j_k}(1+2^{j_k})] (1+2^{j_l})^22^{j_1}(1+2^{j_1})^{-1}.
\end{align*}
Combining the above two parts, we obtain \eqref{Tmu_est3}.

{\bf Proof of \eqref{Tmu_est5}.}
Denote the difference as $\Tf_\mu(\eta_1,\eta_2,\cdots, \eta_{2\mu+1})$ in short,
\begin{align*}
\Tf_\mu:=&\Tb_{\mu}(\eta_1,\eta_2,\cdots, \eta_{2\mu+1})-\Tb_{\mu}(\eta_1+\cdots+\eta_{2\mu+1},\eta_2,\cdots, \eta_{2\mu+1})\\
=~&2\int_{\R}\chi_{\{|\zeta|>1\}}e^{i\eta_1\zeta}(1-e^{i(\eta_2+\cdots+\eta_{2\mu+1})\zeta})\prod_{k=2}^{2\mu+1}(1-e^{i\eta_k\zeta}) A_\mu(\zeta)\diff \zeta\\
&+2\int_{\R}\chi_{\{|\zeta|<1\}}e^{i\eta_1\zeta}(1-e^{i(\eta_2+\cdots+\eta_{2\mu+1})\zeta}) \prod_{k=2}^{2\mu+1}(1-e^{i\eta_k\zeta})B_\mu(\zeta)
\diff\zeta.
\end{align*}
We will estimate $|(1-\partial_{\xi_1}^2)\cdots(1-\partial_{\xi_{2\mu+1}}^2)\Tf_\mu(2^{j_1}\xi_1,\cdots, 2^{j_{2\mu+1}}\xi_{2\mu+1})|$.
Notice that
\begin{align*}
&(1-\partial_{\xi_1}^2)\cdots(1-\partial_{\xi_{2\mu+1}}^2)[e^{i2^{j_1}\xi_1\zeta}(1-e^{i(2^{j_2}\xi_2+\cdots+2^{j_{2\mu+1}}\xi_{2\mu+1})\zeta})\prod_{k=2}^{2\mu+1}(1-e^{i2^{j_k}\xi_k\zeta})]\\
=~&(1+2^{2j_1}\zeta^2)e^{i2^{j_1}\xi_1\zeta}\prod_{k=2}^{2\mu+1}[(1-e^{i2^{j_k}\xi_k\zeta})-2^{2j_k}\zeta^2e^{i2^{j_k}\xi_k\zeta}]\\
&-(1+2^{2j_1}\zeta^2)e^{i2^{j_1}\xi_1\zeta}\prod_{k=2}^{2\mu+1}e^{i2^{j_k}\xi_k\zeta}[1-e^{i2^{j_k}\xi_k\zeta}+2^{2j_k}\zeta^2-2^{2j_k+2}\zeta^2e^{i2^{j_k}\xi_k\zeta}]\\
=:~&(1+2^{2j_1}\zeta^2)e^{i2^{j_1}\xi_1\zeta}M(\xib, \zeta).
\end{align*}
With compact support assumption of $\xib$, the function $M(\xib, \zeta)$ satisfies
\begin{align*}
\left|\frac{\partial_\zeta M(\xib, \zeta)}{|\zeta|^{2\mu}}\right|\lesssim \prod\limits_{k=2}^{2\mu+1}2^{j_k}(1+2^{j_k}|\zeta|)(1+2^{j_2}|\zeta|),\qquad
\left|\frac{\partial_\zeta^3[\zeta^2 M(\xib, \zeta)]}{|\zeta|^{2\mu}}\right|\lesssim \prod\limits_{k=2}^{2\mu+1}2^{j_k}(1+2^{j_k}|\zeta|)(1+2^{j_2}|\zeta|)^3.
\end{align*}
So we have
\begin{align*}
&\left|(1-\partial_{\xi_1}^2)\cdots(1-\partial_{\xi_{2\mu+1}}^2)\Tf_\mu(2^{j_1}\xi_1,\cdots, 2^{j_{2\mu+1}}\xi_{2\mu+1})\right|\\
\leq ~&2\left|\int_{\R}\chi_{\{|\zeta|>1\}}(1+2^{2j_1}\zeta^2)e^{i2^{j_1}\xi_1\zeta}M(\xib, \zeta) A_\mu(\zeta)\diff \zeta\right|\\
&+2\left|\int_{\R}\chi_{\{|\zeta|<1\}}(1+2^{2j_1}\zeta^2)e^{i2^{j_1}\xi_1\zeta}M(\xib, \zeta)B_\mu(\zeta)
\diff\zeta\right|
\\
\leq ~&2\left|\int_{\R}\chi_{\{|\zeta|>1\}}(2^{j_1}\xi_1)^{-1}\partial_{\zeta}e^{i2^{j_1}\xi_1\zeta}M(\xib, \zeta)  A_\mu(\zeta)\diff \zeta\right|
\\
&+2\left|\int_{\R}\chi_{\{|\zeta|<1\}}(2^{j_1}\xi_1)^{-1}\partial_{\zeta}e^{i2^{j_1}\xi_1\zeta}M(\xib, \zeta)B_\mu(\zeta)
\diff\zeta\right|
\\
&2\left|\int_{\R}\chi_{\{|\zeta|>1\}}2^{2j_1}\zeta^2[(2^{j_1}\xi_1)^{-1}\partial_{\zeta}]^3e^{i2^{j_1}\xi_1\zeta}M(\xib, \zeta) A_\mu(\zeta)\diff \zeta\right|\\
&+2\left|\int_{\R}\chi_{\{|\zeta|<1\}}2^{2j_1}\zeta^2[(2^{j_1}\xi_1)^{-1}\partial_{\zeta}]^3e^{i2^{j_1}\xi_1\zeta}M(\xib, \zeta)B_\mu(\zeta)
\diff\zeta\right|\\
\lesssim~& 2^{-j_1}(1+2^{j_2})^3\prod\limits_{k=2}^{2\mu+1}2^{j_k}(1+2^{j_k}).
\end{align*}
Thus we proved \eqref{Tmu_est5}.
\end{proof}

\begin{proposition}\label{propA2}
If $j_1\geq j_2\geq j_3$ and $j_1-j_3>1$.
\begin{equation}\label{eqnA7}
\left\|\frac{1}{p'(\eta_1)-p'(\eta_3)}\psi_{j_1}(\eta_1)\psi_{j_2}(\eta_2)\psi_{j_3}(\eta_3)\right\|_{S^\infty}\lesssim \frac{(1+2^{6j_1})(1+2^{3j_3})}{2^{2j_1}+2^{6j_1}}\lesssim\frac{(1+2^{2j_1})(1+2^{3j_3})}{2^{2j_1}} .
\end{equation}
\begin{equation}\label{eqnA8}
\left\|[p''(\eta_1)+p''(\eta_3)]\psi_{j_1}(\eta_1)\psi_{j_2}(\eta_2)\psi_{j_3}(\eta_3)\right\|_{S^\infty}\lesssim 2^{j_1}(1+2^{j_1})^{-1} .
\end{equation}
\end{proposition}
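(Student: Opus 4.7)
\noindent\textbf{Proof plan for Proposition \ref{propA2}.} The plan is to reduce each $S^\infty$ norm to a bound of the rescaled symbol in $C^2$ via the characterization $\|m\|_{S^\infty}=\|\mathcal{F}^{-1}(m)\|_{L^1}$. Specifically, for a symbol $n(\eta_1,\eta_2,\eta_3)=F(\eta_1,\eta_3)\psi_{j_1}(\eta_1)\psi_{j_2}(\eta_2)\psi_{j_3}(\eta_3)$, the $\eta_2$-factor contributes $\|\mathcal{F}^{-1}\psi_{j_2}\|_{L^1}\lesssim 1$ by scaling, so it suffices to bound $\|F\psi_{j_1}\psi_{j_3}\|_{S^\infty(\R^2)}$. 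After the change of variables $\eta_k=2^{j_k}\xi_k$, the $S^\infty$ norm is invariant, and by the usual Bernstein-type estimate (two integrations by parts in each variable against the compactly supported bump $\psi(\xi_1)\psi(\xi_3)$) one has
\[
\|F\psi_{j_1}\psi_{j_3}\|_{S^\infty}\lesssim \sum_{0\le\alpha_1,\alpha_3\le 2}\|\partial_{\xi_1}^{\alpha_1}\partial_{\xi_3}^{\alpha_3}\big(F(2^{j_1}\xi_1,2^{j_3}\xi_3)\psi(\xi_1)\psi(\xi_3)\big)\|_{L^\infty}.
\]
So everything reduces to $C^2$-estimates of the rescaled symbol on $|\xi_k|\approx 1$, where $p'(\eta)=-(1+\eta^2)^{-3/2}$ and $p''(\eta)=3\eta(1+\eta^2)^{-5/2}$.

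\noindent\textbf{Estimate \eqref{eqnA7}.} Split into three regimes according to the signs of $j_1,j_3$. (i)~If $j_1,j_3\le 0$, expand $p'(\eta)=-1+\tfrac{3}{2}\eta^2+O(\eta^4)$ to get $p'(\eta_1)-p'(\eta_3)=\tfrac{3}{2}(\eta_1^2-\eta_3^2)+O(\eta_1^4+\eta_3^4)$; since $j_1>j_3+1$ implies $|\eta_1|>2|\eta_3|$, this difference is bounded below by $c\,2^{2j_1}$, giving the reciprocal $\lesssim 2^{-2j_1}$; derivatives produce $p''(2^{j_1}\xi_1)\cdot 2^{j_1}\sim 2^{2j_1}$ in the numerator, but an extra factor $2^{-2j_1}$ from the squared denominator, so the $C^2$ bound remains $2^{-2j_1}$, matching the claim. (ii)~If $j_3<0\le j_1$, then $|p'(\eta_3)|\sim 1$ while $|p'(\eta_1)|\lesssim 2^{-3j_1}$, so the difference is $\sim 1$; derivative factors $p''(2^{j_k}\xi_k)\cdot 2^{j_k}$ are bounded, yielding $\lesssim 1$. (iii)~If $0\le j_3\le j_1$, use $p'(\eta)\sim-\eta^{-3}$ for large $\eta$, so $|p'(\eta_1)-p'(\eta_3)|\approx |p'(\eta_3)|\sim 2^{-3j_3}$, hence the reciprocal is $\sim 2^{3j_3}$; derivatives introduce a factor $p''(2^{j_k}\xi_k)\cdot 2^{j_k}/[p'(\eta_1)-p'(\eta_3)]$ which is $\lesssim 2^{-3j_k+3j_3}\le 1$, so the $C^2$ bound is still $\lesssim 2^{3j_3}$. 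In all three regimes the claim $(1+2^{2j_1})(1+2^{3j_3})/2^{2j_1}$ holds.

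\noindent\textbf{Estimate \eqref{eqnA8}.} Here $F(\eta_1,\eta_3)=p''(\eta_1)+p''(\eta_3)$. On $|\eta_k|\approx 2^{j_k}$ one has $|p''(\eta_k)|\lesssim 2^{j_k}(1+2^{j_k})^{-5}$, which is $\lesssim 2^{j_1}(1+2^{j_1})^{-1}$ (the function $x\mapsto x(1+x)^{-5}$ is bounded by $x(1+x)^{-1}$ uniformly, and the term with index $j_3\le j_1$ is even smaller when $j_3\ge 0$, while it contributes $\lesssim 2^{j_3}\le 2^{j_1}$ when $j_3\le 0$). Derivatives of $p''$ behave the same way after rescaling because $p''(2^{j}\xi)$ is smooth on $|\xi|\approx 1$, so the $C^2$ estimate of the rescaled symbol matches the pointwise bound.

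\noindent\textbf{Expected difficulty.} The mechanical part is straightforward; the only genuine subtlety is the small-frequency regime $j_1,j_3<0$ of \eqref{eqnA7}, where the denominator $p'(\eta_1)-p'(\eta_3)$ is a difference of two quantities both close to $-1$, and one must carefully exploit the gap $j_1>j_3+1$ via the quadratic Taylor expansion of $p'$ to avoid a spurious singular factor when differentiating the reciprocal. Once that cancellation is isolated, the rest of the proof is a routine case-by-case check.
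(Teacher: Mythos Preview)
Your plan is correct and would go through: reducing to $C^2$ bounds of the rescaled symbol on $|\xi_k|\approx 1$ is the standard mechanism, and your three-regime split for \eqref{eqnA7} together with the pointwise bound for \eqref{eqnA8} gives the right orders of magnitude. The one place to be slightly more careful is case~(i): rather than a Taylor expansion with an $O(\eta^4)$ remainder (which does not obviously beat the main term when $j_1$ is close to $0$), it is cleaner to use the exact identity $p'(\eta_1)-p'(\eta_3)=\big[(1+\eta_1^2)^{3/2}-(1+\eta_3^2)^{3/2}\big]\big/\big[(1+\eta_1^2)^{3/2}(1+\eta_3^2)^{3/2}\big]$ and then the mean value theorem on $x\mapsto(1+x)^{3/2}$ to get the lower bound $\gtrsim |\eta_1|^2-|\eta_3|^2\gtrsim 2^{2j_1}$ directly from $j_1-j_3>1$.

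The paper's argument is organized differently: instead of a case split, it rewrites the symbol once and for all as
\[
\frac{1}{p'(\eta_1)-p'(\eta_3)}=\frac{(1+\eta_1^2)^{3/2}(1+\eta_3^2)^{3/2}\big[(1+\eta_1^2)^{3/2}+(1+\eta_3^2)^{3/2}\big]}{(3\eta_3^2+3\eta_3^4+\eta_3^6)-(3\eta_1^2+3\eta_1^4+\eta_1^6)},
\]
and similarly for $p''(\eta_1)+p''(\eta_3)$, so that the potentially singular denominator is an explicit polynomial containing the factor $\eta_3^2-\eta_1^2$. One then bounds each factor uniformly in $j_1,j_3$ (using the algebra property of $S^\infty$) without separating low/high frequencies. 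The advantage of the paper's route is that it is regime-free and reduces everything to a single algebraic identity; the advantage of your approach is that it makes the size of each contribution transparent and generalizes more readily to symbols that do not admit such clean factorizations.
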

\begin{proof}
Since
\begin{align*}
\frac{1}{p'(\eta_1)-p'(\eta_3)}=~&\frac{1}{(1+\eta_1^2)^{-3/2}-(1+\eta_3^2)^{-3/2}}\\
=~&\frac{(1+\eta_1^2)^{3/2}(1+\eta_3^2)^{3/2}}{(1+\eta_3^2)^{3/2}-(1+\eta_1^2)^{3/2}}\\
=~&\frac{(1+\eta_1^2)^{3/2}(1+\eta_3^2)^{3/2}[(1+\eta_3^2)^{3/2}+(1+\eta_1^2)^{3/2}]}{(1+\eta_3^2)^{3}-(1+\eta_1^2)^{3}}\\
=~&\frac{(1+\eta_1^2)^{3/2}(1+\eta_3^2)^{3/2}[(1+\eta_3^2)^{3/2}+(1+\eta_1^2)^{3/2}]}{(3\eta_3^2+3\eta_3^4+\eta_3^6)-(3\eta_1^2+3\eta_1^4+\eta_1^6)},
\end{align*}
\begin{align*}
p''(\eta_1)+p''(\eta_3)=~&-3\eta_1(1+\eta_1^2)^{-5/2}+3\eta_3(1+\eta_3^2)^{-5/2}\\
=~&\frac{-3\eta_1(1+\eta_3^2)^{5/2}+3\eta_3(1+\eta_1^2)^{5/2}}{(1+\eta_1^2)^{5/2}(1+\eta_3^2)^{5/2}},
\end{align*}
by direct calculation, we can prove \eqref{eqnA7} and \eqref{eqnA8}.

\end{proof}

\begin{proposition}\label{PropA3}
$j_1, j_2, j_3\in\{j-1, j, j+1\}$. When $(\iota_1,\iota_2)=(+, -)$ or $(-,+)$,
\begin{equation}\label{eqnB7}
\left\|\frac{p'(\eta_3)-p'(\eta_1+\eta_2+\eta_3)}{p'(\eta_1)-p'(\eta_2)}\psi_{j_1}(\eta_1)\psi_{j_2}(\eta_2)\psi_{j_3}(\eta_3)\upsilon_{\iota_1}(\eta_1)\upsilon_{\iota_2}(\eta_2)\upsilon_{\iota_3}(\eta_3)\right\|_{S^\infty}\lesssim 1,
\end{equation}
\begin{equation}\label{eqnB8}
\left\|\frac{p''(\eta_1)+p''(\eta_2)}{p'(\eta_1)-p'(\eta_2)}\psi_{j_1}(\eta_1)\psi_{j_2}(\eta_2)\psi_{j_3}(\eta_3)\upsilon_{\iota_1}(\eta_1)\upsilon_{\iota_2}(\eta_2)\upsilon_{\iota_3}(\eta_3)\right\|_{S^\infty}\lesssim 1.
\end{equation}
By symmetry, when $(\iota_2,\iota_3)=(+, -)$ or $(-,+)$,
\begin{equation}\label{eqnB9}
\left\|\frac{p'(\eta_1)-p'(\eta_1+\eta_2+\eta_3)}{p'(\eta_2)-p'(\eta_3)}\psi_{j_1}(\eta_1)\psi_{j_2}(\eta_2)\psi_{j_3}(\eta_3)\upsilon_{\iota_1}(\eta_1)\upsilon_{\iota_2}(\eta_2)\upsilon_{\iota_3}(\eta_3)\right\|_{S^\infty}\lesssim 1,
\end{equation}
\begin{equation}\label{eqnB10}
\left\|\frac{p''(\eta_2)+p''(\eta_3)}{p'(\eta_2)-p'(\eta_3)}\psi_{j_1}(\eta_1)\psi_{j_2}(\eta_2)\psi_{j_3}(\eta_3)\upsilon_{\iota_1}(\eta_1)\upsilon_{\iota_2}(\eta_2)\upsilon_{\iota_3}(\eta_3)\right\|_{S^\infty}\lesssim 1.
\end{equation}
When $(\iota_1,\iota_3)=(+, -)$ or $(-,+)$,
\begin{equation}\label{eqnB11}
\left\|\frac{p''(\eta_1)+p''(\eta_3)}{p'(\eta_1)-p'(\eta_3)}\psi_{j_1}(\eta_1)\psi_{j_2}(\eta_2)\psi_{j_3}(\eta_3)\upsilon_{\iota_1}(\eta_1)\upsilon_{\iota_2}(\eta_2)\upsilon_{\iota_3}(\eta_3)\right\|_{S^\infty}\lesssim 1.
\end{equation}

\end{proposition}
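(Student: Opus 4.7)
The plan is to exploit a first-order cancellation between numerator and denominator in each of \eqref{eqnB7}--\eqref{eqnB11}. In every case both expressions vanish along the same codimension-one resonant set: the sign conditions encoded by $\upsilon_{\iota_1}\upsilon_{\iota_2}\upsilon_{\iota_3}$ pin down which set this is, namely the one where the two oppositely-signed variables sum to zero. Once this common linear factor is extracted, the remaining quotient is smooth and $O(1)$ on the dyadic support, and the rescaling-and-integration-by-parts argument used in the proof of Proposition~\ref{Prop_symb_Tmu} then converts this into the desired $S^\infty$ bound.

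Concretely, I would first record the factorizations
\begin{align*}
p'(\eta_a)-p'(\eta_b) &= \frac{-(\eta_a-\eta_b)(\eta_a+\eta_b)\,Q_1(\eta_a,\eta_b)}{(1+\eta_a^2)^{3/2}(1+\eta_b^2)^{3/2}\bigl[(1+\eta_a^2)^{3/2}+(1+\eta_b^2)^{3/2}\bigr]}, \\
p''(\eta_a)+p''(\eta_b) &= \frac{3(\eta_a+\eta_b)\,Q_2(\eta_a,\eta_b)}{(1+\eta_a^2)^{5/2}(1+\eta_b^2)^{5/2}},
\end{align*}
where $Q_1(\eta_a,\eta_b)=3+3(\eta_a^2+\eta_b^2)+\eta_a^4+\eta_a^2\eta_b^2+\eta_b^4$ and $Q_2$ is a similarly explicit strictly positive even polynomial of bounded degree; both come from the evenness of $p'$ and the oddness of $p''$. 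Under the sign constraint $\iota_a\iota_b=-$, one has $|\eta_a-\eta_b|\approx 2^j$, so that factor is harmless; the vanishing factor is precisely $\eta_a+\eta_b$. For \eqref{eqnB7} the integral-remainder identity
\[
p'(\eta_3)-p'(\eta_1+\eta_2+\eta_3) = -(\eta_1+\eta_2)\int_0^1 p''\bigl(\eta_3+t(\eta_1+\eta_2)\bigr)\,dt
\]
extracts the same factor $(\eta_1+\eta_2)$ from the numerator, producing explicit cancellation. For \eqref{eqnB8} the second factorization above directly achieves the cancellation. The estimates \eqref{eqnB9} and \eqref{eqnB10} are the same computation with the paired variables replaced by $(\eta_2,\eta_3)$; for \eqref{eqnB11} the same method works with paired variables $(\eta_1,\eta_3)$, using the oddness of $p''$ to see that $p''(\eta_1)+p''(\eta_3)$ carries a factor of $(\eta_1+\eta_3)$ matching the corresponding factor in $p'(\eta_1)-p'(\eta_3)$.

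To pass from the pointwise bound to the $S^\infty$ bound, I would follow the rescaling argument in the proof of Proposition~\ref{Prop_symb_Tmu}: set $\eta_i=2^{j_i}\xi_i$ with $|\xi_i|\in[5/8,8/5]$, and check that the rescaled quotient together with a sufficient number of its $\xi$-derivatives is bounded uniformly in $j$. The main technical point---and the only step I expect to require genuine care---is the uniform derivative control as $j\to\pm\infty$: after cancellation one must verify that the remaining rational expression involving $(\eta_a-\eta_b)$, $Q_1$, $Q_2$, and products of $(1+\eta_i^2)^{\pm 3/2}$ has derivatives that scale correctly under the $2^j$ rescaling and that no hidden denominator degeneration occurs away from the cancelled resonance. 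A direct inspection, exploiting the strict positivity of $Q_1,Q_2$ and the fact that $|\eta_a-\eta_b|\approx 2^j$ on the sign-constrained support, confirms this and closes the argument.
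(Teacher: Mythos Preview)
Your proposal is correct and follows essentially the same strategy as the paper: extract the common linear factor coming from the evenness of $p'$ (so that $p'(\eta_a)-p'(\eta_b)$ carries $(\eta_a-\eta_b)(\eta_a+\eta_b)$) and the oddness of $p''$ (so that $p''(\eta_a)+p''(\eta_b)$ carries $(\eta_a+\eta_b)$), then show the remaining quotient is smooth and $O(1)$ on the sign-constrained dyadic support where $|\eta_a-\eta_b|\approx 2^j$. The paper carries this out by fully explicit algebraic expansion of both numerator and denominator, arriving for \eqref{eqnB7} at the factor $\frac{\eta_1+\eta_2+2\eta_3}{\eta_2-\eta_1}$ times a ratio of smooth nonvanishing expressions, whereas you treat the denominator the same way but handle the numerator of \eqref{eqnB7} via the integral-remainder identity. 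Both routes produce the same cancellation of $(\eta_1+\eta_2)$, and the final $S^\infty$ step is identical.

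One small correction: your $Q_2$ is not literally a polynomial, since $p''(\eta_a)+p''(\eta_b)$ involves $(1+\eta^2)^{5/2}$; after dividing out $(\eta_a+\eta_b)$ one obtains a smooth, strictly positive function on the support rather than a polynomial. This is purely cosmetic and does not affect the argument.
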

\begin{proof} By direct calculation,
\begin{align*}
&\frac{p'(\eta_3)-p'(\eta_1+\eta_2+\eta_3)}{p'(\eta_1)-p'(\eta_2)}\\
=&\frac{\eta_1+\eta_2+2\eta_3}{\eta_2-\eta_1}\cdot\frac{[(1+(\eta_1+\eta_2+\eta_3)^2)^2+(1+(\eta_1+\eta_2+\eta_3)^2)(1+\eta_3^2)+(1+\eta_3^2)^2]}{[(1+\eta_2^2)^{2}+(1+\eta_2^2)(1+\eta_1^2)+(1+\eta_1^2)^{2}]}\\
&\cdot\frac{(1+\eta_1^2)^{3/2}(1+\eta_2^2)^{3/2}[(1+\eta_2^2)^{3/2}+(1+\eta_1^2)^{3/2}]}{(1+(\eta_1+\eta_2+\eta_3)^2)^{3/2}(1+\eta_3^2)^{3/2}[(1+(\eta_1+\eta_2+\eta_3)^2)^{3/2}+(1+\eta_3^2)^{3/2}]}.
\end{align*}
Since 
\begin{align*}
\left\|\frac{\eta_1+\eta_2+2\eta_3}{\eta_2-\eta_1}\psi_{j_1}(\eta_1)\psi_{j_2}(\eta_2)\psi_{j_3}(\eta_3)\psi_{\leq k_1}(\eta_1+\eta_2)\psi_{k_2}(\eta_1-\eta_2)\right\|_{S^\infty}
\lesssim~ 1,
\end{align*}
and
\begin{align*}
&\left\|\frac{[(1+(\eta_1+\eta_2+\eta_3)^2)^2+(1+(\eta_1+\eta_2+\eta_3)^2)(1+\eta_3^2)+(1+\eta_3^2)^2]}{[(1+\eta_2^2)^{2}+(1+\eta_2^2)(1+\eta_1^2)+(1+\eta_1^2)^{2}]}\right.\\
&\quad\cdot\frac{(1+\eta_1^2)^{3/2}(1+\eta_2^2)^{3/2}[(1+\eta_2^2)^{3/2}+(1+\eta_1^2)^{3/2}]}{(1+(\eta_1+\eta_2+\eta_3)^2)^{3/2}(1+\eta_3^2)^{3/2}[(1+(\eta_1+\eta_2+\eta_3)^2)^{3/2}+(1+\eta_3^2)^{3/2}]}\left.\cdot\tilde\psi_{j_1}(\eta_1)\tilde\psi_{j_2}(\eta_2)\tilde\psi_{j_3}(\eta_3)\right\|_{S^\infty}
\lesssim~1,
\end{align*}
by algebraic property of $S^\infty$ norm, we achieve \eqref{eqnB7}.

After simplification,
\begin{align*}
&\frac{p''(\eta_1)+p''(\eta_2)}{p'(\eta_1)-p'(\eta_2)}\\
=~&3\left(\frac{(1+\eta_1^2)^{5/2}[(1+\eta_1^2)^{5/2}+(1+\eta_2^2)^{5/2}]}{(\eta_2-\eta_1)[(1+\eta_2^2)^{2}+(1+\eta_2^2)(1+\eta_1^2)+(1+\eta_1^2)^{2}]}\right.\\
&\quad \left.+\frac{\eta_1[(1+\eta_2^2)^{4}+(1+\eta_2^2)^3(1+\eta_1^2)+(1+\eta_2^2)^2(1+\eta_1^2)^2+(1+\eta_2^2)(1+\eta_1^2)^3+(1+\eta_1^2)^{4}]}{[(1+\eta_2^2)^{2}+(1+\eta_2^2)(1+\eta_1^2)+(1+\eta_1^2)^{2}]}\right)\\
&\quad\cdot\frac{(1+\eta_1^2)^{3/2}+(1+\eta_2^2)^{3/2}}{(1+\eta_1^2)(1+\eta_2^2)[(1+\eta_1^2)^{5/2}+(1+\eta_2^2)^{5/2}]}.
\end{align*}
Since 
\[
\left\|\frac{(1+\eta_1^2)^{3/2}+(1+\eta_2^2)^{3/2}}{[(1+\eta_1^2)^{5/2}+(1+\eta_2^2)^{5/2}]}\psi_{j_1}(\eta_1)\psi_{j_2}(\eta_2)\psi_{j_3}(\eta_3)\psi_{\leq k_1}(\eta_1+\eta_2)\psi_{k_2}(\eta_1-\eta_2)\right\|_{S^\infty}\lesssim 1.
\]
\begin{align*}
\bigg\|\frac{\eta_1[(1+\eta_2^2)^{4}+(1+\eta_2^2)^3(1+\eta_1^2)+(1+\eta_2^2)^2(1+\eta_1^2)^2+(1+\eta_2^2)(1+\eta_1^2)^3+(1+\eta_1^2)^{4}]}{(1+\eta_1^2)(1+\eta_2^2)[(1+\eta_2^2)^{2}+(1+\eta_2^2)(1+\eta_1^2)+(1+\eta_1^2)^{2}]}\\
\psi_{j_1}(\eta_1)\psi_{j_2}(\eta_2)\psi_{j_3}(\eta_3)\psi_{\leq k_1}(\eta_1+\eta_2)\psi_{k_2}(\eta_1-\eta_2)\bigg\|_{S^\infty}\lesssim 1.
\end{align*}
\begin{align*}
\bigg\|\frac{(1+\eta_1^2)^{5/2}[(1+\eta_1^2)^{5/2}+(1+\eta_2^2)^{5/2}]}{(\eta_2-\eta_1)[(1+\eta_2^2)^{2}+(1+\eta_2^2)(1+\eta_1^2)+(1+\eta_1^2)^{2}]}\cdot\frac{(1+\eta_1^2)^{3/2}+(1+\eta_2^2)^{3/2}}{(1+\eta_1^2)(1+\eta_2^2)[(1+\eta_1^2)^{5/2}+(1+\eta_2^2)^{5/2}]}\\
\cdot\psi_{j_1}(\eta_1)\psi_{j_2}(\eta_2)\psi_{j_3}(\eta_3)\psi_{\leq k_1}(\eta_1+\eta_2)\psi_{k_2}(\eta_1-\eta_2)\bigg\|_{S^\infty}\lesssim 1.
\end{align*}
Then using the algebraic property of $S^\infty$ norm, we obtain \eqref{eqnB8}.
The estimates \eqref{eqnB9} to \eqref{eqnB11} can be obtained in a similar way.

\end{proof}

\begin{proposition}\label{PropB3}
$j_1, j_2, j_3\in\{j-1, j, j+1\}$. When $(\iota_1,\iota_2,\iota_3)=(+, +, +)$ or $(-, -, -)$,
\begin{equation}\label{eqnB11}
\left\|\frac{1}{p(\eta_1)+p(\eta_2)+p(\eta_3)-p(\eta_1+\eta_2+\eta_3)}\psi_{j_1}(\eta_1)\psi_{j_2}(\eta_2)\psi_{j_3}(\eta_3)\upsilon_{\iota_1}(\eta_1)\upsilon_{\iota_2}(\eta_2)\upsilon_{\iota_3}(\eta_3)\right\|_{S^\infty}\lesssim 1+2^{-3j}.
\end{equation}
\end{proposition}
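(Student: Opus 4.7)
The plan is to reduce to a pointwise lower bound on the phase $\Phi(\eta_1,\eta_2,\eta_3) := p(\eta_1)+p(\eta_2)+p(\eta_3)-p(\eta_1+\eta_2+\eta_3)$ on the same-sign sector, and then promote it to an $S^\infty$ bound via the derivative estimate template used in Proposition~\ref{Prop_symb_Tmu}. By the symmetry $\Phi(-\eta_1,-\eta_2,-\eta_3)=-\Phi(\eta_1,\eta_2,\eta_3)$, it suffices to treat $(\iota_1,\iota_2,\iota_3)=(+,+,+)$, so that $\eta_k>0$ and $|\eta_k|\approx 2^j$.

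For the pointwise lower bound, write $\Phi(\eta_1,\eta_2,\eta_3)=g(\eta_1+\eta_2+\eta_3)-g(\eta_1)-g(\eta_2)-g(\eta_3)$ with $g(\eta)=\eta/\sqrt{1+\eta^2}$. Since $g(0)=0$, $g'>0$, $g''<0$ on $(0,\infty)$, $g$ is strictly subadditive, so $\Phi<0$ on the positive octant. I would then split on the size of $j$. First, for $j<0$, expand $p(\eta)=-\eta+\tfrac12\eta^3+O(\eta^5)$ and use the identity
\begin{equation*}
(\eta_1+\eta_2+\eta_3)^3-\eta_1^3-\eta_2^3-\eta_3^3=3(\eta_1+\eta_2)(\eta_2+\eta_3)(\eta_1+\eta_3)
\end{equation*}
to get $\Phi=-\tfrac32(\eta_1+\eta_2)(\eta_2+\eta_3)(\eta_1+\eta_3)+O(2^{5j})$; on the same-sign support this produces $|\Phi|\gtrsim 2^{3j}$. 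Second, for $j\geq 0$, each $\eta_k\gtrsim 1$ gives $g(\eta_k)\geq g(5/16)$, while $g(\eta_1+\eta_2+\eta_3)\leq 1$; continuity and the limit $\Phi\to -2$ as the $\eta_k\to\infty$ yield $|\Phi|\gtrsim 1$ uniformly. Combining, $|\Phi|\gtrsim\min(1,2^{3j})$ and hence $|1/\Phi|\lesssim 1+2^{-3j}$ pointwise.

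To upgrade to $S^\infty$, I would follow the rescaling argument from the start of the proof of Proposition~\ref{Prop_symb_Tmu}: bounding $\|1/\Phi\cdot\psi_{j_1}(\eta_1)\psi_{j_2}(\eta_2)\psi_{j_3}(\eta_3)\upsilon_+\upsilon_+\upsilon_+\|_{S^\infty}$ by $\|(1-\partial_{\xi_1}^2)(1-\partial_{\xi_2}^2)(1-\partial_{\xi_3}^2)[1/\widetilde\Phi(\xi)\cdot\psi_0(\xi_1)\psi_0(\xi_2)\psi_0(\xi_3)\upsilon_+\upsilon_+\upsilon_+]\|_{L^\infty}$, where $\widetilde\Phi(\xi)=\Phi(2^{j_1}\xi_1,2^{j_2}\xi_2,2^{j_3}\xi_3)$ and $\xi_k\in(5/8,8/5)$. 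Applying the quotient rule reduces the problem to matching sizes of $\widetilde\Phi$ and its $\xi$-derivatives. Using $p'(\eta)=-(1+\eta^2)^{-3/2}=-1+\tfrac32\eta^2+O(\eta^4)$, each $\partial_{\xi_k}\widetilde\Phi=2^{j_k}[p'(2^{j_k}\xi_k)-p'(\sum_\ell 2^{j_\ell}\xi_\ell)]$ has leading term $\tfrac32\cdot 2^{j_k}[(2^{j_k}\xi_k)^2-(\sum 2^{j_\ell}\xi_\ell)^2]\sim 2^{3j}$ in the low-frequency regime and remains $O(1)$ in the high-frequency regime; similar scalings persist for higher derivatives. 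Plugging these into $\partial^\alpha(1/\widetilde\Phi)$ keeps every term of size $\lesssim 1+2^{-3j}$, which finishes the proof.

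The main obstacle is the sharpness of the low-frequency bound on $\partial^\alpha\widetilde\Phi$: a naive chain-rule bound would give only $\partial_{\xi_k}\widetilde\Phi\sim 2^{j_k}$, which combined with $|\widetilde\Phi|^{-1}\sim 2^{-3j}$ would lose factors. The sign constraint is exactly what is needed to ensure that the leading linear part $-2^j\xi_k$ of $p(2^j\xi_k)$ cancels against $p(\sum)$, leaving a genuine $2^{3j}$ leading order in $\widetilde\Phi$ and in all of its $\xi$-derivatives (because the difference $(2^{j_k}\xi_k)^{2m}-(\sum 2^{j_\ell}\xi_\ell)^{2m}$ is bounded away from zero uniformly on the positive support). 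Verifying this scaling carefully for all mixed derivatives appearing after the three applications of $(1-\partial_{\xi_k}^2)$ is the computational heart of the proof.
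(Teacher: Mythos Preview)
Your approach is correct but takes a genuinely different route from the paper's. You argue in two stages: first a pointwise lower bound $|\Phi|\gtrsim\min(1,2^{3j})$ on the same-sign sector (via strict concavity of $g$ for $j\ge0$ and the cubic identity $(\eta_1+\eta_2+\eta_3)^3-\sum\eta_k^3=3\prod(\eta_k+\eta_\ell)$ for $j<0$), then a direct upgrade to $S^\infty$ by rescaling and bounding $(1-\partial_{\xi_1}^2)(1-\partial_{\xi_2}^2)(1-\partial_{\xi_3}^2)[1/\widetilde\Phi]$ in $L^\infty$ via Fa\`a di Bruno. The key observation---that the sign constraint forces the linear part of $p$ to cancel in $\widetilde\Phi$ and in every $\partial^\alpha\widetilde\Phi$, so that each such quantity scales like $2^{3j}$ (or better) at low frequency---is correct and is what makes every term in the quotient-rule expansion come out $\lesssim 2^{-3j}$.

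The paper instead writes out an explicit algebraic factorization: it puts $-\Phi=\sum_k g(\eta_k)-g(\sum_\ell\eta_\ell)$ over the common denominator $D=\prod_k\sqrt{1+\eta_k^2}\cdot\sqrt{1+(\sum_\ell\eta_\ell)^2}$ and expresses the numerator as $N=\sum_k\eta_k\prod_{\ell\ne k}\sqrt{1+\eta_\ell^2}\,[\sqrt{1+(\sum)^2}-\sqrt{1+\eta_k^2}]$. On the same-sign support each summand of $N$ is manifestly positive, so $1/\Phi=-D/N$ is a quotient of smooth nonvanishing factors; the algebra property of $S^\infty$ then gives the bound $\lesssim(1+2^{4j})/[(1+2^{2j})2^{3j}]\lesssim 1+2^{-3j}$ in one line. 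What the factorization buys is that the low-frequency cancellation is made visible algebraically (through the differences $\sqrt{1+(\sum)^2}-\sqrt{1+\eta_k^2}$), so one never has to track mixed-derivative scalings by hand. Your route is more elementary in spirit but leaves more computation to verify; the paper's is slicker once the factorization is written down.
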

\begin{proof}The proof is based on direct calculation. Notice that
\begin{align*}
&\frac{\eta_1}{\sqrt{1+\eta_1^2}}+\frac{\eta_2}{\sqrt{1+\eta_2^2}}+\frac{\eta_3}{\sqrt{1+\eta_3^2}}-\frac{\eta_1+\eta_2+\eta_3}{\sqrt{1+(\eta_1+\eta_2+\eta_3)^2}}\\
=~&\Big[\eta_1\sqrt{1+\eta_2^2}\sqrt{1+\eta_3^2}\big(\sqrt{1+(\eta_1+\eta_2+\eta_3)^2}-\sqrt{1+\eta_1^2}\big)\\
&+\eta_2\sqrt{1+\eta_1^2}\sqrt{1+\eta_3^2}\big(\sqrt{1+(\eta_1+\eta_2+\eta_3)^2}-\sqrt{1+\eta_2^2}\big)\\
&+\eta_3\sqrt{1+\eta_1^2}\sqrt{1+\eta_2^2}\big(\sqrt{1+(\eta_1+\eta_2+\eta_3)^2}-\sqrt{1+\eta_3^2}\big)\Big]\\
&\cdot\bigg[\sqrt{1+\eta_1^2}\sqrt{1+\eta_2^2}\sqrt{1+\eta_3^2}\sqrt{1+(\eta_1+\eta_2+\eta_3)^2}\bigg]^{-1}.
\end{align*}
By algebraic property of $S^\infty$-norm, we have
\begin{align*}
&\left\|\frac{1}{p(\eta_1)+p(\eta_2)+p(\eta_3)-p(\eta_1+\eta_2+\eta_3)}\psi_{j_1}(\eta_1)\psi_{j_2}(\eta_2)\psi_{j_3}(\eta_3)\upsilon_{\iota_1}(\eta_1)\upsilon_{\iota_2}(\eta_2)\upsilon_{\iota_3}(\eta_3)\right\|_{S^\infty}\\
\lesssim~& \frac{1+2^{4j}}{(1+2^{2j})2^{3j}}\lesssim 1+2^{-3j}.
\end{align*}

\end{proof}


\begin{thebibliography}{99}



\bibitem{BC93} \textsc{A.~Bertozzi and P.~Constantin}. Global regularity for vortex patches, {\it Comm. Math. Phys.}, {\bf 152}(1), 19--28, 1993.

\bibitem{BH10} \textsc{J.~Biello and J.~K.~Hunter}. Nonlinear Hamiltonian waves with constant frequency and surface waves on vorticity discontinuities. {\it Comm. Pure Appl. Math.}, {\bf 63}(3), 303--336, 2010.




\bibitem{Bur82} \textsc{J.~Burbea}. Motions of vortex patches. {\it Lett. Math. Phys.}, {\bf 6}(1), 1--16, 1982.

\bibitem{CCGS16a} \textsc{A.~Castro, D.~C\'{o}rdoba, and J.~G\'{o}mez-Serrano}. Existence and regularity of rotating global solutions for the generalized surface quasi-geostrophic equations. {\it Duke Math. J.}, {\bf 165}(5), 93--984, 2016.

\bibitem{CCGS16b} \textsc{A.~Castro, D.~C\'{o}rdoba, and J.~G\'{o}mez-Serrano}. Uniformly rotating analytic global patch solutions for active scalars. {\it Annals of PDE}, {\bf 2}(1), 1--34, 2016.


\bibitem{CaCr13} \textsc{F.~Cavallini, F.~Crisciani}. {\it Quasi-Geostrophic Theory of Oceans and Atmosphere}. Topics in the Dynamics and Thermodynamics of the Fluid Earth. Springer Netherlands, 2013.

\bibitem{CCCGW12} \textsc{D.~Chae, P.~Constantin, D.~C\'{o}rdoba, F.~Gancedo, and J.~Wu}. Generalized surface quasi-geostrophic equations with singular velocities. {\it Comm. Pure Appl. Math.}, {\bf 65}(8), 1037--1066, 2012.

\bibitem{Charney48}  \textsc{J.G.~Charney}. On the scale of atmospheric motions. {\it Geofys. Publ. Oslo}, {\bf 17}, 1--17, 1948.

\bibitem{Che93} \textsc{J.-Y.~Chemin}. Persistence of geometric structures in two-dimensional incompressible fluids. \emph{Ann. Sci. Ecole. Norm. Sup.}, {\bf 26}(4), 517-542, 1993.

\bibitem{Che98} \textsc{J.-Y.~Chemin}. \emph{Perfect Incompressible Fluids}, Oxford University Press, New York, NY, 1998.


\bibitem{CNQ15}  \textsc{C.~Connaughton, S.~Nazarenko, and B.~Quinn}. Rossby and drift wave turbulence and zonal flows: The Charney-Hasegawa-Mima model and its extensions. {\it Physics Reports}, {\bf 604},  1--71, 2015.



\bibitem{CCG18} \textsc{A. C\'{o}rdoba, D. C\'{o}rdoba and F. Gancedo}. Uniqueness for SQG patch solutions. {\it Trans. Amer. Math. Soc.}, {\bf Ser. B}.(5), 1--31, 2018.

\bibitem{CFR04}\textsc{D.~C\'{o}rdoba, C.~Fefferman and J.~L.~Rodrigo}. Almost sharp fronts for the surface quasi-geostrophic equation. {\it Proc.
Natl. Acad. Sci. USA}, {\bf 101}(9), 2687--2691, 2004.

\bibitem{CFMR05} \textsc{D.~C\'{o}rdoba, M.~A.~Fontelos, A.~M.~Mancho, and J.~L.~Rodrigo}. Evidence of singularities for a family of contour dynamics equations. {\it Proc. Natl. Acad. Sci.}, {\bf 102}(17), 5949--5952, 2005.

\bibitem{CGI19} \textsc{D.~C\'{o}rdoba, J.~G\'{o}mez-Serrano, and A.~D.~Ionescu}. Global solutions for the generalized SQG patch equation. {\it Arch. Ration. Mech. Anal.}, {\bf 233}(3), 1211--1251, 2019.

\bibitem{dlHHH16} \textsc{F.~de~la~Hoz, Z.~Hassainia, and T.~Hmidi}. Doubly connected V-states for the generalized surface quasi-geostrophic equations. {\it Arch. Ration. Mech. Anal.}, {\bf 220}(3), 1209--1281, 2016.

\bibitem{dlHHMV16} \textsc{F.~de~la~Hoz, T.~Hmidi, J.~Mateu, and J.~Vedera}. Doubly connected V-states for the planar Euler equations. {\it SIAM J. Math. Anal.}, {\bf 48}(3), 1892--1928, 2016.

\bibitem{DIP17} \textsc{Y.~Deng, A.~D.~Ionescu, and B.~Pausader}. The Euler-Maxwell system for electrons: global solutions in 2D. {\it Arch. Rational Mech. Anal.}, {\bf 225}(2), 771--871, 2017.

\bibitem{DIPP17} \textsc{Y.~Deng, A.~Ionescu, B.~Pausader, and F.~Pusateri}. Global solutions of the gravity-capillary water wave system in three dimensions. {\it Acta Math.}, {\bf 219}(2), 213--402, 2017.


\bibitem{DHC19}  \textsc{D.~G.~Dritschel and T.~Hmidi and C.~ Renault.} Imperfect Bifurcation for the Quasi-Geostrophic Shallow-Water Equations. {\it Arch. Ration. Mech. Anal.}, {\bf 231}(3), 1853--1915, 2019.



\bibitem{FLR12} \textsc{C.~Fefferman, G.~Luli, and J. Rodrigo}. The spine of an SQG almost-sharp front. {\it Nonlinearity}, {\bf 25}(2), 329--342, 2012.

\bibitem{FR11}\textsc{C.~Fefferman and J.~L.~Rodrigo}. Analytic sharp fronts for the surface quasi-geostrophic equation. {\it Comm. Math. Phys.},
{\bf 303}(1), 261--288, 2011.

\bibitem{FR12}\textsc{C.~Fefferman and J.~L.~Rodrigo}. Almost sharp fronts for SQG: the limit equations. {\it Comm. Math. Phys.}, {\bf 313}(1),
131--153, 2012.

\bibitem{FR15}\textsc{C.~Fefferman and J.~L.~Rodrigo}. Construction of almost-sharp fronts for the surface quasi-geostrophic equation. {\it Arch. Rational Mech. Anal.}, {\bf 218}(1), 123--162, 2015.



\bibitem{Gan08} \textsc{F. Gancedo}. Existence for the $\alpha$-patch model and the QG sharp front in Sobolev spaces. {\it Adv. Math.}, {\bf 217}(6), 2569--2598, 2008.

\bibitem{GP21} \textsc{F.~Gancedo and N.~Patel}. On the local existence and blow-up for generalized SQG patches. {\it Ann. PDE} {\bf 7} (1), Paper No. 4, 63 pp., 2021.

\bibitem{GS14} \textsc{F.~Gancedo and R.~M.~Strain}. Absence of splash singularities for SQG sharp fronts and the muskat problem. {\it Proc. Natl. Acad. Sci.}, {\bf 111}(2), 635--639, 2014.

\bibitem{Germain}  \textsc{P.~Germain}. Space-time resonances.
\emph{Journ\'ees \'equations aux d\'eriv\'ees partielles}, \textbf{8}, 1--10, 2010.

\bibitem{GMS09} \textsc{P.~Germain, N.~Masmoudi, and J.~Shatah}. Global solutions for 3d quadratic Schrodinger equations. {\it Int. Math. Res. Not. IMRN 2009}, no. 3, 414--432, 2009.

\bibitem{GMS12} \textsc{P.~Germain, N.~Masmoudi, and J.~Shatah}. Global solutions for the gravity water waves equation in dimension 3. {\it Ann.  Math.}, {\bf 175} (2), 691--754, 2012.

\bibitem{GS19} \textsc{J.~G\'{o}mez-Serrano}. On the existence of stationary patches. {\it Adv. Math.}, {\bf 343}, 110--140, 2019.



\bibitem{GPSY19p} \textsc{J.~G\'{o}mez-Serrano, J.~Park, J.~Shi, Y.~Yao}. Symmetry in stationary and uniformly-rotating solutions of active scalar equations. {\it Duke Math. J.} {\bf 170}(13), 2957--3038, 2021.


\bibitem{GH04} \textsc{B.~Guo, Y.~Han}.  Existence and uniqueness of global solution of the Hasegawa-Mima equation. {\it J. Math. Phys.} {\bf 45}(4), 1639--1647, 2004.

\bibitem{HM78}  \textsc{A.~Hasegawa, K.~Mima}. Pseudo-three-dimensional turbulence in magnetised nonuniform plasma. {\it Phys. Fluids}  {\bf 21}, 87--92, 1978.

\bibitem{HH15} \textsc{Z.~Hassainia and T.~Hmidi}.  On the V-states for the generalized quasi-geostrophic equations. {\it Comm. Math. Phys.}, {\bf 337}(1), 321--377, 2015.


\bibitem{HM16} \textsc{T.~Hmidi and J.~Mateu}. Degenerate bifurcation of the rotating patches. {\it Adv. Math.}, {\bf 302}, 799--850, 2016.

\bibitem{HM17} \textsc{T.~Hmidi and J.~Mateu}. Existence of corotating and counter-rotating vortex pairs for active scalar equations. {\it Comm. Math. Phys.}, {\bf 350}(2), 699--747, 2017.

\bibitem{HMV13} \textsc{T.~Hmidi, J.~Mateu, and J.~Verdera}. Boundary regularity of rotating vortex patches. {\it Arch. Ration. Mech. Anal.}, {\bf 209}(1), 171--208, 2013.

\bibitem{HR21} \textsc{T.~Hmidi, E.~Roulley}. Time quasi-periodic vortex patches for quasi-geostrophic shallow-water equations. {\it arXiv:2110.13751.}



\bibitem{HS18} \textsc{J.~K.~Hunter and J.~Shu}. Regularized and approximate equations for sharp fronts in the surface quasi-geostrophic equation and its generalization. {\it Nonlinearity}, {\bf 31}(6), 2480--2517, 2018.

\bibitem{HSZ18} \textsc{J.~K.~Hunter, J.~Shu, and Q.~Zhang}. Local well-posedness of an approximate equation for SQG fronts. {\it J. Math. Fluid Mech.}, {\bf 20}(4), 1967--1984, 2018.

\bibitem{HSZ21} \textsc{J.~K.~Hunter, J.~Shu, and Q.~Zhang}. Global solutions of a surface quasigeostrophic front equation. {\it Pure and Applied Analysis}. {\bf 3}(3):403, 2021.

\bibitem{HSZ20a} \textsc{J.~K.~Hunter, J.~Shu, and Q.~Zhang}. Contour dynamics for surface quasi-geostrophic fronts. {\it Nonlinearity}. {\bf33}(9), 4699--4714, 2020.

\bibitem{HSZ20p} \textsc{J.~K.~Hunter, J.~Shu, and Q.~Zhang}. Global solutions for a family of GSQG front equations. arXiv:2005.09154.

\bibitem{HSZ20b} \textsc{J.~K.~Hunter, J.~Shu, and Q.~Zhang}.  Two-front solutions of the SQG equation and its generalizations. {\it Commun. Math. Sci.}, {\bf 18}(6), 1685--1741, 2020.

\bibitem{HSZ22} \textsc{J.~K.~Hunter, R.~C.~Moreno-Vasquez, J.~Shu, and Q.~Zhang}. On the approximation of vorticity fronts by the Burgers–Hilbert equation. {\it Asymptotic Analysis}. Forthcoming. Available online: https://content.iospress.com/articles/asymptotic-analysis/asy211724

\bibitem{IP12} \textsc{A.~D.~Ionescu, and F.~Pusateri}. Nonlinear fractional Schr\"odinger equations in one dimension. {\it J. Func. Anal.},
\textbf{266}(1), 139--176, 2014.


\bibitem{IP13}\textsc{A.~D.~Ionescu, and B.~Pausader}. The Euler-Poisson system in 2d: global stability of the constant equilibrium solution. {\it Int. Math. Res. Not.}, {\bf 2013}(4), 761--826, 2013.

\bibitem{IP14} \textsc{A.~D.~Ionescu and F.~Pusateri}. Global regularity for 2D water waves with surface tension. \emph{Memoirs of the AMS}, {\bf 256}(1277), v+124, 2018.

\bibitem{IP15} \textsc{A.~D.~Ionescu and F.~Pusateri}. Global solutions for the gravity water waves system in 2D. \emph{Invent. Math.}, {\bf 199}(3), 653--804, 2015.

\bibitem{IPu16} \textsc{A.~D.~Ionescu and F.~Pusateri}. Global analysis of a model for capillary water waves in two dimensions, \emph{Comm. Pure Appl. Math.}, {\bf 69}(11), 2015–2071, 2016.

\bibitem{JD20} \textsc{M.~M.~Jalali and D.~G.~Dritschel}. Stability and evolution of two opposite-signed quasi-geostrophic shallow-water vortex patches. {\it Geophys. Astrophys. Fluid Dyn.}, {\bf 114}(4-5), 561--587, 2020. 

\bibitem{JZ21p} \textsc{J.~Jeon, A.~Zlato\v{s}}. An Improved Regularity Criterion and Absence of Splash-like Singularities for g-SQG Patches. 	arXiv:2112.00191.

\bibitem{Kat75a}  \textsc{T.~Kato}. The Cauchy problem for quasi-linear symmetric hyperbolic systems. {\it Arch. Rational Mech. Anal.}, {\bf 58}(3), 181--205, 1975.

\bibitem{Kat75b} \textsc{T.~Kato}. Quasi-linear equations of evolution, with applications to partial differential equations. \emph{Spectral theory and differential equations ({P}roc. {S}ympos., {D}undee, 1974; dedicated to {K}onrad {J}\"{o}rgens)}, \emph{Lecture Notes in Math.}, {\bf 448}, 25--70, 1975.

\bibitem{KR20a}\textsc{C.~Khor, J.~L.~Rodrigo}. Local existence of analytic sharp fronts for singular SQG, {\it Nonlinear Analysis}, {\bf 202}, 112116, 2021.

\bibitem{KR20b}\textsc{C.~Khor, J.~L.~Rodrigo}. On sharp fronts and almost-sharp fronts for singular SQG, {\it J. Differential Equations}, {\bf 278}, 111--145, 2021

\bibitem{KRYZ16} \textsc{A. Kiselev, L. Ryzhik, Y. Yao, and A. Zlato\v{s}}. Finite time singularity for the modified SQG patch equation. \emph{Annals of Mathematics}, \textbf{184}(3), 909--948, 2016.

\bibitem{KYZ17} \textsc{A. Kiselev, Y. Yao and A. Zlato\v{s}}. Local regularity for the modified SQG patch equation. {\it Comm. Pure Appl. Math}, {\bf 70}(7), 1253--1315, 2017.


\bibitem{kato-ponce} \textsc{D.~Li}. On Kato-Ponce and fractional Leibniz. {\it Rev. Mat. Iberoam.}, {\bf 35}(1), 23--100, 2019.


\bibitem{Maj03} \textsc{A.~Majda}. {\it Introduction to PDEs and Waves for the Atmosphere and Ocean}. Courant Lecture Notes in Mathematics, {\bf 9}, American Mathematical Soc., Providence, R.I., 2003.




\bibitem{OLBC} \textsc{F.~W.~Olver, D.~W.~Lozier, R.~F.~Boisvert, and C.~W.~Clark}. {\it NIST handbook of mathematical functions hardback and CD-ROM}, Cambridge University Press, Cambridge, 2010.

\bibitem{Pau04}  \textsc{L.~Paumond}. Some remarks on a Hasegawa-Mima-Charney-Obukhov equation. {\it Phys. D}  {\bf 195}(3-4), 379--390, 2004.

\bibitem{PD12} \textsc{H.~P\l otka and D.G.Dritschel}. Quasi-geostrophic shallow-water vortex-patch equilibria and their stability. {\it Geophys. Astrophys. Fluid Dyn.}, {\bf 106}(6), 574--595, 2012.

\bibitem{PD13} \textsc{H.~P\l otka and  D.G.Dritschel}. Quasi-geostrophic shallow-water doubly-connected vortex equilibria and their stability. {\it Journal of fluid mechanics}, {\bf 723}, 40--68, 2013.

\bibitem{Ped87} \textsc{J.~Pedlosky}. {\it Geophysical fluid dynamics}, 2nd ed. Springer-Verlag, New York, N.Y., 1987.

\bibitem{Ray1895} \textsc{L.~Rayeigh}. On the propagation of waves upon the plane surface separating two portions of fluid of different vorticities. {\it Proc. Lond. Math. Soc.}, {\bf 27}, 13--18, 1895.


\bibitem{Rod05} \textsc{J.~L.~Rodrigo}. On the evolution of sharp fronts for the quasi-geostrophic equation. {\it Comm. Pure and Appl. Math.}, {\bf 58}(6), 821--866, 2005.

\bibitem{SD14} \textsc{R.~K.~Scott and D.~G.~Dritschel}. Numerical simulation of a self-similar cascade of filament instabilities in the Surface quasigeostrophic System. \emph{Phys. Rev. Lett.}, \textbf{112}, 144505, 2014.


\bibitem{SD19} \textsc{R.~K.~Scott and D.~G.~Dritschel}. Scale-invariant singularity of the surface quasigeostrophic patch. \emph{J. Fluid Mech.}, \textbf{863}(R2), 2019.



\bibitem{Ste93} \textsc{E. M. Stein}. \emph{Harmonic analysis: Real-variable Methods, Orthogonality, and Oscillatory Integrals}. Princeton Mathematical Series, {\bf 43}, Monographs in Harmonic Analysis, III. Princeton University, Princeton, N.J., 1993.







\bibitem{Zabusky} \textsc{N.~Zabusky, M.~H.~Hughes, and K.~V.~Roberts}. Contour dynamics for the Euler equations in two dimensions, {\it J. Comput. Phys.}, {\bf 30}(1), 96--106, 1979.

\end{thebibliography}
\end{document}